\DeclareMathAlphabet{\mathpzc}{OT1}{pzc}{m}{it}
\newcommand{\R}{\mathbb{R}}
\newcommand{\C}{\mathbb{C}}
\newcommand\Z{\mathbb{Z}}
\newcommand{\N}{\mathbb{N}}
\newcommand{\Q}{\mathbb{Q}}
\renewcommand{\S}{\mathbb{S}}
\newcommand{\Gb}{\mathbf{G}}
\newcommand{\Hb}{\mathbf{H}}
\newcommand{\Mb}{\mathbf{M}}
\newcommand{\Pb}{\mathbb{P}}
\newcommand{\Tb}{\mathbf{T}}
\newcommand{\Ub}{\mathbf{U}}
\newcommand{\Vb}{\mathbf{V}}
\newcommand{\Zb}{\mathbf{Z}}
\newcommand{\Hbb}{\mathbb{H}}
\newcommand{\Lbb}{\mathbb{L}}
\newcommand{\Vbb}{\mathbb{V}}
\newcommand{\Wbb}{\mathbb{W}}
\newcommand{\bb}{\mathbf{b}}
\newcommand{\cc}{\mathbf{c}}
\renewcommand{\gg}{\mathbf{g}}
\newcommand{\hh}{\mathbf{h}}
\newcommand{\kk}{\mathbf{k}}
\renewcommand{\ss}{\mathbf{s}}
\newcommand{\uu}{\mathbf{u}}
\newcommand{\vv}{\mathbf{v}}
\newcommand{\xx}{\mathbf{x}}
\newcommand{\Acal}{\mathcal{A}}
\newcommand{\Bcal}{\mathcal{B}}
\newcommand{\Fcal}{\mathcal{F}}
\newcommand{\Gcal}{\mathcal{G}}
\newcommand{\Hcal}{\mathcal{H}}
\newcommand{\Rcal}{\mathcal{R}}
\newcommand{\Scal}{\mathcal{S}}
\newcommand{\Vcal}{\mathcal{V}}
\newcommand{\Wcal}{\mathcal{W}}
\newcommand{\Zcal}{\mathcal{Z}}
\newcommand{\Bc}{\mathcal{B}}
\newcommand{\Wc}{\mathcal{W}}
\newcommand{\Xc}{\mathcal{X}}
\newcommand{\CC}{\mathscr{C}}
\newcommand{\Aut}{{\rm Aut}}
\newcommand{\Ad}{{\rm Ad}}
\newcommand{\id}{\mathrm{id}}
\newcommand{\Id}{\mathrm{Id}}
\newcommand{\CP}{\mathbb{P}_{\mathbb{C}}}
\newcommand{\SL}{{\rm SL}}
\newcommand{\GL}{{\rm GL}}
\newcommand{\Mod}{\mathrm{Mod}}
\newcommand{\MCG}{\mathrm{Mod}}
\newcommand{\rk}{\mathrm{rk}}
\newcommand{\PB}{\mathrm{PB}}
\newcommand{\U}{\mathrm{U}}
\newcommand{\PU}{\mathrm{PU}}
\newcommand{\SU}{\mathrm{SU}}
\newcommand{\SO}{\mathrm{SO}}
\newcommand{\Sp}{\mathrm{Sp}}
\newcommand{\su}{\mathbf{su}}
\newcommand{\ssl}{\mathbf{sl}}
\newcommand{\real}{\mathrm{Re}}
\newcommand{\Tr}{\mathrm{Tr}}
\newcommand{\adj}{\mathrm{ad}}
\newcommand{\Span}{\mathrm{Span}}
\renewcommand{\Im}{\mathrm{Im}}
\renewcommand{\Re}{\mathrm{Re}}
\newcommand{\prim}{\mathrm{prim}}
\newcommand{\gf}{\mathfrak{g}}
\newcommand{\pf}{\mathfrak{p}}
\newcommand{\fh}{\mathfrak{h}}
\newcommand{\Hom}{\mathrm{Hom}}
\newcommand{\pdroots}{\mathbf{U}_{d,\mathrm{prim}}}
\newcommand{\hb}{\hat{b}}
\newcommand{\hg}{\hat{g}}
\newcommand{\hX}{\hat{X}}
\newcommand{\hY}{\hat{Y}}
\newcommand{\hZ}{\hat{Z}}
\newcommand{\hk}{\hat{k}}
\newcommand{\hS}{\hat{S}}
\newcommand{\vide}{\varnothing}
\newcommand{\eps}{\epsilon}
\newcommand{\ol}{\overline}
\newtheorem{Theorem}{Theorem}[section]
\newtheorem{Corollary}[Theorem]{Corollary}
\newtheorem{Lemma}[Theorem]{Lemma}
\newtheorem{Proposition}[Theorem]{Proposition}
\newtheorem{Definition}[Theorem]{Definition}
\newtheorem{Claim}[Theorem]{Claim}
\newtheorem{MainTheorem}{Theorem}
\theoremstyle{remark}
\newtheorem{Remark}[Theorem]{Remark}
\begin{document}
\title[Representations of braid groups via cyclic covers]{Representations of braid groups via cyclic covers of the sphere: Zariski closure and arithmeticity}

\author[G.~Menet and D.-M. Nguyen]{Gabrielle Menet and Duc-Manh Nguyen}

\address{Département de Mathématiques, CNRS UMR 7013\\
UFR Sciences et Techniques, Université de Tours\\
Parc de Grandmont\\
37200 Tours\\
France}

\email[G.~Menet]{menet.gabrielle@gmail.com}
\email[D.-M.~Nguyen]{duc-manh.nguyen@univ-tours.fr}


\date{\today}
\begin{abstract}
Let $d \geq 2$ and $n\geq 3$ be two natural numbers. Given any sequence $\kappa=(k_1,\dots,k_n) \in \mathbb{Z}^n$ such that $\gcd(k_1,\dots,k_n,d)=1$,
we consider the representation of the pure braid group $\mathrm{PB}_n$ via the monodromy of the family of compact Riemann surfaces obtained from the plane curves $y^d=\prod_{i=1}^n(x-b_i)^{k_i}$ over the space $(\mathbb{C}^n)^*$, where $(\mathbb{C}^n)^*=\{(b_1,\dots,b_n)\in \mathbb{C}^n, \; b_i\neq b_j \text{ if } i\neq j\}$.
By restricting to a specific subspace of the cohomology of the fiber, we obtain a representation $\rho_d$ of $\mathrm{PB}_n$ into a linear algebraic group defined over $\mathbb{Q}$. The representation $\rho_d$ is of interest because of its primitivity with respect to  the data $d,n,\kappa$.
The first main result of this paper is a criterion for the Zariski closure of the image of $\rho_d$ to be maximal, and the second main result is a criterion for the image to be an arithmetic lattice in the target group. The latter generalizes previous results by Venkataramana~\cite{Venky:Annals, Venky:Invent}, and gives an answer to a question by McMullen~\cite{McM13}.
\end{abstract}


\maketitle

\section{Introduction}\label{sec:intro}
\subsection{Cyclic coverings of the sphere and representations of pure braid groups}\label{subsec:cyclic:covers:intro}
Let $d\geq 2$ and $n \geq 3$ be  natural numbers. Let us fix a tuple of $n$ natural numbers $\kappa:=(k_1,\dots,k_n),  \; k_i \in \Z_{\geq 0}$.
Given an $n$-tuple $\bb:=(b_1,\dots,b_n)$ of pairwise-distinct points in the complex plane $\C$, we are interested in the curve $\CC_{\bb,\kappa} \subset \C^2$ defined by the equation
\begin{equation}\label{eq:plane:curve}
(\CC_{\bb,\kappa}) \qquad y^d=\prod_{i=1}^n (x-b_i)^{k_i}
\end{equation}
Denote by $\ol{\CC}_{\bb,\kappa}$ the closure of $\CC_{\bb,\kappa}$ in $\CP^2$. Normalizing $\ol{\CC}_{\bb,\xx}$, we obtain a compact Riemann surface $\hX_{\bb,\kappa}$.
It is a well known fact that $\hX_{\bb,\kappa}$ is  connected  if and only if $\gcd(k_1,\dots,k_n,d)=1$. It is worth noticing that there is no loss in supposing  that $0 < k_i < d$, for all $i=1,\dots,n$ (c.f. Lemma~\ref{lm:equiv:exponents}).

Let $\zeta_d:=e^{\frac{2\pi\imath}{d}}$.
The map $(x,y) \mapsto (x,\zeta_d\cdot y)$ gives an  automorphism of $\CC_{\bb,\kappa}$, which extends naturally to an automorphism of $\hX_{\bb,\kappa}$.
We denote this automorphism  by $T$. Note that $T$ has order $d$.

The projection $\CC_{\bb,\kappa} \to \C, \; (x,y) \mapsto x$, extends to a ramified cover $\pi: \hX_{\bb,\kappa} \to \CP^1$ branched over the set $\{b_1,\dots,b_n\}\cup\{\infty\}$.
The covering $\pi$ induces an isomorphism $\hX_{\bb,\kappa}/\langle T \rangle \simeq \CP^1$. For this reason, $\hX_{\bb,\kappa}$ is called a {\em cyclic cover of $\CP^1$}.


Fix $\kappa=(k_1,\dots,k_n)$, and let $\bb$ vary in $(\C^n)^*:=\{(b_1,\dots,b_n) \in \C^n, \; b_i\neq b_j \text{ if } i\neq j\}$, we obtain a family  of compact Riemann surfaces parametrized by $(\C^n)^*$. This means that there is a complex manifold $\hat{\Xc}_\kappa$ and a surjective holomorphic map $p: \hat{\Xc}_\kappa \to (\C^n)^*$ such that the fiber of $p$ at every point $\bb\in (\C^n)^*$ is isomorphic to $\hX_{\bb,\kappa}$.
The cohomologies with integer coefficients of the fibers of $p$ form a $\Z$-local  system $\Hb(\kappa)$ over $(\C^n)^*$ whose fiber over a point $\bb\in (\C^n)^*$ is identified with $H^1(\hX_{\bb,\kappa},\Z)$. Define $\Hb_\C(\kappa):=\Hb(\kappa)\otimes_\Z \C$. Then $\Hb_\C(\kappa)$ is a flat complex vector bundle over $(\C^n)^*$ with fiber over $\bb\in (\C^n)^*$ being $H^1(\hX_{\bb, \kappa}):=H^1(\hX_{\bb,\kappa},\C)$.
The family $\hat{\Xc}_\kappa$ comes equipped with an automorphism $\Tb$ of  order $d$ whose restriction to each fiber of $p$ is identified with $T$.

\medskip

Let $\Ub_d:=\{e^{\frac{2\pi\imath k}{d}}, \; k=0,\dots,d-1\}$ be the set of $d$-th roots of unity.  For each $q\in \Ub_d$, let $H^1(\hX_{\bb,\kappa})_q$ denote the $q$-eigenspace of the action $T^*$ on $H^1(\hX_{\bb,\kappa})$.
We get a direct sum decomposition of vector bundles over $(\C^n)^*$
$$
\Hb_\C(\kappa):=\oplus_{q\in \Ub_d} \Hb_\C(\kappa,q)
$$
where $\Hb_\C(\kappa,q)$ is the flat subbundle of $\Hb_\C(\kappa)$ whose fiber over $\bb \in (\C^n)^*$ is identified with $H^1(\hX_{\bb,\kappa})_q$.

Recall that the fundamental group of the space $(\C^n)^*$ is naturally identified with the pure braid group $\PB_n$.
Let $\rho$ and $\rho_q$ be  the monodromy representations of $\pi_1((\C^n)^*)\simeq \PB_n$ associated  with the bundles $\Hb_\C(\kappa)$ and $\Hb_\C(\kappa, q)$ respectively.

The representation $\rho$ is a particular case of a general construction of monodromy representation associated with families of projective varieties. In \cite{GrSch75} Griffiths and Schmid asked whether the images of such representations are always arithmetic (recall that a subgroup $\Gamma \subset \GL(N,\Z)$ is {\em arithmetic} if $\Gamma$ has finite index in the subgroup of integral points of its Zariski closure).
It follows from the results of Deligne-Mostow~\cite{DM86} that the answer is negative in general. However, it is always challenging to determine the images of such representations and to know whether these images are as large as one may expect.

Let us fix a point $\bb \in (\C^n)^*$  and for simplicity we write $\hX$ instead of $\hX_{\bb,\kappa}$.
Let $(.,.)$ denote the standard intersection form on $H^1(\hX,\R)$ which is given by
$$
(\alpha,\beta)=\int_{\hX}\alpha\wedge\beta
$$
for all closed $1$-forms $\alpha,\beta$ on $\hX$. We extend $(.,.)$ to $H^1(\hX):=H^1(\hX,\C)$ by linearity. Let $\langle.,.\rangle$ be the Hermitian form on $H^1(\hX)$ given by
$$
\langle \mu,\eta\rangle :=\frac{\imath}{2}\int_{\hX}\mu\wedge\bar{\eta}=\frac{\imath}{2}(\mu,\bar{\eta})
$$
for all $\C$-valued closed $1$-forms $\mu,\eta$ on $\hX$. Note that  $\langle.,.\rangle$ is a Hermtian form  of signature $(g,g)$ where $g$ is the genus of $\hX$. We will call $\langle.,.\rangle$ the intersection form on $H^1(\hX)$.

\medskip


Fixing a symplectic basis of $H^1(\hX,\Z)$, we can  view $T^*$ as a matrix of order $d$ in $\Sp(2g,\Z)$. Denote by $\Sp(\hX,\R)^T$ the group of endomorphisms of $H^1(\hX,\R)$ that preserve $(.,.)$ and commute with $T^*$. By construction, we have $\rho(\PB_n)\subset \Sp(\hX,\Z)^T$ (here $\Sp(\hX,\Z)^T$ is the subgroup of $\Sp(\hX,\R)^T$ that preserves the lattice $H^1(\hX,\Z)$).

\medskip
For each $q\in \Ub_d$, let $H^1(\hX)_q:=\ker(T^*-q\Id) \subset H^1(\hX)$.
The space $H^1(\hX)$ admits a natural  splitting as follows: for each $e \in \Z_{\geq 2}$,  let $\phi_e$ be the $e$-th cyclotomic polynomial.
The roots $\phi_e$ are the primitive $e$-th roots of unity.
For all  $e  \; | \; d$, we define $H^1(\hX)_e:=\ker \phi_e(T^*) \subset H^1(\hX)$.
Then $H^1(\hX)_e$ has a basis in $H^1(\hX,\Z)$, and we have
\begin{equation}\label{eq:decomp:H:1:X}
 H^1(\hX)=\bigoplus_{e \,  | \, d}H^1(\hX)_e, \quad \text{ with } \quad    H^1(\hX)_e:=\bigoplus_{q \in \Ub_{e,{\rm prim}}} H^1(\hX)_q,
\end{equation}
where $\Ub_{e,{\rm prim}}$ is the set of primitive $e$-th roots of unity.
Note that we have $H^1(\hX)_1\simeq H^1(\CP^1) \simeq \{0\}$.
For all $\tau \in \PB_n$, since $\rho(\tau)$ commutes with $T^*$, $\rho(\tau)$ preserves the decomposition \eqref{eq:decomp:H:1:X}.

Define $\rho_e(\tau):=\rho(\tau)_{|H^1(X)_e}$. Consider the   Riemann surface $\hX_e$  obtained from the equation $y^e=\prod_{i=1}^n(x-b_i)^{k_i}$.
Then $\hX$  is a cyclic cover of $\hX_e$ with the covering map given by $(x,y) \mapsto (x,y^{\frac{d}{e}})$.
It is not difficult to see that for all $q\in \Ub_e$, $H^1(\hX)_q \simeq H^1(\hX_e)_q$, and the representations $\rho_e$ associated to $\hX$ and to $\hX_e$ are the same (cf. Theorem~\ref{th:Menet:alphaij}).
For this reason, we will only focus on the case $e=d$.

\medskip

Let $\Sp(\hX,\R)_d^T$  be the group of endomorphisms of $H^1(\hX,\R)_d:=H^1(\hX)_d\cap H^1(\hX,\R)$ that preserve the intersection form $(.,.)$ and  commute with $T^*$.
Since the restriction of $(.,.)$ as well as the action of $T^*$ on $H^1(\hX,\R)_d$ are given by rational matrices, it follows that $\Sp(\hX,\R)_d^T$ is a linear algebraic group defined over $\Q$. Denote by $\Sp(\hX,\Z)^T_d$ the subgroup of integral points in $\Sp(\hX,\R)^T_d$.
A natural question one may ask about the representation $\rho_d$ is
\begin{flushleft}
(Q1) \hspace{1cm} How large is $\rho_d(\PB_n)$ in $\Sp(\hX,\R)^T_d$?
\end{flushleft}
By construction, for all $\tau\in \Pb_n$, $\rho_d(\tau)$ is given by a matrix with rational coefficients in a basis of $H^1(\hX)_d$ formed by elements of $H^1(\hX,\Z)$. In \cite{McM13}, McMullen also asked
\begin{flushleft}
(Q2) \hspace{1cm} When does  $\rho_d(\PB_n)$ contain a finite index subgroup of $\Sp(\hX,\Z)_d^T$?
\end{flushleft}
In the case $d=2$, $\hX$ is a hyperelliptic curve, and we have $\Sp(\hX,\R)_2^T \simeq \Sp(2g,\R)$. In \cite{Acampo} A'Campo showed that in this case $\rho_2(\PB_n)$ always contains the congruence subgroup modulo 2 in $\Sp(2g,\Z)$.
For $d\geq 3$, let $q=e^{-\frac{2\pi\imath}{d}}$ and $\U(H^1(\hX)_q)$ be the group of automorphisms of $H^1(\hX)_q$ that preserve the intersection form $\langle.,.\rangle$.
In their celebrated work \cite{DM86, Mos86} Deligne and Mostow gave some sufficient conditions on the set of numbers $\{\frac{k_1}{d}, \dots,\frac{k_n}{d}\}$ such that $\rho_q(\PB_n)$ is a lattice in $\U(H^1(\hX)_q)$.
Under these conditions, they also gave a criterion for $\rho_d(\PB_n)$ to be commensurable to $\Sp(\hX,\Z)_d^T$ (see~\cite[\textsection 12]{DM86}).

In the case $\kappa=(1,\dots,1)$, McMullen~\cite{McM13} gave the answer to (Q2) for several pairs $(d,n)$. In \cite{Venky:Annals, Venky:Invent}  Venkataramana proved a criterion for $\rho_d(\PB_n)$ to be an arithmetic subgroup of $\Sp(\hX, \R)^T_d$ in the case  $\gcd(k_i,d)=1$ for all $i=1,\dots,n$. In particular, he showed that under this hypothesis, $\rho_d(\PB_n)$ is always an arithmetic lattice of $\Sp(\hX,\R)_d^T$ if $n\geq 2d$.

\subsection{Statement of the results}\label{subsec:main:results}
The goal of this paper is to address both questions (Q1) and (Q2) in full  generality,  that is without the assumption that $\gcd(k_i,d)=1$ for all $i=1,\dots,n$. Since the case $d=2$ has been completely solved by A'Campo, we will suppose that $d\geq 3$.

\begin{Definition}\label{def:good:weights}
Let $\mu:=(\mu_1,\dots,\mu_n)$ be a sequence of $n$ rational numbers where $\mu_i\in \Q\cap (0;1), \; i=1,\dots,n$. The sequence $\mu$ is said to be {\em good} if one of the following holds
\begin{itemize}
\item[(a)] $ 1 < \mu_1+\dots+\mu_n < n-1$,

\item[(b)] $ \mu_1+\dots+\mu_n \leq 1$ or $\mu_1+\dots+\mu_n \geq n-1$, and there is a triple of  indices $\{i, j, k\} $ in $\{1,\dots,n\}$ such that
\begin{itemize}
\item[$\bullet$] the order of $e^{2\imath\pi(\mu_i+\mu_j)}$ is greater than $5$,

\item[$\bullet$] at least one of the orders of $e^{2\imath\pi(\mu_i+\mu_k)}$ and $e^{2\imath\pi(\mu_j+\mu_k)}$ is greater than $2$.
\end{itemize}
\end{itemize}
\end{Definition}

\begin{Remark}\label{rk:good:symmetric}
Observe that $\mu$ is good if and only if $\bar{\mu}:=(1-\mu_1,\dots,1-\mu_n)$ is good.
\end{Remark}

Let $\Ub^+_{d,\prim}$ be the set of primitive $d$-th roots $q$ of unity with $\Im(q) >0$. Denote by $H^1(\hX,\R)_{2\Re(q)}$  the $2\Re(q)$-eigenspace of  $(T^*+T^*{}^{-1})$ in $H^1(\hX,\R)$.
For $d\geq 3$, the space $H^1(\hX, \R)_d$ admits the following splitting
$$
H^1(\hX,\R)_d=\bigoplus_{q\in \Ub^+_{d,\prim}} H^1(\hX,\R)_{2\Re(q)},
$$
The group of endomorphisms of $H^1(\hX,\R)_{2\Re(q)}$ that preserve the bilinear form  $(.,.)$ and commute with $T$ is isomorphic to $\U(H^1(\hX)_q)$. As a consequence, we have (see \cite[\textsection 7]{McM13} for more details)
\begin{equation}\label{eq:inv:subgrp:factor}
\Sp(\hX,\R)^T_d \simeq \prod_{\substack{q \in \Ub^+_{d,{\rm prim}}}}
\U(H^1(\hX)_q) \simeq \prod_{\substack{q \in \Ub^+_{d,{\rm prim}}}} \U(r_q,s_q)
\end{equation}
where $(r_q,s_q)$ is the signature of the restriction of the intersection form $\langle.,.\rangle$ to $H^1(\hX)_q$.
Observe that for all $q\in \Ub^+_{d, \, {\rm  prim}}$, the composition of $\rho$ with the projection to the $\U(H^1(\hX)_q)$ factor in \eqref{eq:inv:subgrp:factor} gives the representation $\rho_q$.

Our first main result provides a simple arithmetic criterion for the image of $\rho_d$ to have maximal Zariski closure.
\begin{MainTheorem}\label{th:main:Zariski:dense}
Let $d \geq 3$, $n\geq 3$ be two natural numbers, and $\kappa=(k_1,\dots,k_n) \in (\Z_{\geq 1})^n$ be a sequence of integers such that $1\leq k_i \leq d-1$ for all $i=1,\dots,n$, and $\gcd(k_1,\dots,k_n,d)=1$.  Define
\begin{equation}\label{eq:epsilon}
\eps_0=\left\{
\begin{array}{cl}
1 & \text{ if } d \, | \,(k_1+\dots+k_n),\\
0 & \text{ otherwise}.
\end{array}
\right.
\end{equation}
Let $\hat{\Gb}$ denote the Zariski closure of $\rho_d(\PB_n)$ in $\Sp(\hX, \R)^T_d$, and $\hat{\Gb}^0$ its identity component. Assume that  for all $k \in \Z$ such that $\gcd(k,d)=1$, the sequence $\mu^{(k)}:=(\{ \frac{kk_1}{d}\},\dots,\{\frac{kk_n}{d}\})$, where $\{\frac{kk_i}{d}\}=\frac{kk_i}{d} -\lfloor \frac{kk_i}{d}\rfloor$, is good in the sense of Definition~\ref{def:good:weights}. Then we have
\begin{equation}\label{eq:id:comp:Z:closure}
\hat{\Gb}^0 \simeq \prod_{\substack{q\in \Ub^+_{d,{\rm prim}}}}\SU(r_q,s_q).
\end{equation}
if either
\begin{itemize}
\item[$\bullet$] $n-1-\eps_0 \geq 3$, or

\item[$\bullet$] $n-1-\eps_0=2$ and there exist $1 \leq i < j \leq n$ such that $\gcd(k_i+k_j,d)=1$.
\end{itemize}
\end{MainTheorem}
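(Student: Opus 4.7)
The plan is to decompose the target group according to the cyclotomic splitting $\Sp(\hX,\R)^T_d \simeq \prod_{q\in \Ub^+_{d,\prim}} \U(H^1(\hX)_q)$ and to prove the theorem in three steps: (i) for each $q$, show $\rho_q(\PB_n)$ is Zariski-dense in $\SU(r_q,s_q)$; (ii) show that the joint image is not contained in any proper Galois-equivariant subproduct; (iii) handle the determinant character to descend from $\U$ to $\SU$.

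For step (i), fix $q=\zeta_d^{-k}$ with $\gcd(k,d)=1$, so that the Hermitian signature of $H^1(\hX)_q$ is controlled by the weights $\mu^{(k)}$. I would compute the monodromy $\rho_q(A_{ij})$ of a standard generator of $\PB_n$ (a loop in which $b_j$ encircles $b_i$) and identify it with a complex reflection whose hyperplane and eigenvalue are read off from the vanishing cycles of the cyclic cover and from $\mu^{(k)}$. Condition~(a) of Definition~\ref{def:good:weights} applied to $\mu^{(k)}$ ensures that $(r_q,s_q)$ is indefinite with both $r_q,s_q\geq 1$, so $\SU(r_q,s_q)$ is a non-compact simple real group; condition~(b) supplies a triple of indices whose associated reflections have pairwise products of orders large enough to rule out reducibility and finite-image pathologies. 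Feeding this data into a density criterion for complex reflection subgroups of $\U(r_q,s_q)$, in the spirit of those used by Deligne--Mostow and Venkataramana, yields that the Zariski closure of $\rho_q(\PB_n)$ contains $\SU(r_q,s_q)$.

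For step (ii), the ambient group $\prod_q \U(H^1(\hX)_q)$ arises by restriction of scalars from a simple group over $\Q(\zeta_d+\zeta_d^{-1})$, and $\rho_d$ is defined over $\Q$; hence $\hat{\Gb}$ is a $\Q$-algebraic subgroup and $\hat{\Gb}^0$ is Galois-stable. The hypothesis that $\mu^{(k)}$ is good for \emph{every} $k$ coprime to $d$ is exactly what lets Step~1 apply uniformly, in a Galois-equivariant manner, to every factor. A Goursat-type argument then excludes a proper Galois-invariant subproduct: within a single Galois orbit only a diagonal Galois-graph subgroup can occur, and this would force an algebraic relation among the $\rho_q$'s for distinct $k$ that Step~1 forbids; across distinct Galois orbits, the differing signatures force non-isogeny of the simple factors over $\R$. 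Step~(iii) is then immediate: $\det\rho_q(\tau)$ is the determinant of a rational eigenblock of an integer symplectic matrix, hence a cyclotomic integer of absolute value $1$, i.e.\ a root of unity; the character $\det\circ\,\rho_q$ has finite image, so $\hat{\Gb}^0 \subseteq \prod_q \SU(r_q,s_q)$. The dimension hypothesis $n-1-\eps_0 \geq 3$, or its alternative for $n-1-\eps_0=2$ with some $\gcd(k_i+k_j,d)=1$, is needed in Step~1 so that the density criterion survives in low dimension: when $r_q+s_q=2$ the factor degenerates to $\SU(1,1)\simeq \SL_2(\R)$ and one must exhibit an extra generator with a primitive reflection eigenvalue, which is precisely what the gcd condition guarantees.

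The main obstacle is Step~(i), especially under case~(b) of the goodness definition, where the sum of weights lies on the boundary and the Hermitian signature nearly degenerates. In this regime the reflection subgroup can fail to be Zariski-dense for trivial reasons unless the triple of indices $\{i,j,k\}$ supplied by the hypothesis produces reflections whose orders and pairwise products pass the density test. Verifying this against the classification of complex reflection subgroups of $\U(r,s)$, and doing so uniformly in $k$ so that Step~(ii) can perform Galois descent, is the technical heart of the argument.
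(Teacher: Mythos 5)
Your three-step skeleton (density on each $q$-factor, ruling out a proper subproduct, then $\U\!\to\!\SU$ by determinant considerations) matches the paper's high-level organization, and step (iii) is correct, but there are two substantive problems with the proposal as it stands, and a hypothesis is attributed to the wrong step.

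First, in step (i) you defer the whole burden to ``a density criterion for complex reflection subgroups of $\U(r_q,s_q)$, in the spirit of Deligne--Mostow and Venkataramana.'' No such off-the-shelf criterion is applied in the paper, and in fact the introduction explicitly distances the method from Venkataramana's (which requires $\gcd(k_i,d)=1$ and goes through the Gassner representation). What is actually proved is a self-contained induction: Proposition~\ref{prop:density:induction} shows that if $\Gb_0 \supset \SU(V')\times\SU(V'')$ and $\Gb$ contains one more complex reflection transverse to the decomposition, then $\Gb_0 = \SU(V)$; this is bootstrapped from base cases in $\SU(1,1)$ (Lemma~\ref{lm:density:sign:1:1}) and $\SU(2)$ (Lemma~\ref{lm:density:sign:2:0}), with a $2\times 2$ block produced by the lantern relation and subsurface cohomology (Lemmas~\ref{lm:coh:preimage:disc}, \ref{lm:coh:subsurf:inject}, Proposition~\ref{prop:lantern:conseq}). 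Condition~(a) of Definition~\ref{def:good:weights} is used via Lemma~\ref{lm:exist:gd:partition} to locate a $(1,1)$-signature sub-block, and condition~(b) supplies the order hypotheses for the $\SU(2)$ base case --- your heuristic reading of (a) and (b) is roughly right, but it has to be made to work through this explicit filtration $\Vbb_1\subsetneq\cdots\subsetneq\Vbb_n$, not a black box.

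Second, and more seriously, your step (ii) has a real gap. You claim that a diagonal Galois-graph subgroup ``would force an algebraic relation among the $\rho_q$'s for distinct $k$ that Step~1 forbids.'' It does not: a diagonal $\{(g,\phi(g))\}\subset\SU(\Vbb^{(q)})\times\SU(\Vbb^{(q')})$ projects onto each factor, so single-factor density is perfectly consistent with it. (Also, all primitive $d$-th roots form one Galois orbit, so the ``distinct Galois orbits / differing signatures'' alternative never arises --- and in any case distinct primitive $q$'s can have equal signature.) What actually rules out the diagonal is Proposition~\ref{prop:Lie:alg:morph:not:exist}: any $\PB_n$-equivariant Lie algebra morphism $\su(\Vbb^{(q)})\to\su(\Vbb^{(q')})$ must be $X\mapsto \pm S^{-1}({}^t)XS$, and comparing spectra and traces of the images of Dehn twists (via Theorems~\ref{th:Menet:alphaij} and~\ref{th:Menet:Dehn:twist}) then forces $q^{k_i}=q'^{k_i}$ for all $i$, hence $q=q'$. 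This trace/spectrum comparison is precisely where the dimension hypothesis $n-1-\eps_0\geq 3$ (equivalently $\dim\Vbb^{(q)}\geq 3$), or the alternative $\dim\Vbb^{(q)}=2$ with some $\gcd(k_i+k_j,d)=1$, is consumed --- not in step (i), as you suppose, since Theorem~\ref{th:Zar:density:on:factor} requires only goodness. Without this inequivalence result your Goursat argument does not close.
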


\begin{Remark}\label{rk:Zar:density:maximal}
In the course of the proof of Theorem~\ref{th:main:Zariski:dense}, we will see that $\hat{\Gb}^0$ cannot be larger than the right hand side of \eqref{eq:id:comp:Z:closure}.
\end{Remark}

In Theorem~\ref{th:Zar:density:on:factor}, we will show that if the vector $\mu^{(k)}:=(\{ \frac{kk_1}{d}\},\dots,\{\frac{kk_n}{d}\})$ satisfies one of the conditions in Definition~\ref{def:good:weights}, then the Zariski closure of $\rho_q(\PB_n)$ contains $\SU(H^1(\hX)_q)$ as a finite index subgroup, which is clearly a necessary condition for \eqref{eq:id:comp:Z:closure}.   Theorem~\ref{th:main:Zariski:dense} actually follows from the following

\begin{MainTheorem}\label{th:main:Zar:dense:bis}
If the Zariski closure of $\rho_q(\PB_n)$ contains $\SU(H^1(\hX)_q)$ for all $q\in \Ub_{d, \; {\rm prim}}$, then \eqref{eq:id:comp:Z:closure} holds provided for some $q\in \Ub_{d,\, {\rm prim}}$ (and hence for all $q \in \Ub_{d, \, {\rm prim}}$) we have  either $\dim H^1(\hX)_q \geq 3$, or $\dim H^1(\hX)_q=2$ and there exists a pair of indices $1 \leq i< j\leq n$ such that $\gcd(k_i+k_j,d)=1$.
\end{MainTheorem}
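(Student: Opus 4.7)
The plan is to deduce Theorem~\ref{th:main:Zar:dense:bis} from the hypothesis by combining Goursat's lemma for products of simple algebraic groups with a field-of-definition argument exploiting the $\Q$-structure of the ambient group.

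\textbf{Setup and Goursat reduction.} The intersection form and $T^*$ are $\Q$-rational, so $\mathbf{G}:=\Sp(\hX,\R)^T_d$ is a $\Q$-algebraic group isomorphic to $\mathrm{Res}_{K^+/\Q}\,\U(\Lambda)$, where $K^+:=\Q(\zeta_d+\zeta_d^{-1})$ is the maximal totally real subfield of $\Q(\zeta_d)$ and $\Lambda$ is the induced $K^+$-Hermitian module on $H^1(\hX,\Q)_d$; over $\R$ this recovers~\eqref{eq:inv:subgrp:factor}. Since $\det\rho_q(\delta_{ij})=q^{k_i+k_j}$ is a $d$-th root of unity for each generator, the determinant of $\rho_d$ has finite image on each factor, so $\hat{\Gb}^0\subset \mathrm{Res}_{K^+/\Q}\,\SU(\Lambda)\simeq\prod_q\SU(H^1(\hX)_q)$. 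The hypothesis combined with connectedness of $\hat{\Gb}^0$ forces the projection onto each factor to be surjective. By the standard classification of connected $\Q$-subgroups of $\mathrm{Res}_{K/\Q}$ of a simple group, $\hat{\Gb}^0$ has the form $\mathrm{Res}_{K'/\Q}\,H'$ for some intermediate field $\Q\subset K'\subset K^+$ and some $K'$-form $H'$ of $\SU$ inside $\mathrm{Res}_{K^+/K'}\,\SU(\Lambda)$. The conclusion~\eqref{eq:id:comp:Z:closure} is equivalent to $K'=K^+$, from which $H'=\SU(\Lambda)$ follows by $K^+$-simplicity and the surjectivity of each real projection.

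\textbf{Ruling out proper $K'$ via traces.} Suppose $K'\subsetneq K^+$. Then for every $\tau\in\PB_n$ the $K^+$-trace of $\rho_d(\tau)$ acting on $H^1(\hX,\Q)_d$ must lie in $K'$. I exhibit $\tau$'s whose assembled traces generate $K^+$ over $\Q$, yielding a contradiction. If some pair $(i,j)$ satisfies $\gcd(k_i+k_j,d)=1$, take $\tau=\delta_{ij}$: then $\rho_q(\delta_{ij})$ is a pseudo-reflection with trace $q^{k_i+k_j}+(\dim H^1(\hX)_q-1)$, and the associated element of $K^+$ is $\zeta_d^{k_i+k_j}+\zeta_d^{-(k_i+k_j)}+2(\dim-1)$. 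Since $\zeta_d^{k_i+k_j}$ is a primitive $d$-th root of unity, this element generates $K^+$ over $\Q$, forcing $K'=K^+$. If no such coprime pair exists but $\dim H^1(\hX)_q\geq 3$, I consider $\tau=\delta_{ij}\delta_{kl}$ for disjoint $\{i,j\}\cap\{k,l\}=\emptyset$. Using the Zariski density of $\rho_q(\PB_n)$ in $\SU(H^1(\hX)_q)$ (from the hypothesis), I conjugate within $\hat{\Gb}^0$ to place the two pseudo-reflection hyperplanes in general position, so that the characteristic polynomial of $\rho_q(\tau)$ becomes $(X-q^{k_i+k_j})(X-q^{k_k+k_l})(X-1)^{\dim-2}$. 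Varying over disjoint pairs and invoking the primitivity condition $\gcd(k_1,\dots,k_n,d)=1$, the $\Z$-span of the exponents $\{k_i+k_j\bmod d\}$ exhausts $\Z/d\Z$, and the resulting traces together generate $K^+$ over $\Q$. Either way we conclude $K'=K^+$ and hence~\eqref{eq:id:comp:Z:closure}.

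\textbf{Main obstacle.} The crux is the case $\dim H^1(\hX)_q\geq 3$ without a coprime pair: the general-position maneuver for the pseudo-reflection hyperplanes of $\rho_q(\delta_{ij})$ and $\rho_q(\delta_{kl})$ relies essentially on the Zariski density hypothesis, and the dimension condition $\dim\geq 3$ is precisely what leaves enough room to perform this conjugation while keeping the two index pairs disjoint. The complementary number-theoretic step, namely showing that the $\Z$-span of $\{k_i+k_j\bmod d\}$ equals $\Z/d\Z$, reduces via the identity $(k_i+k_j)+(k_i+k_l)-(k_j+k_l)=2k_i$ to the primitivity $\gcd(k_1,\dots,k_n,d)=1$, with a small extra argument absorbing the factor of $2$ when $d$ is even.
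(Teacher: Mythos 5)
Your Goursat/field-of-definition framework is morally the same as the paper's argument: the paper works at the Lie algebra level, projects $\gf\subset\bigoplus_q\su(\Vbb^{(q)})$ onto factors, and rules out equivariant Lie-algebra morphisms $\su(\Vbb^{(q)})\to\su(\Vbb^{(q')})$ for $q\neq q'$ (Proposition~\ref{prop:Lie:alg:morph:not:exist}); you express the same obstruction as nonexistence of a proper intermediate field $K'$ over which $\hat{\Gb}^0$ could descend. Both boil down to: the representations $\rho_q$ and $\rho_{q'}$ are not equivalent when $q\neq q'$, and both detect this through traces. For the case $\dim H^1(\hX)_q=2$ with a coprime pair your trace computation is correct and matches the paper's spectral comparison.

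However, the case $\dim H^1(\hX)_q\geq 3$ without a coprime pair has two genuine gaps. First, the claim that one can ``conjugate to put the pseudo-reflection hyperplanes in general position so that the characteristic polynomial of $\rho_q(\delta_{ij}\delta_{kl})$ becomes $(X-q^{k_i+k_j})(X-q^{k_k+k_l})(X-1)^{\dim-2}$'' is incorrect: conjugation does not change the characteristic polynomial of a fixed element, and the product of two complex reflections factorizes this way only when the mirror vectors are \emph{orthogonal}, which is a codimension-one constraint rather than generic position; for transverse but non-orthogonal mirrors the spectrum on the $2$-plane depends on the angle and is not $\{q^{k_i+k_j},q^{k_k+k_l}\}$. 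Second, even granting the factorization, the number-theoretic step fails: the $\Z$-span of $\{k_i+k_j \bmod d\}$ need not exhaust $\Z/d\Z$ when $d$ is even. Taking $d=4$, $n=5$, $\kappa=(1,1,1,1,1)$ one has $\gcd(k_1,\dots,k_n,d)=1$, $\dim H^1(\hX)_q=4\geq 3$, no coprime pair, yet every pairwise sum is $2\bmod 4$, so your traces would only generate the subfield fixed by $\zeta_4\mapsto -\zeta_4$, and there is no ``small extra argument'' that absorbs the factor of $2$ using only pairwise sums. The paper circumvents both problems in one move: instead of products of two Dehn twists $\alpha_{i,j}$, it takes a single Dehn twist $\tau_\gamma$ about the boundary of a disc containing an arbitrary subset $I\subset\{1,\dots,n\}$ with $|I|\geq 2$; by Theorem~\ref{th:Menet:Dehn:twist} its trace is $(|I|-1)q^{\sum_{i\in I}k_i}+(\dim-|I|+1)$, which is a closed formula (no position issue), and comparing $|I|=2$ with $|I|=3$ yields $q^{k_l}=q'^{k_l}$ for every $l$, from which primitivity gives $q=q'$ with no parity obstruction. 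Your argument would go through if you replaced the disjoint-pairs products by these larger Dehn twists.
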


Our third main result addresses Question (Q2).

\begin{MainTheorem}\label{th:main:arithm}
Let $d \geq 3$ be a natural number. Let $\kappa=(k_1,\dots,k_n)$ a sequence of integers such that $1 \leq k_i \leq d-1$ for all $i=1,\dots,n$, and 
$n+1-\eps_0 \geq 5$, where $\eps_0$ is defined in \eqref{eq:epsilon}.
Suppose that there is a proper subset $I$ of $\{1,\dots,n\}$ which satisfies
\begin{itemize}
\item[(i)] $d \; | \; \sum_{i\in I} k_i$,

\item[(ii)] $\gcd(d, \{k_i, \; i \in I\})=1$ if $|I|\geq 3$,

\item[(iii)] $\gcd(d, \{k_i, \; i \in  I^c\})=1$ if $|I| \leq n-2-\eps_0$. 
\end{itemize}
Then $\rho_d(\PB_n)$ is commensurable to $\Sp(\hX,\Z)^T_d$ provided either
\begin{itemize}
\item[(a)] $d\not\in \{3,4,6\}$, or

\item[(b)] $d\in \{3,4,6\}$ and $2 < \frac{k_1}{d}+\dots+\frac{k_n}{d} < n-2$.
\end{itemize}
\end{MainTheorem}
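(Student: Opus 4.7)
The plan is to deduce arithmeticity from the Zariski-density supplied by Theorems~\ref{th:main:Zariski:dense} and~\ref{th:main:Zar:dense:bis} by exhibiting a sufficient supply of unipotent elements in $\rho_d(\PB_n)$ and then invoking a Venkataramana-type arithmeticity criterion, in the spirit of \cite{Venky:Annals, Venky:Invent}. The role of the set $I$ satisfying (i)--(iii) is to furnish a separating simple closed curve on the $n$-punctured sphere whose Dehn twist lifts \emph{coherently} to the cyclic cover $\hX$ and thereby produces unipotent operators on $H^1(\hX)_d$.

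The first step is the construction of unipotents. One chooses a simple closed curve $\gamma_I\subset \C\smin\{b_1,\dots,b_n\}$ enclosing precisely the punctures $\{b_i : i\in I\}$. The full Dehn twist $\tau_I$ about $\gamma_I$ lies in $\PB_n$, and the local monodromy of $\pi\colon \hX\to\CP^1$ around $\gamma_I$ is $\zeta_d^{\sum_{i\in I}k_i}$, which equals $1$ by condition~(i). Hence $\gamma_I$ lifts to $d$ disjoint simple closed curves $\tilde\gamma^{(0)},\dots,\tilde\gamma^{(d-1)}$ on $\hX$, cyclically permuted by $T$, and the Picard--Lefschetz formula shows that $\rho(\tau_I)$ acts on $H^1(\hX,\R)$ as a product of symplectic transvections along the cycle classes $c_j=(T^*)^{j}c_0$. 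Restricted to $H^1(\hX)_d$ this gives a unipotent element of $\rho_d(\PB_n)$ whose image in each factor $\U(r_q,s_q)$ is a transvection along the projection $c_0^{(q)}$ of $c_0$ to $H^1(\hX)_q$. Conditions (ii) and (iii) are what guarantee that the auxiliary cyclic covers associated to the sub-branchings $\{k_i : i\in I\}$ and $\{k_i : i\in I^c\}$ are each connected, which is needed to ensure $c_0^{(q)}\neq 0$ for every $q\in \Ub_{d,\prim}$; otherwise the transvection would be trivial in some factor.

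Next I would enlarge this family by conjugating $\tau_I$ with elements of $\PB_n$ that permute the punctures inside $I$ and inside $I^c$, and also apply the action of $T^*$ on cycle classes (which gives a $\Z[\zeta_d]$-module structure on the span of the transvection vectors). The numerical hypothesis $n+1-\eps_0\geq 5$ translates into $\dim_{\C}H^1(\hX)_q\geq 4$ for every primitive $d$-th root $q$, so that each factor $\SU(r_q,s_q)$ has real rank at least $2$; hypothesis~(a), or~(b) in the small-$d$ cases, excludes precisely the configurations where some signature $(r_q,s_q)$ would collapse to $(r,0)$ or $(0,s)$ and the corresponding factor would be compact, making the higher-rank criterion inapplicable. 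At this point one invokes a Venkataramana--Raghunathan type criterion: a Zariski-dense subgroup of a semisimple $\Q$-group all of whose simple factors have real rank at least $2$, which intersects the unipotent radical of a proper $\Q$-parabolic in a subgroup of finite index in its $\Z$-points, is necessarily arithmetic. Theorems~\ref{th:main:Zariski:dense} and~\ref{th:main:Zar:dense:bis} supply the Zariski-density (their hypotheses being implied by (i)--(iii), (a)--(b) together with $n+1-\eps_0\geq 5$), and the family of unipotents above provides the intersection with a unipotent radical.

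The main obstacle will be Step~2: showing that the transvections coming from $\tau_I$ and its $\PB_n$-conjugates generate a subgroup of \emph{finite index} in $\U(\Z)$ for a suitable $\Q$-unipotent radical $U$, rather than merely a Zariski-dense one. This is an arithmetic density statement about a $\Z[\zeta_d]$-lattice inside $H^1(\hX,\Z[\zeta_d])_q$, and it will rely on strong approximation in $\SU(r_q,s_q)$ together with the fact that the $\rho_d(\PB_n)$-orbit of $c_0^{(q)}$ is not contained in any proper $\Q$-subspace (a consequence of Zariski-density). The split between hypotheses (a) and (b) is exactly where this arithmetic density argument becomes delicate: when $d\in\{3,4,6\}$, the cyclotomic field $\Q(\zeta_d)$ has degree at most $2$ and the interior weight condition $2<\sum k_i/d<n-2$ is the sharp requirement that guarantees every factor $\SU(r_q,s_q)$ remains non-compact, so that the unipotent radicals one produces are genuinely non-trivial in each factor simultaneously.
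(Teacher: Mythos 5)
Your high-level outline --- Zariski density from Theorems~\ref{th:main:Zariski:dense} and~\ref{th:main:Zar:dense:bis}, a stock of unipotents coming from the partition $I\sqcup I^c$, and then an arithmeticity criterion --- is the right shape, but several of the steps as you describe them would not go through, and the criterion you invoke is not the one the paper uses.

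First, the criterion. You ask for $\Gamma$ to ``intersect the unipotent radical of a proper $\Q$-parabolic in a subgroup of finite index in its $\Z$-points.'' No such theorem is at your disposal, and it is not what either Venkataramana or the paper uses. Venkataramana's arithmeticity criterion (Raghunathan--Venkataramana) is about generation by opposite unipotent subgroups; the paper explicitly abandons that route and instead uses the Margulis conjecture as proved by Benoist--Miquel (Theorem~\ref{th:Margulis:crit}): a discrete Zariski-dense subgroup $\Gamma$ of a semisimple $G$ of real rank $\ge 2$ containing an \emph{irreducible lattice in a horospherical subgroup} $U$ is an arithmetic lattice. Establishing that $\Gamma\cap U$ contains a lattice in $U$ is much weaker than finite index in $U(\Z)$, and it is precisely the content of \textsection\ref{sec:prf:arithmetic}. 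Your gesture at strong approximation does not fill this gap: to run strong approximation you would already need to know $\Gamma$ intersects $U$ in something close to a lattice, which is what must be proved.

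Second, the unipotents. The Dehn twist $\tau_I$ about $\gamma_I$ is unipotent when $d\mid\sum_{i\in I}k_i$, but it does not lie in any of the Heisenberg-type horospherical subgroups $U_q$ the paper constructs (cf.\ Theorem~\ref{th:Menet:Dehn:twist}: the twist fixes a large flag but is not strictly upper triangular with respect to the partial flag \eqref{eq:flag:filtration}). The paper instead feeds the Margulis criterion with \emph{commutators} $\tau=[\alpha,\beta]\in\PB_m$ and $\tau'=[\alpha',\beta']\in\PB_{m+1,n}$ (Lemma~\ref{lm:U:q:inters:Gam:non:triv}), whose images land in $U_q$ for every $q$, and then uses the irreducibility of the $\PB_{m-1}$- and $\PB_{m+1,n}$-actions on $U_q/N_q$ (Lemma~\ref{lm:action:PB:m:q:irred}, Proposition~\ref{prop:irred:rep:PB:m}) together with a Galois-descent argument (Proposition~\ref{prop:orbits:basis:W}, Proposition~\ref{prop:Gam:inters:N:lattice}) to extract a genuine lattice of the 2-step nilpotent group $U=\prod_q U_q$.

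Third, your reading of (ii), (iii), and the rank condition is off. Conditions (ii) and (iii) are not about connectedness of auxiliary covers forcing $c_0^{(q)}\neq 0$; they are used (via Claim~\ref{clm:V:m-1:equiv:iso:not:exist}) to guarantee that no two Galois-conjugate factors of the representation on $\Wcal^{(q)}$ are isomorphic as $\PB_m$-modules, which is what lets the irreducibility argument produce the full span $\Vcal_{m-1}$ rather than a graph subspace. And $n+1-\eps_0\ge 5$ gives $\dim H^1(\hX)_q\ge 3$, not $\ge 4$, so you cannot conclude each factor has real rank $\ge 2$. The paper argues instead that $\rk_\R(G)=\sum_q\rk_\R(\SU(\Vbb^{(q)}))\ge\ell$, with each summand $\ge 1$ because the signatures are indefinite (Lemma~\ref{lm:0:mod:d:good}); that is why $d\in\{3,4,6\}$ (where $\ell=1$) needs the extra condition (b) to push one factor to rank $\ge 2$ --- not to rule out a compact factor, which indefiniteness already does.
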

\begin{Remark}\label{rk:arithmetic:n:cond:ii}\hfill
\begin{itemize}
\item[$\bullet$] Note that we have $2 \leq |I| \leq n-1$. The condition $n+1-\eps_0\geq 5$ implies that at least one of (ii) and (iii) is realized. In particular, we always have $\gcd(d,k_1,\dots,k_n)=1$. 

\item[$\bullet$] The conditions (ii) and (iii) are crucial to Theorem~\ref{th:main:arithm}, without these two conditions, its conclusion does not hold. Indeed, it follows from the results of Deligne and Mostow (see \cite[p. 86]{DM86}) that in the case $d=12, n=5,  \kappa=(7,5,4,4,4)$, $\rho_{12}(\PB_5)$ is not arithmetic, while the sequence $\kappa$ satisfies (i) with $I=\{3,4,5\}$, but not (ii). The same phenomenon occurs in the cases $ d= 12, n=5, \kappa=(7,6,5,3,3)$ and $d=12, n=6,  \kappa=(7,5,3,3,3,3)$.
\end{itemize}
\end{Remark}

In \cite{Venky:Invent} Venkataramana showed Theorem~\ref{th:main:arithm} in the case $\gcd(k_i,d)=1$ for all $i=1,\dots,n$ (in this case the conditions (ii) and (iii) are automatically satisfied).
Geometrically, this condition means that the preimage of $b_i$ in $\hX$ consists of a single point.
In~\cite{Venky:Invent}, it is shown that in this case the representation $\rho_q$ factors through the Gassner representation of the pure braid group $\PB_n$. The arithmeticity of $\rho_q(\PB_n)$ follows from a criterion (due to Ragunathan and Venkataramana) for a subgroup of a linear algebraic group $G$ defined over a number field $K$ to be commensurable to $G(O_K)$, where $O_K$ is the ring of integers in $K$.
The Zariski closure of $\rho_d(\PB_n)$ was not addressed in the works \cite{Venky:Annals, Venky:Invent}.


In this paper, we use a more geometric approach to investigate the representations $\rho_q$ and $\rho_d$. Specifically, we will identify $\PB_n$ with the mapping class group of the $n$-punctured disc, and the relevant properties of $\rho_q$ (and hence of $\rho_d$) are obtained by using tools from  differential topology. This allows us to lift the condition on the cardinality of preimage of $b_i$.
Moreover, to prove the arithmeticity of the image of $\PB_n$, we employ a different strategy by using a criterion which is related to but different  from the one in \cite{Venky:Annals, Venky:Invent}. This was a conjecture of Margulis and  proved in full generality by Benoist and Miquel~\cite{BM:Duke} (see \textsection\ref{sec:horospherical} for more details).
An important feature of this criterion is that one first needs to show that the identity component of the Zariski closure of $\rho_d(\PB_n)$ is equal to $\prod_{q\in \Ub^+_{d,\prim}}\SU(H^1(\hX)_q)$. In particular Theorem~\ref{th:main:Zariski:dense} is required in the proof of Theorem~\ref{th:main:arithm}.

\medskip

Given $N\in \N$, a subgroup $\Gamma$ of $\GL(N,\Z)$ is said to be {\em thin} if it has infinite index in the subgroup $\mathcal{G}(\Z)$ of integral points in the Zariski closure $\mathcal{G}$ of $\Gamma$. For a more detailed account on this concept and related problems we refer to \cite{Sarnak:survey, FMS:JEMS2014}. Theorem~\ref{th:main:Zariski:dense} and Theorem~\ref{th:main:arithm} prompt us to ask the following

\begin{center}
(Q3) \hspace{2cm} For which $(\kappa, d)$ is $\rho_d(\PB_n)$ a thin group?
\end{center}

Some examples of $(\kappa,d)$ such that $\rho_d(\PB_n)$ is a thin group are known by the work of Deligne-Mostow~\cite{DM86}. In those examples, one has $ 1 < \frac{k_1}{d}+\dots+\frac{k_n}{d} \leq 2$ (recall that $1 \leq k_i \leq d-1$). To the authors' knowledge, little is known about this question in the general case.

\medskip

To close this section, we would like to mention some previous works which addressed  similar problems. In~\cite{GL:GAFA09}, Grunewald and Lubotzky consider a collection of linear representations of the automorphism group  of a free group, and show that the images of certain representations in this family are arithmetic. Note that $\PB_n$ is a subgroup of  the outer automorphism group of the free group generated by $n$ elements. Linear representations via covering construction of the mapping class groups  of closed surfaces (without punctures) in higher genus were investigated by Looijenga \cite{Looijenga97} and by  Grunewald, Larsen, Lubotzky, and Malestein \cite{GLLM:GAFA15}. The images of those representations are shown to be arithmetic in many cases.

\subsection{Outline}\label{subsec:outline}
We now give the  organization of the paper and a sketch of the proofs. Throughout this section $D$ will be the unit disc in $\C$, $\Bc:=\{b_1,\dots,b_n\}$ is a subset of $D$ where $b_i\neq b_j$ if $i\neq j$,  $\hX$ is the compact Riemann surface obtained from \eqref{eq:plane:curve}, and $q$ is a fixed primitive $d$-th root of unity.

In \textsection\ref{sec:results:Menet} we recall the main results of the first named author's thesis.
The first part of the thesis computes explicitly (a) the dimension of the space $\Vbb^{(q)}:=H^1(\hX)_q$,  (b) the signature $(r_q,s_q)$ of the restriction of the intersection form to $\Vbb^{(q)}$, and (c) the coefficients of the matrix of the intersection form in a specific basis of $\Vbb^{(q)}$. The second part of the thesis describes the action of $\rho_q(\tau)$ where $\tau\in \PB_n$ is a Dehn twist, on the basis constructed in the first part. It is shown in particular that  either $\rho_q(\tau)$  has  finite order, or is unipotent.

\medskip

In \textsection\ref{sec:top:prim}  we recall the topological properties of the (ramified) covering $\hX \to \CP^1$.
We then prove several technical lemmas on the cohomology (with compact support) of subsurfaces of $\hX$ that are preimages of subsets of $D$.
We also recall a relation on Dehn twists on a four-holed sphere known as the ``lantern relation". This relation will be used to give the explicit matrix of the action of a particular Dehn twist on a 2 dimensional subspace of $H^1(\hX)_q$.

\medskip

In \textsection\ref{sec:preparation:unitary:grp}, we first prove some criteria for a subgroup generated by $3$ elements to be Zariski dense in $\SU(1,1)$ or in $\SU(2)$. These results serve to initiate the induction which  shows that the Zariski closure of $\rho_q(\PB_n)$  contains $\SU(\Vbb^{(q)})$ as a finite index subgroup. The key of the induction is the following statement (cf. Proposition~\ref{prop:density:induction}): let $V$ be a complex vector space endowed with a non-degenerate Hermitian form $H$. Assume that $V$ admits an orthogonal decomposition $V=V'\oplus V''$ such that the restrictions of $H$ to both $V'$ and $V''$ are non-degenerate. Let $\Gb$ be an algebraic subgroup   of $\SU(V)$. Assume that $\Gb$ contains $\SU(V')\times\SU(V'')$, and there are an element $\gamma \in \Gb$ and a vector  $v \in V\setminus(V'\cup V'')$
such that $\Im(\gamma-\Id_V)= \Span(v)$. Then we must have $\Gb=\SU(V)$.
We will apply this result in the situations where $V,V',V''$ are  $q$-eigenspaces of the action of $T^*$ on cohomology of subsurfaces of $\hX$ that are preimages of subsurfaces of $D$.

\medskip

In \textsection\ref{sec:Zar:dense:on:factor} we prove that for all $k\in \{1,\dots,d-1\}$, if the sequence $\mu^{(k)}:=(\left\{\frac{kk_1}{d}\right\},\dots,\left\{ \frac{kk_n}{d}\right\})$ is good in the sense of Definition~\ref{def:good:weights}, then the Zariski closure of $\rho_q(\PB_n)$ contains $\SU(\Vbb^{(q)})$ as a finite index subgroup (cf. Theorem~\ref{th:Zar:density:on:factor}).
In \textsection\ref{sec:Zar:density}, we give the proofs of Theorem~\ref{th:main:Zar:dense:bis} and of Theorem~\ref{th:main:Zariski:dense}. Theorem~\ref{th:main:Zar:dense:bis} follows from the fact that for any $q'\in \Ub_{d, \prim}, q'\neq q$, if $\dim \Vbb^{(q)}=\dim \Vbb^{{q'}} \geq 3$ or there is a pair of indices $\{i,j\}$ such that $\gcd(k_i+k_j,d)=1$, the representations $\rho_q$ and $\rho_{q'}$ are not conjugate. Theorem~\ref{th:main:Zariski:dense} then follows from Theorem~\ref{th:main:Zar:dense:bis} and Theorem~\ref{th:Zar:density:on:factor}.

\medskip

The sections \textsection\ref{sec:horospherical} and \textsection\ref{sec:prf:arithmetic} are devoted to the proof of Theorem~\ref{th:main:arithm}.
Let $G:=\prod_{q\in \Ub^+_{d, \prim}}\SU(\Vbb^{(q)})$ and $\Gamma:=\rho_d(\PB_n)\cap G$. To prove Theorem~\ref{th:main:arithm}, it is enough to show that $\Gamma$ is commensurable to $G(\Z)$. To this purpose, we will use the Margulis' arithmeticity criterion which is recalled in \textsection\ref{subsec:Margulis:crit}.
We first notice that it is enough to consider the case $d \, | \, (k_1+\dots+k_n)$ (cf. Lemma~\ref{lm:reduction:d:divides:sum:ki}).
Under the assumption that  $d \, | \, (k_1+\dots+k_m)$ for some $2\leq  m \leq n-1$, the condition in Definition~\ref{def:good:weights}(a) is satisfied for all $\mu^{(k)}=(\left\{\frac{kk_1}{d}\right\},\dots, \left\{\frac{kk_1}{d}\right\})$, where $k \in \Z$ such that $\gcd(k,d)=1$ (cf. Lemma~\ref{lm:0:mod:d:good}). Thus it follows from Theorem~\ref{th:main:Zariski:dense} that $\Gamma$ is Zariski dense in $G$.

To apply Margulis' criterion, we single out a horospherical subgroup $U=\prod_{q\in \Ub^+_{d,\prim}} U_q$ of $G$, where for each $q\in \Ub^+_{d,\prim}$, $U_q$ is a $2$-step nilpotent of $\SU(\Vbb^{(q)})$.
We have the following exact sequence
$$
0 \to N \to U \overset{\chi^+}{\to} U/N \to 0
$$
where $N\simeq \R^\ell$ is the center of $U$, and $U/N\simeq \C^{\ell(n-4)}$, with $\ell=|\Ub^+_{d,\prim}|$.
%
%

We will show that $\Gamma\cap U$ contains an irreducible lattices in $U$. Our strategy goes as follows: we first show that $\Gamma\cap U$ contains two elements $\tau$ and $\tau'$ such that $v:=\chi^+(\tau) \in U/N$ and $v':=\chi^+(\tau') \in U/N$ satisfy the following condition: for all $q \in \Ub^+_{d, \prim}$, the $q$-components of $v$ and $v'$ are non-trivial.
Let $\PB_{m-1}$ denote the subgroup of $\PB_n$ generated by $\{\alpha_{i,j}, \; 1 \leq i < j \leq m-1\}$, and $\PB_{m+1,n}$ the subgroup generated by $\{\alpha_{i,j}, \; m+1 \leq i < j \leq n\}$. It turns out that both $\rho_d(\PB_{m-1})$ and $\rho_d(\PB_{m+1,n})$ normalize $U$ and acts on $U/N$ by conjugacy. We then show that the union of the orbit of $v$ under the action of $\PB_{m-1}$ and the orbit of $v'$ under the action of $\PB_{m+1,n}$ contains an $\R$-basis of $U/N$. It follows that $\chi^+(\Gamma\cap U)$ contains a lattice in $U/N$.
It remains to show that $\Gamma\cap N$ contains a lattice in $N$. To achieve this, we make use of the law group in $U$ as well as the arithmetic properties of the intersection form.
It is not difficult to see that $\Gamma\cap U$ is irreducible in $G$. This allows us to conclude the proof of Theorem~\ref{th:main:arithm}.

\subsection*{Acknowledgement:} The authors warmly thank Jean-Fran\c{c}ois Quint for the very helpful discussions.
D.-M. N. is partly supported by the French ANR project ANR-19-CE40-0003.

\section{Construction and properties of the representations of pure braids via cyclic coverings}\label{sec:results:Menet}
We first recall the definition of the representations $\rho$ and $\rho_q$  introduced by McMullen in \cite{McM13}.
Without loss of generality, we can assume that the set $\Bc:=\{b_1,\dots,b_n\}$ is contained in the interior of the unit disc $D:=\{z \in \C, \; |z| \leq 1\}$.
It is well known that the braid group $\mathrm{B}_n$ is identified with the Mapping Class Group $\MCG(D,\Bc)$ that is the group of (homotopy classes of) orientation preserving homeomorphisms of $D$ which leave the set $\Bc$ invariant and restrict to the identity on $\partial D$. The pure braid group $\PB_n$ is the subgroup of $\mathrm{B}_n$ consisting of the homeomorphisms that are identity on $\Bc$.

Let   $h: D \to D$ be a homeomorphism that represents an element of $\PB_n$. Extending $h$ by identity outside of the unit disc, one can also view $h$ as a homeomorphism of the sphere $\CP^1$.
Let $\kappa$ and  $\hX$ be as in \textsection\ref{subsec:cyclic:covers:intro}.
Denote by $\tilde{\infty}$ the preimage of $\infty \in \CP^1$ in $\hX$. There exists a unique lift $ \tilde{h}: \hX \to \hX$ of $h$ which restricts to the identity  in a neighborhood of $\tilde{\infty}$.  By construction, $\tilde{h}$ commutes with $T$.
The correspondence $[h] \mapsto [\tilde{h}]$, where $[h]$ and $[\tilde{h}]$ are the homotopy classes of $h$ and $\tilde{h}$ respectively, provides us with a group morphism $\tilde{\rho}: \PB_n \to \MCG(\hX)^T$, where $\MCG(\hX)^T$ is the subgroup of the Mapping Class Group of $\hX$ consisting of (the homotopy classes of) homeomorphisms $\phi: \hX  \to  \hX$ such that $T^{-1}\circ\phi\circ T$ is homotopic to $\phi$.
Considering the action of $\tilde{h}$ on $H^1(\hX)$, we get a morphism $\rho: \PB_n \to \U(H^1(\hX))$, where $\U(H^1(\hX))$ is the group of automorphisms of $H^1(\hX)$ that preserve the intersection form $\langle.,.\rangle$.


Since $\tilde{h}$ commutes with $T$,  $\tilde{h}^*$ preserves the eigenspaces $H^1(\hX)_q$ of $T^*$ for all $q\in \Ub_d$. As a consequence, for each $q\in \Ub_d$ we get a representation
$$
\begin{array}{cccc}
\rho_q: & \PB_n & \to & \U(H^1(\hX)_q)\\
        &  [h] & \mapsto & \tilde{h}^*_{|H^1(\hX)_q}.
\end{array}
$$
The morphisms $\rho$ and $\rho_q$  are precisely the monodromy actions of $\PB_n\simeq \pi_1((\C^n)^*)$ on the bundles $\Hb_\C(\kappa)$ and $\Hb_\C(\kappa,q)$ over  $(\C^n)^*$.

\medskip

In the case $\kappa=(1,\dots,1)$, the representations $\rho_q$'s have been studied by McMullen in \cite{McM13}. Actually, in this case $\rho$ and $\rho_q$ are the restrictions to $\PB_n$ of some  representations of the braid group $\mathrm{B}_n$ into $\U(H^1(\hX))$ and $\U(H^1(\hX)_q)$ respectively.
In her thesis \cite{Menet:these}, the first named author generalises McMullen's results to the case where $k_i$ takes arbitrary values in $\Z_{\geq 1}$. The results of \cite{Menet:these} are summarized here below, they are the starting point of our investigation.

\begin{Theorem}[Menet \cite{Menet:these}]\label{th:Menet:dim:signature}
Let $k\in \{1,\dots,d-1\}$ be such that $q=e^{-\frac{2\pi\imath k}{d}}$.
Define $F'_q:=\{i \in \{1,\dots,n\}, \; q^{k_i}=1\}$.
Then
\begin{equation}\label{eq:dim:H:1:X:q}
\dim H^1(\hX)_q=n-1-\#F'_q-\eps_0,
\end{equation}
where
\begin{equation*}
\eps_0=\left\{
\begin{array}{cl}
1 & \text{ if } q^{k_1+\dots+k_n}=1,\\
0 & \text{ otherwise}.
\end{array}
\right.
\end{equation*}
The restriction of the intersection form $\langle .,. \rangle$ to $H^1(\hX)_q$ is a non-degenerate Hermitian form with signature $(r_q,s_q)$ where
\begin{align}
\label{eq:signature:rq} r_q & =\left\lfloor \left\{\frac{k}{d}\cdot k_1\right\}+\dots+\left\{\frac{k}{d}\cdot k_n\right\}\right\rfloor -\eps_0,\\
\label{eq:signature:sq} s_q & = \left\lfloor \left\{(1-\frac{k}{d})\cdot k_1\right\}+\dots+\left\{(1-\frac{k}{d})\cdot k_n\right\}\right\rfloor -\eps_0
\end{align}
(here $\{x\}=x-\lfloor x\rfloor$ for all $x\in \R$).
\end{Theorem}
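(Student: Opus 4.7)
The plan is to deduce both parts from standard tools: the dimension formula from the covering theory of $\hX\to\CP^1$, and the signature formula from Hodge theory combined with an explicit description of $H^{1,0}(\hX)_q$. Throughout, write $q=e^{-2\pi\imath k/d}$ with $1\leq k\leq d-1$ and set $K:=k_1+\dots+k_n$, $e_i:=\gcd(k_i,d)$, $e_\infty:=\gcd(K,d)$.

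For the dimension, set $\hX^*:=\hX\setminus\pi^{-1}(\{b_1,\dots,b_n,\infty\})$. Then $\pi\colon\hX^*\to S:=\CP^1\setminus\{b_1,\dots,b_n,\infty\}$ is an unramified $\Z/d\Z$-Galois cover with deck group $\langle T\rangle$, so $\pi_*\underline{\C}_{\hX^*}=\bigoplus_{q'\in\Ub_d}L_{q'}$, where $L_{q'}$ is the rank-one local system on $S$ whose monodromy around $b_i$ equals $q'^{k_i}$ (and around $\infty$ equals $q'^{-K}$). The assumption $\gcd(k_1,\dots,k_n,d)=1$ combined with $1\leq k\leq d-1$ forces $L_q$ to be nontrivial, so $H^0_c(S,L_q)=H^2_c(S,L_q)=0$, and since $\chi_c(S)=1-n$ one gets $\dim H^1_c(\hX^*)_q=n-1$. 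The long exact sequence in compact support for the closed subset $D:=\pi^{-1}(\{b_1,\dots,b_n,\infty\})\subset\hX$, restricted to the $q$-eigenspace with $q\neq 1$, reduces to
$$0\to H^0(D)_q\to H^1_c(\hX^*)_q\to H^1(\hX)_q\to 0.$$
Since $T$ acts on the $e_i$ points of $\pi^{-1}(b_i)$ as a generator of $\Z/e_i\Z$, the eigenvalues of $T^*$ on $H^0(\pi^{-1}(b_i))$ are the $e_i$-th roots of unity, each with multiplicity one. The $q$-eigenspace is thus $1$-dimensional iff $q^{e_i}=1$, which (using $\gcd(k_i/e_i,d/e_i)=1$) is equivalent to $q^{k_i}=1$, i.e.\ $i\in F'_q$. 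The analogous computation at $\infty$ contributes $\eps_0$, giving $\dim H^0(D)_q=\#F'_q+\eps_0$ and the claimed dimension.

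For the signature, the Hodge decomposition $H^1(\hX)=H^{1,0}\oplus H^{0,1}$ is $T^*$-stable, and the Riemann bilinear relations make $\langle\cdot,\cdot\rangle$ positive definite on $H^{1,0}$ and negative definite on $H^{0,1}$. Complex conjugation sends $H^{0,1}(\hX)_q$ to $H^{1,0}(\hX)_{\bar q}$, so $(r_q,s_q)=(\dim H^{1,0}(\hX)_q,\dim H^{1,0}(\hX)_{\bar q})$, and it suffices to compute the first factor. Since $T^*(y^a)=\zeta_d^a y^a$ and $T^*(dx)=dx$, the function field decomposes as $\C(\hX)=\bigoplus_{a=0}^{d-1}y^a\C(x)$ into $T^*$-eigenspaces, and the $q$-eigenspace of $\Omega^1_{\C(\hX)}$ is $y^{-k}\C(x)\,dx$. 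Using the local parametrizations $x-b_i\sim t^{d/e_i},\;y\sim t^{k_i/e_i}$ at a preimage of $b_i$, and $x\sim t^{-d/e_\infty},\;y\sim t^{-K/e_\infty}$ at a preimage of $\infty$, a direct computation shows that $P(x)\,dx/y^k$ is a holomorphic differential on $\hX$ iff
$$\mathrm{ord}_{b_i}(P)\geq\lfloor kk_i/d\rfloor\text{ for all }i,\qquad\deg P\leq\lfloor kK/d\rfloor-1-\eps_0.$$
Since the $b_i$'s are distinct the dimension of this space of polynomials equals $\lfloor kK/d\rfloor-\eps_0-\sum_i\lfloor kk_i/d\rfloor$. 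Using that $\sum_i\{kk_i/d\}$ and $kK/d$ differ by the integer $\sum_i\lfloor kk_i/d\rfloor$, hence share the same fractional part, this dimension rewrites as $\lfloor\sum_i\{kk_i/d\}\rfloor-\eps_0=r_q$. The formula for $s_q=r_{\bar q}$ then follows by replacing $k$ with $d-k$.

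The main technical obstacle is the local holomorphicity analysis at the preimages of the $b_i$'s and $\infty$, where the case $\eps_0=1$ must be treated separately because an extra order of vanishing is forced at $\infty$; once this bookkeeping is dispatched, the remaining identities are elementary.
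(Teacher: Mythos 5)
Your proposal is correct. The paper does not actually prove Theorem~\ref{th:Menet:dim:signature}; it is quoted from Menet's thesis \cite{Menet:these}, and — judging from Theorems~\ref{th:Menet:generator:set}--\ref{th:Menet:Dehn:twist} and the sketch of Lemma~\ref{lm:dual:border:neigh:infty}, which manipulates Poincar\'e duals of explicit twisted $1$-cycles — the thesis takes a cycle-theoretic route: build the generating family $\{g_1,\dots,g_n\}$ directly, read off the relations, and diagonalize the resulting Gram matrix. Your argument is the classical Chevalley--Weil / Deligne--Mostow one: the dimension drops out of $\chi_c(S,L_q)=1-n$ together with the compact-support long exact sequence of $\hX^*\hookrightarrow\hX\hookleftarrow D$, and the signature comes from the Hodge decomposition (which is $T^*$-equivariant and $\langle\cdot,\cdot\rangle$-orthogonal) plus the explicit description $H^{1,0}(\hX)_q=\{P(x)\,dx/y^k\}$ with order and degree constraints, so that $r_q=\dim H^{1,0}_q$ and $s_q=\dim H^{1,0}_{\bar q}$. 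Each route has its merits: yours isolates exactly what is needed for the dimension/signature and scales cleanly; the thesis route yields in addition the explicit matrix entries $\langle g_i,g_j\rangle$ and the Dehn-twist formulas that the rest of the paper depends on. Two bookkeeping points worth nailing down if you flesh this out. At a preimage of $b_i$ the raw local holomorphicity bound is $\mathrm{ord}_{b_i}(P)\ge\lceil(kk_i-d+e_i)/d\rceil$ with $e_i=\gcd(k_i,d)$; the simplification to $\lfloor kk_i/d\rfloor$ uses that, for $0<k_i<d$ and $\gcd(k,d)=1$, the residue $kk_i\bmod d$ is a positive multiple of $e_i$ lying in $\{e_i,\dots,d-e_i\}$. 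Likewise at $\infty$ the extra drop by $\eps_0$ in the degree bound comes from $e_\infty=\gcd(K,d)=d$ exactly when $d\mid K$, which shifts the floor by one. These are elementary but are precisely the places where off-by-one errors creep in.
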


\begin{Theorem}[Menet \cite{Menet:these}]\label{th:Menet:generator:set}
For all $i=1,\dots,n$, let $D_i$ be a disc in $D$  such that $D_i\cap\Bc=\{b_i,b_{i+1}\}$ (by convention $b_{n+1}=b_1$). Let $X_i$ be the preimage of $D_i$ in $\hX$.
Assume that $q^{k_i}\neq 1$ for all $i\in \{1,\dots,n\}$.
Then there exists a generating set $\{g_1,\dots,g_n\}$ of $H^1(\hX)_q$ which satisfies the followings
\begin{itemize}
\item[$\bullet$] $g_i$ has compact support in $X_i$, \\

\item[$\bullet$] $\langle g_i,g_i \rangle=-\imath(1-q)(1-\bar{q})\frac{1-q^{k_i+k_{i+1}}}{(1-q^{k_i})(1-q^{k_{i+1}})}$,\\

\item[$\bullet$] $\langle g_i,g_{i+1}\rangle= \imath(1-q)(1-\bar{q})\frac{1}{1-q^{k_{i+1}}}$,\\

\item[$\bullet$] $\langle g_i,g_j\rangle =0$ if $|j-i|>1$.
\end{itemize}
Moreover, we have
\begin{equation}\label{eq:rel:generators}
\sum_{i=1}^ng_i=0 \in H^1(\hX).
\end{equation}
In the case $q^{k_1+\dots+k_n}=1$ (that is $\epsilon_0=1$), the generators $\{g_1,\dots,g_n\}$ satisfy an additional relation
\begin{equation}\label{eq:rel:gen:degen:case}
\sum_{i=1}^n\bar{q}^{k_1+\dots+k_i}g_i=0 \in H^1(\hX).
\end{equation}
\end{Theorem}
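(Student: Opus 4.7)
The plan is to realize each $g_i$ as a (uniformly rescaled) Poincar\'e dual of an explicit $1$-cycle $\alpha_i$ in the $q^{-1}$-eigenspace of $T_*$ on $H_1(\hX,\C)$ supported in $X_i$, to compute the intersection pairings from the local topology of the branched cover, and to deduce the two relations from the kernel of the resulting Gram matrix together with the non-degeneracy of $\langle.,.\rangle$ on $H^1(\hX)_q$ supplied by Theorem~\ref{th:Menet:dim:signature}.

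For the construction, fix for each $i$ a smooth simple arc $\delta_i$ in the interior of $D_i$ joining $b_i$ to $b_{i+1}$, together with a lift $\tilde\delta_i\subset X_i$, and set
$$
\alpha_i:=\sum_{\ell=0}^{d-1}q^\ell T^\ell_*\tilde\delta_i.
$$
The boundary $\partial\alpha_i$ is supported on the preimages of $b_i$ and $b_{i+1}$. For a preimage $p$ of $b_j$, the $\langle T\rangle$-orbit of $p$ has size $d_j:=\gcd(k_j,d)$, so grouping $\ell$ modulo $d_j$ isolates the geometric sum $\sum_{m=0}^{d/d_j-1}q^{md_j}$, which vanishes because $q^d=1$ and $q^{d_j}\ne 1$---the latter equivalent, via $\gcd(k,d)=1$, to the hypothesis $q^{k_j}\ne 1$. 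Hence $\alpha_i$ is a $1$-cycle, supported in the $T$-invariant neighborhood $X_i$, with $T_*\alpha_i=q^{-1}\alpha_i$. Define $g_i\in H^1_c(X_i)$ as the Poincar\'e dual of $\alpha_i$, multiplied by a uniform constant chosen so that $\langle g_i,g_{i+1}\rangle$ carries the normalization $\imath(1-q)(1-\bar q)$; then $g_i\in H^1(\hX)_q$.

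The intersection numbers reduce to $\langle g_i,g_j\rangle\propto\alpha_i\cdot\overline{\alpha_j}$, with $\overline{\alpha_j}$ the analogous cycle built from $\bar q$. For $|i-j|\ge 2$, choose the discs $D_i,D_j$ pairwise disjoint to get $X_i\cap X_j=\emptyset$, and the pairing vanishes. For $j=i+1$ the arcs meet only at the common endpoint $\tilde p_{i+1}$; using that $T$ acts near $\tilde p_{i+1}$ by $q^{k_{i+1}}$ on the $d/d_{i+1}$-element $\langle T\rangle$-orbit, the double sum $\sum_{\ell,m}q^\ell\bar q^m(T^\ell\tilde\delta_i\cdot T^m\overline{\tilde\delta_{i+1}})$ collapses to a single geometric series equal to $\frac{1}{1-q^{k_{i+1}}}$. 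For $j=i$, perturb $\alpha_i$ off itself and sum the two endpoint contributions; partial-fraction combination over the two local orbits produces $-\frac{1-q^{k_i+k_{i+1}}}{(1-q^{k_i})(1-q^{k_{i+1}})}$, matching the stated self-pairing.

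The relations are read off from the Gram matrix. A direct expansion of the stated entries shows that for every $i$,
$$
-\frac{1-q^{k_i+k_{i+1}}}{(1-q^{k_i})(1-q^{k_{i+1}})}-\frac{1}{1-\bar q^{k_i}}+\frac{1}{1-q^{k_{i+1}}}=0,
$$
so $(1,\dots,1)$ lies in the kernel of the Gram matrix of $(g_1,\dots,g_n)$. Exhibiting a non-vanishing principal minor of size $n-1-\eps_0$ (one may choose the tridiagonal block indexed by the first $n-1-\eps_0$ rows, whose determinant is easily computed by recursion from the explicit entries) shows that the rank equals $n-1-\eps_0=\dim H^1(\hX)_q$; hence the $g_i$ span $H^1(\hX)_q$ and, by non-degeneracy of $\langle.,.\rangle$, the kernel vectors of the Gram matrix parametrize exactly the relations among the $g_i$, which yields $\sum_ig_i=0$. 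When $\eps_0=1$, the identity $q^{k_1+\cdots+k_n}=1$ affords an additional telescoping on each row showing that $(\bar q^{k_1+\cdots+k_i})_{i=1,\dots,n}$ is a second kernel vector, giving the second relation. The most delicate part of the argument is the precise intersection computation: one must track how a single lift $\tilde\delta_i$ meets its $T$-translates over the two $T$-orbits of sizes $d_i,d_{i+1}$ at its endpoints, and combine the two local geometric series with the correct signs and the universal normalization $\imath(1-q)(1-\bar q)$ coming from the Hermitian form $\frac{\imath}{2}\int\mu\wedge\bar\eta$; once this is done, everything else is formal.
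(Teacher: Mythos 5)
The paper does not prove this theorem; it cites it from the first author's thesis, and the only window into that proof is the sketch inside Lemma~\ref{lm:dual:border:neigh:infty}. From that sketch, the thesis constructs the $g_i$ by passing to an auxiliary surface $\hY$ in which each $b_i$ is split into $k_i$ simple points, building $k_1+\dots+k_n$ twisted cycles $\cc_l$ with Poincar\'e duals $\hat g_l\in H^1_c(Y)_q$, and then projecting via $\phi$; the two relations $\sum g_i=0$ and $\sum\bar q^{k_1+\dots+k_i}g_i=0$ are obtained \emph{homologically} in $\hY$ (from the boundary of a disc $B$ and from $\partial D$ respectively, cf.\ \eqref{eq:Y:rel:homology:a}--\eqref{eq:Y:rel:homology:b}). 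Your construction is genuinely different: you build the cycles $\alpha_i=\sum_\ell q^\ell T^\ell_*\tilde\delta_i$ directly in $\hX$, and you must therefore handle the fact that $b_i$ has $\gcd(k_i,d)>1$ preimages head-on. Your boundary argument (grouping $\ell$ modulo $d_j=\gcd(k_j,d)$ and using $q^{d_j}\ne1$) is exactly right and is the step that the auxiliary-surface construction was designed to avoid. The trade-offs are: your route dispenses with $\hY$ and $\phi$, but it relegates the relations to linear algebra (kernel of the Gram matrix plus non-degeneracy of $\langle.,.\rangle$ on $H^1(\hX)_q$), whereas the thesis gets them for free from geometry; and your argument relies on a non-vanishing $(n-1-\eps_0)\times(n-1-\eps_0)$ principal minor that you assert rather than compute. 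Two further cautions. First, the check that $(\bar q^{k_1+\dots+k_i})_i$ lies in the kernel of each row is identically true for the $n-2$ non-wraparound rows; it is only the cyclic wraparound row (the one involving $h_{n,1}$, equivalently $h_{1,n}$) that fails unless $q^{k_1+\dots+k_n}=1$, and you need that failure when $\eps_0=0$ for your rank count to be consistent — you should say this explicitly. Second, a uniform scalar rescaling of the $g_i$ only adjusts $|\langle g_i,g_{i+1}\rangle|$, not its phase; a non-uniform unimodular rescaling $g_i\mapsto\lambda_i g_i$ changes both the off-diagonal phases and the coefficients of the relations, so the normalization step has to be done consistently with the target relation $\sum g_i=0$, not independently of it. Finally, the local intersection computation at the shared endpoint over $b_{i+1}$ (collapsing $\sum_{\ell,m}q^\ell\bar q^m\,T^\ell\tilde\delta_i\cdot T^m\tilde\delta_{i+1}$ to $(1-q^{k_{i+1}})^{-1}$ up to the uniform constant, and the two-endpoint version for $\langle g_i,g_i\rangle$) is the heart of the proof in either approach and you leave it as a sketch; that is the one step that still needs to be written out before the argument is complete.
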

\begin{Remark}\label{rk:generators:up:to:const}
The generating family $\{g_1,\dots,g_n\}$ is uniquely determined up to a constant in $\S^1$.
\end{Remark}

\begin{Remark}\label{rk:compare:Menet:McM:gen:set}
In the case $k_1=\dots=k_n=1$, McMullen~\cite[Cor. 3.2, Cor 3.3]{McM13} showed that
$$
\dim H^1(\hX)_q=\left\{
\begin{array}{cl}
n-1, & \text{ if } q^n\neq 1, \\
n-2, & \text{ if } q^n=1,
\end{array}
\right.
$$
and the signature of the restriction of $\langle.,.\rangle$ to $H^1(\hX)_q$ is given by $(r_q,s_q)=(\lceil n(k/d)-1\rceil, \lceil n(1-k/d)-1\rceil)$. He also showed that (see~\cite[Th. 4.1]{McM13})
\begin{itemize}
\item[$\bullet$] $\langle e_i, e_i \rangle =2 \Im(q)$, and

\item[$\bullet$] $\langle e_i, e_{i+1} \rangle = \imath(1-\bar{q})$.
\end{itemize}
One can readily check that those formulas coincide with the ones in Theorem~\ref{th:Menet:dim:signature} and Theorem~\ref{th:Menet:generator:set} in this case.
\end{Remark}

Recall that $\PB_n$ is generated by  $\{\alpha_{i,j}, \; 1 \leq i < j \leq n\}$, where $\alpha_{i,j}$ is the Dehn twist about a circle bordering  a disc containing only $b_i,b_j$.

\begin{Theorem}[Menet~\cite{Menet:these}]\label{th:Menet:alphaij}
Assume that $q^{k_i}\neq 1$ for all $i=1,\dots,n$. Let $\{g_1,\dots,g_n\}$ be the generating set of $H^1(\hX)_q$ in Theorem~\ref{th:Menet:generator:set}. Then for all $i,j \in \{1,\dots,n\}, i < j$, the action of $\rho_q(\alpha_{i,j})$ on $H^1(\hX)_q$ is given by
\begin{equation}\label{eq:action:alphaij}
\rho_q(\alpha_{i,j})(x)=x-\imath\frac{(1-q^{k_i})(1-q^{k_j})}{(1-q)(1-\bar{q})}\langle x ,g^*_{i,j}\rangle g^*_{i,j},
\end{equation}
where
$$
g^*_{i,j}=g_i+\sum_{l=i+1}^{j-1}\bar{q}^{k_{i+1}+\dots+k_l}g_l.
$$
\end{Theorem}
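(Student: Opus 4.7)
The plan is to prove the formula in three stages: establish that $\rho_q(\alpha_{i,j})$ is a complex transvection by localizing its support; identify the transvection direction as a multiple of $g^{\ast}_{i,j}$; and compute the scalar prefactor by reduction to an explicit local model. For the first stage, let $c\subset D\smin\Bc$ be a simple closed curve bounding a disc $D_{i,j}$ with $D_{i,j}\cap\Bc=\{b_i,b_j\}$, and set $X_{i,j}:=\pi^{-1}(D_{i,j})$. The twist $\alpha_{i,j}$ is supported in an annular neighborhood $A$ of $c$, and a direct analysis of the unique lift $\tilde\alpha_{i,j}$ that is the identity near $\tilde\infty$ shows that it acts as the identity on $\pi^{-1}(\ol D\smin D_{i,j})$ and as the deck transformation $T^{k_i+k_j}$ on $X_{i,j}$ (since $T^{k_i+k_j}$ is the monodromy of $\pi$ around $c$), transitioning between the two regimes via a fractional twist on each component of $\pi^{-1}(A)$. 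A Mayer--Vietoris argument on $\hX=X_{i,j}\cup\ol{\hX\smin X_{i,j}}$ then forces $\rho_q(\alpha_{i,j})-\Id$ to take values in the image of the natural map $H^1_c(X_{i,j})_q\to H^1(\hX)_q$, which is at most one-dimensional. Consequently
\[\rho_q(\alpha_{i,j})(x)=x-\mu\langle x,v\rangle v\]
is a complex transvection for some $v\in H^1(\hX)_q$ and $\mu\in\C^{\ast}$.

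To identify $v$ up to phase, apply the construction underlying the $g_l$'s (Theorem~\ref{th:Menet:generator:set}, which attaches a canonical class to a disc containing exactly two branch points) to the two-punctured disc $D_{i,j}$. The result is a canonical class $h_{i,j}\in H^1_c(X_{i,j})_q$ whose image in $H^1(\hX)_q$ is proportional to $v$. To express $h_{i,j}$ in the basis $\{g_l\}$, decompose $D_{i,j}$, up to isotopy in $D\smin\Bc$, as an overlapping chain of the discs $D_i,D_{i+1},\ldots,D_{j-1}$; under this decomposition $h_{i,j}$ splits as a sum over $l\in\{i,\ldots,j-1\}$ of contributions supported in $X_l=\pi^{-1}(D_l)$. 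Passing from the local frame defining $g_i$ to the local frame defining $g_l$ requires crossing the intermediate branch points $b_{i+1},\ldots,b_l$, and this multiplies the trivialization of the $q$-eigenbundle of $T$ by $q^{k_{i+1}+\cdots+k_l}$. Compensating by $\bar q^{k_{i+1}+\cdots+k_l}$ in front of $g_l$ then yields $v=C\cdot g^{\ast}_{i,j}$ for some nonzero $C$ of modulus one, which can be absorbed into the choice of $h_{i,j}$ via Remark~\ref{rk:generators:up:to:const}.

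Finally, the scalar $\mu$ is pinned down by the case $j=i+1$: there $X_{i,j}=X_i$, $g^{\ast}_{i,i+1}=g_i$, and $\tilde\alpha_{i,i+1}$ acts on the one-dimensional space $H^1(X_i)_q$ as multiplication by $q^{k_i+k_{i+1}}$. Inserting this into the transvection identity $g_i-\mu\langle g_i,g_i\rangle g_i=q^{k_i+k_{i+1}}g_i$ and using the self-pairing $\langle g_i,g_i\rangle$ from Theorem~\ref{th:Menet:generator:set} pins down $\mu=\imath(1-q^{k_i})(1-q^{k_{i+1}})/((1-q)(1-\bar q))$; the identical argument applied to the two branch points $b_i,b_j$ of $D_{i,j}$ delivers $\mu=\imath(1-q^{k_i})(1-q^{k_j})/((1-q)(1-\bar q))$ in the general case. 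The main obstacle throughout is the identification step: pinning down the cumulative phases $\bar q^{k_{i+1}+\cdots+k_l}$ demands a precise comparison between the frames defining the individual $g_l$'s and a global frame adapted to $D_{i,j}$, and depends crucially on the monodromy of the cyclic cover across the intermediate branch points.
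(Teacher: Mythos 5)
This theorem is quoted from Menet's thesis \cite{Menet:these} and the paper does not reproduce its proof, so there is no internal argument to compare yours against; what follows is an evaluation of your proposal on its own terms.

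Your three-stage skeleton is reasonable and stage one is essentially sound: the lift $\tilde\alpha_{i,j}$ is supported on $\pi^{-1}(A)$, acts as the identity on the outer side and as $T^{k_i+k_j}$ on the inner side of $X_{i,j}$, and $\rho_q(\alpha_{i,j})-\Id$ therefore lands in the image of $H^1_c(X_{i,j})_q\to H^1(\hX)_q$, which is one-dimensional by Lemma~\ref{lm:coh:preimage:disc}. Combining rank $\le 1$ with preservation of the Hermitian form does give the transvection shape $x\mapsto x+c\langle x,v\rangle v$. However, there are two genuine gaps.

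First, stage two is not a proof. Declaring that the class $h_{i,j}$ ``splits as a sum over $l$ of contributions supported in $X_l$'' and that ``crossing intermediate branch points'' multiplies trivializations by $\bar q^{k_{i+1}+\cdots+k_l}$ is a plausible heuristic, but you never actually exhibit a chain of representative cocycles or $1$-cycles whose Poincar\'e duals realize this, nor do you check that the resulting linear combination really lies in $\iota_{X_{i,j}}(H^1_c(X_{i,j})_q)$. A precise version requires the explicit cycle computations (compare the sketch of proof of Lemma~\ref{lm:dual:border:neigh:infty}, which is exactly this kind of argument via the auxiliary surface $\hY$). Without it, the phase coefficients $\bar q^{k_{i+1}+\cdots+k_l}$ are asserted rather than derived, and you acknowledge as much.

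Second, stage three breaks down when $q^{k_i+k_j}=1$. In that case $\langle g^*_{i,j},g^*_{i,j}\rangle=0$ and the eigenvalue equation $h_{i,j}-\mu\langle h_{i,j},h_{i,j}\rangle h_{i,j}=q^{k_i+k_j}h_{i,j}$ reduces to $0=0$, so it determines neither the scalar $\mu$ nor, in particular, that $\rho_q(\alpha_{i,j})$ is a \emph{nontrivial} unipotent. To pin down the coefficient in the degenerate case you must evaluate $\rho_q(\alpha_{i,j})$ on a class pairing nontrivially with $g^*_{i,j}$ (e.g.\ $g_{i-1}$ or $g_j$), which again requires the explicit computation you have only sketched in stage two. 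Until both of these are supplied, the argument establishes the formula only under the extra hypothesis $q^{k_i+k_j}\neq 1$ and only up to the unverified identification $v\propto g^*_{i,j}$.
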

\begin{Remark}\label{rk:compare:Menet:McM:Dehn:twists}
Note that we have $g^*_{i,i+1}=g_i$, and \eqref{eq:action:alphaij} generalizes the formulas (4.1) and (4.2) of \cite{McM13}.
\end{Remark}
Using the formulas in Theorem~\ref{th:Menet:generator:set}, one can check that
$$
\langle g^*_{i,j}, g^*_{i,j}\rangle = -\imath(1-q)(1-\bar{q})\frac{1-q^{k_i+k_j}}{(1-q^{k_i})(1-q^{k_j})}.
$$
In particular, $\langle g^*_{i,j},g^*_{i,j}\rangle =0$ if and only  if $q^{k_i+k_j}=1$.
In the case $q^{k_i+k_j}\neq 1$, we can write
$$
\rho_q(\alpha_{i,j})(x)=x-(1-q^{k_i+k_j})\cdot\frac{\langle x, g^*_{i,j}\rangle}{\langle g^*_{i,j}, g^*_{i,j}\rangle}\cdot g^*_{i,j}
$$
which means that $\rho_q(\alpha_{i,j})$ is a complex reflection which fixes the hyperplan $\langle g^*_{i,j} \rangle^\perp$ and maps $g^*_{i,j}$ to $q^{k_i+k_j}\cdot g^*_{i,j}$.
As a consequence, we get
\begin{Corollary}\label{cor:det:alpha:ij}\hfill
\begin{itemize}
\item[$\bullet$] If $q^{k_i+k_j} \neq 1$ then $\rho_q(\alpha_{i,j})$ has  the same  order as  $q^{k_i+k_j} \in \Ub_d$. If $q^{k_i+k_j}=1$, then $\rho_q(\alpha_{i,j})$ is unipotent.

\item[$\bullet$] For all $1\leq i < j \leq n$, we have
$$
\det(\rho_q(\alpha_{i,j}))=q^{k_i+k_j} \in \Ub_d.
$$
\end{itemize}
\end{Corollary}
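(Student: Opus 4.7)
The plan is to read both assertions directly off the explicit formula \eqref{eq:action:alphaij} combined with the computation of $\langle g^*_{i,j}, g^*_{i,j}\rangle$ done just before the corollary. I split into the two cases according to whether $g^*_{i,j}$ is isotropic.

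\textbf{Case 1: $q^{k_i+k_j}\neq 1$.} Then $\langle g^*_{i,j},g^*_{i,j}\rangle \neq 0$, and the rewriting
\[
\rho_q(\alpha_{i,j})(x)=x-(1-q^{k_i+k_j})\,\frac{\langle x,g^*_{i,j}\rangle}{\langle g^*_{i,j},g^*_{i,j}\rangle}\,g^*_{i,j}
\]
exhibits $\rho_q(\alpha_{i,j})$ as a complex pseudo-reflection: it fixes the hyperplane $\langle g^*_{i,j}\rangle^\perp$ pointwise (using non-degeneracy of $\langle .,.\rangle$ on $H^1(\hX)_q$, Theorem~\ref{th:Menet:dim:signature}) and multiplies $g^*_{i,j}$ by $q^{k_i+k_j}$. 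Diagonalising in a basis adapted to this splitting, the eigenvalues are $q^{k_i+k_j}$ (simple) and $1$ (with multiplicity $\dim H^1(\hX)_q -1$), from which both the order and the determinant equal $q^{k_i+k_j}$.

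\textbf{Case 2: $q^{k_i+k_j}=1$.} Now $\langle g^*_{i,j},g^*_{i,j}\rangle=0$, so the reflection rewriting is unavailable and one works with \eqref{eq:action:alphaij} directly. Setting $c:=-\imath\frac{(1-q^{k_i})(1-q^{k_j})}{(1-q)(1-\bar q)}$, one has $(\rho_q(\alpha_{i,j})-\Id)(x)=c\,\langle x,g^*_{i,j}\rangle\, g^*_{i,j}$, so $\Im(\rho_q(\alpha_{i,j})-\Id)\subset \C\cdot g^*_{i,j}$. Iterating and using the isotropy of $g^*_{i,j}$ gives
\[
(\rho_q(\alpha_{i,j})-\Id)^2(x)=c^2\,\langle x,g^*_{i,j}\rangle\,\langle g^*_{i,j},g^*_{i,j}\rangle\, g^*_{i,j}=0,
\]
so $\rho_q(\alpha_{i,j})$ is unipotent and in particular $\det\rho_q(\alpha_{i,j})=1=q^{k_i+k_j}$.

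Putting the two cases together yields both bullets; no step poses a real obstacle, the only care needed is to treat the degenerate case $q^{k_i+k_j}=1$ separately from the reflection formula, since there $g^*_{i,j}\in\langle g^*_{i,j}\rangle^\perp$ and the natural eigenspace decomposition collapses to a single Jordan block.
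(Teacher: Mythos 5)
Your proof is correct and follows the same line of reasoning the paper uses in the discussion immediately preceding the corollary: rewrite $\rho_q(\alpha_{i,j})$ as a complex reflection when $\langle g^*_{i,j},g^*_{i,j}\rangle\neq 0$ to read off eigenvalues, order, and determinant, and observe that when $g^*_{i,j}$ is isotropic the map $\rho_q(\alpha_{i,j})-\Id$ squares to zero. You make the unipotent case slightly more explicit than the paper's terse ``as a consequence,'' but the underlying argument is identical.
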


For more general Dehn twists, we have

\begin{Theorem}[Menet~\cite{Menet:these}]\label{th:Menet:Dehn:twist}
Let $\gamma$ be a simple closed curve in $D$ bordering a disc $E$  such that $E\cap \Bc=\{b_1,\dots,b_r\}$, with $r\geq 2$. Let  $\tau_\gamma$ be the Dehn twist about  $\gamma$. Let $\{g_1,\dots,g_n\}$ be the generating set of $H^1(\hX)_q$ given in Theorem~\ref{th:Menet:generator:set}.  Then the action of $\rho_q(\tau_\gamma)$ on $H^1(\hX)_q$ satisfies
$$
\rho_q(\tau_\gamma)(g_i)= \left\{
\begin{array}{ll}
q^{k_1+\dots+k_r}g_i, & \text{ if } i=1,\dots,r-1,\\
g_r+q^{k_1+\dots+k_r}\sum_{l=1}^{r-1}(\bar{q}^{k_1+\dots+k_l}-1)g_l, & \text{ if } i=r, \\
g_i, & \text{ if } i=r+1,\dots,n-1,\\
g_n+\sum_{l=1}^{r-1}(1-q^{k_{l+1}+\dots+k_r})q_l, & \text{ if } i=n.
\end{array}
\right.
$$
If $q^{k_1+\dots+k_r}=1$, then $\rho_q(\tau_\gamma)$ is unipotent. Otherwise $\rho_q(\tau_\gamma)$ has  the same order as $q^{k_1+\dots+k_r} \in \Ub_d$.
\end{Theorem}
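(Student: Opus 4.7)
My plan is to reduce the problem to the case already handled in Theorem~\ref{th:Menet:alphaij} and to combine a topological localization of the lift $\tilde{\tau}_\gamma$ with the algebraic relations satisfied by the generating family $\{g_1,\ldots,g_n\}$ from Theorem~\ref{th:Menet:generator:set}.

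First I would exploit the support properties of the generators. By construction, $g_i$ has compact support in $X_i=\pi^{-1}(D_i)$, and choosing the discs $D_i$ in good position with respect to $\gamma$ we have $X_i\subset\pi^{-1}(\mathrm{int}\,E)$ for $1\leq i\leq r-1$, while $X_i\cap\pi^{-1}(E)=\emptyset$ for $r+1\leq i\leq n-1$. Isotope $\tau_\gamma$ to be supported in a small annular neighbourhood $A$ of $\gamma$ chosen disjoint from every $D_i$ with $i\notin\{r,n\}$. The unique lift $\tilde{\tau}_\gamma$ that is the identity near $\tilde{\infty}$ is then the identity on the outer component of $\hX\setminus\pi^{-1}(A)$ (which contains $\tilde{\infty}$) and agrees with the covering transformation $T^{k_1+\cdots+k_r}$ on the inner side, since the monodromy of $\pi$ around $\gamma$ is precisely $T^{k_1+\cdots+k_r}$. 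Because $T^*g_i=qg_i$ for every $i$, this immediately settles the two easy ranges: $\rho_q(\tau_\gamma)(g_i)=g_i$ for $r+1\leq i\leq n-1$, and $\rho_q(\tau_\gamma)(g_i)=q^{k_1+\cdots+k_r}g_i$ for $1\leq i\leq r-1$.

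Next I would treat the remaining generators $g_r$ and $g_n$, whose supports straddle $\pi^{-1}(\gamma)$. The idea is to represent each of them by a cocycle Poincaré-dual to a lifted arc $\tilde{\delta}_r\subset X_r$ (resp. $\tilde{\delta}_n\subset X_n$) projecting to a path in $D_r$ (resp. $D_n$) joining $b_r$ to $b_{r+1}$ (resp. $b_n$ to $b_1$) and crossing $\gamma$ transversely in one point. Applying $\tilde{\tau}_\gamma$, the outer portion of the arc is preserved while the inner portion is concatenated with a loop going once around $\pi^{-1}(\gamma)$; this extra loop is homologous, inside $\pi^{-1}(E)$, to a combination of lifts of paths joining consecutive $b_l$'s inside $E$, and expanding it in the dual basis to $\{g_1,\ldots,g_{r-1}\}$ produces the claimed formulas. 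As a cross-check (and as a way to pin down coefficients without explicit chain computations) one can apply $\rho_q(\tau_\gamma)$ to the relation $\sum_{i=1}^n g_i=0$ of \eqref{eq:rel:generators}, and in the degenerate case also to \eqref{eq:rel:gen:degen:case}; together with the values on the generators from the first step, these yield enough linear constraints to recover both $\rho_q(\tau_\gamma)(g_r)$ and $\rho_q(\tau_\gamma)(g_n)$.

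Finally, the assertions on the order of $\rho_q(\tau_\gamma)$ follow by inspecting the explicit matrix: on $\mathrm{Span}(g_1,\ldots,g_{r-1})$ the map is the scalar $q^{k_1+\cdots+k_r}$, while on a complement the unipotent blocks coming from $g_r$ and $g_n$ have identity as their diagonal; hence the eigenvalues are exactly $1$ and $q^{k_1+\cdots+k_r}$, giving the stated dichotomy between finite order and unipotency. The main obstacle, as with every Dehn-twist computation on a cyclic cover, is tracking signs and powers of $q$ through the decomposition of the twisted arcs into the dual basis; this requires care with the orientation conventions fixing the lifts $\tilde{\delta}_i$ and with the cyclic shift $b_{n+1}=b_1$, but is otherwise routine bookkeeping.
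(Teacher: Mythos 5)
The paper states this theorem as a cited result from \cite{Menet:these} and does not reprove it, so there is no in-text argument to compare yours against; I assess the proposal on its own terms. Your topological localization of $\tilde{\tau}_\gamma$ is the right picture: after isotoping $\tau_\gamma$ into an annular neighbourhood $A$ of $\gamma$ disjoint from the discs $D_i$ with $i\notin\{r,n\}$, the lift $\tilde{\tau}_\gamma$ fixed near $\tilde{\infty}$ is the identity on the outer part and a deck transformation $T^{k_1+\cdots+k_r}$ on the part lying over the interior of $E$, which immediately yields $\rho_q(\tau_\gamma)(g_i)=q^{k_1+\cdots+k_r}g_i$ for $i<r$ and $\rho_q(\tau_\gamma)(g_i)=g_i$ for $r<i<n$. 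This is consistent with Corollary~\ref{cor:Dehn:twist} and the mechanism of Lemma~\ref{lm:dual:border:neigh:infty}, and the eigenvalue/order analysis in your final paragraph is correct once the matrix is in hand.

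The gap is in the parenthetical claim that the relations \eqref{eq:rel:generators} and \eqref{eq:rel:gen:degen:case}, together with the first-step values, ``yield enough linear constraints to recover both $\rho_q(\tau_\gamma)(g_r)$ and $\rho_q(\tau_\gamma)(g_n)$'' without explicit chain computations. Applying $\rho_q(\tau_\gamma)$ to $\sum_{i=1}^n g_i=0$ gives $\sum_i\rho_q(\tau_\gamma)(g_i)=0$, which after substituting the known columns is a \emph{single} linear equation relating the two unknown vectors. Since $g_n=-\sum_{i<n}g_i$ is already a derived generator, this equation is tautological: it expresses $\rho_q(\tau_\gamma)(g_n)$ in terms of $\rho_q(\tau_\gamma)(g_r)$ (and the other columns), but cannot determine either independently; the degenerate relation is likewise preserved automatically. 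In particular, for $r=2$ the system is genuinely underdetermined even once one knows that $\rho_q(\tau_\gamma)(g_r)-g_r$ and $\rho_q(\tau_\gamma)(g_n)-g_n$ are scalar multiples of the boundary class. One of $\rho_q(\tau_\gamma)(g_r)$, $\rho_q(\tau_\gamma)(g_n)$ must be pinned down directly, either by the chain-level argument you sketch (identifying $\rho_q(\tau_\gamma)(g_r)-g_r$ with $q^{k_1+\cdots+k_r}\cdot\sum_{l<r}(\bar{q}^{k_1+\cdots+k_l}-1)g_l$ and fixing the scalar by tracking the shear carefully, exactly as in Lemma~\ref{lm:dual:border:neigh:infty}) or by exploiting that $\rho_q(\tau_\gamma)$ preserves $\langle\cdot,\cdot\rangle$ and using the pairings of Theorem~\ref{th:Menet:generator:set}, which you do not invoke. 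The chain computation is therefore not optional bookkeeping; it is the step that actually produces the $g_r$-column.
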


\section{Topological preliminaries}\label{sec:top:prim}
Let $\kappa=(k_1,\dots,k_n), \bb=(b_1,\dots,b_n)$ and $\hX$ be as in \textsection~\ref{subsec:cyclic:covers:intro}.
In this section, we prove some  technical results on the cohomology with compact support on subsurfaces of $\hX$ invariant by the action of $T$.
Those results are frequently used in the proof of the main theorems. Throughout this section, we do not assume that $\gcd(k_1,\dots,k_n,d)=1$.
Let $e:=\gcd(k_1,\dots,k_n,d)$. This means that $\hX$ has $e$ connected components. The automorphism $T$ permutes cyclicly the components of $\hX$, and $T^{e}$ preserves each of these components.

\subsection{Topological cyclic coverings of punctured spheres}\label{subsec:topo:cover}
Recall that $\tilde{\Bc}$ and $\tilde{\infty}$ are the preimages of $\Bc=\{b_1,\dots,b_n\}$ and of $\infty$ in $\hX$ respectively. By a slight abuse of notation, we denote again by $\pi$  the restriction of the projection $\pi: \hX \to \CP^1$ to $\hX^*:=\hX\setminus(\tilde{\Bc}\cup\tilde{\infty})$. The map $\pi: \hX^* \to \C\setminus\Bc$ is a topological covering of degree $d$.

Topologically, the punctured surface $\hX^*$ can be constructed as follows: consider the group morphism $\chi: \pi_1(\C\setminus\Bc) \to \Z/d\Z$ which maps  a loop freely homotopic to the  border of a small disc about $x_i$ to $k_i \mod d$. Then $\chi(\pi_1(\C\setminus\Bc))=\{0,e,\dots,d-e\}$. Let $\Pi_0:=\ker(\chi) \subset \pi_1(\C\setminus\Bc)$. Let $\hX^*_0$ be a component of $\hX^*$. Then  $\hX^*_0$ is homeomorphic to the quotient $\Hbb/\Pi_0$, where $\Hbb$ is the universal cover of $\C\setminus\Bc$.
In this setting, the action of $T^{e}$ on $\hX^*_0$ is induced by  the action of the lift of a loop $\gamma$ in   $\C\setminus\Bc$ such that $\chi(\gamma)=e$.
Let $\hX_0$ be the closure of $\hX^*_0$ in $\hX$. Then $\hX_0$ is a connected component of  $\hX$, and we have $\hX= \sqcup_{k=0}^{d/e-1}T^k(\hX_0)$.

\begin{Lemma}\label{lm:equiv:exponents}
Let $\kappa'=(k'_1,\dots,k'_n) \in (\Z_{\geq 0})^n$ be another tuple of $n$ natural numbers. Let $\pi': \hX' \to \CP^1$ be cyclic covering constructed from the curve $\CC_{\bb,\kappa'}: y^d=\prod_{i=1}^n(x-b_i)^{k'_i}$. Denote by $T'$ the automorphism of $\hX'$ which is induced by the map $(x,y) \mapsto (x,\zeta_d\cdot y)$.  Assume that
$$
k'_i -k_i \equiv 0 \mod d, \; \text{ for all } i \in \{1, \dots,n\}
$$
Then there is an isomorphism $f: \hX \to  \hX'$ such that $\pi=\pi'\circ f$, and $T'\circ f=f\circ T$.
\end{Lemma}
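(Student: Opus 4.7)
The plan is to exhibit an explicit rational map between the two affine curves and extend it to the normalisations by smoothness. Set $m_i := (k'_i - k_i)/d \in \Z$ for each $i$, and on the open locus $U \subset \Cc_{\bb,\kappa}$ where $x \neq b_i$ for every $i$, define
\[
f(x,y) := \Bigl(x, \; y\cdot\prod_{i=1}^{n}(x-b_i)^{m_i}\Bigr).
\]
Because $\Bigl(y\cdot\prod_i(x-b_i)^{m_i}\Bigr)^d = y^d\cdot\prod_i(x-b_i)^{dm_i} = \prod_i(x-b_i)^{k_i+dm_i} = \prod_i(x-b_i)^{k'_i}$, the image lies in $\Cc_{\bb,\kappa'}$. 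So $f$ defines a morphism from $U$ to $\Cc_{\bb,\kappa'}$, and in particular a rational map $f\colon \hX \dashrightarrow \hX'$.

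Next, I would observe that the symmetric formula $g(x,y) := \bigl(x, y\cdot\prod_i(x-b_i)^{-m_i}\bigr)$, defined on the corresponding open set of $\Cc_{\bb,\kappa'}$, provides a rational inverse. Since $\hX$ and $\hX'$ are both smooth projective curves, a standard fact from the theory of algebraic curves (every rational map from a smooth projective curve to a projective variety extends to a morphism) guarantees that both $f$ and $g$ extend to morphisms $\hX \to \hX'$ and $\hX' \to \hX$; the extensions are mutually inverse because they agree with $f$ and $g$ on a dense open set, so $f$ is an isomorphism of Riemann surfaces.

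The compatibilities are then immediate from the definition of $f$. For the projection, $\pi'\circ f(x,y) = x = \pi(x,y)$ on the dense open set $U$, hence $\pi' \circ f = \pi$ everywhere by continuity. For the deck transformation, on $U$ one has
\[
f\circ T(x,y) = f(x,\zeta_d y) = \bigl(x,\zeta_d\cdot y\prod_i(x-b_i)^{m_i}\bigr) = T'\circ f(x,y),
\]
and again the identity extends to all of $\hX$ by density.

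The only potential obstacle is to make sure $f$ really does extend across the points lying above $\{b_1,\dots,b_n\}\cup\{\infty\}$ in spite of the factors $(x-b_i)^{m_i}$ possibly having poles or zeros there. This is handled entirely by the universal property of smooth projective curves invoked above, so no case analysis on the local structure of the ramification (which depends on $\gcd(k_i,d)$) is needed. I therefore expect the argument to be short, with the substance of the lemma being the explicit formula for $f$ on the affine locus.
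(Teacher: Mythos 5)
Your proof is correct and follows essentially the same approach as the paper's: both exhibit the explicit rational map $(x,y)\mapsto\bigl(x,\,y\prod_i(x-b_i)^{(k'_i-k_i)/d}\bigr)$ on the affine locus and then extend it to the normalisations, with you spelling out the extension step (via the universal property of smooth projective curves) that the paper leaves as ``straightforward to check.''
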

\begin{proof}
Denote by $\CC^*_{\bb,\kappa}$ and $\CC^*_{\bb,\kappa'}$ the preimages of $\C\setminus\{b_1,\dots,b_n\}$ in $\CC_{\bb,\kappa}$ and $\CC_{\bb,\kappa'}$ respectively. Let $r_i=\frac{k'_i-k_i}{d}, \; i=1,\dots,n$. Then the map
$$
\begin{array}{cccc}
f: & \CC^*_{\bb,\kappa} & \to &  \CC^*_{\bb,\kappa'}\\
   & (x,y) & \mapsto & (x,(x-b_1)^{r_1}\cdots(x-b_n)^{r_n}y)
\end{array}
$$
is an isomorphism. It is straightforward to check that $f$ extends to an isomorphism between $\hX$ and $\hX'$ satisfying the conditions stated in the lemma.
\end{proof}
\begin{Remark}\label{rk:lm:equiv:exponents}
Lemma~\ref{lm:equiv:exponents} can also be shown by using the topological construction of $\hX$ and $\hX'$.
\end{Remark}

The following lemma also follows immediately from the topological construction of $\hX$. Its proof is left to the reader.
\begin{Lemma}\label{lm:equiv:subsurfaces}
Let $\{I_1,\dots,I_m\}$ be a partition of the set $\{1,\dots,n\}$ such that $m\geq 3$. Let $\kappa'=(k'_1,\dots,k'_m)\in (\Z_{>0})^m$ be a sequence of positive integers such that  $k'_j \equiv \sum_{i\in I_j}k_i \mod d, \; j=1,\dots,m$.
Let $E_1,\dots,E_m$ be $m$ discs in $\C$ which are pairwise disjoint and satisfy $E_j\cap \Bc=\{x_i, \; i\in I_j\}$.
For each  $j=1,\dots,m$, choose a point $b'_j$ in $E_j$, and let $\hX'$ be the Riemann surface constructed from the curve  $y^d=\prod_{j=1}^m(x-b'_j)^{k'_j}$.
Denote by $T'$ the isomorphism of $\hX'$ induced by the map $(x,y)\mapsto (x,\zeta_d\cdot y)$.

Let $Y$ and $Y'$ be the preimages of $\CP^1 \setminus \left(\bigcup_{j=1}^m E_j\right)$ in $\hX$ and $\hX'$ respectively. Then there is a homeomorphism $\varphi: Y \to Y'$ such that $T '\circ \varphi = \varphi\circ T$.
\end{Lemma}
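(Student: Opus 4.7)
The plan is to reduce the statement to the standard classification of finite cyclic coverings via monodromy characters, following the topological model of $\hX$ introduced in the paragraph preceding the lemma. First I would set $B:=\CP^1\smin \bigcup_{j=1}^m E_j$ and let $B^*:=B\smin\{\infty\}\subset \C\smin\Bc$. Both $Y$ and $Y'$ are closed subsurfaces of $\hX$, $\hX'$ respectively, invariant under the cyclic action of $T$, $T'$; their interiors $Y^*$ and $Y'^*$ (obtained by deleting the preimage of $\infty$) are unramified cyclic coverings of $B^*$, and the inclusion $B^*\hookrightarrow B$ makes $Y$ (resp.\ $Y'$) the natural completion of $Y^*$ (resp.\ $Y'^*$) obtained by filling in the fiber over $\infty$.

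Next I would compute the monodromy characters. For a base point in $B^*$, denote by $\delta_j\in\pi_1(B^*)$ a small loop enclosing $E_j$ once, positively oriented. Inside $\C\smin\Bc$ the loop $\delta_j$ is homotopic to a product of small loops around the points $b_i$ for $i\in I_j$, so by the description of $\pi:\hX\to\CP^1$ recalled in \textsection\ref{subsec:topo:cover}, the monodromy character $\chi:\pi_1(\C\smin\Bc)\to\Z/d\Z$ classifying $\hX^*\to \C\smin\Bc$ restricted to $\pi_1(B^*)$ sends $\delta_j$ to $\sum_{i\in I_j}k_i \bmod d$. For the cover $Y'^*\to B^*$ constructed from $\kappa'$ and the points $b'_j\in E_j$, the analogous character $\chi'$ sends $\delta_j$ to $k'_j\bmod d$. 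By hypothesis these two residues coincide, and the classes $\{\delta_1,\dots,\delta_m\}$ generate $\pi_1(B^*)$ (modulo the single relation $\delta_1\cdots\delta_m=\delta_\infty^{-1}$), hence $\chi\!\restriction_{\pi_1(B^*)}=\chi'$. By the standard correspondence between connected covers and subgroups of $\pi_1$, normalised so that the generator $1\in\Z/d\Z$ acts as $T$ (respectively $T'$), this equality produces a $T$-equivariant homeomorphism $\varphi^*:Y^*\to Y'^*$ covering the identity of $B^*$.

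Finally, I would extend $\varphi^*$ across the fiber over $\infty$. The cyclic structure of the fiber and the local model of the cover near a branch value are entirely determined by $\chi(\delta_\infty)$, i.e.\ by $-\sum_{i} k_i\equiv -\sum_j k'_j \pmod d$, so the two local models near $\pi^{-1}(\infty)$ and $\pi'^{-1}(\infty)$ are abstractly isomorphic as germs with a cyclic action. Choosing such a local $T$-equivariant identification on a punctured neighbourhood of $\infty$ in $B$ (which always exists because $\chi(\delta_\infty)=\chi'(\delta_\infty)$) and gluing it to $\varphi^*$ gives the required global homeomorphism $\varphi:Y\to Y'$ satisfying $T'\circ\varphi=\varphi\circ T$.

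The only delicate point, which I would spell out carefully, is that the cover classified by $\chi$ is only defined up to the choice of generator of the deck group, so one must fix the convention ``the residue class $1$ acts as $T$'' (respectively $T'$) when invoking the classification theorem; once this convention is fixed on both sides the homeomorphism is automatically $T$-equivariant and no further adjustment is needed. Everything else is a routine application of covering space theory, which is why the authors leave the proof to the reader.
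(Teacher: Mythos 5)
Your proof is correct and is precisely the argument the paper has in mind: the lemma is stated with the comment that it ``follows immediately from the topological construction of $\hX$'' in \textsection\ref{subsec:topo:cover}, and your write-up is a careful unpacking of that construction via monodromy characters. One wording issue worth fixing: the covers $Y^*\to B^*$ and $Y'^*\to B^*$ are in general \emph{disconnected} (this already happens when $\gcd(k_1,\dots,k_n,d)>1$, and even when $\hX$ is connected it happens whenever $\chi$ does not surject onto $\Z/d\Z$ after restriction to $\pi_1(B^*)$), so one should not appeal to ``the standard correspondence between connected covers and subgroups of $\pi_1$''. The correct statement, which your argument is really using, is that covers of $B^*$ with structure group $\Z/d\Z$ (together with the cyclic action) are classified up to equivariant isomorphism by $\Hom(\pi_1(B^*),\Z/d\Z)$, and the equality $\chi\restriction_{\pi_1(B^*)}=\chi'$ gives the equivariant homeomorphism $\varphi^*$ directly, with the convention ``$1\in\Z/d\Z$ acts as $T$ (resp.\ $T'$)'' built into the construction. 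As for the extension over $\infty$: since $\varphi^*$ covers the identity of $B^*$, its restriction to the preimage of a punctured disc around $\infty$ is a homeomorphism between finite unions of annular ends covering the identity of the punctured disc, and each such annulus map extends over the central puncture by continuity; no independent choice of local identification is needed, and equivariance of the extension follows by continuity.
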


\subsection{Cohomology of subsurfaces invariant by $T$}\label{subsec:coh:inv:subsurf}
In view of Lemma~\ref{lm:equiv:exponents}, from now on we will make the following assumption
\begin{equation}\label{eq:cond:ki<d}
 0 < k_i < d, \quad  \text{ for all } i=1,\dots,n.
\end{equation}
By convention, all the subsurfaces in $\hX$ are considered without their boundary (that is subsurfaces are open subsets of $\hX$). If $Y$ is a subsurface of $\hX$, we denote by $Y^c$ the interior of its complement, and by $\partial Y$ a tubular neighborhood of its boundary. By a slight abuse of notation, one can write $\partial Y =  Y \cap Y^c$.
All cohomologies are considered with coefficients in $\C$, unless indicated otherwise.
Assuming that $Y$ is invariant by $T$, for all $q\in \Ub_d$, we will denote by $H^\bullet_c(Y)_q$ the $q$-eigenspace of the action of $T^*$ on $H^\bullet_c(Y)$.

In what follows $q$ will be a fixed primitive $d$-th root of unity.
\begin{Lemma}\label{lm:coh:disconnect:surf}
Let $\hX_0$ be a connected component of $\hX$.  Denote by $H^1(\hX_0)_{q^e}$ the $q^e$-eigenspace of the action of $T^{e*}$ on $H^1(\hX_0)$.  Then  there is an isomorphism between $H^1(\hX)_q$ and $H^1(\hX_0)_{q^e}$.
\end{Lemma}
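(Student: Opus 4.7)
The plan is to exhibit the asserted isomorphism as the restriction map $\omega \mapsto \omega|_{\hX_0}$. Since $T$ cyclically permutes the $e$ connected components of $\hX$, we have $\hX = \bigsqcup_{k=0}^{e-1} T^k(\hX_0)$ and a corresponding direct sum decomposition
\[
H^1(\hX) = \bigoplus_{k=0}^{e-1} H^1\!\bigl(T^k(\hX_0)\bigr).
\]
In this grading $T^*$ acts as a cyclic shift, pulling the $(k+1)$-st summand back onto the $k$-th via $T|_{T^k(\hX_0)}$, while $T^{e*}$ preserves each summand individually. Conceptually, $H^1(\hX)$ is an induced representation of $\langle T\rangle \simeq \Z/d\Z$ from the stabilizer subgroup $\langle T^e\rangle$, and the lemma is the Frobenius-type identification of isotypic components.

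First I would check that the restriction map sends $H^1(\hX)_q$ into $H^1(\hX_0)_{q^e}$: for $\omega \in H^1(\hX)_q$, applying $T^{e*}$ gives $q^e\omega$, and because $T^e$ preserves $\hX_0$, restricting this identity yields $(T^e|_{\hX_0})^*(\omega|_{\hX_0}) = q^e(\omega|_{\hX_0})$. For injectivity, I would iterate the eigen-relation $T^*\omega = q\omega$ slot by slot to obtain the explicit formula
\[
\omega|_{T^k(\hX_0)} \;=\; q^k\,\bigl((T^k|_{\hX_0})^{-1}\bigr)^*(\omega|_{\hX_0}), \qquad k=0,\dots,e-1,
\]
so that $\omega|_{\hX_0}=0$ forces $\omega=0$. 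For surjectivity I would use this same formula as a construction: given $\omega_0 \in H^1(\hX_0)_{q^e}$, define $\omega$ by the prescription above with $\omega_0$ in place of $\omega|_{\hX_0}$, and then verify $T^*\omega = q\omega$ directly on each summand. The only consistency requirement is that the prescription agree when one returns to $\hX_0$ after $e$ cyclic steps, which is precisely the condition $(T^e|_{\hX_0})^*\omega_0 = q^e \omega_0$ guaranteed by the assumption $\omega_0 \in H^1(\hX_0)_{q^e}$.

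There is no genuine obstacle here: the argument is linear algebra on a finite direct sum once the geometric decomposition $\hX = \bigsqcup_k T^k(\hX_0)$ is in place. The one subtlety is to keep straight the direction of the maps $T^k|_{\hX_0}\colon \hX_0 \to T^k(\hX_0)$ and the pullback conventions, so that iterating the shift $e$ times produces the eigenvalue $q^e$ of $T^{e*}$ rather than $q^{-e}$; once this bookkeeping is correct, the rest is automatic.
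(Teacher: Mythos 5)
Your proof is correct and follows essentially the same route as the paper's: decompose $H^1(\hX)=\bigoplus_{k=0}^{e-1}H^1(T^k(\hX_0))$, observe that the $q$-eigenspace condition forces each slot to be determined by the $\hX_0$-slot via $\omega|_{T^k(\hX_0)} = q^k\,(T^{-k})^*(\omega|_{\hX_0})$, and note that the wrap-around condition after $e$ steps is exactly $\omega|_{\hX_0}\in H^1(\hX_0)_{q^e}$. The paper presents this as a direct construction of $\eta=(\eta_0,\dots,\eta_{e-1})$ from $\eta_0$ rather than phrasing it as a restriction isomorphism, but the content is identical.
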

\begin{proof}
Denote the components of $\hX$ by $\hX_k, \; k=0,\dots,e-1$, where $\hX_k=T^k(\hX_0)$.
Every  element $\eta\in H^1(\hX)$ is given by a tuple $(\eta_0,\dots,\eta_{e-1})$, with $\eta_k\in H^1(\hX)_k$. Since $T(\hX_k)=\hX_{k+1}$, it follows that $\eta \in H^1(\hX)_q$ if and only if $T^*(\eta_{k})=q\eta_{k-1}$. It follows in particular that $T^{e*}(\eta_0)=q^e\eta_0$, that is $\eta_0\in H^1(\hX_0)_{q^e}$. Conversely, given $\eta_0 \in H^1(\hX_0)_{q^e}$, by setting $\eta_k:=q^kT^{-k*}\eta_0$, and $\eta:=(\eta_0,\dots,\eta_{e-1})$, we have that $\eta \in H^1(\hX)_q$.
\end{proof}
As a consequence of Lemma~\ref{lm:coh:disconnect:surf} we get
\begin{Corollary}\label{cor:hX:not:connected}
Theorem~\ref{th:Menet:dim:signature} and Theorem~\ref{th:Menet:generator:set} hold whenever $e=\gcd(k_1,\dots,k_n,d) < d$.
\end{Corollary}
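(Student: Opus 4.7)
The strategy is to reduce both theorems to the connected case by working on a single component $\hX_0$ of $\hX$ and transporting the results through the isomorphism $H^1(\hX)_q \cong H^1(\hX_0)_{q^e}$ of Lemma~\ref{lm:coh:disconnect:surf}. By Lemma~\ref{lm:equiv:exponents} we may assume $0<k_i<d$, and we write $k_i=ek'_i$, $d=ed'$, so that $\gcd(k'_1,\dots,k'_n,d')=1$. The topological description in \textsection\ref{subsec:topo:cover} identifies $\hX_0$ with the (connected) cyclic cover of $\CP^1$ associated to the curve $y^{d'}=\prod_i(x-b_i)^{k'_i}$, and identifies the restriction $T^e|_{\hX_0}$ with its deck transformation of order $d'$. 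Since $q=e^{-2\pi\imath k/d}$ is a primitive $d$-th root of unity, $\gcd(k,d)=1$, hence $\gcd(k,d')=1$, so $q^e$ is a primitive $d'$-th root of unity; thus Theorem~\ref{th:Menet:dim:signature} and Theorem~\ref{th:Menet:generator:set} apply to $\hX_0$ with parameters $(k'_1,\dots,k'_n),\ d',\ q^e$.

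The dimension and signature formulas translate directly. Indeed, $(q^e)^{k'_i}=q^{k_i}$ and $(q^e)^{k'_1+\cdots+k'_n}=q^{k_1+\cdots+k_n}$, so $F'_{q^e}$ and the correction term $\eps_0$ computed on $\hX_0$ coincide with those attached to $\hX$. For the signature, $(k/d')k'_i=(k/d)k_i$, so $\{(k/d')k'_i\}=\{(k/d)k_i\}$, and likewise for $1-k/d$. Combining with Lemma~\ref{lm:coh:disconnect:surf}, the formulas \eqref{eq:dim:H:1:X:q}, \eqref{eq:signature:rq}, \eqref{eq:signature:sq} of Theorem~\ref{th:Menet:dim:signature} follow for $\hX$.

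For Theorem~\ref{th:Menet:generator:set}, take generators $g'_1,\dots,g'_n$ of $H^1(\hX_0)_{q^e}$ provided by the connected case, with $g'_i$ supported in $X'_i:=\pi|_{\hX_0}^{-1}(D_i)$. Using the explicit formula from (the proof of) Lemma~\ref{lm:coh:disconnect:surf}, lift each $g'_i$ to
\[
\tilde g_i=(g'_i,\ q T^{-1*}g'_i,\ \dots,\ q^{e-1}T^{-(e-1)*}g'_i)\in H^1(\hX)_q,
\]
supported in $X_i=\pi^{-1}(D_i)=\bigsqcup_{k=0}^{e-1}T^k(X'_i)$. A direct computation, using that each $T^{-k}$ preserves the intersection pairing, gives $\langle \tilde g_i,\tilde g_j\rangle_{\hX}=e\,\langle g'_i,g'_j\rangle_{\hX_0}$ for all $i,j$. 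The pairing formulas for $\hX_0$ involve the factor $|1-q^e|^2$ instead of the desired $|1-q|^2$; since $q^e\neq 1$, we correct this by rescaling $g_i:=c\,\tilde g_i$ with the real positive constant $c$ determined by $|c|^2\, e\,|1-q^e|^2=|1-q|^2$. With this single normalisation the diagonal, the near-diagonal, and the orthogonality formulas of Theorem~\ref{th:Menet:generator:set} all match simultaneously (their $q$-dependence through the factors $(1-q^{k_i})$, $(1-q^{k_i+k_{i+1}})$ is already identical on both sides, since $q^{ek'_i}=q^{k_i}$). The linearity of the isomorphism of Lemma~\ref{lm:coh:disconnect:surf} transports the global relation $\sum_i g'_i=0$ to $\sum_i g_i=0$, and the degenerate-case relation \eqref{eq:rel:gen:degen:case} similarly from $\hX_0$ to $\hX$.

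\textbf{Main obstacle.} The only non-cosmetic point is the scaling: applying Theorem~\ref{th:Menet:generator:set} on $\hX_0$ produces natural generators whose self-pairings carry the factor $|1-q^e|^2$ rather than $|1-q|^2$, and the lift through Lemma~\ref{lm:coh:disconnect:surf} introduces the additional factor $e$ from the sum over the $e$ components. One must verify that a single real rescaling constant $c$ simultaneously normalises all the pairings $\langle g_i,g_i\rangle$, $\langle g_i,g_{i+1}\rangle$ to the forms stated in Theorem~\ref{th:Menet:generator:set}; this works precisely because every off-diagonal and diagonal entry acquires the same prefactor $(1-q^e)(1-\bar q^e)$ from Theorem~\ref{th:Menet:generator:set} applied to $\hX_0$, so a common rescaling corrects all entries at once.
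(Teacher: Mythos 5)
Your proof is correct and follows exactly the route the paper intends: reduce to a connected component via the isomorphism $H^1(\hX)_q \cong H^1(\hX_0)_{q^e}$ of Lemma~\ref{lm:coh:disconnect:surf}, apply the theorems to $\hX_0$ with parameters $(d',k'_i,q^e)$, and transport back. The paper states the corollary as an immediate consequence of that lemma, whereas you usefully spell out the only non-trivial verification, namely that the prefactor $e\,(1-q^e)(1-\bar q^e)$ arising from the sum over the $e$ components and from applying Theorem~\ref{th:Menet:generator:set} at $q^e$ is uniform across all pairings, so a single positive real rescaling brings them to the stated $(1-q)(1-\bar q)$ form simultaneously.
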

\begin{Lemma}\label{lm:coh:annulus} \hfill
\begin{itemize}
\item[(i)] Let $U$ be a subsurfaces in $\hX$  whose components are cyclicly permuted by  $T$. Then we have
$$
\dim H^2_c(U)_q=\left\{
\begin{array}{ll}
0 & \text{ if } \quad  |\pi_0(U)| < d, \\
1 & \text{ if } \quad |\pi_0(U)| = d.
\end{array}
\right.
$$

\item[(ii)] Let $E$ be a disc in $D$ whose boundary is disjoint from the set $\Bcal$. Denote by $A$ the preimage of an annulus that contains the boundary of $E$ and disjoint from  $\Bcal$. Let $I:=\{i\in \{1,\dots,n\}, \; b_i \in E\}$. Then we have
$$
\dim_\C H^1_c(A)_q =\dim_\C H^2_c(A)_q=\left\{
\begin{array}{ll}
0 & \text{ if } q^{\sum_{i\in I} k_i} \neq 1, \\
1 & \text{ if } q^{\sum_{i\in I} k_i} = 1.
\end{array}
\right.
$$
\end{itemize}
\end{Lemma}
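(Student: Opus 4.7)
For part (i), the plan is to reduce to a counting of eigenvalues of a cyclic permutation matrix. Each connected component $U_j$ of $U$ is a connected open oriented surface, so Poincaré duality gives $H^2_c(U_j)\cong H^0(U_j)^{\vee}\cong \C$. Writing $m:=|\pi_0(U)|$, a divisor of $d$, we obtain $H^2_c(U)\cong \C^m$. Since $T$ is holomorphic (hence orientation-preserving) and permutes the components cyclically, picking an orientation class in $H^2_c(U_0)$ and transporting it by iterating $T^*$ yields a basis on which $T^*$ is the standard cyclic permutation of order $m$. Its eigenvalues are the $m$-th roots of unity, and since $q$ is a primitive $d$-th root with $m\mid d$, it appears as an eigenvalue precisely when $m=d$, giving the claim.

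For part (ii), the first step is to determine the number of components of $A$ and their structure. Let $m:=\sum_{i\in I}k_i$ and $s:=\gcd(m,d)$. The loop $\partial E$ generates $\pi_1(A_0)$, where $A_0\subset \CP^1$ is the annulus whose preimage is $A$, and its image under the monodromy homomorphism $\pi_1(\C\smin\Bc)\to \Z/d$ equals $m\bmod d$. Therefore the cover $A\to A_0$ has exactly $s$ connected components, each a connected cyclic cover of degree $d/s$ of the annulus $A_0$; in particular each component $A_j$ is itself an open annulus. Moreover $T$ cyclically permutes the $s$ components, and $T^s$ restricts to each $A_j$ as a generator of its deck group, which acts by rotation of the annulus and is therefore isotopic to the identity.

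The second step is to combine this with the cohomology of an open annulus. For each $k\in\{1,2\}$ one has $H^k_c(A_j)\cong \C$ by Poincaré duality, and since $T^s|_{A_j}$ is isotopic to $\Id$, the map $(T^s)^*$ is trivial on $H^k_c(A_j)$. Writing $H^k_c(A)=\bigoplus_{j=0}^{s-1}H^k_c(A_j)\cong \C^s$, we can now argue exactly as in (i): choose a nonzero class $\omega_0\in H^k_c(A_0)$ and set $\omega_j:=(T^*)^{-j}\omega_0$; the relation $(T^s)^*=\Id$ on $H^k_c(A_0)$ ensures this is a well-defined basis on which $T^*$ realises the cyclic permutation of order $s$. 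Hence the $q$-eigenspace is nonzero iff $q^s=1$, i.e.\ iff $s=d$ (using $s\mid d$ and the primitivity of $q$), which is equivalent to $d\mid m$, i.e.\ to $q^{\sum_{i\in I}k_i}=1$; in that case the eigenspace is one-dimensional.

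I do not anticipate any serious obstacle. The computation is a straightforward eigenvalue count once the topology of $A$ is identified; the only point requiring a bit of care is the vanishing of $(T^s)^*$ on $H^1_c$ and $H^2_c$ of each annular component, which follows from the general fact that the deck transformations of a connected cyclic cover of an open annulus are rotations, hence isotopic to the identity.
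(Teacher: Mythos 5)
Your proof is correct and follows essentially the same route as the paper's. The paper simply cites the Poincaré dualities $H^2_c(U)_q \simeq H^0(U)^*_{\bar q}$, $H^2_c(A)_q \simeq H^0(A)^*_{\bar q}$, $H^1_c(A)_q \simeq H^1(A)^*_{\bar q}$ and leaves the remaining eigenvalue count to the reader; you carry out that count directly on compactly supported cohomology, using Poincaré duality on each connected component to identify $H^k_c(A_j)\cong\C$ and then the fact that $T^*$ realizes a cyclic permutation of order $m=|\pi_0(U)|$ (resp. $s=\gcd(\sum_{i\in I}k_i,d)$). The two ingredients you supply — that an orientation-preserving automorphism acts trivially on $H^2_c$ of a connected open oriented surface, and that the deck transformation of a connected cyclic cover of an annulus is isotopic to the identity (hence acts trivially on $H^1_c$) — are exactly the details the paper suppresses, so your write-up is a faithful expansion of the intended argument.
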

\begin{proof}
The assertions of the lemma follow from the dualities $H^2_c(U)_q \simeq H^0(U)^*_{\bar{q}}, H^2_c(A)_q\simeq H^0(A)^*_{\bar{q}}$, and $H^1_c(A)_{q} \simeq H^1(A)^*_{\bar{q}}$. The details are left to the reader.
\end{proof}
Recall that the set of marked points $\Bc$ is contained in the disc $D$.
Let $X$ denote the preimage of the interior of $D$ in $\hX$.
As usual we denote by $\partial X$ a tubular neighborhood of the boundary of $X$.
Note that the complement of $X$ in $\hX$ is the union of $e_0=\gcd(k_0,d)$ discs, and $\partial X$ is the union of $e_0$ annuli cyclicly permuted by $T$.
We consider $H^1_c(\partial X)_q$ as a subspace of $H^1(X)_q$ via the morphism  induced by  the inclusion $\partial X \hookrightarrow X$. Let $\{g_1,\dots,g_n\}$ be the generating set of $H^1(\hX)_q$ given in Theorem~\ref{th:Menet:generator:set}. Since the $g_i$ is represented by a closed $1$-form with support in $X$, we can consider $g_1,\dots,g_n$ as elements of $H^1_c(X)_q$.
The following lemma is implicitly proved in \cite{Menet:these}.
\begin{Lemma}\label{lm:dual:border:neigh:infty}
Assume that $q^{k_1+\dots+k_n}=1$ then $H^1_c(\partial X)_q \subset H^1_c(X)_q$ is generated by
$$
g^*=\sum_{i=1}^{n-1}(\bar{q}^{k_1+\dots+k_i}-1)g_i.
$$
\end{Lemma}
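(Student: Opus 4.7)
The plan is to identify $H^1_c(\partial X)_q$ with the kernel of the natural extension-by-zero map $\phi_q\colon H^1_c(X)_q\to H^1(\hX)_q$ and then verify that $g^*$ is a nonzero element of this kernel.

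First I would unpack the geometry. Since $q^{k_1+\dots+k_n}=1$, the cover $\pi$ is unramified at $\infty$, so $Z:=\hX\setminus X$ is a disjoint union of $d$ closed discs $Z_0,\dots,Z_{d-1}$ cyclically permuted by $T$, and $\partial X$ is a disjoint union of $d$ annuli also permuted cyclically. Applying Lemma~\ref{lm:coh:annulus}(ii) to a slight shrinking $E$ of $D$ (so $I=\{1,\dots,n\}$ and $q^{\sum_{i\in I}k_i}=1$) gives $\dim H^1_c(\partial X)_q=1$. It therefore suffices to produce one nonzero element of $H^1_c(\partial X)_q$ inside $H^1_c(X)_q$ and to check it equals $g^*$ up to scalar.

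Next I would use the long exact sequence associated to the open-closed decomposition $X\hookrightarrow\hX\hookleftarrow Z$:
\begin{equation*}
\cdots\to H^0(\hX)_q\to H^0(Z)_q\xrightarrow{\delta}H^1_c(X)_q\xrightarrow{\phi_q}H^1(\hX)_q\to H^1(Z)_q\to\cdots.
\end{equation*}
Since $q\neq 1$ we have $H^0(\hX)_q=0$; since $Z$ is a union of $d$ discs cyclically permuted by $T$, the regular-representation decomposition gives $\dim H^0(Z)_q=1$ and $H^1(Z)_q=0$. This yields a short exact sequence $0\to H^0(Z)_q\xrightarrow{\delta}H^1_c(X)_q\xrightarrow{\phi_q}H^1(\hX)_q\to 0$, so $\ker\phi_q$ is one-dimensional. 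Geometrically, $\delta$ sends a locally constant function on $Z$ in the $q$-eigenspace to the class of the differential of any smooth extension to $\hX$; this differential has support in an arbitrarily small collar of $Z$, hence in $\partial X$. So the image of $H^1_c(\partial X)_q$ inside $H^1_c(X)_q$ lies in $\ker\phi_q$, and comparing the three one-dimensional spaces yields $H^1_c(\partial X)_q=\ker\phi_q$.

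It remains to check $g^*\in\ker\phi_q$ and $g^*\neq 0$ in $H^1_c(X)_q$. The first is an algebraic computation: by Theorem~\ref{th:Menet:generator:set}, in $H^1(\hX)_q$ we have both $\sum_{i=1}^n[g_i]=0$ and $\sum_{i=1}^n\bar q^{s_i}[g_i]=0$, where $s_i:=k_1+\dots+k_i$. Using $\bar q^{s_n}=1$,
\begin{equation*}
\phi_q(g^*)=\sum_{i=1}^{n-1}(\bar q^{s_i}-1)[g_i]=\sum_{i=1}^n\bar q^{s_i}[g_i]-\sum_{i=1}^n[g_i]=0,
\end{equation*}
so $g^*\in\ker\phi_q=H^1_c(\partial X)_q$. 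The coefficients $\bar q^{s_i}-1$ for $i=1,\dots,n-1$ are not all zero, since $\bar q^{s_1}=\bar q^{k_1}\neq 1$ under the running assumption $q^{k_i}\neq 1$ of Theorem~\ref{th:Menet:generator:set}. Since $\dim H^1_c(X)_q=n-1$ (read off from the short exact sequence above together with $\dim H^1(\hX)_q=n-2$ from Theorem~\ref{th:Menet:dim:signature}), nonvanishing of $g^*$ reduces to showing that $g_1,\dots,g_{n-1}$ form a basis of $H^1_c(X)_q$. I expect this linear independence to be the main obstacle. I would establish it via the perfect Poincar\'e duality pairing $H^1_c(X)_q\times H^1(X)_{\bar q}\to\C$, constructing a dual family in $H^1(X)_{\bar q}$ by the disc-by-disc procedure of Theorem~\ref{th:Menet:generator:set} applied with the conjugate character, and verifying that the resulting Gram matrix is invertible using the explicit local intersection formulas given there.
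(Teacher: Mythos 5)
Your framework is genuinely different from the paper's and, up to the last step, correct: you identify $H^1_c(\partial X)_q$ with the one-dimensional kernel of the extension-by-zero map $\phi_q\colon H^1_c(X)_q\to H^1(\hX)_q$ (the paper makes the same identification inside Lemma~\ref{lm:coh:preimage:D}), and the computation $\phi_q(g^*)=0$ follows cleanly from the two relations \eqref{eq:rel:generators} and \eqref{eq:rel:gen:degen:case} together with $\bar q^{k_1+\dots+k_n}=1$. This turns the lemma into ``$g^*\in\ker\phi_q$ and $g^*\neq 0$,'' which is conceptually lighter than the paper's approach. The paper instead works on an auxiliary surface $\hY$ obtained by blowing up each branch point into $k_i$ simple ones, realises the $g_i$ as Poincar\'e duals of explicit weighted $1$-cycles $\cc_l$, and proves the relation \eqref{eq:Y:rel:homology:b} at the cycle level, which simultaneously produces the boundary class and identifies it with $g^*$; nonvanishing is automatic there because the class is built from a visibly nontrivial boundary cycle.

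The gap is in your last step, and it is a real one. You propose to show $g_1,\dots,g_{n-1}$ are linearly independent in $H^1_c(X)_q$ by exhibiting an invertible Gram matrix against a ``dual'' family in $H^1(X)_{\bar q}$ constructed by the same disc-by-disc recipe. But that recipe produces \emph{compactly supported} classes, i.e.\ elements of $H^1_c(X)_{\bar q}$, and pairing $g_i\in H^1_c(X)_q$ against such classes just gives the intersection numbers $\langle g_i,g_j\rangle$. That $(n-1)\times(n-1)$ matrix is \emph{necessarily singular}: one checks directly from the formulas in Theorem~\ref{th:Menet:generator:set} (see Remark~\ref{rk:boundary:coh:class}) that $\langle g^*,g_j\rangle=0$ for every $j$, so the coefficient vector of $g^*$ is a null vector of the Gram matrix. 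This is not an accident of your choice of dual family; it reflects the fact that the span of $g_1,\dots,g_{n-1}$ maps onto the $(n-2)$-dimensional $H^1(\hX)_q$, so any pairing factoring through $H^1(\hX)_q$ has rank at most $n-2$. You cannot fix it by citing the basis statement of Lemma~\ref{lm:coh:preimage:D} either: in the paper that statement is \emph{deduced from} the present lemma, so using it here is circular. In short, your steps isolate exactly the right thing to prove ($g^*\neq 0$ in $H^1_c(X)_q$), but the verification you sketch cannot succeed, and some input at the level of explicit cycles or forms supported in $X$ (as in the paper's use of the auxiliary surface and the relation \eqref{eq:Y:rel:homology:b}) seems unavoidable.
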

\begin{proof}[Sketch of proof]
We consider an auxiliary surface $\hY$ defined as follows: for all $i=1,\dots,n$, let $D'_i$ be a small disc about $b_i$ contained in $D$ such that $D'_i\cap D'_j=\vide$ if $i\neq j$.
For each $i\in\{1,\dots,n\}$, let $b_{i,1},\dots,b_{i,k_i}$ be $k_i$ distinct points in $D'_i$. Then $\hY$ is the Riemann surface obtained from the equation
$$
y^d=\prod_{i=1}^n\prod_{j=1}^{k_i}(x-b_{i,j}).
$$
By a slight abuse of notation, denote also by $T$ the automorphism of $\hY$ induced by  the map $(x,y) \mapsto (x,\zeta_dy)$.
Let $Y$ be the preimage of $D$ in $\hY$.
Let $X'$ and $Y'$ be the preimages of $D\setminus\left(\cup_{i=1}^nD'_i \right)$ in $\hX$ and in $\hY$ respectively. It follows from Lemma~\ref{lm:equiv:subsurfaces} that there is a homeomorphism $\varphi: X' \to Y'$ equivariant with respect to  the actions of $T$. Therefore we have $H^1_c(X')_q\simeq H^1_c(Y')_q$.

Let $Y'_i$ be the preimage of the interior of the disc $D'_i$ in $\hY$. Since $q^{k_i}\neq 1$ for all $i\in \{1,\dots,n\}$, we have $H^1_c(X')_q \simeq H^1_c(X)_q$, and $H^1_c(Y)_q$ admits the following  decomposition
\begin{equation}\label{eq:H:1:hY:q:decomp}
H^1_c(Y)_q=H^1_c(Y')_q\oplus H^1_c(Y'_1)_q\oplus\dots\oplus H^1_c(Y'_n)_q.
\end{equation}
Let $\phi: H^1_c(Y)_q \to H^1_c(X)_q$ be the composition of the projection $H^1(\hY) \to H^1_c(Y')_q$ given by \eqref{eq:H:1:hY:q:decomp} followed by the isomorphism $\varphi^* : H^1_c(Y')_q \to H^1_c(X')_q\simeq H^1_c(X)_q$.

\medskip

For all $i\in\{1,\dots,n\}$ and $j\in \{1,\dots,k_i-1\}$, let $c_{k_1+\dots+k_{i-1}+j}$ be a simple arc in $D'_i$ joining $b'_{i,j}$ to $b'_{i,j+1}$. Let $c_{k_1+\dots+k_i}$ be a simple arc in $D$ joining $b'_{i,k_i}$ to $b'_{i+1,1}$ (here we use the convention $n+1\equiv 1$). The arcs $\{c_l, \; l=1,\dots,k_0\}$ (recall that $k_0=k_1+\dots+k_n$) are chosen so that if $l\neq l'$ then $c_l$ and $c_{l'}$ are either disjoint or only meet at their unique common endpoint. In particular, the union of those arcs forms the boundary of a disc $B$ in $\C$.

The preimage of each arc $c_l$ in $Y$ consists of  $d$ simple arcs with the same endpoints. We label those arcs by $\{c_{l,m}, \; m=0,\dots,d-1\}$, where $c_{l,m}=T^m(c_{l,0})$. Define
$$
\cc_l:=\sum_{m=0}^{d-1}q^m\cdot c_{l,m}.
$$
One readily checks that $\cc_l$ is a $1$-cycle, thus it represents an element of $H_1(Y,\C)$. Let $\hg_l \in H^1_c(Y)$ denote the Poincar\'e dual of $\cc_l$. By definition, $\hg_l$ is a closed $1$-form with compact support such that
$$
\int_{\cc_l}\beta=\int_{\hY}\hg_l\wedge \beta,
$$
for all closed $1$-form $\beta$ on $Y$. Remark that $T_*(\cc_l)=\bar{q}\cdot\cc_l$. Therefore, $T^*(\hg_l)=q\cdot\hg_l$ that is $\hg_l\in H^1_c(Y)_q$.

If $l=k_1+\dots+k_{i-1}+j$ where  $j\in\{1,\dots,k_i-1\}$, then since the support of $\cc_l$ is contained in $Y'_i$. Thus $\hg_l$ is represented by a closed $1$-form with compact support in $Y'_i$,  which means that $\hg_l\in H^1_c(Y'_i)_q$, and hence $\phi(\hg_l)=0$.
It is shown in \cite{Menet:these} that  the elements of the family $\{g_1,\dots,g_n\}$ can be defined as
$$
g_i:=\frac{1-q}{\sqrt{d}}\phi(\hg_{k_1+\dots+k_i}) \in H^1_c(X)_q.
$$
Recall that the arcs $\{c_l, \; l=1,\dots,k_0\}$ form the boundary of a disc $B$ in $D$. The preimage of $B$ in $Y$ consists of $d$ discs denoted by $B_0,\dots,B_{d-1}$, where $B_m=T^m(B_0)$. It is not difficult to check that
\begin{equation}\label{eq:Y:rel:homology:a}
\sum_{l=1}^{k_0}\cc_l=\sum_{m=1}^{d-1} q^m\partial B_m.
\end{equation}
Therefore, we have $\hg_1+\dots+\hg_{k_0}=0 \in H^1_c(Y)_q$, which implies that $g_1+\dots+g_n=0 \in H^1_c(X)_q$.

Let $S$ denote the boundary of the disc $D$. Since $q^k_0=1$, the preimage $\hS$ of $S$ in $Y$ consists of $d$ circles denoted by $S_0,\dots, S_{d-1}$, where $S_m=T^m(S_0)$.  Note that each component of $\hS$ bounds a disc in $\hY$  containing  a point in the preimage of $\infty$.
In \cite{Menet:these}, it is shown that
\begin{equation}\label{eq:Y:rel:homology:b}
\sum_{l=1}^{k_0}\bar{q}^l\cdot\cc_l \sim \sum_{m=0}^{d-1}q^{m-1}\cdot S_m
\end{equation}
where $\sim$ is the homologuous equivalence relation of $1$-cycles in $Y$.
Let $\ss$ be the homology class of $\sum_{m=0}^{d-1}q^{m-1}\cdot S_m$ in $H_1(Y)$.
Note that we have $T_*(\ss)=\bar{q}\cdot\ss$.
The dual $\eta \in H^1_c(Y)$ of $\ss$ a generator of $H^1_c(\partial Y)_q$.
Since $\partial X$ can be identified with $\partial Y$ via the map $\varphi$, we can consider $\eta$ as a generator of $H^1_c(\partial X)_q$.
It follows from \eqref{eq:Y:rel:homology:b} that we have
$$
\sum_{l=1}^{k_0}\bar{q}^l\cdot \hg_l=\eta \in H^1_c(Y)_q.
$$
Combining with \eqref{eq:Y:rel:homology:a}, we get
$$
\eta=\sum_{l=1}^{k_0}(\bar{q}^l-1)\cdot\hg_l.
$$
Applying $\phi$ and notice that $\bar{q}^{k_0}-1=0$, we get the desired conclusion.
\end{proof}

\begin{Lemma}\label{lm:coh:preimage:D}
Let $\iota_X: H^1_c(X)_q \to H^1(\hX)_q$ be the morphism induced by the inclusion $X \hookrightarrow \hX$. Then $\iota_X$ is an isomorphism if $q^{k_1+\dots+k_n}\neq 1$. In the case $q^{k_1+\dots+k_n}=1$, $\iota_X$ is surjective and $\ker(\iota_X)=H^1_c(\partial X)_q$. In all cases we always have
$$
\dim H^1_c(X)_q=n-1.
$$
and $\{g_1,\dots,g_{n-1}\}$ is a basis of $H^1_c(X)_q$.
\end{Lemma}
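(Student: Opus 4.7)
The approach is to apply the long exact sequence of the pair $(\hX,Z)$ in compactly supported cohomology, where $Z:=\hX\setminus X$. Topologically, $Z$ is a disjoint union of $e_0:=\gcd(k_1+\dots+k_n,d)$ closed discs, each a small neighborhood of one of the preimages of $\infty$, cyclically permuted by $T$. Since $\hX$ and $Z$ are compact, the $T^*$-equivariant long exact sequence
\begin{equation*}
\cdots \to H^k_c(X) \to H^k(\hX) \to H^k(Z) \to H^{k+1}_c(X) \to \cdots
\end{equation*}
splits into $q$-eigenspaces. The discs making up $Z$ give $H^1(Z)_q=H^2(Z)_q=0$, while $H^0(Z)_q$ is the $q$-eigenspace of the cyclic permutation of $e_0$ components, hence vanishes unless $e_0=d$, i.e.\ unless $q^{k_1+\dots+k_n}=1$, in which case $\dim H^0(Z)_q=1=\eps_0$. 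The analogous reasoning applied to $\hX$ (which has $e:=\gcd(k_1,\dots,k_n,d)<d$ components because $0<k_i<d$) gives $H^0(\hX)_q=H^2(\hX)_q=0$, and $H^0_c(X)_q=0$ because $X$ has no compact components. The exact sequence therefore collapses to
\begin{equation*}
0 \to H^0(Z)_q \to H^1_c(X)_q \overset{\iota_X}{\to} H^1(\hX)_q \to 0,
\end{equation*}
which already proves that $\iota_X$ is surjective, that it is an isomorphism when $\eps_0=0$, and, combined with $\dim H^1(\hX)_q=n-1-\eps_0$ from Theorem~\ref{th:Menet:dim:signature} (extended to the disconnected case via Corollary~\ref{cor:hX:not:connected}), that $\dim H^1_c(X)_q=n-1$.

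Assume now $\eps_0=1$. To identify $\ker\iota_X$ with the image of $H^1_c(\partial X)_q$ in $H^1_c(X)_q$, I use the standard geometric description of the connecting morphism $\delta:H^0(Z)_q\to H^1_c(X)_q$: lift a locally constant function on $Z$ to a smooth function on $\hX$ which equals this value on a neighborhood of $Z$ and vanishes outside a slightly larger collar; its differential is then a compactly supported $1$-form concentrated in the annular region $\partial X\subset X$. Thus $\delta$ factors as $H^0(Z)_q\to H^1_c(\partial X)_q\to H^1_c(X)_q$. By Lemma~\ref{lm:coh:annulus}(ii) applied with $I=\{1,\dots,n\}$, $\dim H^1_c(\partial X)_q=\eps_0=1$, which matches $\dim\ker\iota_X$, so the map $H^1_c(\partial X)_q\to H^1_c(X)_q$ is injective with image exactly $\ker\iota_X$.

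It remains to check that $\{g_1,\dots,g_{n-1}\}$ is a basis of $H^1_c(X)_q$. Each $g_i$ is represented by a closed $1$-form with compact support in $X_i\subset X$, and the construction in the proof of Lemma~\ref{lm:dual:border:neigh:infty} actually establishes the relation $g_1+\dots+g_n=0$ already at the level of $H^1_c(X)_q$. Since the images $\iota_X(g_i)$ generate $H^1(\hX)_q$ by Theorem~\ref{th:Menet:generator:set}, the subspace $V:=\Span(g_1,\dots,g_{n-1})\subset H^1_c(X)_q$ surjects via $\iota_X$ onto $H^1(\hX)_q$; when $\eps_0=1$, Lemma~\ref{lm:dual:border:neigh:infty} produces the element $g^*=\sum_{i=1}^{n-1}(\bar q^{k_1+\dots+k_i}-1)g_i\in V$, which by the previous paragraph generates $\ker\iota_X$. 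In either case $V$ contains $\ker\iota_X$ and maps onto $H^1(\hX)_q$, so $V=H^1_c(X)_q$, which forces $\dim V=n-1$ and hence the linear independence of $g_1,\dots,g_{n-1}$.

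The main technical point is the geometric identification of $\delta$ with the map induced by the open inclusion $\partial X\hookrightarrow X$; beyond that the proof is a dimension count on the long exact sequence combined with the explicit generator of $H^1_c(\partial X)_q$ already extracted in Lemma~\ref{lm:dual:border:neigh:infty}.
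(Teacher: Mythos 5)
Your proof is correct, and it follows a route that differs from the paper's in one substantive way. The paper uses the Mayer--Vietoris sequence for the decomposition $\hX = X \cup X^c$ with overlap the collar $\partial X$; there $H^1_c(\partial X)_q$ appears literally as the first term of the sequence and is at once identified with $\ker\iota_X$, but the argument must then also compute $H^2_c(\partial X)_q$ and $H^2_c(X^c)_q$ and verify that the surjection between them is an isomorphism in order to truncate the sequence at $H^1(\hX)_q$. You instead use the long exact sequence of the closed pair $(\hX, Z)$ with $Z = \hX\smallsetminus X$, which terminates immediately because $H^1(Z) = 0$, eliminating all $H^2$ bookkeeping; the trade-off is that $\ker\iota_X$ now appears as the image of the connecting map $\delta \colon H^0(Z)_q \to H^1_c(X)_q$, and you must supply the geometric argument that $\delta$ factors through $H^1_c(\partial X)_q$. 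That argument is sound: a $T$-equivariant extension of a locally constant function on $Z$ that is supported in a collar has differential a compactly supported $1$-form in $\partial X$, and the resulting inclusion $\mathrm{im}\,\delta \subseteq \mathrm{im}\bigl(H^1_c(\partial X)_q \to H^1_c(X)_q\bigr)$ is promoted to an equality by the dimension count from Lemma~\ref{lm:coh:annulus}(ii), which simultaneously shows the map from $H^1_c(\partial X)_q$ is injective (a point the paper's Mayer--Vietoris sequence gives for free). Your identification of the basis $\{g_1,\dots,g_{n-1}\}$ is the same as the paper's. Both proofs rest on the same auxiliary lemmas (Lemma~\ref{lm:coh:annulus}, Lemma~\ref{lm:dual:border:neigh:infty}, Theorem~\ref{th:Menet:dim:signature}); you trade a small $H^2$ computation for an explicit description of the connecting map, which is roughly a wash.
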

\begin{proof}
The complement $X^c$ of $X$ in $\hX$ is a disjoint union of discs. Therefore, $H^1_c(X^c)_q=H^1_c(X^c)=\{0\}$. The Mayer-Vietoris exact sequence associated to the decomposition $\hX=X\cup X^c$ gives
 \begin{equation}\label{eq:MV:seq:X:in:hX}
 0 \to H^1_c(\partial X)_q \to H^1_c(X)_q \to H^1(\hX)_q \to H^2_c(\partial X)_q \to H^2_c(X^c)_q \to 0.
 \end{equation}
If $q^{k_1+\dots+k_n}\neq 1$, then $|\pi_0(\partial X)|=|\pi_0(X^c)| < d$. Therefore  $H^1_c(\partial X)_q=H^2_c(\partial X)_q=H^2_c(X^c)=\{0\}$ by Lemma~\ref{lm:coh:annulus}. The exact sequence \eqref{eq:MV:seq:X:in:hX} then yields
 $$
 0\to H^1_c(X)_q \to H^1(\hX_q) \to 0
 $$
 which means that $\iota_X$ is an isomorphism. In particular, we have $\dim H^1_c(X)_q=\dim H^1(\hX)_q=n-1$.
 By Theorem~\ref{th:Menet:generator:set}, $\{g_1,\dots,g_n\}$ is a generating family of $H^1(\hX)_q$ which satisfies $g_1+\dots+g_n=0$. Thus $\{g_1,\dots,g_{n-1}\}$ is a basis of  both $H^1(\hX)_q$ and $H^1_c(X)_q$.

\medskip

Assume now that $q^{k_1+\dots+k_n}=1$. In this case, $e_0=d$. Hence $\dim H^1_c(\partial X)_q=\dim H^2_c(\partial X)_q=\dim H^2_c(X^c)= 1$ by Lemma~\ref{lm:coh:annulus}. The morphism $H^2_c(\partial X)_q \to H^2_c(X^c)_q$ is an isomorphism because it is a surjective linear map between two vector spaces of the same dimension. The exact sequence \eqref{eq:MV:seq:X:in:hX} then yields
 $$
 0 \to H^1_c(\partial X)_q \to H^1_c(X)_q \to H^1(\hX)_q \to 0
 $$
 which means that $\iota_X$ is surjective and $\ker(\iota_X)=H^1_c(\partial X)_q$. In particular,
 $$
 \dim H^1_c(X)_q=\dim H^1_c(\partial X)_q+\dim H^1(\hX)_q=1+(n-2)=n-1.
 $$
By Lemma~\ref{lm:dual:border:neigh:infty}, $H^1_c(\partial X)_q \subset \Span(g_1,\dots,g_{n-1})$. Since $\iota_X(\Span(g_1,\dots,g_{n-1}))=H^1(\hX)_q$, we conclude that $\Span(g_1,\dots,g_{n-1})=H^1_c(X)_q$, which means that $\{g_1,\dots,g_{n-1}\}$ is a basis of $H^1_c(X)_q$.
\end{proof}

The following  lemma will be frequently used in the sequel.
\begin{Lemma}\label{lm:coh:preimage:disc}
Let $E \subset D$ be a disc such that $E\cap \Bc=\{b_1,\dots,b_r\}$ with $r\geq 1$, and $Y$ the preimage of $E$ in $\hX$. Let $\iota_Y: H^1_c(Y)_q \to H^1(\hX)_q$ be the natural morphism induced by the inclusion $Y\hookrightarrow \hX$.
Let $\{g_1,\dots,g_n\}$ be the generating family of $H^1(\hX)_q$ in Theorem~\ref{th:Menet:generator:set}. Define
\begin{equation}\label{eq:def:boundary:coh:class}
g^*_r:=\sum_{l=1}^{r-1}(\bar{q}^{k_1+\dots+ k_l}-1)g_l \in H^1_c(Y)_q.
\end{equation}
Then  we have
\begin{itemize}
\item[(a)] $\dim H^1_c(Y)_q=r-1$ and $\{g_1,\dots,g_{r-1}\}$ is a basis of $H^1_c(Y)_q$.

\item[(b)] $\iota_Y$ is injective if $q^{k_1+\dots+k_r}\neq 1$ or $q^{k_1+\dots+k_r}=1$ and $r < n$.  In the case $r=n$ and $q^{k_1+\dots+k_n}=1$, $\iota_Y$ is surjective and $\ker(\iota_Y)$ is generated by $g^*_n$.

\item[(c)] The restriction of the intersection form $\langle.,.\rangle$ to $\Vbb:=\mathrm{Im}(\iota_Y)$ is non-degenerate if and only if $q^{k_1+\dots+k_r}\neq 1$, or  $q^{k_1+\dots+k_r}=1$ and $r=n$.
In the case $q^{k_1+\dots+k_r}=1$ and $r<n$, the element $\iota_Y(g^*_r)$ is non-zero in $H^1(\hX)$, and satisfies $\langle \iota_Y(g^*_r),\iota_Y(g')\rangle=0$ for all $g'\in H^1_c(Y)_q$.
\end{itemize}
\end{Lemma}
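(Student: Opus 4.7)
\noindent\emph{Proof proposal.} The strategy is to reduce all three assertions to Lemma~\ref{lm:coh:preimage:D} via a change of ambient surface. Let $\hY$ denote the cyclic cover of $\CP^1$ associated with the reduced data $\kappa':=(k_1,\dots,k_r)$, equipped with its own automorphism of order $d$. The topology of a ramified cover over a disc is determined by the exponents at the marked points inside that disc, so $Y$ is $T$-equivariantly homeomorphic to the preimage $Y'\subset\hY$ of a disc $\tilde E\subset\CP^1$ containing all the marked points of $\hY$. The generators $g_1,\dots,g_{r-1}$ of Theorem~\ref{th:Menet:generator:set} have supports depending only on consecutive pairs $(k_i,k_{i+1})$, so after adjusting the $\S^1$-constants of Remark~\ref{rk:generators:up:to:const} they coincide with the corresponding generators for $\hY$. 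Part (a) then follows from Lemma~\ref{lm:coh:preimage:D} applied to $\hY$, with $Y'$ playing the role of $X$.

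For part (b), consider the long exact sequence of the pair $(\hX,Z)$ with $Z:=\hX\setminus Y$, combined with excision $H^k(\hX,Z)\simeq H^k_c(Y)$:
\begin{equation*}
0\to H^0(\hX)_q\to H^0(Z)_q \to H^1_c(Y)_q \xrightarrow{\iota_Y} H^1(\hX)_q \to H^1(Z)_q \to \cdots
\end{equation*}
Since $q$ is a primitive $d$-th root of unity and $T$ cyclically permutes the $e_0:=\gcd(k_1,\dots,k_n,d)$ components of $\hX$, with $e_0<d$ because $1\leq k_i\leq d-1$, we have $H^0(\hX)_q=0$. Similarly $T$ cyclically permutes the $e_Z:=\gcd(k_{r+1},\dots,k_n,k_\infty,d)$ components of $Z$, where $k_\infty:=-(k_1+\dots+k_n)\bmod d$. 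When $r<n$, the presence of $k_{r+1}\in\{1,\dots,d-1\}$ forces $e_Z<d$, so $H^0(Z)_q=0$ and $\iota_Y$ is injective. When $r=n$, one has $e_Z=d$ iff $q^{k_1+\dots+k_n}=1$; in this case $Y$ is $T$-equivariantly homeomorphic to the surface $X$ of Lemma~\ref{lm:coh:preimage:D}, and the surjectivity of $\iota_Y$ together with $\ker(\iota_Y)=\langle g^*_n\rangle$ (noting $g^*_n$ coincides with the class $g^*$ of Lemma~\ref{lm:dual:border:neigh:infty}) follow directly from that lemma.

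For part (c), when $q^{k_1+\dots+k_r}\neq 1$, Lemma~\ref{lm:coh:preimage:D} applied to $\hY$ gives an isomorphism $H^1_c(Y)_q\simeq H^1(\hY)_q$, and the intersection form on the right-hand side is non-degenerate by Theorem~\ref{th:Menet:dim:signature}; since $\iota_Y$ preserves the pairing $\frac{\imath}{2}\int\alpha\wedge\bar\beta$ and is injective, the restriction of $\langle.,.\rangle$ to $\Vbb=\mathrm{Im}(\iota_Y)$ is non-degenerate. The case $r=n$ with $q^{k_1+\dots+k_n}=1$ is immediate since $\Vbb=H^1(\hX)_q$. Finally, for $r<n$ with $q^{k_1+\dots+k_r}=1$, the class $g^*_r$ is non-zero in $H^1_c(Y)_q$ because its $g_1$-coefficient $\bar q^{k_1}-1$ is non-zero, hence $\iota_Y(g^*_r)\neq 0$ by (b). By Lemma~\ref{lm:dual:border:neigh:infty} applied to $\hY$, $g^*_r$ admits a representative supported in an arbitrarily small tubular neighborhood of $\partial Y$; choosing the supporting discs $D_i$ of $g_1,\dots,g_{r-1}$ disjoint from this neighborhood, the corresponding cup products vanish in $H^2_c(Y)$, yielding the orthogonality of $\iota_Y(g^*_r)$ with all of $\Vbb$. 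The principal subtlety lies in this last step, which requires exploiting the freedom in choosing representatives with disjoint supports; this crucially relies on the fact that $g^*_r$ belongs to the image of $H^1_c(\partial Y)_q$.
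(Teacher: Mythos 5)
Your proof is correct, and for part (b) it takes a genuinely different route from the paper while parts (a) and (c) follow the same lines. The paper proves (b) via the Mayer--Vietoris sequence for the open cover $\hX = Y\cup Y^c$, which produces the term $H^1_c(\partial Y)_q$; in the subtle case $r<n$, $q^{k_1+\dots+k_r}=1$, this term is nonzero, and the paper has to construct a second auxiliary surface $\hY'$ from the data $(k_{r+1},\dots,k_n)$ to check that the restriction map $\kk':H^1_c(\partial Y)_q\to H^1_c(Y^c)_q$ is injective. You instead use the long exact sequence of the pair $(\hX,Z)$ with $Z=\hX\setminus Y$, identifying $H^k(\hX,Z)\simeq H^k_c(Y)$ by the standard excision isomorphism for an open subset of a compact surface. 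In that formulation the injectivity of $\iota_Y$ reduces uniformly to the vanishing of $H^0(Z)_q$, which you translate into a component count: $e_Z=\gcd(k_{r+1},\dots,k_n,k_\infty,d)<d$ whenever $r<n$, because $e_Z$ divides $k_{r+1}\in\{1,\dots,d-1\}$; and for $r=n$ the count fails precisely when $q^{k_1+\dots+k_n}=1$. This is tidier than the paper's two-stage argument and handles all subcases of (b) at once, at the modest cost of invoking the excision identification $H^k(\hX,\hX\setminus Y)\simeq H^k_c(Y)$. One small point worth making explicit, which the paper does state: the surface $\hY$ from the reduced data $(k_1,\dots,k_r)$ may be disconnected, but since no $k_i$ is divisible by $d$ it has fewer than $d$ components, so $H^0(\hY)_q=H^2(\hY)_q=0$ and Theorems~\ref{th:Menet:dim:signature}, \ref{th:Menet:generator:set} and Lemma~\ref{lm:coh:preimage:D} apply to it; your argument uses this tacitly when applying Lemma~\ref{lm:coh:preimage:D} in parts (a) and (c).
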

\begin{proof}\hfill
\begin{itemize}
\item[(a)] Consider the Riemann surface $\hY$ constructed from the curve $y^d=\prod_{i=1}^r(x-b_i)^{k_i}$. Let $T_{\hY}$ be the automorphism of $\hY$ induced by the map $(x,y) \to (x,\zeta_d y)$ and by $H^\bullet(\hY)_q$ the $q$-eigenspace of the actions of $T^*_{\hY}$ on $H^\bullet(\hY)_q$.
Note that the surface $\hY$ is not necessarily connected, but since none of the $k_i$ is divisible by $d$, it has less than $d$ components. It follows that $H^0(\hY)_q=H^2(\hY)_q=\{0\}$ (recall that $q$ is a primitive $d$-th root of unity) and Theorems~\ref{th:Menet:dim:signature} and Theorem~\ref{th:Menet:generator:set} holds for $\hY$.

The the preimage of $E$ in $\hY$ is homeomorphic to $Y$ by a homeomorphism that is equivariant with respect to the actions of $T$ and $T_{\hY}$ (see Lemma~\ref{lm:equiv:subsurfaces}).
Therefore, we can consider $Y$ as a subsurface of $\hY$, and identify $H^1_c(Y)_q$ with the $q$-eigenspace of the action of $T^*_{\hY}$ on $H^1_c(Y)$. Thus we obtain (a) from Lemma~\ref{lm:coh:preimage:D}. \\

\item[(b)] Consider the splitting $\hX=Y\cup Y^c$. Since $H^0(\hX)_q=\{0\}$ (because $|\pi_0(\hX)|=e <d$), from the Mayer-Vietoris sequence, we get the following exact sequence
\begin{equation}\label{eq:MV:seq:Y:n:Yc:in:hX}
0 \to H^1_c(\partial Y)_q \overset{\psi}{\to} H^1_c(Y)_q\oplus H^1_c(Y^c)_q \overset{\phi}{\to} H^1(\hX)_q.
\end{equation}
If $q^{k_1+\dots+k_r}\neq 1$ then $H^1_c(\partial Y)_q=\{0\}$, hence $\iota_Y$ is injective.
Assume now that $q^{k_1+\dots+k_r}=1$.
If $r=n$ then the conclusion follows from Lemma~\ref{lm:coh:preimage:D} and Lemma~\ref{lm:dual:border:neigh:infty}.
We are left with the case $q^{k_1+\dots+k_r}= 1$ and  $r<n$.

Let $\kk: H^1_c(\partial Y)_q \to H^1_c(Y)_q$ and $\kk': H^1_c(\partial Y)_q \to H^1_c(Y^c)_q$ be the morphisms induced by the inclusions of $\partial Y$ into $Y$ and $Y^c$ respectively.
Consider $Y$ as a subsurface of $\hY$. Since $\hY\setminus Y$ is a union of $d$ discs, we derive from the Mayer-Vietoris exact sequence associated to the decomposition $\hY=Y\cup(\hY\setminus Y)$ that $\kk$ is injective.

Similarly, let $\hY'$ be the Riemann surface  constructed from the curve   $y^d=\prod_{i=r+1}^n(x-b_i)^{k_i}$. Then $Y^c$ can be identified with a subsurface of $\hY'$ such that $\hY'\setminus Y^c$ is the union of $d$ discs.
By  considering the Mayer-Vietoris exact sequence associated with the decomposition $\hY'=Y^c\cup (\hY'\setminus Y^c)$, we get that $\kk'$ is injective.

\medskip

Denote by $\iota_{Y^c}: H^1_c(Y^c)_q \to H^1(\hX)_q$ the morphism induced from the inclusion $Y^c\hookrightarrow \hX$.
By definition, the morphism $\phi: H^1(Y)_q\oplus H^1(Y^c)_q \to H^1(\hX)_q$ is given by $\phi(v,w)=\iota_Y(v)-\iota_{Y^c}(w)$ for all $(v,w)\in H^1_c(Y)_q\oplus H^1_c(Y^c)_q$.
In particular, we have $\iota_Y(v)=\phi(v,0)$. Assume now that $v\in \ker(\iota_{Y})$.
Then $(v,0) \in \ker(\phi)=\Im(\psi)$.
This means that there exists $u \in H^1_c(\partial Y)_q$ such that $(v,0)=(\kk(u),\kk'(u))$. But since $\kk'$ is injective, we must have $u=0$. Therefore, $v=\kk(u)=0$, which implies that $\iota_{Y}$ is injective. This completes the proof of (b).\\

\item[(c)] Let $\langle.,.\rangle_{\hY}$ denote the intersection form on $H^1(\hY)$.  
For any closed $1$-forms $\mu$ and $\eta$ with compact supports in $Y$ we have
    \begin{equation}\label{eq:inters:form:equal}
    \langle \mu,\eta\rangle=\frac{\imath}{2}\int_{\hX}\mu\wedge\bar{\eta}= \frac{\imath}{2}\int_{Y}\mu\wedge\bar{\eta}= \frac{\imath}{2}\int_{\hY}\mu\wedge\bar{\eta}=\langle \mu,\eta\rangle_{\hY}.
    \end{equation}
    If $q^{k_1+\dots+k_r}\neq 1$ then $H^1_c(Y)_q$ is isomorphic to both $\Vbb$ and  $H^1(\hY)_q$. Equality \eqref{eq:inters:form:equal} means that the Hermitian spaces $(\Vbb,\langle.,.\rangle)$ and $(H^1(\hY)_q,\langle.,.\rangle_{\hY})$ are isometric. Since $\langle.,.\rangle_{\hY}$ is non-degenerate by Theorem~\ref{th:Menet:dim:signature}, we conclude that the restriction of $\langle.,.\rangle$ to $\Vbb$ is also non-degenerate.
    In the case $q^{k_1+\dots+k_r}=1$ and $r=n$, we have $\hY=\hX$ hence $\Vbb=H^1(\hX)=H^1(\hY)$, and the same conclusion holds.
    Finally, in the case $r<n$ and $q^{k_1+\dots+k_r}=1$, by Lemma~\ref{lm:dual:border:neigh:infty}, $g^* _r$ is a generator of $H^1_c(\partial Y)_q$. Since every element $g' \in H^1_c(Y)_q$ can be represented by a closed $1$-form with support disjoint from $\partial Y$, we have  $\langle g^*_r, g'\rangle=0$.
\end{itemize}
\end{proof}

\begin{Remark}\label{rk:boundary:coh:class}
In the case $q^{k_1+\dots+k_r}=1$, one can directly check that $\langle g^*_r,g_i\rangle=0$ for all $i=1,\dots,r-1$ from the formulas in Theorem~\ref{th:Menet:generator:set}.
\end{Remark}

As a consequence of Lemma~\ref{lm:coh:preimage:disc}, we get the following
\begin{Corollary}\label{cor:Dehn:twist}
Assume that $r\geq 2$. Let $\gamma$ be the boundary of $E$, and $\tau_\gamma$ the Dehn twist about $\gamma$.
Denote by $\Vbb^\perp$ the orthogonal of $\Vbb$ in $H^1(\hX)_q$ with respect to the intersection form. Then we have
\begin{itemize}
\item[$\bullet$] The restriction of $\rho_q(\tau_\gamma)$ to $\Vbb$ is equal to $q^{k_1+\dots+k_r}\cdot \id_{\Vbb}$.\\

\item[$\bullet$] Assume that $q^{k_1+\dots+k_r} \neq 1$.
Then $\Vbb\simeq H^1_c(Y)_q$, $H^1(\hX)_q=\Vbb\oplus \Vbb^\perp$, and for all $v\in H^1(\hX)_q$
\begin{equation}\label{eq:Dehn:twist:act:dec}
\rho_q(\tau_\gamma)(v)=q^{k_1+\dots+k_r}\cdot v_0+v_1
\end{equation}
where  $v_0$ and $v_1$ are respectively the orthogonal projections of $v$ in $\Vbb$ and $\Vbb^\perp$.
\end{itemize}
\end{Corollary}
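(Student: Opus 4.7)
The first bullet follows directly from Theorem~\ref{th:Menet:Dehn:twist}. Under the standing convention $q^{k_i}\neq 1$ for all $i$, Lemma~\ref{lm:coh:preimage:disc}(a) says that $\{g_1,\dots,g_{r-1}\}$ is a basis of $H^1_c(Y)_q$, so the images $\iota_Y(g_1),\dots,\iota_Y(g_{r-1})$ span $\Vbb$; and Theorem~\ref{th:Menet:Dehn:twist} gives $\rho_q(\tau_\gamma)(g_i)=q^{k_1+\dots+k_r}g_i$ for $i=1,\dots,r-1$. Hence $\rho_q(\tau_\gamma)|_\Vbb$ is the scalar $q^{k_1+\dots+k_r}$, independently of whether $q^{k_1+\dots+k_r}=1$ or not.

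For the second bullet, assume $q^{k_1+\dots+k_r}\neq 1$. The isomorphism $\Vbb\simeq H^1_c(Y)_q$ is Lemma~\ref{lm:coh:preimage:disc}(b), and the direct sum decomposition $H^1(\hX)_q=\Vbb\oplus\Vbb^\perp$ is forced by the non-degeneracy of $\langle.,.\rangle_{|\Vbb}$ granted by Lemma~\ref{lm:coh:preimage:disc}(c). The remaining task is to show that $\rho_q(\tau_\gamma)$ acts as the identity on $\Vbb^\perp$; the displayed formula then follows by linearity from the first bullet. My plan is to realize $\Vbb^\perp$ geometrically as $\mathrm{Im}(\iota_{Y^c})$, where $Y^c$ denotes the interior of the complement of $Y$ in $\hX$. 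The inclusion $\mathrm{Im}(\iota_{Y^c})\subset\Vbb^\perp$ is the easy direction: any pair of classes in $\Vbb$ and $\mathrm{Im}(\iota_{Y^c})$ admits representatives compactly supported in $Y$ and $Y^c$ respectively which, after a slight shrinking, have disjoint supports, and hence pair to $0$ under $\langle.,.\rangle$.

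For the reverse inclusion, I would compute dimensions using the Mayer--Vietoris sequence of the decomposition $\hX=Y\cup Y^c$ restricted to $q$-eigenspaces. The hypothesis $q^{k_1+\dots+k_r}\neq 1$ implies via Lemma~\ref{lm:coh:annulus}(ii) that $H^1_c(\partial Y)_q=H^2_c(\partial Y)_q=0$; the same lemma combined with $0<k_i<d$ and Poincaré duality yields $H^2_c(Y)_q=H^2_c(Y^c)_q=H^2(\hX)_q=0$. The Mayer--Vietoris sequence therefore collapses to an isomorphism $H^1(\hX)_q\simeq H^1_c(Y)_q\oplus H^1_c(Y^c)_q$ induced by $\iota_Y-\iota_{Y^c}$; in particular $\iota_{Y^c}$ is injective and a dimension count forces $\mathrm{Im}(\iota_{Y^c})=\Vbb^\perp$. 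Finally, $\tau_\gamma$ admits a representative supported in a thin annular neighbourhood of $\gamma$ contained in the interior of $\overline{Y}\smin Y^c$; its canonical lift $\tilde\tau_\gamma$ is then supported in the preimage of this annulus, which is disjoint from a slightly shrunk version of $Y^c$. Consequently $\tilde\tau_\gamma^*$ fixes every class of $H^1_c(Y^c)_q$ represented by a form with support in this shrunk $Y^c$, so $\rho_q(\tau_\gamma)|_{\Vbb^\perp}=\mathrm{id}$, and the formula follows. The only delicate step in this plan is the Mayer--Vietoris dimension count, and the hypothesis $q^{k_1+\dots+k_r}\neq 1$ is tailored precisely to force all the boundary and top-degree $q$-eigenspaces to vanish; I do not anticipate any further obstacle.
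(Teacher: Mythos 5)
Your plan is correct in outline but takes a genuinely different route from the paper's. For the second bullet, the paper verifies \eqref{eq:Dehn:twist:act:dec} by a direct computation on the generating family $\{g_1,\dots,g_{n-1}\}$ using the formulas of Theorem~\ref{th:Menet:generator:set} and Theorem~\ref{th:Menet:Dehn:twist}. You instead realize $\Vbb^\perp$ geometrically as $\mathrm{Im}(\iota_{Y^c})$, and your Mayer--Vietoris dimension count is sound: under $q^{k_1+\dots+k_r}\neq 1$ the $q$-eigenspaces $H^1_c(\partial Y)_q$, $H^2_c(\partial Y)_q$, $H^2_c(Y)_q$, $H^2_c(Y^c)_q$ all vanish by Lemma~\ref{lm:coh:annulus}, so the Mayer--Vietoris sequence of $\hX=Y\cup Y^c$ collapses to an isomorphism $H^1_c(Y)_q\oplus H^1_c(Y^c)_q\to H^1(\hX)_q$; combined with $\mathrm{Im}(\iota_{Y^c})\subset\Vbb^\perp$ (disjoint supports) and non-degeneracy of $\langle.,.\rangle|_\Vbb$, this forces $\mathrm{Im}(\iota_{Y^c})=\Vbb^\perp$. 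This geometric route is more conceptual than the computational one in the paper.

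However, the final step contains a real error. You assert that the canonical lift $\tilde{\tau}_\gamma$ is ``supported in the preimage of the annulus'' $A$. This cannot hold: if $\tilde{\tau}_\gamma$ were the identity off $\pi^{-1}(A)$, it would in particular be the identity on the inner region $Y\setminus\pi^{-1}(A)$, and hence would fix the classes of $\Vbb$ (which are represented by forms supported there), contradicting the first bullet when $q^{k_1+\dots+k_r}\neq 1$. In fact on that inner region $\tilde{\tau}_\gamma$ acts as a nontrivial power of $T$, and that is precisely the source of the factor $q^{k_1+\dots+k_r}$. What you actually need --- that $\tilde{\tau}_\gamma$ restricts to the identity on $Y^c$ --- is true, but for a different reason: over $\CP^1\setminus\overline{E}$ the lift covers the identity, so on $Y^c$ it belongs to the discrete group of covering automorphisms; lifting a fractional Dehn-twist isotopy from $\id_{\hX}$ (each stage being the identity outside $A$) shows this covering automorphism is continuously connected to the identity on $Y^c$, hence is the identity (this is the same principle the paper invokes, without comment, in the proof of Proposition~\ref{prop:lantern:conseq}). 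With that repair your argument goes through.
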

\begin{proof}
By Lemma~\ref{lm:coh:preimage:disc}(a), $H^1_c(Y)_q$  is generated by $\{g_1,\dots,g_{r-1}\}$.
It follows from Theorem~\ref{th:Menet:Dehn:twist} that $\rho_q(\tau_\gamma)_{|\Vbb}= q^{k_1+\dots+k_r}\cdot \id_{\Vbb}$.

Assume now that $q^{k_1+\dots+k_{r-1}}\neq 1$. Then the restriction of $\langle.,.\rangle$ to $\Vbb$ is non-degenerate by Lemma~\ref{lm:coh:preimage:disc}(c). Hence $H^1(\hX)_q$ is the direct sum of $\Vbb$ and $\Vbb^\perp$.
By direct calculations using the formulas of Theorem~\ref{th:Menet:generator:set} and Theorem~\ref{th:Menet:Dehn:twist}, one readily  checks that \eqref{eq:Dehn:twist:act:dec} holds for all $g_i, \; i=1,\dots,n-1$.  Since $\{g_1,\dots,g_{n-1}\}$ is a generating set   of $H^1(\hX)_q$, the corollary follows.
\end{proof}

\begin{Lemma}\label{lm:coh:subsurf:inject}
Let $E_0$ be a topological disc in $D$, and $E_1,\dots,E_\ell$ be $\ell$ discs pairwise disjoint in $E_0$. We suppose that the boundary of $E_j$ does not intersect the set $\Bc$. Let $I_0, I_1,\dots,I_\ell$ be the subsets of $\{1,\dots,n\}$ such that $b_i \in E_j$ if and only if $i \in I_j$. We suppose that $I_j \neq \vide$ and $I_j \subsetneq I_0$ for all $j=1,\dots,\ell$.

Let $E:=E_0\setminus\left(\cup_{j=1}^\ell E_j\right)$. Let $Y_0$ and $Y$ be the preimages of $E_0$ and $E$ in $\hX$ respectively.
Then the morphism  $\iota_{Y,Y_0}: H^1_c(Y)_q\to H^1_c(Y_0)_q$   which is induced by the inclusion $Y \hookrightarrow Y_0$ is injective.
\end{Lemma}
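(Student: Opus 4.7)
Let $Z_j \subset \hX$ denote the preimage of $E_j$, and set $Z := \sqcup_{j=1}^{\ell} Z_j$. After slightly thickening $Y$ and $Z$ into open sets whose union is $Y_0$, the intersection $Y \cap Z$ is a disjoint union, over $j$, of $T$-invariant annular neighborhoods of the preimages of the circles $\partial E_j$. The Mayer--Vietoris long exact sequence for compactly supported cohomology, restricted to the $q$-eigenspaces of $T^*$, yields
\begin{equation*}
H^1_c(Y \cap Z)_q \xrightarrow{\psi} H^1_c(Y)_q \oplus H^1_c(Z)_q \xrightarrow{\phi} H^1_c(Y_0)_q,
\end{equation*}
where $\phi(v, w) = \iota_{Y,Y_0}(v) - \iota_{Z, Y_0}(w)$ and $\psi(u) = (\kk_Y(u), \kk_Z(u))$, with $\kk_Y, \kk_Z$ the maps induced by the inclusions $Y \cap Z \hookrightarrow Y$ and $Y \cap Z \hookrightarrow Z$. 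Given $v \in \ker \iota_{Y,Y_0}$, the pair $(v, 0)$ lies in $\ker \phi = \Im(\psi)$, so there exists $u \in H^1_c(Y \cap Z)_q$ with $\kk_Y(u) = v$ and $\kk_Z(u) = 0$. It therefore suffices to prove that $\kk_Z$ is injective, and since both $Y \cap Z$ and $Z$ are disjoint unions over $j$, this reduces to proving that each $\kk_{Z_j}: H^1_c(Y \cap Z_j)_q \to H^1_c(Z_j)_q$ is injective.

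\textbf{Step 2 (Injectivity of $\kk_{Z_j}$).} If $q^{\sum_{i \in I_j} k_i} \neq 1$, Lemma~\ref{lm:coh:annulus}(ii) yields $H^1_c(Y \cap Z_j)_q = 0$, and the claim is trivial. In the remaining case $q^{\sum_{i \in I_j} k_i} = 1$, we mimic the method of the proof of Lemma~\ref{lm:coh:preimage:disc}(b): since $d$ divides $\sum_{i \in I_j} k_i$, the monodromy around $\partial E_j$ acts trivially on $\Z/d\Z$, so the preimage of $\partial E_j$ in $Z_j$ consists of exactly $d$ circles cyclically permuted by $T$. Capping off these circles with open discs produces a compact Riemann surface $\hY_j$ in which $Z_j$ embeds as a $T$-equivariant open subsurface whose complement $\hY_j \setminus Z_j$ is a disjoint union of $d$ open discs. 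Applying Mayer--Vietoris to this new decomposition on $q$-eigenspaces gives
\begin{equation*}
H^0_c(\hY_j)_q \to H^1_c(N_j)_q \to H^1_c(Z_j)_q \oplus H^1_c(\hY_j \setminus Z_j)_q,
\end{equation*}
where $N_j$ is an open annular neighborhood of $\partial Z_j$ in $\hY_j$. The right summand vanishes because $H^1_c$ of an open disc is zero; the left term vanishes because $\hY_j$ has $m := \gcd(d, \{k_i\}_{i \in I_j}) < d$ connected components (as each $k_i < d$), on which $T$ acts by a cyclic shift of length $m$, and no primitive $d$-th root of unity is an $m$-th root. Hence $H^1_c(N_j)_q \to H^1_c(Z_j)_q$ is injective. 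Since $Y \cap Z_j$ and $N_j$ are both $T$-equivariantly homotopy equivalent to the preimage of $\partial E_j$, this map is identified with $\kk_{Z_j}$, which completes the proof.

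\textbf{Main difficulty.} The conceptually delicate step is the degenerate case $q^{\sum_{i \in I_j} k_i} = 1$: here $H^1_c(Y \cap Z_j)_q$ does not vanish, so the plain Mayer--Vietoris on $Y_0 = Y \cup Z$ is inconclusive. The key idea, borrowed from the proof of Lemma~\ref{lm:coh:preimage:disc}(b), is to cap off the boundary circles of $Z_j$ in $\hX$ to produce an auxiliary compact surface $\hY_j$ whose complement is a union of discs, and use the vanishing of $H^0_c(\hY_j)_q$ (which is where $k_i < d$ is crucially used) to obtain the desired injectivity. The verification of the equivariant homotopy equivalence matching $Y \cap Z_j$ with $N_j$ and the compatibility of inclusion-induced maps is routine and uses only the local structure of the cyclic cover near the circles $\partial E_j$.
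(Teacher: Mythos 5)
Your proof is correct and follows essentially the same path as the paper: Step 1 reproduces verbatim the paper's Mayer--Vietoris decomposition $Y_0 = Y \cup Y'$ and the reduction to injectivity of the maps $\kk'_j \colon H^1_c(\partial Y_j)_q \to H^1_c(Y_j)_q$, and Step 2 spells out explicitly the argument that the paper compresses into the phrase ``by the same argument as in Lemma~\ref{lm:coh:preimage:disc}(b),'' namely embedding $Y_j$ (your $Z_j$) into the auxiliary closed Riemann surface $\hY_j$ whose complement is a union of discs and invoking $H^0_c(\hY_j)_q = 0$, which relies on $0 < k_i < d$. Your presentation is slightly more verbose than the paper's, but there is no genuine difference in strategy.
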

\begin{proof}
Let $Y_j$ be the preimage of $E_j$ in $\hX$ for $j=1,\dots,\ell$, and $Y':=\cup_{j=1}^\ell Y_j$.
Recall that $Y_0$ is an open subsurface of $\hX$. Therefore $H^0_c(Y_0)=\{0\}$. The Mayer-Vietoris sequence associated with the decomposition $Y_0=Y\cup Y'$ implies the following exact sequence
$$
0 \to H^1_c(\partial Y')_q \overset{\psi}{\to} H^1_c(Y)_q \oplus H^1_c(Y')_q \overset{\phi}{\to} H^1_c(Y_0)_q.
$$
We have $H^1_c(\partial Y')_q=\oplus_{j=1}^\ell H^1_c(\partial Y_j)_q$ and  $H^1_c(Y')_q=\oplus_{j=1}^\ell H^1_c(Y_j)_q$.
Let $\kk: H^1_c(\partial Y')_q \to H^1_c(Y)_q$ and $\kk': H^1_c(\partial Y')_q \to H^1_c(Y')_q$ be the morphisms induced by the inclusions of $\partial Y'$ into $Y$ and $Y'$ respectively.  For all $(u_1,\dots,u_\ell) \in \oplus_{j=1}^\ell H^1_c(\partial Y_j)_q$, we have $\kk'(u_1,\dots,u_\ell)=(\kk'_1(u_1),\dots,\kk'_\ell(u_\ell))$, where $\kk'_j: H^1_c(\partial Y_j)_q \to H^1_c(Y_j)_q$ is the inclusion morphism.
By the same argument as in Lemma~\ref{lm:coh:preimage:disc} (b), one readily shows that all the $\kk'_j$'s are injective,  hence $\kk'$ is injective.


Denote by $\iota_{Y',Y_0}: H^1_c(Y')_q \to H^1_c(Y_0)_q$ the morphism induced from the inclusion $Y'\hookrightarrow Y_0$.  By definition, for all $(v,w)\in H^1_c(Y)_q\oplus H^1_c(Y')_q$, $\phi(v,w)=\iota_{Y,Y_0}(v)-\iota_{Y',Y_0}(w)$. In particular, we have $\iota_{Y,Y_0}(v)=\phi(v,0)$. Assume now that $v\in \ker(\iota_{Y,Y_0})$. Then $(v,0) \in \ker(\phi)=\Im(\psi)$. This means that there exists $u \in H^1_c(\partial Y')_q$ such that $(v,0)=(\kk(u),\kk'(u))$. But since $\kk'$ is injective, we must have $u=0$. Therefore, $v=\kk(u)=0$, which implies that $\iota_{Y,Y_0}$ is injective.
\end{proof}

The following proposition generalizes Lemma~\ref{lm:coh:preimage:disc}.
\begin{Proposition}\label{prop:coh:subsurf:embedding}
Let $E_0,E_1,\dots,E_\ell, Y_0, Y$ be as in Lemma~\ref{lm:coh:subsurf:inject}.
Define $\hk_j:=\sum_{i \in I_j}k_i, \;  j=0,1,\dots,\ell$. Then we have
\begin{equation}\label{eq:subsurf:partition:dim:1}
\dim H^1_c(Y)_q=\left\{\begin{array}{ll}
\ell & \text{ if } I_0=\cup_{j=1}^\ell I_j \text{ and }  q^{\hk_j}=1 \; \text{ for all } j=1,\dots,\ell,\\
|I_0|-\sum_{j=1}^\ell |I_j|+\ell-1 & \text{ otherwise}.
\end{array}
\right.
\end{equation}
Let $\Vbb$ denote the image of $H^1_c(Y)_q$ under the natural morphism $\iota_Y: H^1_c(Y)_q \to H^1(\hX)_q$. Then
\begin{equation}\label{eq:subsurf:partition:dim:2}
\dim \Vbb = \left\{
\begin{array}{ll}
\dim H^1_c(Y)_q-1 & \text{ if } I_0=\{1,\dots,n\} \text{ and } q^{\hk_0} = 1, \\
\dim H^1_c(Y)_q & \text{ otherwise.}
\end{array}
\right.
\end{equation}
Moreover, the restriction of $\langle.,.\rangle$ to $\Vbb$ is non-degenerate if and only if $q^{\hk_j}\neq 1$ for all $j=1,\dots,\ell$ and either $q^{\hk_0}\neq 1$, or $q^{\hk_0}=1$ and $I_0=\{1,\dots,n\}$.
\end{Proposition}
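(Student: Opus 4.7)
My plan is to prove the three assertions in sequence, using Mayer--Vietoris arguments combined with Lemma~\ref{lm:coh:annulus}, Lemma~\ref{lm:coh:preimage:disc}, and Lemma~\ref{lm:coh:subsurf:inject}, together with a Poincar\'e--Lefschetz interpretation of the intersection form.

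For the dimension formula \eqref{eq:subsurf:partition:dim:1}, I would decompose $Y_0 = Y \cup Y'$ with $Y' := \bigsqcup_{j=1}^\ell Y_j$ and write the resulting Mayer--Vietoris sequence for compactly supported cohomology in the $q$-eigenspaces. All $H^0_c$ terms vanish because the surfaces involved are non-compact, and $H^2_c(Y_0)_q = H^2_c(Y_j)_q = 0$ since the hypothesis $0<k_i<d$ forces each of $Y_0, Y_j$ to have fewer than $d$ connected components, so their $H^0$'s carry no $q$-eigenvector for primitive $q$. The sequence therefore reduces to
\begin{equation*}
0 \to H^1_c(\partial Y')_q \to H^1_c(Y)_q \oplus H^1_c(Y')_q \to H^1_c(Y_0)_q \to H^2_c(\partial Y')_q \to H^2_c(Y)_q \to 0.
\end{equation*}
Lemma~\ref{lm:coh:preimage:disc}(a) yields $\dim H^1_c(Y_0)_q = |I_0|-1$ and $\dim H^1_c(Y_j)_q = |I_j|-1$, while Lemma~\ref{lm:coh:annulus}(ii) gives $\dim H^1_c(\partial Y_j)_q = \dim H^2_c(\partial Y_j)_q$ equal to $1$ when $q^{\hk_j}=1$ and to $0$ otherwise. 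The one subtlety is that $H^2_c(Y)_q$ need not vanish: the topological description of Section~\ref{subsec:topo:cover} implies that $Y$ has $d$ connected components (and hence $\dim H^2_c(Y)_q = 1$) precisely when the monodromy image of $\pi_1(E)$ in $\Z/d\Z$ is trivial, equivalently iff $I_0 = \bigcup_{j=1}^\ell I_j$ and $q^{\hk_j}=1$ for every $j \geq 1$; otherwise $H^2_c(Y)_q = 0$. An alternating-sum computation then produces both cases of \eqref{eq:subsurf:partition:dim:1}.

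For \eqref{eq:subsurf:partition:dim:2}, I factor $\iota_Y = \iota_{Y_0} \circ \iota_{Y,Y_0}$. Lemma~\ref{lm:coh:subsurf:inject} ensures that $\iota_{Y,Y_0}$ is injective, and Lemma~\ref{lm:coh:preimage:disc}(b) shows $\ker \iota_{Y_0}$ is trivial except in the exceptional case $I_0 = \{1,\dots,n\}$ with $q^{\hk_0}=1$, where it is the line spanned by $g^*_{|I_0|}$. Since $g^*_{|I_0|}$ lies in $H^1_c(\partial Y_0)_q$ and $\partial Y_0$ is a boundary component of $Y$, this class belongs to $\iota_{Y,Y_0}(H^1_c(Y)_q)$, and so it contributes exactly one dimension to $\ker \iota_Y$ in the exceptional case, giving \eqref{eq:subsurf:partition:dim:2}.

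Finally, the pairing $\langle \iota_Y(v),\iota_Y(v')\rangle = \tfrac{\imath}{2}\int_Y \alpha\wedge\bar{\alpha}'$ agrees with the cup-product pairing on $H^1_c(Y)_q$, whose radical coincides with the kernel of the natural map $H^1_c(Y) \to H^1(Y)$ by Poincar\'e--Lefschetz duality. That kernel is the image of the connecting map from $H^0(\partial Y)$, and by Lemma~\ref{lm:coh:annulus} the $q$-eigenspace $H^0(\partial Y)_q$ receives a one-dimensional contribution from the preimage of $\partial E_j$ exactly when $q^{\hk_j}=1$, for each $j=0,1,\dots,\ell$. Passing to $\Vbb = \iota_Y(H^1_c(Y)_q)$, the $\partial Y_0$-contribution is absent when $q^{\hk_0}\ne 1$ and is killed by $\iota_Y$ when $I_0=\{1,\dots,n\}$ (by the Step~2 analysis), while it survives as a non-trivial radical element precisely when $q^{\hk_0}=1$ and $I_0 \ne \{1,\dots,n\}$; the inner contributions from $\partial Y_j$ with $j\geq 1$ and $q^{\hk_j}=1$ survive non-trivially and are linearly independent in $\Vbb$ (they can be detected by integrating against classes supported inside the individual $Y_j$'s). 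Thus $\langle\cdot,\cdot\rangle$ restricted to $\Vbb$ is non-degenerate if and only if $q^{\hk_j}\ne 1$ for all $j\geq 1$ and either $q^{\hk_0}\ne 1$ or $I_0=\{1,\dots,n\}$. The main obstacle lies in this last step: rigorously certifying that the inner-boundary classes really survive non-trivially and that their collection spans the entire radical without introducing extra elements. This requires combining the injectivity results of Step~2 with an explicit identification of boundary classes in the basis $\{g_1,\dots,g_{|I_0|-1}\}$ of $H^1_c(Y_0)_q$ from Theorem~\ref{th:Menet:generator:set} via the Mayer--Vietoris computation of Step~1.
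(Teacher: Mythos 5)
Your treatment of \eqref{eq:subsurf:partition:dim:1} and \eqref{eq:subsurf:partition:dim:2} is essentially the paper's: the same Mayer--Vietoris sequence for $Y_0 = Y \cup Y'$, the same identification of when $H^2_c(Y)_q$ jumps, and the same factorization $\iota_Y = \iota_{Y_0}\circ\iota_{Y,Y_0}$ combined with Lemma~\ref{lm:coh:subsurf:inject} and Lemma~\ref{lm:coh:preimage:disc}(b). The observation that the generator $g^*_{|I_0|}$ of $\ker\iota_{Y_0}$ in the exceptional case is a boundary class and hence lies in $\iota_{Y,Y_0}(H^1_c(Y)_q)$ is exactly what makes \eqref{eq:subsurf:partition:dim:2} work.

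For the non-degeneracy statement, however, you take a genuinely different route from the paper. You propose to identify the radical of the form on $\Vbb$ with the image under $\iota_Y$ of $\ker\bigl(H^1_c(Y)_q \to H^1(Y)_q\bigr)$, read that kernel off from the long exact sequence as the image of the connecting map $H^0(\partial Y)_q \to H^1_c(Y)_q$, and then track, boundary component by boundary component, which of those classes persist in $\Vbb$. The paper instead embeds $Y$ into the auxiliary compact surface $\hY$ built from the curve \eqref{eq:curve:assoc:partition}, observes that $\langle\iota_Y(\mu),\iota_Y(\eta)\rangle = \langle\iota'_Y(\mu),\iota'_Y(\eta)\rangle_{\hY}$ for $\mu,\eta\in H^1_c(Y)_q$, computes $\ker\iota'_Y = \bigoplus_{j=0}^\ell H^1_c(\partial Y_j)_q$, and then simply quotes Theorem~\ref{th:Menet:dim:signature} to get non-degeneracy of $\langle.,.\rangle_{\hY}$ on $H^1(\hY)_q$; degeneracy in the remaining cases is exhibited directly by a class $\mu$ with $\iota_Y(\mu)\ne 0$ but $\iota'_Y(\mu)=0$. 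The paper's argument trades the Poincar\'e--Lefschetz bookkeeping for a reuse of the already-established non-degeneracy result, and avoids entirely the step you yourself flag as the main obstacle: certifying that each inner boundary class $\mu_j$ with $q^{\hk_j}=1$ survives non-trivially in $\Vbb$ and that these classes, together with the possible outer class, span the whole radical of $\langle.,.\rangle|_\Vbb$ and nothing more. Your detection idea (pair $\iota_Y(\mu_j)$ against classes supported inside $Y_j$, which lies in $Y^c$) is sound, and a careful dimension count using $\dim H^0(\partial Y)_q$, $\dim H^0(Y)_q$, and the exactness at $H^0(\partial Y)_q$ would indeed reproduce the stated iff criterion; but since you have not carried out that count, the argument is incomplete as written. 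If you wish to complete it along your lines, the crucial case to check is when $H^0(Y)_q$ is nonzero, i.e., when $Y$ has $d$ components, since then the map $H^0(Y)_q\to H^0(\partial Y)_q$ is injective and eats one dimension of the boundary contribution; the constraint $I_j\subsetneq I_0$ from Lemma~\ref{lm:coh:subsurf:inject} guarantees $\ell\geq 2$ in this situation, which is what keeps the count consistent.
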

\begin{proof}
Let $Y_j$ be the preimage of the interior of $E_j$ in $\hX$ for $j=1,\dots,\ell$.
Since $Y_j$ is an open subsurface of $\hX$ we have $H^0_c(Y_j)=0$, for $j=0,\dots,\ell$.
Since the $k_i$'s are not  divisible by $d$,  we have $|\pi_0(Y_j)|< d$ and hence $H^2_c(Y_j)_q=\{0\}$ by Lemma~\ref{lm:coh:annulus}.

Consider the Mayer-Vietoris sequence associated with the decomposition $Y_0=Y\cup Y'$,  where $Y'=\cup_{j=1}^\ell Y_j$. Since $H^0_c(Y_j)=H^2_c(Y_j)_q=\{0\}$ for all $j=0,\dots,\ell$, we get the following exact sequence
$$
0 \to H^1_c(\partial Y')_q \to H^1_c(Y)_q \oplus H^1_c(Y')_q \to H^1_c(Y_0)_q \to H^2_c(\partial Y')_q \to H^2_c(Y)_q \to 0.
$$
Note that $\dim H^1_c(\partial Y')_q = \dim H^2_c(\partial Y')_q$ by Lemma~\ref{lm:coh:annulus}(ii), while $\dim H^1_c(Y_j)_q=|I_j|-1$ by Lemma~\ref{lm:coh:preimage:disc}. Therefore
$$
\dim H^1_c(Y)_q=|I_0|-\sum_{j=1}^\ell|I_j|+\ell-1+\dim H^2_c(Y)_q.
$$
By Lemma~\ref{lm:coh:annulus}(i),
$$
\dim H^2_c(Y)_q=\left\{
\begin{array}{ll}
1 & \text{ if } |\pi_0(Y)|=d,\\
0 & \text{ otherwise.}
\end{array}
\right.
$$
The number of components of $Y$ is equal to the number of components of the Riemann surface $\hY$ constructed from the curve defined by
\begin{equation}\label{eq:curve:assoc:partition}
y^d=\prod_{j=1}^\ell(x-\hb_j)^{\hk_j}\cdot \prod_{i\in I_0\setminus(\cup_{j=1}^\ell I_j)} (x-b_i)^{k_i}
\end{equation}
where $\hb_j$ is an arbitrary point in the disc $E_j$. The number of components of $\hY$ is $d$ if and only if all the exponents on the right hand side of \eqref{eq:curve:assoc:partition} are divisible by $d$.
Recall that the $0< k_i < d$ for all $i=1,\dots,n$.
Therefore, $\dim H^2_c(Y)_q=1$ if and only if $I_0=I_1\cup\dots\cup I_\ell$, and $d  \; | \; \hk_j$ for all  $j=1,\dots,\ell$. This proves \eqref{eq:subsurf:partition:dim:1}.

Since $\iota_Y=\iota_{Y_0}\circ\iota_{Y,Y_0}$, \eqref{eq:subsurf:partition:dim:2} follows from Lemma~\ref{lm:coh:subsurf:inject} and  Lemma~\ref{lm:coh:preimage:disc}(b).
It remains to show the last assertion on the restriction of $\langle.,.\rangle$ to $\Vbb=\iota_Y(H^1_c(Y)_q)$.
Consider $Y$ as a subsurface of the Riemann surface $\hY$ obtained from \eqref{eq:curve:assoc:partition}, and let
$\iota'_Y: H^1_c(Y)_q\to H^1(\hY)_q$ be the inclusion morphism.
Note that the complement of $Y$ in $\hY$ is a disjoint union of discs. Thus, it follows from the Mayer-Vietoris exact sequence that
$$
\ker(\iota'_Y)= H^1_c(\partial Y)_q = \bigoplus_{j=0}^\ell H^1_c(\partial Y_j)_q.
$$
Let $\langle.,.\rangle_{\hY}$ denote the intersection form on $H^1(\hY)$.
By construction, for all $\mu,\eta\in H^1_c(Y)$,  we have
\begin{equation}\label{eq:inters:form:equal:V:hY}
\langle \iota_Y(\mu),\iota_Y(\eta)\rangle = \langle \iota'_Y(\mu),\iota'_Y(\eta)\rangle_{\hY}.
\end{equation}
If $q^{\hk_j}\neq 1$ for all $j=0,\dots,\ell$, then $\Vbb\simeq  H^1_c(Y)_q \simeq H^1(\hY)_q$.
It follows from \eqref{eq:inters:form:equal:V:hY} that the Hermitian spaces $(\Vbb,\langle.,.\rangle)$ and  $(H^1(\hY)_q,\langle.,.\rangle_{\hY})$ are isometric.
Since $(H^1(\hY)_q,\langle.,.\rangle_{\hY})$ is non-degenerate by Theorem~\ref{th:Menet:dim:signature}, so is $(\Vbb,\langle.,.\rangle)$.

In the case  $q^{\hk_j}\neq 1$ for all $j=1,\dots,\ell$, but $q^{\hk_0}=1$ and $I_0=\{1,\dots,n\}$, we have $\Vbb\simeq H^1(\hY)_q\simeq H^1_c(Y)_q/H^1_c(\partial Y_0)_q$, and the same conclusion holds.
Finally, in the case $q^{\hk_j}=1$ for some $j\in \{1,\dots,\ell\}$, or $q^{\hk_0}=1$ and $|I_0| < n$, then there is an element $\mu \in H^1_c(\partial Y)_q \subset H^1_c(Y)_q$ such that $\iota_{Y}(\mu) \neq 0 \in H^1(\hX)_q$, but $\iota'_Y(\mu)=0 \in H^1(\hY)_q$.
For all $\eta\in H^1_c(Y)_q$, we have
$$
\langle \iota_Y(\mu),\iota_Y(\eta)\rangle= \langle \iota'_Y(\mu),\iota'_Y(\eta)\rangle_{\hY}=\langle 0, \iota'_Y(\eta)\rangle = 0
$$
which means that the restriction of $\langle.,.\rangle$ to $\Vbb$ is degenerate.
\end{proof}

\subsection{A consequence of the lantern relation}\label{subsec:lantern:rel}
Let $S$ be a four-holed sphere, that is $S$ is homeomorphic to the sphere with four discs removed. We can represent $S$ as the complement of the union of three disjoint (closed) discs  $E_1, E_2, E_3$ in a disc $E_0$.
In what follows, if $\gamma$ is a simple closed curve in $S$, then we denote by $\tau_\gamma$ the Dehn twist about $\gamma$ considered as an element of $\MCG(S)$.

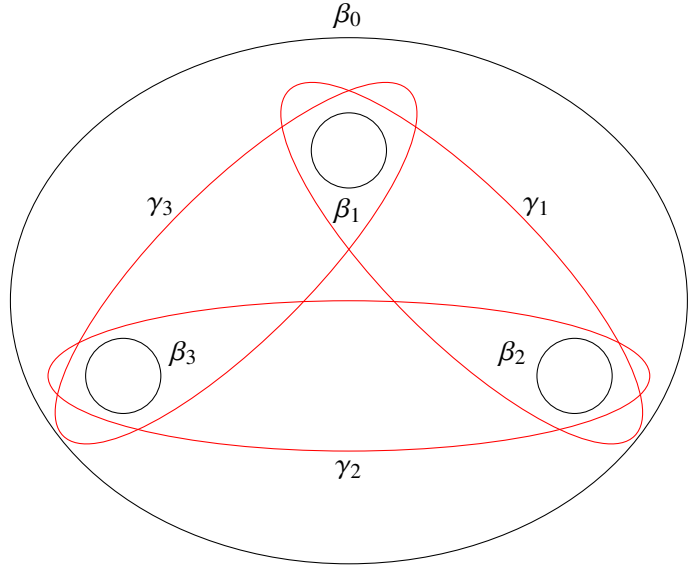
\begin{figure}[htb]
\begin{tikzpicture}[scale=0.5]
\foreach \x in {(6,0), (-6,0), (0,6)} \draw \x circle (1);
\draw[red] (0,0) ellipse (8 and 2);

\draw[rotate around={135:(3,3)}, red] (3,3) ellipse (6.5 and 2);
\draw[rotate around={45:(-3,3)}, red] (-3,3) ellipse (6.5 and 2);

\draw (0,2) ellipse (9 and 7);
\draw (0,9) node[above] {$\beta_0$};
\draw (0,5) node[below] {$\beta_1$};
\draw (5,0) node[above left]   {$\beta_2$};
\draw (-5,0) node[above right]   {$\beta_3$};
\draw (5,4.5) node   {$\gamma_1$};
\draw (0,-2.5) node   {$\gamma_2$};
\draw (-5,4.5) node   {$\gamma_3$};
\end{tikzpicture}
\caption{The lantern relation}
\label{fig:lantern:rel}
\end{figure}
Let $\beta_0,\dots,\beta_3$ denote the borders of $E_0, \dots, E_3$ respectively.
Let $\gamma_1,\gamma_2,\gamma_3$ be three simple closed curves in $E_0$ such that $\gamma_i$ borders a disc that contains $E_i$ and $E_{i+1}$ but not $E_{i+2}$ with the convention $E_j\equiv E_{j-3}$ if $j>3$. The configuration of $\gamma_1,\gamma_2,\gamma_3$ is shown in Figure~\ref{fig:lantern:rel}. The lantern relation (see~\cite{FM12,Dehn,J79}) asserts that we have
\begin{equation}\label{eq:lantern}
\tau_{\gamma_1}\cdot \tau_{\gamma_2}\cdot \tau_{\gamma_3}=\tau_{\beta_0}\cdot \tau_{\beta_1}\cdot \tau_{\beta_2}\cdot \tau_{\beta_3}
\end{equation}
in $\Mod(S)$.

Now let $E_0$ be an open disc in $D$ whose boundary does not intersect the set $\Bc$. Let $I_0 \subset \{1,\dots,n\}$ be the set of indices $i$ such that $b_i\in E_0$. Let $E_1,E_2,E_3$ be three open disjoint  discs  in $E_0$ whose boundaries are also disjoint. We suppose that each of $E_1, E_2, E_3$ contains some points in the set $\Bcal$ and that $\Bc\cap E_0$ is contained in $E_1\cup E_2 \cup E_3$. Let $I_1,I_2,I_3$ be the sets of indices $i$ such that $b_i$ is contained in $E_1,E_2,E_3$ respectively.  Define
$$
\hk_j:=\sum_{i\in I_j}k_i, \; j=0,\dots,3.
$$
The lantern relation \eqref{eq:lantern} implies the  following
\begin{Proposition}\label{prop:lantern:conseq}
Let $E:=E_0\setminus(\cup_{i=1}^3\ol{E}_i)$, and $\gamma_1,\gamma_2,\gamma_3$ be the simple closed curve as in Figure~\ref{fig:lantern:rel}. Denote by $Y$ the preimage of $E$ in $\hX$, and by $\Vbb$ the image of $H^1_c(Y)_q$ under the inclusion morphism $\iota_Y: H^1_c(Y)_q \to H^1(\hX)_q$.
Assume that $q^{\hk_j}\neq 1$ for all $j=0,\dots,3$.
Then we have
\begin{itemize}
\item[(i)] $\dim \Vbb=2$ and the restriction of the intersection form $\langle.,.\rangle$ to $\Vbb$ is non-degenerate.

\item[(ii)] $\Vbb$ is preserved by the action of $\rho_q(\tau_{\gamma_1}), \rho_q(\tau_{\gamma_2}), \rho_q(\tau_{\gamma_3})$ and
$$
\left(\rho_q(\tau_{\gamma_3})\cdot\rho_q(\tau_{\gamma_2})\cdot\rho_q(\tau_{\gamma_1})\right)_{|\Vbb}=q^{\hk_0}\cdot \id_\Vbb.
$$
\end{itemize}
\end{Proposition}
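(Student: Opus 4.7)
I would derive (i) by direct application of Proposition~\ref{prop:coh:subsurf:embedding}, and (ii) by pulling the lantern relation~\eqref{eq:lantern} back to $\PB_n$ and evaluating both sides on $\Vbb$.

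For (i), the assumption $\Bc\cap E_0\subseteq E_1\cup E_2\cup E_3$ gives $I_0=I_1\sqcup I_2\sqcup I_3$, so $|I_0|-\sum_{j=1}^{3}|I_j|+\ell-1=2$ with $\ell=3$. The hypothesis $q^{\hk_j}\neq 1$ for $j=1,2,3$ puts us outside the exceptional branch of~\eqref{eq:subsurf:partition:dim:1}, so $\dim H^1_c(Y)_q=2$; then $q^{\hk_0}\neq 1$ together with~\eqref{eq:subsurf:partition:dim:2} yields $\dim\Vbb=2$. Non-degeneracy of $\langle\cdot,\cdot\rangle|_\Vbb$ follows from the last clause of Proposition~\ref{prop:coh:subsurf:embedding} since $q^{\hk_j}\neq 1$ for all $j\in\{0,1,2,3\}$.

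For (ii), I first note that each curve $\gamma_i$ lies in $E$, so the element $\tau_{\gamma_i}\in\PB_n$ admits a representative homeomorphism supported in a small annular neighborhood of $\gamma_i$ contained in $E$. Its lift to $\hX$ is supported in $Y$ and fixes $\hX\smin Y$ pointwise, so the induced action on $H^1(\hX)_q$ preserves $\Vbb=\iota_Y(H^1_c(Y)_q)$. Next, writing $Y_j$ for the preimage of $E_j$ and $\Vbb_j=\iota_{Y_j}(H^1_c(Y_j)_q)$ for $j=1,2,3$, the disjointness $Y\cap Y_j=\vide$ lets me choose representatives with disjoint compact supports, forcing $\Vbb\perp\Vbb_j$. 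Corollary~\ref{cor:Dehn:twist} applied to $\tau_{\beta_j}$ (with $q^{\hk_j}\neq 1$) then gives $\rho_q(\tau_{\beta_j})|_\Vbb=\id_\Vbb$. Finally, $Y$ sits inside the preimage $\tilde E_0$ of $E_0$, so $\Vbb$ is contained in the image $\Vbb_0$ of $H^1_c(\tilde E_0)_q$, and Corollary~\ref{cor:Dehn:twist} applied to $\tau_{\beta_0}$ (with $q^{\hk_0}\neq 1$) yields $\rho_q(\tau_{\beta_0})|_{\Vbb_0}=q^{\hk_0}\id_{\Vbb_0}$, hence $\rho_q(\tau_{\beta_0})|_\Vbb=q^{\hk_0}\id_\Vbb$.

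Assembling these and applying $\rho_q$ to the lantern relation~\eqref{eq:lantern} in $\PB_n$ (with the usual reversal of order that accompanies pullback of homeomorphisms on cohomology), one obtains
$$
\rho_q(\tau_{\gamma_3})\rho_q(\tau_{\gamma_2})\rho_q(\tau_{\gamma_1})|_\Vbb = \rho_q(\tau_{\beta_3})\rho_q(\tau_{\beta_2})\rho_q(\tau_{\beta_1})\rho_q(\tau_{\beta_0})|_\Vbb = q^{\hk_0}\cdot\id_\Vbb.
$$
The only genuinely delicate step is the stability of $\Vbb$ under each $\rho_q(\tau_{\gamma_i})$, which is handled by the support argument above; the remainder is orthogonality bookkeeping that reduces cleanly to Corollary~\ref{cor:Dehn:twist} and Proposition~\ref{prop:coh:subsurf:embedding}.
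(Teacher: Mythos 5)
Your proof is correct and follows essentially the same route as the paper: part (i) via Proposition~\ref{prop:coh:subsurf:embedding}, and part (ii) by pushing the lantern relation through $\rho_q$, using the support of $\gamma_i$ in $E$ to see that the $\rho_q(\tau_{\gamma_i})$ preserve $\Vbb$, and then handling the $\tau_{\beta_j}$ factors. The single place you deviate is in showing $\rho_q(\tau_{\beta_j})|_\Vbb=\id_\Vbb$ for $j=1,2,3$: you argue via the orthogonality $\Vbb\perp\Vbb_j$ (disjoint supports) combined with Corollary~\ref{cor:Dehn:twist}, whereas the paper observes directly that the lift $\tilde{\tau}_{\beta_j}$, being supported in the preimage of a thin annular neighbourhood of $\beta_j$ disjoint from $E$, is already the identity on $Y$, so $\tilde{\tau}_{\beta_j}^*$ fixes every class represented by a form supported in $Y$. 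Both work, but Corollary~\ref{cor:Dehn:twist} is stated only for $r\geq 2$, and the setup here allows $|I_j|=1$; in that case $\tau_{\beta_j}$ is the trivial braid and the claim is vacuous, but you should flag that edge case, whereas the paper's direct support argument handles all $|I_j|\geq 1$ uniformly and with less machinery.
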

\begin{proof}
That $\dim \Vbb=2$ and the restriction of $\langle.,.\rangle$ to $\Vbb$ is non-degenerate follows from Proposition~\ref{prop:coh:subsurf:embedding}.
Since the Dehn twists $\tau_{\gamma_j}$'s preserve $E$, their lifts preserve $Y$. Therefore, the space $H^1_c(Y)_q$ is invariant by the actions of $\rho_q(\tau_{\gamma_j})$'s.  The lantern relation \eqref{eq:lantern} then implies that
$$
(\rho_q(\tau_{\gamma_3})\cdot\rho_q(\tau_{\gamma_2})\cdot\rho_q(\tau_{\gamma_1}))_{|\Vbb}=(\rho_q(\tau_{\beta_3})\cdot\rho_q(\tau_{\beta_2})\cdot\rho_q(\tau_{\beta_1})\cdot\rho_q(\tau_{\beta_0}))_{|\Vbb}.
$$
Up to homotopy, $\tau_{\beta_j}$ is identity outside the disc $E_j$. By definition, the lift $\tilde{\tau}_{\beta_j}$ of $\tau_{\beta_j}$ is identity in a neighborhood of the preimage of $\infty$ in $\hX$. Therefore, for $j=1,2,3$, $\tilde{\tau}_{\beta_j}$ is identity on $Y$. It follows that  $\tilde{\tau}^*_{\beta_j|\Vbb}=\id_{\Vbb}$ for $j=1,2,3$.
Let $Y_0$ be the preimage of $E_0$ in $\hX$, and $\Vbb_0$  the image of $H^1_c(Y_0)_q$ in $H^1(\hX)_q$. By Corollary~\ref{cor:Dehn:twist}, we know that the restriction of $\rho_q(\tau_{\beta_0})$ to $\Vbb_0$ is equal to $q^{\hk_0}\cdot\id_{\Vbb_0}$. Since $\Vbb$ is a subspace of $\Vbb_0$, we get that
$$
(\rho_q(\tau_{\gamma_3})\cdot\rho_q(\tau_{\gamma_2})\cdot\rho_q(\tau_{\gamma_1}))_{|\Vbb}=\rho_q(\tau_{\beta_0})_{|\Vbb}= q^{\hk_0}\cdot \id_\Vbb.
$$
\end{proof}

\section{Zariski closures in unitary groups}~\label{sec:preparation:unitary:grp}
In this section, we collect some elementary results on unitary Lie groups (that is matrix groups that preserve some non-degenerate Hermitian form) and their Lie algebra.
In what follows, $V$ will be a  $\C$-vector space of dimension $r \geq 2$ endowed with a non-degenerate Hermitian form $\langle.,.\rangle$. Denote by $\U(V)$ (resp. by $\SU(V)$) the group of automorphisms of $V$ that preserve $\langle.,.\rangle$  (resp. and have determinant $1$). Recall that both $\U(V)$ and $\SU(V)$ are real Lie groups, and that the Lie algebra of $\SU(V)$, denoted by $\su(V)$,  is a simple real Lie algebra.
We denote by $\U_d(V)$ the subgroup of $\U(V)$ which consists of elements $A\in \U(V)$ such that $\det(A)^d=1$. This is a Lie subgroup of $\U(V)$ whose identity component is equal to $\SU(V)$.

\medskip

Let $\Gamma$ be a subgroup of $\U_d(V)$. Denote by $\Gb$ the Zariski closure of $\Gamma$ in $\U(V)$, and by $\Gb_0$ the identity component of $\Gb$. Note that $\Gb_0$ is a Lie subgroup of $\SU(V)$. We denote by $\gg$ its Lie algebra.

\begin{Lemma}\label{lm:density:sign:1:1}
Assume that $\dim V=2$ and that  $\Gamma$ is generated by three elements $\alpha,\beta,\gamma$ in $\U_d(V)$ such that
\begin{itemize}
\item[(i)] Each of $\{\alpha,\beta,\gamma\}$ is either of finite order or unipotent, none of them is multiple of identity.

\item[(ii)] There are three distinct points $a,b,c$ in $\Pb(V)$ such that $a$ is fixed by $\alpha$, $b$ by $\beta$, and $c$ by $\gamma$.

\item[(iii)] There exists $\lambda \in \C^*$ such that $\alpha\cdot\beta\cdot\gamma=\lambda\Id_V$.
\end{itemize}
Then $\Gb_0=\SU(V)$  if $\langle.,.\rangle$ has signature $(1,1)$.
\end{Lemma}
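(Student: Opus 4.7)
The plan is to argue by contradiction. Suppose $\Gb_0$ is a proper subgroup of $\SU(V)$. Since the real Lie algebra $\mathfrak{su}(V)\simeq \mathfrak{su}(1,1)$ is simple of dimension $3$, every proper Lie subalgebra has dimension at most $2$ and is therefore solvable; hence $\Gb_0$ is connected and solvable. It is convenient to pass to $\PU(V)\simeq \PSL(2,\R)$ via the natural projection: one checks that $\Gb_0=\SU(V)$ is equivalent to the image $\ol{\Gb}_0$ of $\Gb_0$ in $\PU(V)$ being all of $\PU(V)$. Under projection the hypotheses translate as follows: by (i) each of $\bar\alpha,\bar\beta,\bar\gamma$ is either elliptic of finite order or parabolic; by (ii) they admit three distinct fixed points $a,b,c$ on $\Pb(V)$; and by (iii) their product equals $\Id$ in $\PU(V)$, since the scalar $\lambda\Id$ is central.

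The second step is to enumerate the proper connected Lie subgroups of $\PSL(2,\R)$. By the hyperbolic-geometry classification, up to conjugacy these are: the trivial group; the compact torus $K$ (stabilizer of a point in the Poincar\'e disk); the identity component $T_s^0$ of a split torus (stabilizer of a geodesic with boundary endpoints $\{p_0,p_1\}$); the $1$-dimensional unipotent subgroup $U$ (stabilizer of a horocycle); and the $2$-dimensional Borel $B^0$ (stabilizer of a boundary point). A short normalizer computation yields $N(K)=K$, $N(B^0)=B^0$, $N(U)=B^0$, and $N(T_s^0)/T_s^0\simeq \Z/2\Z$, the non-trivial class being represented by the Weyl involution $\bar w$ which swaps $p_0$ and $p_1$.

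For the third step, each possibility for $\ol{\Gb}_0$ is ruled out. If $\ol{\Gb}$ is finite it is cyclic, since every finite subgroup of $\PSL(2,\R)$ is cyclic, so $\bar\alpha,\bar\beta,\bar\gamma$ commute and share the same pair of fixed points, forcing $a,b,c$ into a $2$-element set and contradicting (ii). If $\ol{\Gb}_0\in\{U,B^0\}$ then $\ol{\Gb}\subseteq B^0$; the Borel of $\PSL(2,\R)$ contains no non-trivial elliptic element, so (i) forces each of $\bar\alpha,\bar\beta,\bar\gamma$ to be parabolic with unique fixed point equal to the common Borel fixed point $p_0$, whence $a=b=c=p_0$, contradicting (ii) again. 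If $\ol{\Gb}_0=K$ then $\ol{\Gb}=K$ by self-normalization, and since $K$ is abelian all of its non-trivial elements share the two fixed points of $K$ in $\Pb(V)$, so the same argument applies.

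The only delicate case is $\ol{\Gb}_0=T_s^0$, and this is where hypothesis (iii) becomes essential. Elements of $T_s^0$ are hyperbolic, so by (i) none of $\bar\alpha,\bar\beta,\bar\gamma$ can lie in $T_s^0$; they must all sit in the non-identity coset $\bar w T_s^0$. Using the relation $\bar w t \bar w^{-1}=t^{-1}$ for $t\in T_s^0$, a direct computation gives $(\bar w t)^2=\Id$, confirming that these elements are elliptic of order $2$ in $\PU(V)$ (consistent with (i), as they lift to elements of order $4$ in $\SU(V)$). But then $\bar\alpha\bar\beta\bar\gamma$ lies in $\bar w T_s^0$, contradicting the equality $\bar\alpha\bar\beta\bar\gamma=\Id \in T_s^0$ forced by (iii). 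I expect this split-torus case to be the main obstacle, as it is the only one where condition (iii) is indispensable; all other cases use only (i) and (ii).
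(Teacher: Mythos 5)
Your proof is correct and takes essentially the same route as the paper: both enumerate the proper connected Lie subgroups of $\SL(2,\R)\simeq\SU(1,1)$ (equivalently, of $\PSL(2,\R)$) up to conjugacy, and rule out each one using hypotheses (i)--(ii) for the finite, compact torus, unipotent, and Borel cases and hypothesis (iii) only for the split torus. The paper works in $\SU(V)$ and argues directly about the fixed points that $\alpha,\beta,\gamma$ can have, whereas you pass to $\PU(V)$ and package the case analysis through explicit normalizer computations and the $\Z/2\Z$ coset structure of $N(T_s^0)$ — a cosmetic but slightly cleaner formulation of the same argument (the parenthetical claim that the order-two elements lift to order $4$ in $\SU(V)$ is not needed and not always accurate, but it plays no role).
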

\begin{proof}
We first show that $\Gamma$ is infinite. Assume that $\Gamma$ is finite, then so is it image in $\PU(V)$. Since $\langle.,.\rangle$ has signature $(1,1)$, we have $\PU(V)\simeq \PU(1,1)$. It is a well known fact that all elements in any finite subgroup of $\PU(1,1)$ have two common fixed points in $\CP^1$. But this would lead to a contradiction to the hypothesis (ii), hence the claim is proved.

Since $\Gamma$ is infinite, its Zariski closure $\Gb$ is a Lie subgroup of positive dimension in $\U_d(V)$. In particular, the identity component $\Gb_0$ of $\Gb$ is a Lie subgroup of positive dimension in $\SU(V)\simeq \SU(1,1) \simeq \SL(2,\R)$.

Assume that $\Gb_0 \varsubsetneq \SL(2,\R)$. Then up to a conjugation, $\Gb_0$ is one of the following subgroups of $\SL(2,\R)$: $\SO(2,\R)$, $A=\left\{\left(\begin{smallmatrix} e^{t} & 0 \\ 0 & e^{-t}\end{smallmatrix} \right), \; t \in \R \right\}, \; N=\left\{ \left( \begin{smallmatrix} 1 & * \\ 0 & 1 \end{smallmatrix}\right)\right\}, \; P= \left\{\left(\begin{smallmatrix} * & * \\ 0 & * \end{smallmatrix} \right)\right\}$.

\begin{itemize}
\item[$\bullet$] If $\Gb_0\simeq \SO(2,\R)$, then all the elements of $\Gb_0$ fix two points $\C\cdot v$ and $\C\cdot v'$ in $\Pb(V)$, for some $v,v' \in V$ such that $\langle v,v \rangle >0$ and $\langle v', v' \rangle < 0$. By definition $\alpha,\beta,\gamma$ normalize $\Gb_0$. Therefore, each of $\{\alpha,\beta,\gamma\}$ stabilizes the set $\{\C\cdot v, \C\cdot v'\}$. But $\alpha,\beta,\gamma$ can not exchange the lines $\C\cdot v$ and $\C \cdot v'$, since they must preserve the norms of the vectors in $V$ with respect to $\langle .,.\rangle$. Therefore, all of $\alpha,\beta,\gamma$ fix $\C\cdot v$ and $\C\cdot v'$. This is again impossible because there are $3$ distinct points in $\Pb(V)$ that are fixed points of one of $\alpha,\beta,\gamma$. Thus, we conclude that $\Gb_0\not\simeq \SO(2,\R)$. \\

\item[$\bullet$] If $\Gb_0\simeq N$ or $\Gb_0\simeq P$ then all the elements of $\Gb_0$ fix a unique point $\C\cdot v \in \Pb(V)$, where $\langle v,v \rangle=0$. Since $\alpha,\beta,\gamma$ normalize $\Gb_0$, they  also fix the line  $\C\cdot v$. We claim that $\C\cdot v$ is the unique fixed point of $\alpha,\beta,\gamma$ in $\Pb(V)$. If $\alpha$ fixes another point in $\Pb(V)$ then it is diagonalizable. Therefore, it cannot be unipotent. By assumption, $\alpha$  must be of finite order. But any element of finite order in $\U(1,1)$ cannot fix a point $\C\cdot v \in \Pb(V)$ with $\langle v, v\rangle=0$. Thus $\alpha$ is unipotent and $\C\cdot v$ is the unique fixed point of $\alpha$ in $\Pb(V)$. The same argument holds for $\beta$ and $\gamma$. But we have again a contradiction to the  assumption that there are three distinct points in $\Pb(V)$ each of which is fixed by one of $\{\alpha,\beta,\gamma\}$.  We can then conclude that $\Gb_0\not\simeq N$ and $\Gb_0\not\simeq P$. \\

\item[$\bullet$] Finally, if $\Gb_0\simeq A$, then all the elements of $\Gb_0$ fix two points $\C\cdot v$ and $\C\cdot v'$ in $\Pb(V)$, where $\langle v,v\rangle =\langle v',v'\rangle =0$. Since $\alpha,\beta,\gamma$ normalize $\Gb_0$, they stabilize $\{\C\cdot v, \C\cdot v'\}$. We have seen that each of $\alpha,\beta,\gamma$ cannot have two fixed points in the set $\{\C\cdot w \in \Pb(V), \; \langle w,w\rangle=0\}$. Therefore, all of them exchange the lines $\C\cdot v$ and $\C\cdot v'$. But we have by assumption $ \alpha\cdot\beta\cdot\gamma = \lambda \Id$, which implies that the action of $\alpha\cdot\beta\cdot\gamma$ on $\Pb(V)$ is identity. We thus have again a contradiction, which means that  $\Gb_0\not\simeq A$, and hence we must have $\Gb_0 = \SU(V)$.
\end{itemize}
\end{proof}

\begin{Lemma}\label{lm:density:sign:2:0}
Let $V, \Gamma, \alpha, \beta, \gamma$ be as in Lemma~\ref{lm:density:sign:1:1}. Assume that the signature of $\langle.,.\rangle$ is $(2,0)$ or $(0,2)$. Let $\ol{\alpha}, \ol{\beta}, \ol{\gamma}$ be the projections of $\alpha,\beta,\gamma$ in $\PU(V)$, and  $n_\alpha, n_\beta, n_\gamma$ be the orders of $\ol{\alpha}, \ol{\beta},\ol{\gamma}$ respectively. Without loss of generality, we can assume that $n_\alpha \geq n_\beta \geq n_\gamma$. If we have  $n_\alpha >5$ and $n_\beta >2$, then $\Gb_0=\SU(V)$.
\end{Lemma}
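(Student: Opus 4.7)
The plan is to exploit the fact that in signature $(2,0)$ or $(0,2)$ the group $\U(V) \simeq \U(2)$ is compact, which severely restricts both the possible Zariski closures and the generators. First I would observe that any unipotent element in a compact group is trivial, so hypothesis (i) combined with non-scalarity forces $\alpha, \beta, \gamma$ to all be non-scalar of finite order. Next I would use that $\SU(V) \simeq \SU(2)$ has real Lie algebra $\su(V) \simeq \su(2)$ whose proper real Lie subalgebras are all of dimension $\leq 1$ (the bracket is, up to scale, the cross product on $\R^3$, so no $2$-plane is closed under it); hence $\Gb_0 \subset \SU(V)$ is either trivial, a circle subgroup $T$, or all of $\SU(V)$, and the task is to rule out the first two possibilities.

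To exclude $\Gb_0 \simeq T$: after a unitary change of basis one can assume that $T$ is the standard diagonal torus with exactly two projective fixed points $[e_1], [e_2] \in \Pb(V)$, and its normalizer in $\U(V)$ consists of diagonal and anti-diagonal matrices. Since $\alpha, \beta, \gamma$ normalize $T$, each is either diagonal (fixing both $[e_1]$ and $[e_2]$) or anti-diagonal (swapping them, hence projectively of order $2$). The hypotheses $n_\alpha > 5$ and $n_\beta > 2$ force $\alpha$ and $\beta$ to be diagonal, so $a, b \in \{[e_1], [e_2]\}$; distinctness of $a, b, c$ then forces $c \notin \{[e_1], [e_2]\}$, which rules out $\gamma$ being diagonal (a non-scalar diagonal matrix fixes exactly $[e_1]$ and $[e_2]$ projectively), so $\gamma$ must be anti-diagonal. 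Projecting condition (iii) to $\PU(V) \simeq \SO(3)$ gives $\ol{\alpha} \cdot \ol{\beta} = \ol{\gamma}^{-1}$, but the left side is a rotation about the $[e_1]$-$[e_2]$ axis while the right side is a rotation by $\pi$ about an orthogonal axis; such rotations can coincide only if both are trivial, making $\gamma$ scalar, a contradiction.

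To exclude $\Gb_0 = \{1\}$: then $\Gb$ is $0$-dimensional, hence finite, so $\Gamma$ is finite and its projective image $\ol{\Gamma} \subset \PU(V) \simeq \SO(3)$ is a finite rotation group. The classification of finite subgroups of $\SO(3)$ (cyclic, dihedral, $A_4$, $S_4$, $A_5$), together with the presence of an element of order $n_\alpha > 5$, leaves only cyclic $\Z/n$ or dihedral $D_n$ with $n \geq 6$. In the cyclic case, all elements of $\ol{\Gamma}$ share the same two fixed points on the rotation axis, contradicting distinctness of $a, b, c$. In the dihedral case, every element of order $> 2$ lies in the rotation subgroup about the main axis, so $n_\alpha, n_\beta > 2$ forces $\ol{\alpha}, \ol{\beta}$ into this cyclic subgroup with common fixed points on the axis, and distinctness then forces $\ol{\gamma}$ to be a reflection; condition (iii) again equates a rotation about the main axis with a $\pi$-rotation about a perpendicular one, forcing both to be trivial and $\gamma$ to be scalar. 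The main (mild) point throughout is simply tracking which elements the numeric bounds $n_\alpha > 5$ and $n_\beta > 2$ force into the maximal torus or the principal rotation subgroup; these bounds are precisely what is needed to eliminate all remaining cases and conclude $\Gb_0 = \SU(V)$.
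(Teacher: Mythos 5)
Your proof is correct and follows essentially the same route as the paper: rule out $\ol{\Gamma}$ being a finite subgroup of $\SO(3)$ via the classification (with $n_\alpha>5$ eliminating $A_4,S_4,A_5$ and the order/product constraints eliminating cyclic and dihedral), and rule out a $1$-dimensional $\Gb_0$ by analyzing the normalizer of a maximal torus. The only differences are organizational (you treat the torus case first via explicit diagonal/anti-diagonal matrices, the paper treats it second and more abstractly via the fixed-point set $\{e,e'\}$ and the observation that any $\varphi$ swapping $e,e'$ squares to the identity) and cosmetic (your phrase ``can coincide only if both are trivial'' should really be ``can never coincide,'' since a $\pi$-rotation about an equatorial axis is never a rotation about the polar axis, but the contradiction stands either way).
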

\begin{proof}
In this case $\alpha, \beta,\gamma$ cannot be unipotent, hence $n_\alpha,n_\beta,n_\gamma$ are all finite.
We first show that the projection $\ol{\Gamma}$ of $\Gamma$ in $\PU(V)\simeq \PU(2)$ is infinite. Assume that $\ol{\Gamma}$ is finite, then it follows from the classification of finite groups acting on $\CP^1$, we have that $\ol{\Gamma}$ is isomorphic to one of the following finite groups: the cyclic group $\Z/p\Z$  with $p \in \Z_{\geq 2}$, the dihedral group $D_{2p}$ with $p \in \Z_{\geq 2}$,  the alternating group $\Acal_4$, the symmetric group $\Scal_4$, and the alternating group $\Acal_5$.
Since the order of any element of $\Acal_4, \Scal_4, \Acal_5$ cannot be greater than $5$, it follows that $\ol{\Gamma}$ is not isomorphic to one of $\{\Acal_4, \Scal_4, \Acal_5\}$.

\begin{itemize}
\item[$\bullet$] If $\ol{\Gamma}\simeq \Z/p\Z$, then all the elements of $\ol{\Gamma}$ have two common fixed points in $\Pb(V)$ which contradicts the hypothesis that there are three distinct points $a,b,c \in \Pb(V)$ each of which is fixed by one of $\ol{\alpha}, \ol{\beta}, \ol{\gamma}$.

\item[$\bullet$] Assume that $\ol{\Gamma}\simeq D_{2p}, \; p \in\Z_{\geq 2}$. Since $n_\alpha >5$ and $n_\beta >2$, $\ol{\alpha}$  and $\ol{\beta}$ must be  rotations. But we have $\ol{\alpha}\cdot\ol{\beta}\cdot\ol{\gamma}=\id$. Therefore $\ol{\gamma}$ is also a rotation, which means that $\ol{\Gamma}$ is contained in $\Z/p\Z \varsubsetneq D_{2p}$. This contradiction implies that $\ol{\Gamma} \not\simeq D_{2p}$.  Thus, $\ol{\Gamma}$ must be infinite.
\end{itemize}

Since $\Gamma$ is infinite, its Zariski closure $\Gb$ is a Lie subgroup of $\U(V)$ of positive dimension. The identity component $\Gb_0$ of $\Gb$ is a Lie subgroup of $\SU(V)$.  All non-trivial proper Lie subgroup of $\SU(V)\simeq \SU(2)$ is isomorphic to $\SO(2,\R)$. This implies in particular that all elements of $\Gb_0$ have two common fixed points $e,e'$ in $\Pb(V)$. Since $\ol{\alpha}, \ol{\beta},\ol{\gamma}$ normalize $\Gb_0$, they stabilize the set $\{e,e'\}$. Note that any element of $\SU(V)$ that fixes both $e$ and $e'$ is contained in $\Gb_0$.

Since we have $\ol{\alpha}\cdot\ol{\beta}\cdot\ol{\gamma}=\id$, either all of them fixe both $e$ and $e'$, or exactly two them permute $e$ and $e'$.  If $\ol{\alpha},\ol{\beta},\ol{\gamma}$ fix both $e$ and $e'$ then we have a contradiction to the assumption that there are three distinct points each of which is fixed by one of $\ol{\alpha},\ol{\beta},\ol{\gamma}$. Hence $e$ and $e'$ are permuted by two transformations in $\ol{\alpha},\ol{\beta}, \ol{\gamma}$. We now claim that if $\varphi \in \PU(V)$ exchanges $e$ and $e'$, then $\varphi^2=\id$. This is because $\varphi^2$ has four fixed points in $\Pb(V)$: the  two fixed points of $\varphi$ and $\{e,e'\}$. This implies that two of $\ol{\alpha},\ol{\beta},\ol{\gamma}$ have order $2$. Since this is excluded by the hypothesis on $n_\alpha, n_\beta, n_\gamma$, we must have $\Gb_0=\SU(V)$.
\end{proof}

The following result plays a key role in the proof of Theorem~\ref{th:main:Zariski:dense} and Theorem~\ref{th:main:Zar:dense:bis}.

\begin{Proposition}\label{prop:density:induction}
Let $V'$ be proper subspace of $V$ such that $\dim V' \geq 2$ and the restriction of $\langle.,.\rangle$ to  $V'$  is non-degenerate.
Let $V'':=V'{}^\perp$. We embed $\U(V')$ (resp. $\U(V'')$) into $\U(V)$ be setting its action on $V''$  (resp. on $V'$) to be identity.
Assume  that there exist $\gamma\in \Gamma, \; v\in V\setminus\{0\}$, and $\lambda \in \C^*$ such that
\begin{itemize}
\item[(i)] $\gamma(x)=x +\lambda\cdot\langle x, v\rangle v$ for all $x \in V$,

\item[(ii)] $V' \nsubset v^\perp$ and $V'' \nsubset v^\perp$.
\end{itemize}
Then if $\Gb_0$ contains $\SU(V')\times\SU(V'')$, then $\Gb_0=\SU(V)$.
\end{Proposition}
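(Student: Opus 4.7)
The plan is to work at the Lie-algebra level. Let $\gg:=\mathrm{Lie}(\Gb_0)$ and $\mathfrak{h}:=\su(V')\oplus\su(V'')\subseteq\gg$. With respect to $H:=\SU(V')\times\SU(V'')$ we have the $\Ad(H)$-invariant reductive decomposition
\[
\su(V)=\mathfrak{h}\oplus\mathfrak{z}\oplus\mathfrak{p},
\]
where $\mathfrak{z}$ is the one-dimensional centralizer of $\mathfrak{h}$ in $\su(V)$ (spanned by the trace-free block-scalar skew-Hermitian element) and $\mathfrak{p}$ is the block-off-diagonal part for $V=V'\oplus V''$. Once I establish $\mathfrak{p}\subseteq\gg$, the conclusion follows: a direct bracket computation gives $\mathfrak{z}\subseteq[\mathfrak{p},\mathfrak{p}]\subseteq\gg$, so $\gg=\su(V)$ and $\Gb_0=\SU(V)$.

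Since $\Gb_0$ is normal in $\Gb$ and $\gamma\in\Gamma\subseteq\Gb$, the adjoint $\Ad(\gamma)$ preserves $\gg$; in particular $Z(X):=\Ad(\gamma)(X)-X$ lies in $\gg$ for every $X\in\mathfrak{h}$. Writing $\gamma=\Id+\lambda P$ with $P(y):=\langle y,v\rangle v$, the relation $\gamma\in\U(V)$ forces $\gamma^{-1}=\Id+\bar\lambda P$, and an expansion yields
\[
Z(X)=\bar\lambda\,XP+\lambda\,PX+|\lambda|^2\,PXP.
\]
Decompose $v=v'+v''$ with $v'\in V'$ and $v''\in V''$; by hypothesis (ii), $v'\neq 0$ and $v''\neq 0$. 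Taking $X=\bigl(\begin{smallmatrix}X' & 0\\ 0 & 0\end{smallmatrix}\bigr)$ with $X'\in\su(V')$ chosen so that $X'(v')\notin\C v'$---possible since $\dim V'\geq 2$ and the $\SU(V')$-orbit of $v'$ has positive dimension---a block-by-block computation shows that the $V''\to V'$ block of $Z(X)$ is the rank-one operator
\[
y''\;\longmapsto\;\bar\lambda\,\langle y'',v''\rangle\,\bigl(X'(v')+\lambda\,\langle X'v',v'\rangle\,v'\bigr),
\]
which is non-zero because $X'(v')$ and $v'$ are $\C$-linearly independent. Hence $Z(X)$ has non-trivial $\mathfrak{p}$-component.

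To upgrade this single element to the full inclusion $\mathfrak{p}\subseteq\gg$ I would pass to the complexification $\gg_\C\subseteq\mathfrak{sl}(V)$, which contains $\mathfrak{h}_\C=\mathfrak{sl}(V'_\C)\oplus\mathfrak{sl}(V''_\C)$ and is stable under the complex conjugation $\sigma$ defining $\gg$. The block decomposition yields $\mathfrak{p}_\C=\mathfrak{p}_{1,\C}\oplus\mathfrak{p}_{2,\C}$, with $\mathfrak{p}_{1,\C}=\Hom_\C(V''_\C,V'_\C)$ and $\mathfrak{p}_{2,\C}=\Hom_\C(V'_\C,V''_\C)$, each irreducible as an $\mathfrak{h}_\C$-representation, and $\sigma$ exchanges them. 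Bracketing $Z(X)$ with $\mathfrak{h}_\C$ and using the irreducibility of $\mathfrak{p}_{1,\C}$, the $\mathfrak{h}_\C$-module generated (inside $\gg_\C$, modulo $\mathfrak{h}_\C\oplus\mathfrak{z}_\C$) by the $\mathfrak{p}_{1,\C}$-component of $Z(X)$ is all of $\mathfrak{p}_{1,\C}$; when $\mathfrak{p}_{1,\C}\not\cong\mathfrak{p}_{2,\C}$ as $\mathfrak{h}_\C$-modules---the generic case, which holds whenever $\dim V'\geq 3$ or $\dim V''\geq 3$---this forces $\mathfrak{p}_{1,\C}\subseteq\gg_\C$, and then $\mathfrak{p}_{2,\C}=\sigma(\mathfrak{p}_{1,\C})\subseteq\gg_\C$, so descending to real forms gives $\mathfrak{p}\subseteq\gg$.

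I expect the main obstacle to be the degenerate regime in which $\mathfrak{p}_{1,\C}\cong\mathfrak{p}_{2,\C}$ as $\mathfrak{h}_\C$-representations---this occurs only when $\dim V'=2$ and $\dim V''\in\{1,2\}$---where $\mathfrak{p}_\C$ admits a one-parameter family of ``graph'' subrepresentations, any of which could a priori contain $\gg_\C\cap\mathfrak{p}_\C$ as a proper subspace. To eliminate these intermediate possibilities I would vary $X$ in the construction of $Z(X)$, and additionally exploit that $\gamma$ does not preserve the splitting $V=V'\oplus V''$ (precisely because $v$ has non-zero components in both $V'$ and $V''$): this ensures that $\Ad(\gamma)(\mathfrak{h})\not\subseteq\mathfrak{h}\oplus\mathfrak{z}$, and a direct case analysis of the resulting family of $\mathfrak{p}$-components rules out every proper Lie subalgebra between $\mathfrak{h}$ and $\su(V)$, yielding $\gg=\su(V)$.
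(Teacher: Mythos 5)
Your strategy is a genuine alternative to the paper's: both arguments complexify and work at the Lie-algebra level, but where the paper chooses bases of $V'$, $V''$ adapted to the components $v'$, $v''$ of $v$, writes out the matrix of $\gamma$ in four explicit block shapes, and shows by direct conjugation that specific matrix units $E^{(r)}_{r,1}$, $E^{(r)}_{1,r}$ land in $\gg_\C$, you replace all of that with the reductive decomposition $\su(V)=\mathfrak{h}\oplus\mathfrak{z}\oplus\mathfrak{p}$ and an irreducibility argument for the $\mathfrak{h}_\C$-modules $\mathfrak{p}_{1,\C}\simeq\Hom(V'',V')$ and $\mathfrak{p}_{2,\C}\simeq\Hom(V',V'')$. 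Your formula $Z(X)=\bar\lambda XP+\lambda PX+|\lambda|^2PXP$ and the computation of its nonzero $V''\to V'$ block are correct, and the generic argument closes whenever $\dim V'\geq 3$ or $\dim V''\geq 3$, since then $\mathfrak{h}_\C$, $\mathfrak{z}_\C$, $\mathfrak{p}_{1,\C}$, $\mathfrak{p}_{2,\C}$ are pairwise non-isomorphic and $\gg_\C$ must be a sum of a subset of them.

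The gap is exactly where you flag it, and it is not peripheral. When $(\dim V',\dim V'')\in\{(2,1),(2,2)\}$ one has $\mathfrak{p}_{1,\C}\simeq\mathfrak{p}_{2,\C}$, so $\gg_\C\cap\mathfrak{p}_\C$ could \emph{a priori} be a $\sigma$-stable graph $\{(p,\phi(p)):p\in\mathfrak{p}_{1,\C}\}$ for some $\mathfrak{h}_\C$-isomorphism $\phi$, and what you offer to rule this out (``vary $X$'', ``exploit that $\gamma$ does not preserve the splitting'', ``a direct case analysis'') is a plan rather than a proof. These degenerate dimensions are in fact the default in this paper's use of the proposition: in the induction behind Theorem~\ref{th:Zar:density:on:factor} one repeatedly takes $V''$ of dimension $1$ or $2$, and the base of the induction has $\dim V'=2$. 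The paper's concrete-matrix route sidesteps the module-theoretic degeneracy, yet it still needs to single out $(r',r'')=(2,1)$ in Remark~\ref{rk:induction:lem}; your conceptual route would need, in the same spirit, an explicit classification of the $\sigma$-stable Lie subalgebras of $\su(V)$ strictly between $\mathfrak{h}$ and $\su(V)$ in the low-dimensional cases together with a verification that $\gg$ is none of them, and that is precisely the step currently missing.
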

\begin{proof}
We first remark that $V=V'\oplus V''$ and the restriction of $\langle.,.\rangle$ to $V''$ is also non-degenerate.   Let $v'$ and $v''$ be respectively the projections of $v$ in $V'$ and $V''$. Note that $v'\neq 0$ and $v''\neq 0$ (otherwise either $V''\subset v^\perp$ or $V'\subset v^\perp$).
Since the restriction of $\langle.,.\rangle$ to $V'$ (resp. to $V''$) is non-degenerate, there exists $w'\in V'$ (resp. $w''\in V''$) such that $\langle w',v'\rangle = \langle w',v\rangle \neq 0$ (resp. $\langle w'',v''\rangle=\langle w'',v\rangle \neq 0$).

Let $r= \dim V'$ and $r''=\dim V''$. Define $W':=V'\cap v^{\perp}$  and $W'':= V''\cap v^\perp$.
We then have $\dim W'=r'-1$ and $\dim W''=r''-1$.
We choose a basis $\{w'_1,\dots,w'_{r'}\}$ of $V'$ as follows:
\begin{itemize}
\item[$\bullet$] if $\langle v', v \rangle = \langle v', v' \rangle \neq 0$ then $\{w'_1,\dots,w'_{r'-1}\}$  is a basis of $W'$, and $w'_{r'}=v'$.

\item[$\bullet$] if $\langle v',v \rangle =0$, then $w'_1=v'$, $\{w'_1,\dots,w'_{r'-1}\}$ is a basis of $W'$, and $w'_{r'}=w'$.
\end{itemize}
We choose a basis $\{w''_1,\dots,w''_{r''}\}$ of $V''$ in a similar way,  namely
\begin{itemize}
\item[$\bullet$] if $\langle v'', v \rangle = \langle v'', v'' \rangle \neq 0$ then $\{w''_1,\dots,w''_{r''-1}\}$  is a basis of $W''$ and $w''_{r''}=v''$.

\item[$\bullet$] if $\langle v'',v \rangle =0$, then $w''_1=v''$, $\{ w''_1,\dots,w''_{r''-1}\}$ is a basis of $W''$, and $w''_{r''}=w''$.
\end{itemize}
By (i), we have $\gamma(w'_i)=w'_i$ for $i=1,\dots,r'-1$, and $\gamma(w''_i)=w''_i$, for $i=1,\dots,r''-1$. We also have
$$
\gamma(w'_{r'})=w'_{r'}+\lambda\langle w'_r, v \rangle (v'+v''), \text{ where } \lambda\langle w'_r, v \rangle \neq 0,
$$
and
$$
\gamma(w''_r)= w''_r+ \lambda \langle w''_r,v \rangle (v'+v''), \text{ where } \lambda \langle w''_r,v \rangle  \neq 0.
$$
Therefore, the matrix $R$ of $\gamma$ in the basis $\Wc:=\{w'_1,\dots, w'_{r'},w''_1,\dots,w''_{r''}\}$ has the form
\begin{itemize}
\item[(a)]  $\small R = \left(\begin{array}{cccc} I_{r'-1} & 0 & 0 & 0 \\ 0  & a & 0 & c \\ 0  & 0 & I_{r''-1}  &  0 \\ 0  & b  & 0  & d \end{array} \right)$ if $\langle v',v\rangle \neq 0$ and $\langle v'',v\rangle \neq 0$, \\

\item[(b)] $\small R = \left(\begin{array}{cccc} I_{r'-1} & 0 & 0 &  0 \\ 0  & a & 0  &  c \\ 0  & b & I_{r''-1}  &  d \\ 0  & 0  & 0 & 1 \end{array} \right)$ if $\langle v',v \rangle \neq 0$ and $\langle v'',v\rangle=0$,\\

\item[(c)]  $\small R = \left(\begin{array}{cccc} I_{r'-1} & a & 0 & c \\ 0  & 1 & 0 & 0 \\ 0  & 0 & I_{r''-1}  &  0 \\ 0  &   b  & 0  & d \end{array} \right)$ if $\langle v',v\rangle=0$ and $\langle v'',v \rangle \neq 0$,\\

\item[(d)]  $\small R = \left(\begin{array}{cccc} I_{r'-1} & a & 0 & c \\ 0  & 1 & 0 & 0 \\ 0  & b & I_{r''-1}  &  d \\ 0  & 0 & 0  & 1 \end{array} \right)$ if $\langle v',v\rangle =\langle v'',v\rangle=0$,
\end{itemize}
with $a,b,c,d\in \C$. Note that we have  $bc\neq 0$ in all cases.
We will only provide the proof for  case (a),  the other cases follow from similar arguments.

\medskip

Let $\gg_\C:= \gg\otimes_\R\C$ be the complexified Lie algebra of $\gg$.
By assumption $\gg$ contains $\su(V')\oplus\su(V'')$. Therefore, $\gg_\C$ contains $\ssl(V')\oplus \ssl(V'')\simeq \ssl(r',\C)\oplus\ssl(r'',\C)$.
We will show that $\gg_\C \simeq \ssl(V)\simeq \ssl(r,\C)$, which is enough to conclude because if $\Gb_0 \subsetneq \SU(V)$, then $\gg\subsetneq \su(V)$, and $\gg_\C\subsetneq \su(V)\otimes_\R\C \simeq \ssl(r,\C)$. To this purpose, we first consider the adjoint action of $\gamma$ on $\gg_\C$.
Note that in  case (a) the matrix of $\gamma^{-1}$ is $R^{-1}=\left(\begin{smallmatrix} I_{r'-1} & 0 & 0 & 0 \\ 0  & a' & 0 & c' \\ 0  & 0 & I_{r''-1}  &  0 \\ 0  & b'  & 0  & d' \end{smallmatrix} \right)$ with $b'c'\neq 0$.

\medskip

Let us write $R=\left(\begin{smallmatrix} A & C \\ B & D \end{smallmatrix} \right)$ and $R^{-1}=\left(\begin{smallmatrix} A' & C' \\ B' & D' \end{smallmatrix} \right)$, where
\begin{itemize}
\item[$\bullet$] $A=\left(\begin{smallmatrix}I_{r'-1} & 0 \\ 0 & a \end{smallmatrix} \right), A'=\left(\begin{smallmatrix}I_{r'-1} & 0 \\ 0 & a' \end{smallmatrix} \right) \in \Mb_{r'\times r'}(\C)$,

\item[$\bullet$] $B=\left(\begin{smallmatrix} 0 & 0 \\ 0 & b \end{smallmatrix}\right), \; B'=\left(\begin{smallmatrix} 0 & 0 \\ 0 & b' \end{smallmatrix}\right) \in \Mb_{r''\times r'}(\C)$,

\item[$\bullet$] $C=\left(\begin{smallmatrix} 0 & 0 \\ 0 & c \end{smallmatrix}\right), \; C'=\left(\begin{smallmatrix} 0 & 0 \\ 0 & c' \end{smallmatrix}\right) \in \Mb_{r'\times r''}(\C)$,

\item[$\bullet$] $D=\left(\begin{smallmatrix}I_{r''-1} & 0 \\ 0 & d \end{smallmatrix} \right), D'=\left(\begin{smallmatrix}I_{r''-1} & 0 \\ 0 & d' \end{smallmatrix} \right) \in \Mb_{r''\times r''}(\C)$.

\end{itemize}
An element $X$ of $\ssl(V')$ considered as subspace of $\ssl(V)$ is given in the basis $\Wc$ by a matrix of the form $\left(\begin{smallmatrix} X' & 0 \\ 0 & 0 \end{smallmatrix} \right)$, with $X'\in \Mb_{r'\times r'}(\C), \Tr(X')=0$. We have
\begin{align*}
\Ad(\gamma)(X)=R\cdot X \cdot R^{-1} & =\left(\begin{array}{cc} A & C  \\ B & D \end{array} \right) \cdot \left(\begin{array}{cc} X' & 0  \\ 0 & 0_{r''} \end{array} \right)\cdot \left(\begin{array}{ccc} A' & C' \\ B' & D' \end{array} \right) = \left(\begin{array}{cc} AX'A' & AX'C'   \\ BX'A' & BX'C' \end{array} \right).
\end{align*}
We denote by $E_{ij}^{(m)}$ the matrix in $\Mb_{m\times m}(\C)$ whose  entry at the $i$-th row and $j$-th column is  $1$, and all other entries are zero. Since $r'\geq 2$ by assumption, the matrix $E_{r',1}^{(r')}$ is traceless. Therefore $E^{(r)}_{r',1}=\left(\begin{smallmatrix} E_{r',1}^{(r')} & 0 \\ 0 & 0 \end{smallmatrix}\right) \in  \gg_\C$. Since we have $E^{(r')}_{r',1}\cdot C'=0$, it follows $\Ad(\gamma)(E_{r'+1,1}^{(r)})=\left(\begin{smallmatrix} A E_{r',1}^{(r')}A' & 0 \\ BE^{(r')}_{r',1}A' & 0 \end{smallmatrix}\right) \in  \gg_\C$.
Since $\Tr(AE^{(r')}_{r',1}A')=0$,
$\left(\begin{smallmatrix}AE_{r',1}^{(r')}A' & 0 \\ 0 & 0 \end{smallmatrix} \right)$ is in $\ssl(V')$.
It follows that $\left(\begin{smallmatrix} 0 & 0 \\ BE^{(r')}_{r',1}A'& 0\end{smallmatrix}\right)=b\cdot E^{r}_{r,1} \in \gg_\C$.
Since $b\neq 0$, we have $E_{r,1}^{(r)}  \in \gg_\C$.

Similarly, we have $E^{(r)}_{1,r'}= \left(\begin{smallmatrix} E^{(r')}_{1,r'} & 0 \\ 0 & 0  \end{smallmatrix} \right)\in \ssl(V')\subset \ssl(V)$. Since $B\cdot E^{(r')}_{1,r'}=0$, $\Tr(AE^{(r')}_{1,r'}A')=0$, and $c'\neq 0$, by considering $\Ad(\gamma)(E^{(r)}_{1,r'})$ we get  that $E^{(r)}_{1,r}  \in \gg_\C$.

We consider the adjoint actions of $E^{(r)}_{r,1}$ and $E^{(r)}_{1,r}$ on $\gg_\C$. Let $X=\left(\begin{smallmatrix} X' & 0 \\ 0 & 0 \end{smallmatrix}\right)$, where $X'\in \Mb_{r'\times r'}(\C)$ such that $\Tr(X')=0$.  We have
\begin{align*}
\adj(E_{r,1}^{(r)})(X)  = E_{r,1}^{(r)}X-XE_{r,1}^{(r)}  & = \left(\begin{array}{cc} 0 \dots 0  & 0 \cdots 0 \\ {}^tv_1X' & 0 \dots  0 \end{array}\right) \in \gg_\C\\
\adj(E_{1,r}^{(r)})(X) = E_{1,r}^{(r)}X-XE_{1,r}^{(r)} &  =\left(\begin{array}{cc} 0 \dots 0 & -X'v_1 \\ 0 \dots 0 & 0 \end{array} \right) \in \gg_\C.
\end{align*}
where $v_1={}^t\left(1, 0, \dots, 0\right)$.
Since $\ssl(r',\C)$ acts transitively on $\C^{r'}$, it follows that $\gg_\C$ contains all the matrices of the form $\left(\begin{smallmatrix} 0 \dots 0 & 0 \dots 0  \\ {}^tv & 0 \dots 0 \end{smallmatrix} \right)$ and $\left(\begin{smallmatrix} 0 \dots 0 & v   \\ 0 \dots 0 & 0  \end{smallmatrix}\right)$, with $v \in \C^{r'}$.

\medskip

Now let $X=\left(\begin{smallmatrix} 0 & 0 \\ 0 & X''\end{smallmatrix} \right)$, where $X''\in \Mb_{r''\times r''}(\C)$ such that $\Tr(X'')=0$, and $w_1:={}^t(0,\dots,0,1)\in \C^{r''}$. We have
\begin{align*}
\adj(E_{r,1}^{(r)})(X)  = E_{r,1}^{(r)}X-XE_{r,1}^{(r)}  & = \left(\begin{array}{cc} 0 & 0 \dots 0   \\ -X''w_1 & 0 \dots 0 \end{array}\right) \in \gg_\C\\
\adj(E_{1,r}^{(r)})(X) =E_{1,r}^{(r)}X-XE_{1,r}^{(r)}  & = \left(\begin{array}{cc}  0 \dots 0 & {}^tw_1X'' \\ 0 \dots 0  & 0 \end{array} \right) \in \gg_\C.
\end{align*}
Taking $X''=E^{(r'')}_{i,1}$ and $X''=E^{(r'')}_{1,i}$, for $i=2,\dots,r''$, we see that $\gg_\C$ contains the matrices $E^{(r)}_{i,1}$ and $E^{(r)}_{1,i}$, for $i=r'+1,\dots,r$.
By considering $\adj(E^{(r)}_{i,1})(X)$ and $\adj(E^{(r)}_{1,i})(X)$ with $X \in \left(\begin{smallmatrix} \ssl(r',\C) & 0 \\ 0 & 0 \end{smallmatrix} \right)$, and using the irreducibility of $\ssl(r',\C)$ over $\C^{r'}$, we obtain that $\gg_\C$ contains all the matrices of the form $\left(\begin{smallmatrix} 0_{r'} & Y \\ Z & 0_{r''} \end{smallmatrix} \right)$.  Finally, we have
$$
\adj(E^{(r)}_{1,r})(E^{(r)}_{r,1})=E^{(r)}_{1,1}- E^{(r)}_{r,r} \in \gg_\C.
$$
Therefore,
\begin{itemize}
\item[$\bullet$] $\left(\begin{smallmatrix} \ssl(r',\C)  & 0 \\ 0 & \ssl(r'',\C) \end{smallmatrix}\right) \subset \gg_\C$,

\item[$\bullet$] $\left\{\left(\begin{smallmatrix} 0_{r'} & Y \\ Z & 0_{r''} \end{smallmatrix}\right), \; Y \in \Mb_{r'\times r''}(\C), Z \in \Mb_{r''\times r'}(\C)\right\} \subset \gg_\C$,

\item[$\bullet$] $\mathrm{diag}(1,0,\dots,0,-1) \in \gg_\C$.
\end{itemize}
It follows that $\gg_\C=\ssl(r,\C)$, and the proposition is proved.
\end{proof}

\begin{Remark}\label{rk:induction:lem}
In cases (c) and (d), we do not have $E^{(r')}_{r',1}\cdot C' = 0$ anymore.  If $r'\geq 3$, it is enough to replace $E^{(r')}_{r',1}$ by  $E^{(r')}_{r',2}$. If $r'=2$ and $r''\geq 2$, instead of consider $\Ad(\gamma)(E^{(r)}_{r',1})$ and $\Ad(\gamma)(E^{(r)}_{1,r'})$, we consider $\Ad(\gamma)(E^{(r)}_{r,r'+1})$ and $\Ad(\gamma)(E^{(r)}_{r'+1,r})$ (note that $E^{(r)}_{r,r'+1}$ and $E^{(r)}_{r'+1,r}$ belong to  $\left(\begin{smallmatrix} 0 & 0 \\ 0 & \ssl(V'') \end{smallmatrix}\right)$), and the same arguments allow us to conclude.

Since the restriction of $\langle.,.\rangle$ to $V''$ is non-degenerate, in case (d), we must have $\dim V'' \geq 2$.  Therefore, the only case remaining is case (c) when $(r',r'')=(2,1)$. In this particular case we proceed as follows: as $E^{(3)}_{1,1}-E^{(3)}_{2,2}\in \ssl(V') \subset \gg_\C$, we have
$$
\Ad(\gamma)(E^{(3)}_{1,1}-E^{(3)}_{2,2})=\left(\begin{array}{ccc}
1 & a & c \\ 0 & 1 & 0 \\ 0 & b & d
\end{array}\right)\cdot\left( \begin{array}{ccc}
1 & 0 & 0 \\ 0 & -1 & 0 \\ 0 & 0 & 0
\end{array}
\right)\cdot\left( \begin{array}{ccc}
1 & a' & c' \\ 0 & 1 &  0 \\ 0 & b' & d'
\end{array}
\right)=\left(\begin{array}{ccc}
1 & a'-a & c' \\ 0 & -1 & 0 \\ 0 & -b & 0
\end{array}
\right) \in \gg_\C
$$
It follows that $Y:=\left(\begin{smallmatrix} 0 & 0 & c' \\ 0 & 0 &0 \\ 0 & -b & 0 \end{smallmatrix}\right) \in \gg_\C$.
Now $Z:=\adj(Y)(E^{(3)}_{2,1})=\left( \begin{smallmatrix}
0 & 0 & 0 \\ 0 & 0 & -c' \\ -b & 0 & 0
\end{smallmatrix}
\right) \in \gg_\C$. Since $\adj(Y)(Z)=bc'\cdot\left( \begin{smallmatrix} -1 & 0 & 0\\ 0 & -1 & 0 \\ 0 & 0 & 2\end{smallmatrix}\right)$,
we get that $\gg_\C$ contains the Cartan algebra $\hh=\{\mathrm{diag}(x_1,x_2,x_3) \in \Mb_3(\C), \;  x_1+x_2+x_3=0\}$ of $\ssl(V)$.
One obtains then the desired conclusion by using standard arguments on the adjoint representation of $\hh$ .
\end{Remark}

\section{Zariski density on each factor}\label{sec:Zar:dense:on:factor}
Throughout this section, we will assume that $\gcd(k_1,\dots,k_n,d)=1$, and  $q=e^{-\frac{2\pi\imath k}{d}}$ is a primitive $d$-th root of unity, which means that $\gcd(k,d)=1$.
Let $\Gb$ be the Zariski closure of $\rho_q(\PB_n)$ in $\U(H^1(\hX)_q)$, and $\Gb^0$ the identity component of $\Gb$.
Our goal is to prove the following
\begin{Theorem}\label{th:Zar:density:on:factor}
Assume that the sequence of numbers $(\left\{\frac{kk_1}{d}\right\},\dots,\left\{\frac{kk_n}{d}\right\})$ satisfies the condition in Definition~\ref{def:good:weights}. Then $\Gb^0$ is equal to $\SU(H^1(\hat{X})_q)$.
\end{Theorem}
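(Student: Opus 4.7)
The plan is to proceed by induction on $n$. The inclusion $\Gb^0\subseteq\SU(H^1(\hat{X})_q)$ is automatic: by Corollary~\ref{cor:det:alpha:ij} each generator $\rho_q(\alpha_{i,j})$ has determinant $q^{k_i+k_j}\in\Ub_d$, so $\rho_q(\PB_n)\subset\U_d(H^1(\hat{X})_q)$, whose identity component is $\SU(H^1(\hat{X})_q)$. Thus the task is to prove the reverse inclusion, and we may also assume $\hat{X}$ is connected (the disconnected case reduces to it via Lemma~\ref{lm:coh:disconnect:surf} applied to a component, after replacing $d$ by $d/e$).

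For the base case $n=3$, if $\dim H^1(\hat{X})_q\leq 1$ the statement is vacuous, so assume $\dim=2$. I would apply the triple $(\alpha,\beta,\gamma):=(\rho_q(\alpha_{1,2}),\rho_q(\alpha_{2,3}),\rho_q(\alpha_{1,3})^{-1})$ to Lemma~\ref{lm:density:sign:1:1} or Lemma~\ref{lm:density:sign:2:0}, depending on the signature of $\langle.,.\rangle$. By Theorem~\ref{th:Menet:alphaij} each of these is a complex reflection, fixed-point-wise on the hyperplanes $g_1^\perp,\; g_2^\perp$ and $(g^*_{1,3})^\perp$; since $g_1$, $g_2$, $g^*_{1,3}=g_1+\bar q^{k_2}g_2$ are pairwise non-proportional, the three fixed lines in $\mathbb{P}(H^1(\hat X)_q)$ are distinct. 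The product $\alpha\beta\gamma$ is scalar by Proposition~\ref{prop:lantern:conseq} (applied with $E_0=D$, $E_1,E_2,E_3$ small discs around $b_1,b_2,b_3$), verifying hypothesis (iii). In the indefinite case Lemma~\ref{lm:density:sign:1:1} applies directly; in the definite case the orders of $\ol\alpha,\ol\beta,\ol\gamma$ in $\PU$ are precisely those of $q^{k_i+k_j}$, and condition~(b) of Definition~\ref{def:good:weights} is exactly what is needed to supply the assumption $n_\alpha>5$, $n_\beta>2$ of Lemma~\ref{lm:density:sign:2:0}.

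For the inductive step $n\geq 4$, the strategy is to combine Corollary~\ref{cor:Dehn:twist} with Proposition~\ref{prop:density:induction}. I would choose a proper nonempty subset $I\subsetneq\{1,\dots,n\}$ with $|I|,|I^c|\geq 2$ and a disc $E\subset D$ with $E\cap\Bc=\{b_i,\;i\in I\}$, such that $q^{\sum_{i\in I}k_i}\neq 1$. By Corollary~\ref{cor:Dehn:twist} this yields an orthogonal decomposition $H^1(\hat X)_q=V'\oplus V''$ with both summands non-degenerate, where $V'$ is the image of $H^1_c(\pi^{-1}(E))_q$. Using Lemma~\ref{lm:equiv:subsurfaces}, the sub-group $\rho_q(\PB_I)$ leaves $V'$ invariant and restricts there to the $\rho_q$-representation of a cyclic cover with marked points $\{b_i,\;i\in I\}$ supplemented by an auxiliary point of weight $-\sum_{i\in I}k_i\bmod d$; the analogous statement holds for $\PB_{I^c}$ on $V''$. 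If the reduced weight tuples are good (in the sense of Definition~\ref{def:good:weights}) then the induction hypothesis gives $\SU(V')\times\SU(V'')\subset\Gb^0$, and applying Proposition~\ref{prop:density:induction} to a generator $\rho_q(\alpha_{i,j})$ with $i\in I$, $j\in I^c$ (which, by Theorem~\ref{th:Menet:alphaij}, acts as the transvection $x\mapsto x-\lambda\langle x,g^*_{i,j}\rangle g^*_{i,j}$, with $g^*_{i,j}\notin V'\cup V''$ for appropriate adjacent indices crossing the partition) completes the inductive step.

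The main obstacle is controlling the propagation of the good condition: an arbitrary partition $I\sqcup I^c$ need not produce two good reduced tuples. The plan is therefore a case analysis. In case~(a) of Definition~\ref{def:good:weights}, since $1<\sum\mu_i^{(k)}<n-1$, one can always cluster a pair $\{\mu_i^{(k)},\mu_j^{(k)}\}$ in such a way that both reduced sums still lie in the open interval $(1,n'-1)$, preserving~(a) on both sides; this is the generic situation and drives the induction. In case~(b), where $\sum\mu_i^{(k)}\leq 1$ or $\geq n-1$, I would single out the distinguished triple $\{i,j,k\}$ provided by Definition~\ref{def:good:weights}(b) and use the high-order Dehn twists $\rho_q(\alpha_{i,j})$ together with Proposition~\ref{prop:lantern:conseq} to directly exhibit elements of $\Gb^0$ strong enough to force $\Gb^0=\SU(H^1(\hat X)_q)$, bypassing the induction when no admissible partition exists. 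A handful of low-dimensional corner cases ($n=4$ with a symmetric weight, $\epsilon_0=1$ with tightly constrained sums) will require individual treatment, again by producing explicit unipotent or high-order elements from the lantern relation.
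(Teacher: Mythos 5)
Your plan is structurally different from the paper's proof, and the difference is not cosmetic: it introduces a gap that the paper's design was engineered to avoid. You propose an induction on the number of branch points $n$, splitting $\{1,\dots,n\}$ into a partition $I\sqcup I^c$ and invoking the theorem on each side before gluing via Proposition~\ref{prop:density:induction}. The paper instead never reduces to a smaller $n$: it fixes a chain of nested discs $E_1\subset\cdots\subset E_n=D$ with associated flags of subspaces $\Vbb_r$ and windows $\Wbb_{r,s}$, locates (Lemma~\ref{lm:exist:gd:partition}) a two--dimensional window $\Wbb_{r,r-2}$ of signature $(1,1)$ as a seed, establishes Zariski density there via the lantern relation (Proposition~\ref{prop:Zar:density:dim:2}), and then \emph{grows the subspace one or two dimensions at a time} in both directions. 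The complementary piece $V''$ in each application of Proposition~\ref{prop:density:induction} is either one--dimensional (so $\SU(V'')=\{1\}$, nothing to verify) or two--dimensional of signature $(1,1)$ (handled directly by the lantern relation). At no step does the paper need the ``good'' condition to hold for a sub-tuple of length comparable to $n$.

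This matters because the obstruction you acknowledge as ``the main obstacle'' is a real one, and your proposed resolution does not work. Concretely, take $d=10$, $\kappa=(3,3,3,3,3)$, $n=5$, $k=1$, so $\mu^{(1)}=(3/10,\dots,3/10)$ with $\sum\mu^{(1)}=3/2\in(1,4)$, i.e.\ condition~(a) of Definition~\ref{def:good:weights} holds and Theorem~\ref{th:Zar:density:on:factor} applies. For any pair $I^c=\{i,j\}$ the $V'$--side weight tuple is $(3/10,3/10,3/10,1/10)$, whose sum equals $1$; condition~(a) fails, and condition~(b) also fails since every $q^{k_i+k_j}=q^6$ has order exactly $5$ (not $>5$). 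Taking $|I^c|=1$ instead produces a $V'$--side cover with $|I|+1=n$ marked points, so the parameter $n$ does not decrease and the proposed induction is not well founded. (The dimension of the subspace does drop, so perhaps you intend an induction on dimension, but then the base case structure changes and the argument needs to be reformulated.) By contrast, the paper's seed for this example is $r=4$ (since $s_4=1.2$, $s_2=0.6$, $s_4-\lfloor s_2\rfloor=1.2\in(1,2)$), and the two-step/one-step growth reaches $\Vbb_5=H^1(\hX)_q$ without ever asking a sub-tuple of length $n-1$ or $n-2$ to be good. Your remaining remarks (``this is the generic situation'', ``a handful of low-dimensional corner cases\dots will require individual treatment'') are too vague to close the gap; the example above is not a corner case but an instance of the typical phenomenon when $\sum\mu$ is close to $1$ or to $n-1$.

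Two smaller points. First, your $n=3$ base-case triple $(\rho_q(\alpha_{1,2}),\rho_q(\alpha_{2,3}),\rho_q(\alpha_{1,3})^{-1})$ does not satisfy hypothesis~(iii) of Lemma~\ref{lm:density:sign:1:1}: the lantern relation gives $\rho_q(\alpha_{1,3})\,\rho_q(\alpha_{2,3})\,\rho_q(\alpha_{1,2})=q^{k_1+k_2+k_3}\Id$, so the correct triple is $(\rho_q(\alpha_{1,3}),\rho_q(\alpha_{2,3}),\rho_q(\alpha_{1,2}))$; with the inverse inserted the product is not scalar. Second, when you pass to a smaller cover you will sometimes land in a disconnected situation; applying Lemma~\ref{lm:coh:disconnect:surf} and replacing $d$ by $d/e$ can drop you to $d/e=2$, which the theorem excludes, so that reduction also needs care. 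The paper's incremental scheme sidesteps both issues because Lemma~\ref{lm:coh:preimage:disc} and Proposition~\ref{prop:coh:subsurf:embedding} control the intersection-form data on the window spaces $\Wbb_{r,s}$ directly, without appealing to the full theorem for a smaller cover.
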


\begin{Remark}\label{rk:Zar:density:max}
It is a well known fact that $\PB_n$ is generated by the family of Dehn twists $\{\alpha_{i,j}, \; 1 \leq i < j \leq n\}$. By Corollary~\ref{cor:det:alpha:ij} $\det(\rho_q(\alpha_{i,j}))=q^{k_i+k_j} \in \Ub_d$. Thus $\rho_q(\PB_n) \subset \U_d(H^1(\hX)_q)$. Since $\SU(H^1(\hX)_q)$ is the identity component  of $\U_d(H^1(\hX)_q)$, Theorem~\ref{th:Zar:density:on:factor} means that if $(\left\{\frac{kk_1}{d}\right\},\dots,\left\{\frac{kk_n}{d}\right\})$ satisfies the condition in Definition~\ref{def:good:weights}, then $\Gb^0$ is maximal.
\end{Remark}

To prove Theorem~\ref{th:Zar:density:on:factor} we will find a sequence $\{0\}=V_0 \subsetneq V_1 \subsetneq \dots \subsetneq  V_\ell=H^1(\hX)_q$ of subspaces of $H^1(\hX)_q$ such that $\dim V_{i+1}-\dim V_i \in \{1,2\}$ and the restriction of $\langle.,.\rangle$ to each $V_i$ is non-degenerate.
We then show that $\Gb^0$ contains $\SU(V_i)$, for all $i=1,\dots,\ell$, by induction using Proposition~\ref{prop:density:induction}.

In preparation for the proof of Theorem~\ref{th:Zar:density:on:factor}, let us introduce the following notation. For all $r\in  \{1,\dots,n\}$
\begin{itemize}
\item[$\bullet$] $E_r \subset D$ is a topological disc such that $E_r\cap \Bc=\{b_1,\dots,b_r\}$. We also assume that $E_{r-1}\subset E_r$.

\item[$\bullet$] $Y_r$ is the preimage of $E_r$ in $\hX$.

\item[$\bullet$] $\Vbb_r$ is the image of $H^1_c(Y_r)_q$ in $H^1(\hX)_q$.

\item[$\bullet$] $\hk_r:=k_1+\dots+k_r$.

\end{itemize}
For any pair $(r,s)$ such that $1 \leq s < r \leq n$,
\begin{itemize}
\item[$\bullet$] $A_{r,s}:=E_r\setminus E_s$,

\item[$\bullet$] $Z_{r,s}$ is preimage of $A_{r,s}$ in $\hX$,

\item[$\bullet$] $\Wbb_{r,s}$ is the image of $H^1_c(Z_{r,s})_q$ in $H^1_q(\hX)$.
\end{itemize}

We start by
\begin{Lemma}\label{lm:exist:gd:partition}
Let $\{x_1,\dots,x_n\}$, with $n\geq 3$, be a sequence of real numbers such that $0 < x_i < 1$ for all $i=1,\dots,n$, and
$$
1 < x_1+\dots+x_n < n-1.
$$
For all $j\in \{1,\dots,n\}$, define $s_j:=\sum_{i=1}^j x_i$.
Then there exists $r\in \{3,\dots,n\}$ which satisfies $s_{r-2}, s_r \notin \N$, and
\begin{equation}\label{eq:good:triple}
1 < s_r-\lfloor s_{r-2} \rfloor < 2.
\end{equation}
\end{Lemma}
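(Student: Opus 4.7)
For $j \in \{1,\dots,n\}$ I will set $m_j := \lfloor s_j\rfloor$ and $\sigma_j := m_{j+1} - m_j$ for $j=1,\dots,n-1$; because $0 < x_{j+1} < 1$, each $\sigma_j \in \{0,1\}$. Since
\[
s_r - \lfloor s_{r-2}\rfloor \;=\; (m_r - m_{r-2}) + \{s_r\} \;=\; (\sigma_{r-2}+\sigma_{r-1}) + \{s_r\},
\]
condition \eqref{eq:good:triple} combined with $s_r \notin \N$ is equivalent to $\sigma_{r-2}+\sigma_{r-1}=1$ and $\{s_r\}>0$. Thus the goal becomes: find $r \in \{3,\dots,n\}$ with $\sigma_{r-2}\neq \sigma_{r-1}$ and $s_{r-2}, s_r \notin \N$. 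Writing $R := \{r\in\{3,\dots,n\}: \sigma_{r-2}\neq\sigma_{r-1}\}$, the constraints $m_1=0$ and $1 \leq m_n \leq n-2$ ensure that the binary word $\sigma_1\cdots\sigma_{n-1}$ is neither $0^{n-1}$ nor $1^{n-1}$, so it contains at least one switch, hence $R \neq \vide$.

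My plan is to argue by contradiction: assume every $r \in R$ satisfies $s_r \in \N$ or $s_{r-2}\in\N$. The hard step I expect to be the main obstacle is showing that this assumption forces the word $\sigma$ to be non-decreasing, i.e.\ to contain no $1\to 0$ transition. For this, given $r \in R$ with $(\sigma_{r-2},\sigma_{r-1})=(1,0)$, I would rule out both possibilities: if $s_r\in\N$, then $s_r=m_r=m_{r-1}$ (no jump at $r$), giving the impossible chain $s_{r-1}\geq m_{r-1}=s_r>s_{r-1}$; if $s_{r-2}\in\N$, then $s_{r-1}=m_{r-2}+x_{r-1}\in(m_{r-2},m_{r-2}+1)$, which forces $\sigma_{r-2}=m_{r-1}-m_{r-2}=0$, contradicting $\sigma_{r-2}=1$.

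It follows that $\sigma = 0^{j_0-1}\,1^{n-j_0}$ for some $j_0\in\{2,\dots,n-1\}$, where the bounds come from $1\leq m_n=n-j_0\leq n-2$. The only switch lies between positions $j_0-1$ and $j_0$, so $R = \{j_0+1\}$. Applying the assumption to this single $r=j_0+1$: since $s_{j_0-1}\in(0,1)$ we have $s_{j_0-1}\notin\N$, hence $s_{j_0+1}\in\N$; combined with $m_{j_0+1}=1$ this forces $s_{j_0+1}=1$. If $j_0=n-1$, this contradicts $s_n > 1$. Otherwise $\sigma_{j_0+1}=1$ would give $m_{j_0+2}=2$, but $s_{j_0+2}=1+x_{j_0+2}<2$ forces $m_{j_0+2}=1$, a contradiction. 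The assumption therefore fails and the desired $r$ exists, satisfying \eqref{eq:good:triple} by the initial reformulation.
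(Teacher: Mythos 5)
Your proof is correct, but it takes a genuinely different and longer route than the paper's. You encode the problem in the binary jump-word $\sigma_j = \lfloor s_{j+1}\rfloor - \lfloor s_j\rfloor$, translate \eqref{eq:good:triple} (together with nonintegrality of $s_r$ and $s_{r-2}$) into the existence of a switch $\sigma_{r-2}\neq\sigma_{r-1}$ at a position where $s_r, s_{r-2}\notin\N$, and then argue by contradiction that failure would force $\sigma$ to be the monotone word $0^{j_0-1}1^{n-j_0}$, whose single switch $r=j_0+1$ you rule out last. The paper's proof is instead a short constructive case split on $s_2$: if $s_2\le 1$ take $r$ to be the first index with $s_r>1$ (then $s_{r-2}\in(0,1)$ and $s_r\in(1,2)$); if $s_2>1$ take $r$ to be the first index with $s_r<r-1$ (then $s_{r-2}\in(r-3,r-2)$ and $s_r\in(r-2,r-1)$). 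In both cases \eqref{eq:good:triple} and the nonintegrality are read off immediately. Your floor-jump encoding is a clean conceptual framing that makes transparent exactly which local patterns matter, but it trades the paper's immediate explicit choice of $r$ for a reformulation, a contradiction, and noticeably more bookkeeping.
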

\begin{proof}
If $s_2 \leq 1$, then we choose $r \geq 3$ to be the first number such that $s_r >1$. Such an $r$ exists because $s_n>1$. We  have $0 < s_{r-2} < s_{r-1} \leq 1$. Therefore, $\lfloor s_{r-2}\rfloor=0$.  Since $x_r< 1$, we must have $s_r <2$, and \eqref{eq:good:triple} follows.

\medskip

If $s_2 > 1$, let $r \geq 3$ be the first number such that $s_r < r-1$. Again, such an $r$ exists because $s_n < n-1$. By assumption, we have
\begin{equation}\label{eq:good:triple:cond1}
r-2 \leq s_{r-1} < s_r < r-1,
\end{equation}
and
\begin{equation}\label{eq:good:triple:cond2}
r-3 \leq s_{r-2} < r-2.
\end{equation}
Note that we must have $s_{r-2}> r-3$, because if $s_{r-2}=r-3$ then $s_{r-1}=s_{r-2}+x_{r-1}=r-3+x_{r-1} < r-3+1=r-2$.
From \eqref{eq:good:triple:cond2}, we get $\lfloor s_{r-2}\rfloor=r-3$.
The desired inequalities then follows from \eqref{eq:good:triple:cond1}.
\end{proof}

As a consequence of Lemma~\ref{lm:exist:gd:partition} we get the following
\begin{Lemma}\label{lm:exist:W:r:r-2:sign:1:1}
Assume that the sequence $(\left\{\frac{kk_1}{d}\right\},\dots,\left\{\frac{kk_n}{d}\right\})$ satisfies the condition of Definition~\ref{def:good:weights} (a).
Then there exists $r\in \{3,\dots,n\}$ such that $q^{\hk_r}\neq 1, q^{\hk_{r-2}}\neq 1$,  $\dim \Wbb_{r,r-2}=2$, and  the restriction of $\langle.,.\rangle$ to $\Wbb_{r,r-2}$ has signature $(1,1)$.
\end{Lemma}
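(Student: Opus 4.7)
My plan is to obtain $r$ directly from Lemma~\ref{lm:exist:gd:partition}, then verify each of the four claimed properties using the cohomological machinery from \textsection\ref{sec:top:prim}. Set $\mu_i := \{kk_i/d\}$ and $\sigma_j := \mu_1+\dots+\mu_j$; because $\gcd(k,d)=1$ and $0<k_i<d$, each $\mu_i$ lies in $(0,1)$. Under hypothesis~(a) of Definition~\ref{def:good:weights} we have $1<\sigma_n<n-1$, so Lemma~\ref{lm:exist:gd:partition} applied to $(\mu_1,\dots,\mu_n)$ produces an index $r\in\{3,\dots,n\}$ with $\sigma_{r-2},\sigma_r\notin\N$ and $1<\sigma_r-\lfloor\sigma_{r-2}\rfloor<2$. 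The two non-integrality conditions translate at once to $q^{\hk_{r-2}}\neq 1$ and $q^{\hk_r}\neq 1$.

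Next I apply Proposition~\ref{prop:coh:subsurf:embedding} to the nested-disc configuration $E_1:=E_{r-2}\subset E_0:=E_r$ (so $\ell=1$, $I_0=\{1,\dots,r\}$, $I_1=\{1,\dots,r-2\}$). Since $I_0\neq I_1$, we are in the generic case of~\eqref{eq:subsurf:partition:dim:1}, giving $\dim H^1_c(Z_{r,r-2})_q = |I_0|-|I_1|+\ell-1=2$. The condition $q^{\hk_r}\neq 1$ excludes the exceptional case of~\eqref{eq:subsurf:partition:dim:2}, so $\dim\Wbb_{r,r-2}=2$. Combined with $q^{\hk_{r-2}}\neq 1$, the same proposition shows that the restriction of $\langle\cdot,\cdot\rangle$ to $\Wbb_{r,r-2}$ is non-degenerate.

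The main step is the signature computation. Following the proof of Proposition~\ref{prop:coh:subsurf:embedding} (which relies on Lemma~\ref{lm:equiv:subsurfaces}), the Hermitian space $\Wbb_{r,r-2}$ is isometric to $H^1(\hY)_q$, where $\hY$ is the (possibly disconnected; cf.\ Corollary~\ref{cor:hX:not:connected}) Riemann surface attached to the three-point curve
\[
 y^d=(x-\hb)^{\hk_{r-2}}(x-b_{r-1})^{k_{r-1}}(x-b_r)^{k_r}.
\]
The congruence $\hk_{r-2}+k_{r-1}+k_r\equiv\hk_r\not\equiv 0\pmod d$ forces $\epsilon_0=0$ for $\hY$, so Theorem~\ref{th:Menet:dim:signature} yields
\[
 r_q=\big\lfloor \{\sigma_{r-2}\}+\mu_{r-1}+\mu_r\big\rfloor = \big\lfloor \sigma_r-\lfloor\sigma_{r-2}\rfloor \big\rfloor = 1,
\]
the last equality being forced by the bound $1<\sigma_r-\lfloor\sigma_{r-2}\rfloor<2$ supplied by Lemma~\ref{lm:exist:gd:partition}. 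Hence $s_q=2-r_q=1$, so the signature is $(1,1)$. The only subtle bookkeeping is in this last step---converting the fractional part of the partial sum $\sigma_{r-2}$ into an integer part of the full sum $\sigma_r$---after which the lemma follows immediately.
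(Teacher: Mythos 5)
Your proof is correct and follows the paper's argument essentially step by step: applying Lemma~\ref{lm:exist:gd:partition}, translating $\{\sigma_{r-2}\}=\{k\hk_{r-2}/d\}$, invoking Proposition~\ref{prop:coh:subsurf:embedding} for dimension, non-degeneracy, and the isometry with the three-point curve, and finally reading off $r_q=1$ from Theorem~\ref{th:Menet:dim:signature}. The only (harmless) deviation is that you infer $s_q=1$ from $r_q+s_q=\dim\Wbb_{r,r-2}=2$ rather than evaluating the $s_q$ formula directly as the paper does.
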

\begin{proof}
Applying Lemma~\ref{lm:exist:gd:partition} to the sequence $(\left\{ \frac{kk_1}{d}\right\},\dots,\left\{\frac{kk_n}{d}\right\})$, we see that there is some index $r \geq 3$ we have $d \; \nmid \; \hk_{r-2}$, $d \; \nmid \; \hk_r$, and
$$
1 < \sum_{i=1}^r\left\{\frac{kk_i}{d}\right\}-\left\lfloor \sum_{i=1}^{r-2}\left\{\frac{kk_i}{d}\right\} \right\rfloor <2.
$$
Since
$$
\sum_{i=1}^{r-2}\left\{\frac{kk_i}{d}\right\}-\left\lfloor \sum_{i=1}^{r-2}\left\{\frac{kk_i}{d}\right\} \right\rfloor=\left\{ \sum_{i=1}^{r-2}\left\{\frac{kk_i}{d}\right\}\right\} = \left\{\frac{\sum_{i=1}^{r-2}kk_i}{d}\right\} = \left\{\frac{k\hat{k}_{r-2}}{d}\right\}
$$
we get
$$
1 < \left\{\frac{k\hk_{r-2}}{d}\right\}+\left\{\frac{kk_{r-1}}{d}\right\}+\left\{\frac{kk_{r}}{d}\right\} <2.
$$
Let $\hZ_{r,r-2}$ be the Riemann surface constructed from the curve $y^d=(x-\hb_{r-2})^{\hk_{r-2}}(x-b_{r-1})^{k_{r-1}}(x-b_r)^{k_r}$, where $\hb_{r-2}$ is an arbitrary point in the disc $E_{r-2}$.
Since $q^{\hk_{r-2}}\neq 1$ and $q^{\hk_r}\neq 1$, by Proposition~\ref{prop:coh:subsurf:embedding},  $(\Wbb_{r,r-2},\langle.,.\rangle)$ is isomorphic to $(H^1(\hZ_{r,r-2})_q,\langle.,.\rangle_{\hZ_{r,r-2}})$. It follows from Theorem~\ref{th:Menet:dim:signature} that the signature $(r_q, s_q)$ of $\langle.,.\rangle_{\hZ_{r,r-2}}$ is given by
$$
r_q=\left\lfloor\left\{\frac{k\hk_{r-2}}{d}\right\}+\left\{\frac{kk_{r-1}}{d}\right\}+\left\{\frac{kk_r}{d}\right\} \right\rfloor=1
\quad \text{ and } \quad  s_q= \left\lfloor 3-\left\{\frac{k\hk_{r-2}}{d}\right\}-\left\{\frac{kk_{r-1}}{d}\right\}-\left\{\frac{kk_r}{d}\right\} \right\rfloor=1
$$
(in this case $\eps_0=0$ since $q^{\hk_r}\neq 1$). The lemma is then proved.
\end{proof}

\begin{Proposition}\label{prop:Zar:density:dim:2}
Assume that the sequence $(\left\{\frac{kk_1}{d}\right\},\dots,\left\{\frac{kk_n}{d}\right\})$ satisfies the condition of Definition~\ref{def:good:weights} (a).
Let $r \in \{3,\dots,n\}$ be the index in Lemma~\ref{lm:exist:W:r:r-2:sign:1:1}.
Let $\alpha_r$ be the boundary of the disc $E_{r-1}$, $\beta_r$ the boundary of a disc in $E_r$ that contains only $\{b_{r-1}, b_r\}$ and disjoint from $E_{r-2}$, and $\gamma_r$ the boundary of a disc that contains $E_{r-2}$ and $b_r$ but not $b_{r-1}$. The configuration of $\alpha_r, \beta_r, \gamma_r$ is shown in Figure~\ref{fig:config:lantern:in:E:r}.

\begin{figure}[htb]
\begin{tikzpicture}[scale=0.5]
\foreach \x  in {(-7,0), (0,0), (4,0), (8,0)} \filldraw[gray] \x circle(3pt);
\draw[very thick] (0,0) ellipse (11 and 6);
\draw[very thick] (-3.5,0) ellipse (4.5 and 3);

\draw[very thin, red] (6,0) ellipse (3 and 2);
\draw[very thin, red] (2,0) arc (0:270: 5.5 and 4);
\draw[very thin, red] (-3.5,-4) --(7,-4) arc (-90:0:2)-- (9,0) arc (0:180:1) -- (7,-1) arc (0:-180: 2.5  and 2) -- (2,0);
\draw[very thin, red] (-2.5,0) ellipse (7.5 and 5);
\draw (-7,0) node[below] {$b_1$}
      (-3.5,0) node {$\dots$}
      (0,0) node[below] {$b_{r-2}$}
      (4,0) node[below]  {$b_{r-1}$}
      (8,0) node[below]  {$b_r$};
\draw (-3.5,-1.5) node {$E_{r-2}$};
\draw (2,4) node[above] {$\alpha_r$};
\draw (6,2) node[above] {$\beta_r$};
\draw (4,-4) node[below] {$\gamma_r$};
\end{tikzpicture}

\caption{Configurations of the curves $\alpha_r,\beta_r,\gamma_r$ in $A_{r,r-2}$}
\label{fig:config:lantern:in:E:r}
\end{figure}
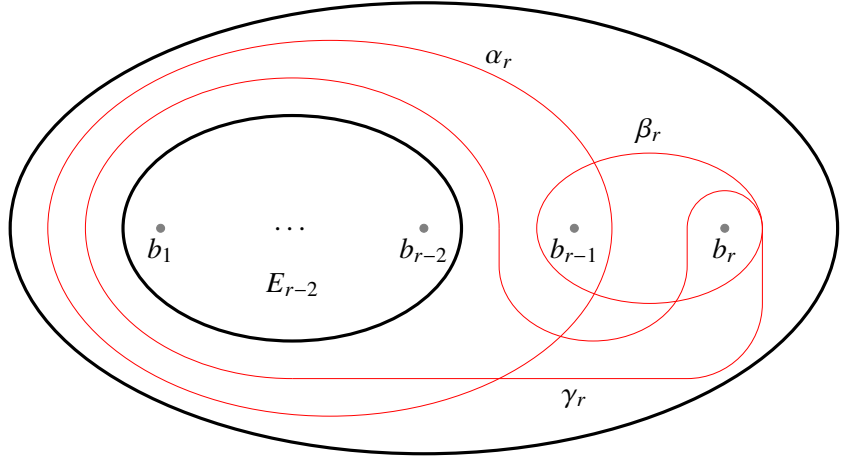
Let $A,B,C$ be respectively the matrices of the restrictions of $\rho_q(\tau_{\alpha_r}),\rho_q(\tau_{\beta_r}), \rho_q(\tau_{\gamma_r})$ to $\Wbb_{r,r-2}$. Then the identity component of the Zariski closure of the group generated by $\{ A,B,C\}$ is equal to $\SU(\Wbb_{r,r-2})$.
\end{Proposition}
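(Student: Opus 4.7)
The plan is to verify the three hypotheses of Lemma~\ref{lm:density:sign:1:1} for the subgroup of $\U(\Wbb_{r,r-2})$ generated by $\{A,B,C\}$; since by Lemma~\ref{lm:exist:W:r:r-2:sign:1:1} the restriction of $\langle.,.\rangle$ to $\Wbb_{r,r-2}$ has signature $(1,1)$, the conclusion will then follow.

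The first step is to transfer the picture from $\hX$ to the simpler Riemann surface $\hZ_{r,r-2}$ associated to the curve $y^d=(x-\hb_{r-2})^{\hk_{r-2}}(x-b_{r-1})^{k_{r-1}}(x-b_r)^{k_r}$, which has only three branch points. Since all four exponents $\hk_{r-2},k_{r-1},k_r,\hk_r$ are not divisible by $d$ (by Lemma~\ref{lm:exist:W:r:r-2:sign:1:1} and \eqref{eq:cond:ki<d}), Proposition~\ref{prop:coh:subsurf:embedding} applied with $E_0=E_r$ and the single sub-disc $E_1=E_{r-2}$ provides a Hermitian isomorphism $\Wbb_{r,r-2}\simeq H^1(\hZ_{r,r-2})_q$ under which the actions of $\tau_{\alpha_r},\tau_{\beta_r},\tau_{\gamma_r}$ become the actions of the three standard pure-braid generators $\alpha_{1,2},\alpha_{2,3},\alpha_{1,3}$ of $\PB_3$ on $\hZ_{r,r-2}$ (with the three branch points labelled $\hb_{r-2},b_{r-1},b_r$). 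Denoting by $\{\tilde g_1,\tilde g_2\}$ the generating family of $H^1(\hZ_{r,r-2})_q$ produced by Theorem~\ref{th:Menet:generator:set}, Theorem~\ref{th:Menet:alphaij} then yields explicit formulas: $A$ multiplies $\tilde g_1$ by $q^{\hk_{r-1}}$ and fixes $\tilde g_1^{\perp}$ pointwise; $B$ multiplies $\tilde g_2$ by $q^{k_{r-1}+k_r}$ and fixes $\tilde g_2^{\perp}$ pointwise; and $C$ multiplies $g^*_{1,3}=\tilde g_1+\bar q^{k_{r-1}}\tilde g_2$ by $q^{\hk_{r-2}+k_r}$ and fixes $(g^*_{1,3})^{\perp}$ pointwise.

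Conditions (i) and (ii) of Lemma~\ref{lm:density:sign:1:1} now follow directly from Corollary~\ref{cor:det:alpha:ij} and the description above: each of $A,B,C$ is of finite order (equal to the order of its non-trivial eigenvalue) when that eigenvalue is distinct from $1$, and is a non-trivial unipotent transvection otherwise; none is a scalar multiple of the identity; and the three fixed lines $\C\tilde g_1,\C\tilde g_2,\C g^*_{1,3}$ in $\Pb(\Wbb_{r,r-2})$ are pairwise distinct because $\bar q^{k_{r-1}}\neq 0$. Condition (iii) is exactly the output of Proposition~\ref{prop:lantern:conseq}: applied to the lantern configuration whose three interior discs are $E_{r-2}$ and small discs around $b_{r-1}$ and $b_r$ inside the outer disc $E_r$, it gives $C\cdot B\cdot A = q^{\hk_r}\cdot\mathrm{id}$ on the two-dimensional subspace it produces, and that subspace coincides with $\Wbb_{r,r-2}$ by a dimension count.

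Once these three points are established, Lemma~\ref{lm:density:sign:1:1} delivers the conclusion. The one subtlety that deserves care is the compatibility between the Hermitian isomorphism of Proposition~\ref{prop:coh:subsurf:embedding} and the actions of $\tau_{\alpha_r},\tau_{\beta_r},\tau_{\gamma_r}$: this amounts to observing that the $T$-equivariant homeomorphism of annular preimages provided by Lemma~\ref{lm:equiv:subsurfaces} maps the three Dehn twists supported in $A_{r,r-2}$ to Dehn twists on $\hZ_{r,r-2}$ about curves enclosing the respective pairs of branch points, and that the canonical lifts (identity near $\tilde\infty$) correspond under this homeomorphism.
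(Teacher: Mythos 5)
Your proposal is correct, and it takes a route that is genuinely different from the paper's at one key stage. The paper's own proof stays entirely inside $\hX$: it decomposes $g_{r-2}=g'_{r-2}+g''_{r-2}$ with $g'_{r-2}\in\Vbb_{r-2}$ and $g''_{r-2}\in\Wbb_{r,r-2}$, works in the basis $(g''_{r-2},g_{r-1})$, and reads the matrices $A,B$ of $\rho_q(\tau_{\alpha_r}),\rho_q(\tau_{\beta_r})$ off the formulas of Theorem~\ref{th:Menet:Dehn:twist} after projecting; because the analogous computation for $\tau_{\gamma_r}$ is awkward in the $g$-basis, the paper then extracts $C$ from the lantern relation. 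You instead transfer the whole two-dimensional picture to the $3$-branch-point surface $\hZ_{r,r-2}$ and read all of $A,B,C$ directly from Theorem~\ref{th:Menet:alphaij} as complex reflections $\rho_q(\alpha_{1,2}),\rho_q(\alpha_{2,3}),\rho_q(\alpha_{1,3})$; the lantern relation is then only invoked for hypothesis (iii). Both approaches then feed the same three fixed lines $\C g''_{r-2}\leftrightarrow\C\tilde g_1$, $\C g_{r-1}\leftrightarrow\C\tilde g_2$, $\C(g''_{r-2}+\bar q^{k_{r-1}}g_{r-1})\leftrightarrow\C g^*_{1,3}$ and the same scalar relation into Lemma~\ref{lm:density:sign:1:1}. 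The upshot of your route is conceptual simplicity (you only ever manipulate the rank-$2$ data of a $3$-point cover), at the cost of a transfer argument; the paper's route is less elegant but entirely self-contained.

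Concerning the compatibility you flag at the end: it is a real point, and your one-sentence sketch is a bit thin, but it is correct and can be closed in two ways. First, since $\tau_{\alpha_r},\tau_{\beta_r},\tau_{\gamma_r}$ are supported inside $A_{r,r-2}$, the canonical lift in $\hX$ is the identity on the full preimage of $E_r^c$ (every component of that preimage surjects onto $E_r^c$ and therefore contains a preimage of $\infty$), hence on the entire preimage of $\partial E_r$ bounding $Z_{r,r-2}$; the same holds in $\hZ_{r,r-2}$, and a lift of a homeomorphism of $A_{r,r-2}$ to a covering of $A_{r,r-2}$ is determined once it is fixed on a nonempty set, so the two lifts match under the $T$-equivariant identification. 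Second, even if one only knew that the two lifts agreed up to a deck transformation $T^j$, the induced maps on the $q$-eigenspace would differ by the scalar $q^j$, which leaves the fixed lines in $\Pb(\Wbb_{r,r-2})$ and the scalar-identity relation (iii) unchanged, so the conclusion of Lemma~\ref{lm:density:sign:1:1} is unaffected. Either remark suffices, and neither changes the structure of your argument.
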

\begin{proof}
It follows from Lemma~\ref{lm:coh:preimage:disc} and Proposition~\ref{prop:coh:subsurf:embedding} that we have
\begin{itemize}
\item[$\bullet$] $\dim \Vbb_r=r-1$,

\item[$\bullet$] $\dim \Vbb_{r-2}=r-3$,

\item[$\bullet$] $\dim \Wbb_{r,r-2}=2$.
\end{itemize}
Moreover, the restrictions of $\langle.,.\rangle$ to both $\Vbb_r$ and $\Vbb_{r-2}$ are  non-degenerate since $q^{\hk_{r-2}}\neq 1$ and $q^{\hk_r}\neq 1$.

We can consider $\Vbb_{r-2}$ as a subspace of $\Vbb_r$.
We claim that $\Wbb_{r,r-2}=\Vbb_{r-2}^\perp \cap \Vbb_r$. Indeed,  by construction we have $\Wbb_{r,r-2}\subset \Vbb_{r-2}^\perp \cap \Vbb_r$. Since $\dim \Vbb_{r-2}^\perp \cap \Vbb_r=\dim  \Vbb_r- \dim \Vbb_{r-2}=2=\dim \Wbb_{r,r-2}$, the claim follows.

Recall that the element  $g_i$ in the generating set $\{g_1,\dots,g_n\}$ of $H^1(\hX)_q$ is represented by a closed $1$-form with compact support in the preimage of a disc that contains $b_i, b_{i+1}$ but none of the other points in $\Bcal$. In particular, we  have $g_{r-1} \in \Wbb_{r,r-2} \subset \Vbb_r$.

Consider now the element $g_{r-2}$. We can write
$$
g_{r-2}=g'_{r-2}+g''_{r-2}
$$
with $g'_{r-2}\in \Vbb_{r-2}$ and $g''_{r-2}\in \Vbb_{r-2}^\perp\cap\Vbb_r=\Wbb_{r,r-2}$. Since $\{g_1,\dots,g_{r-1}\}$ is a basis of $\Vbb_r$ by Lemma~\ref{lm:coh:preimage:disc}, and $\{g_1,\dots,g_{r-3}\}\subset \Vbb_{r-2}$, it follows that $(g''_{r-2},g_{r-1})$ is a basis of $\Wbb_{r,r-2}$.

The Dehn twists $\tau_{\alpha_r},\tau_{\beta_r},\tau_{\gamma_r}$ clearly preserve the annulus $A_{r,r-2}$. Therefore, their lifts $\tilde{\tau}_{\alpha_r}, \tilde{\tau}_{\beta_r}, \tilde{\tau}_{\gamma_r}$ preserve $Z_{r,r-2}$. As a consequence, $\tilde{\tau}^*_{\alpha_r}, \tilde{\tau}^*_{\beta_r}, \tilde{\tau}^*_{\gamma_r}$ preserve $\Wbb_{r,r-2}$ as well as the orthogonal decomposition $\Vbb_r=\Vbb_{r-2}\oplus\Wbb_{r,r-2}$.

We now compute the matrices of $\tilde{\tau}^*_{\alpha_r|\Wbb_{r,r-2}}$ and $\tilde{\tau}^*_{\beta_r|\Wbb_{r,r-2}}$ in the basis $(g''_{r-2},g_{r-1})$.
By Theorem~\ref{th:Menet:Dehn:twist}, the action of  $\tilde{\tau}^*_{\alpha_r}$ on $H^1(\hX)_q$ satisfies
\begin{equation}\label{eq:Dtwist:a:in:E:r:action:1}
\tilde{\tau}^*_{\alpha_r}(g_{r-2})=q^{\hk_{r-1}}g_{r-2}= q^{\hk_{r-1}}.g'_{r-2}+q^{\hk_{r-1}}g''_{r-2},
\end{equation}
and
\begin{equation}\label{eq:Dtwist:b:in:E:r:action:2}
\tilde{\tau}^*_{\alpha_r}(g_{r-1})=g_{r-1}+\sum_{l=1}^{r-2}(q^{k_{l+1}+\dots+k_{r-1}}-q^{\hk_{r-1}})g_l=g_{r-1}+(q^{k_{r-1}}-q^{\hk_{r-1}})g''_{r-2}+v, \; \text{ with } v\in \Vbb_{r-2}.
\end{equation}
Since $\tau_{\alpha_r}$ preserves the decomposition $\Vbb_r=\Vbb_{r-2}\oplus\Wbb_{r,r-2}$, from \eqref{eq:Dtwist:a:in:E:r:action:1}, we must have $\tilde{\tau}^*_{\alpha_r}(g''_{r-2})=q^{\hk_{r-1}}g''_{r-2}$, and from \eqref{eq:Dtwist:b:in:E:r:action:2} we get $\tilde{\tau}^*_{\alpha_r}(g_{r-1})=g_{r-1}+(q^{k_{r-1}}-q^{\hk_{r-1}})g''_{r-2}$ (that is $v=0$). As a consequence, the restriction of $\tilde{\tau}^*_{\alpha_r}$ to $\Wbb_{r,r-2}$ is given by the matrix
$$
A=\left(
\begin{array}{cc}
q^{\hk_{r-2}+k_{r-1}} & q^{k_{r-1}}-q^{\hk_{r-2}+k_{r-1}}\\
0 & 1
\end{array}
\right)
$$
in the basis $(g''_{r-2},g_{r-1})$.
The action of $\tilde{\tau}^*_{\beta_r}$ satisfies $\tilde{\tau}^*_{\beta_r}(g_{r-1})=q^{k_{r-1}+k_r}g_{r-1}$, and
$$
\tilde{\tau}^*_{\beta_r}(g_{r-2})=g_{r-2}+(1-q^{k_r})g_{r-1},
$$
which implies that $\tilde{\tau}^*_{\beta_r}(g''_{r-2})=g''_{r-2}+(1-q^{k_r})g_{r-1}$. Therefore, the matrix of $\tilde{\tau}^*_{\beta_r|\Wbb_{r,r-2}}$ in the basis $(g''_{r-2},g_{r-1})$ is
$$
B=\left( \begin{array}{cc}
1 & 0 \\
1-q^{k_r} & q^{k_{r-1}+k_r}
\end{array}
\right).
$$
The determination of the images of $g_{r-2}$ and $g_{r-1}$ by  $\tilde{\tau}^*_{\gamma_r}$ by direct calculations is quite involved since the basis in Theorem~\ref{th:Menet:Dehn:twist} adapted to $\tau_{\gamma_r}$ is different from $\{g_1,\dots,g_{r-1}\}$.
For this reason, we will make use of the lantern relation applied to the four-holed sphere $S=E_r\setminus \left(E_{r-2}\cup D_{r-1}\cup D_r \right)$, where $D_{r-1}$ and $D_r$ are two (disjoint) small discs about $b_{r-1}$ and $b_r$ respectively. Note that the Dehn twists about the borders of $D_{r-1}$ and $D_r$ are trivial in $\PB_n$. Let $C$ be the matrix of $\tilde{\tau}^*_{\gamma_r|\Wbb_{r,r-2}}$ in the basis $(g''_{r-2},g_{r-1})$, then from Proposition~\ref{prop:lantern:conseq}, we get $A\cdot B\cdot C =q^{\hk_r}\cdot\Id_2$. Therefore,
$$
C=q^{\hk_r}\cdot B^{-1}\cdot A^{-1}= \left(
\begin{array}{cc}
q^{k_r} & -q^{k_{r-1}+k_r}(1-q^{\hk_{r-2}})\\
-\bar{q}^{k_{r-1}}(1-q^{k_r}) & 1-q^{k_r}+q^{\hk_{r-2}+k_r}
\end{array}
\right).
$$
Observer that $g''_{r-2}+\bar{q}^{k_{r-1}}g_{k-1}$ is an eigenvector of $C$ with the associated eigenvalue $q^{\hk_{r-2}+k_r}$.

Since $\tilde{\tau}^*_{\alpha_r}, \tilde{\tau}^*_{\beta_r},\tilde{\tau}^*_{\gamma_r}$ are either of finite order, or unipotent, so are $A, B, C$. We have $A\cdot B\cdot C$ is identity on $\Pb(\Wbb_{r,r-2})$, and  $\C\cdot g''_{r-2}, \C\cdot g_{r-1}, \C\cdot(g''_{r-2}+\bar{q}^{k_{r-1}}g_{r-1})$ are respectively fixed points of $A, B, C$ on $\Pb(\Wbb_{r,r-2})$.
We thus get the desired conclusion by applying Lemma~\ref{lm:density:sign:1:1}.
\end{proof}

For all $s\in \{1,\dots,n\}$, let $\Gamma_{s}$ be the subgroup of $\PB_n$ generated by  the Dehn twists about simple closed curves in the disc $E_s$.  Remark that $\Gamma_s \simeq \PB_s$ is generated by the Dehn twists $\alpha_{i,j}$ with $1 \leq i < j \leq s$.
Let $\Gb_s$ be the Zariski closure of the group $\{\tilde{\tau}^*_{|\Vbb_s}, \; \tau \in \PB_s\}$ in $\U(\Vbb_s)$,  and $\Gb^0_s$ be the identity component of $\Gb_s$.



\medskip

For all $s'\in \{1,\dots,s-1\}$, denote by $\Gamma_{s,s'}$ the subgroup of $\PB_n$ that is generated by Dehn twists about simple closed curves contained in the annulus $A_{s,s'}$.
\begin{Lemma}\label{lm:Dehn:twists:in:annuli}
For all $\tau \in \Gamma_{s,s'}$, we have $\tilde{\tau}^*(\Wbb_{s,s'})=\Wbb_{s,s'}$ and  $\tilde{\tau}_{|\Vbb_{s'}}=\lambda\cdot\id_{\Vbb_{s'}}$ for some $\lambda\in \Ub_q$.
\end{Lemma}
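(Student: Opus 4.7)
The plan is to reduce the statement to its generators and to exploit the classification of simple closed curves in the annulus $A_{s,s'}$ endowed with the marked points $\{b_{s'+1},\dots,b_s\}$. Any such curve $\gamma$ falls into one of two types: either (i) $\gamma$ bounds a topological disc $F\subset A_{s,s'}$ with $F\cap\Bc=\{b_i: i\in I\}$ for some $I\subset\{s'+1,\dots,s\}$, or (ii) $\gamma$ is essential in $A_{s,s'}$, in which case it bounds in $D$ a disc $F\supset E_{s'}$ with $F\cap\Bc=\{b_1,\dots,b_{s'}\}\cup\{b_i:i\in I\}$ for some $I\subset\{s'+1,\dots,s\}$. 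Since $\Gamma_{s,s'}$ is generated by the Dehn twists $\tau_\gamma$ about such curves and a product of scalar transformations is a scalar transformation, it suffices to establish both conclusions for such generators.

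For the first assertion, the representative of $\tau_\gamma$ may be chosen with support in a tubular neighborhood of $\gamma$ contained in $A_{s,s'}$, so that $\tau_\gamma$ preserves $A_{s,s'}$ setwise and its canonical lift $\tilde{\tau}_\gamma$ preserves $Z_{s,s'}$. The functoriality of the inclusion morphism $\iota_{Z_{s,s'}}:H^1_c(Z_{s,s'})_q\to H^1(\hX)_q$ then immediately yields $\tilde{\tau}_\gamma^*(\Wbb_{s,s'})=\Wbb_{s,s'}$. For the second assertion, in case (i) the homeomorphism $\tau_\gamma$ may in addition be taken identity on a neighborhood of $D\setminus A_{s,s'}$, so the unique lift which is identity near $\tilde{\infty}$ is identity on all of $\hX\setminus Z_{s,s'}$, hence in particular on $Y_{s'}$, and therefore acts as $1\cdot\id$ on $\Vbb_{s'}$. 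In case (ii), the curve $\gamma$ borders in $D$ the disc $F\supset E_{s'}$, so by Corollary~\ref{cor:Dehn:twist} the restriction of $\rho_q(\tau_\gamma)$ to $\Vbb_F:=\iota_{Y_F}(H^1_c(Y_F)_q)$ is the scalar $q^{\hk_F}\cdot\id_{\Vbb_F}$ with $\hk_F:=\hk_{s'}+\sum_{i\in I}k_i$. The natural factorization of $\iota_{Y_{s'}}$ through $\iota_{Y_F}$ (via extension by zero from $Y_{s'}$ to $Y_F$) shows that $\Vbb_{s'}\subset\Vbb_F$, so $\tilde{\tau}_\gamma^*$ acts on $\Vbb_{s'}$ as the scalar $q^{\hk_F}\in\Ub_d$.

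The only genuinely subtle point is case (ii): although $\tau_\gamma$ is identity on $E_{s'}$, its canonical lift $\tilde{\tau}_\gamma$ is generally \emph{not} identity on $Y_{s'}$. The requirement that $\tilde{\tau}_\gamma$ be identity near $\tilde{\infty}$ forces it to be identity on the preimage of the exterior component of $D\setminus\gamma$, while continuity across $\pi^{-1}(\gamma)$ combined with the monodromy of the covering around $\gamma$ forces $\tilde{\tau}_\gamma$ to coincide on the preimage of $F$ with the deck transformation $T^{\hk_F}$; this deck transformation multiplies each element of $H^1_c(Y_{s'})_q$ by $q^{\hk_F}$, which is precisely the scalar extracted above and is already packaged in Corollary~\ref{cor:Dehn:twist}. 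No further calculation beyond this invocation is required.
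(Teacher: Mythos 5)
Your proposal is correct and takes essentially the same approach as the paper: it reduces to Dehn twist generators, splits according to whether the twist disc is disjoint from $E_{s'}$ or contains it, and reads off the scalar from the explicit twist formulas. Your invocation of Corollary~\ref{cor:Dehn:twist} rather than Theorem~\ref{th:Menet:Dehn:twist}, and the closing paragraph identifying the canonical lift with a deck transformation $T^{\pm\hk_F}$ on the preimage of the inner disc, are equivalent repackagings of the paper's computation rather than a genuinely different route.
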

\begin{proof}
It is enough to prove the lemma in the case $\tau$ is a Dehn twist about a simple closed curve $\gamma$ contained in the annulus $A_{s,s'}$. Up to a homotopy $\tau$ preserves $A_{s,s'}$. Therefore, $\tilde{\tau}$ preserves the preimage $Z_{s,s'}$ of $A_{s,s'}$ in $\hX$, and $\tilde{\tau}^*(\Wbb_{s,s'})=\Wbb_{s,s'}$.

The simple closed curve $\gamma$ is  the boundary of a disc $D_\gamma \subset E_s$. Either $D_\gamma$ is disjoint from $E_{s'}$ or $D_\gamma$ contains $E_{s'}$ in its interior. By Theorem~\ref{th:Menet:Dehn:twist}, if $D_\gamma$ is disjoint from $E_{s'}$ then $\tilde{\tau}^*_{\Vbb_{s'}}=\id_{\Vbb_{s'}}$. If $E_{s'}$ is contained in $D_\gamma$, then for all $g_j, \, j=1,\dots,s'-1$, we have $\tilde{\tau}^*(g_j)=q^{\hk(\gamma)}\cdot g_j$, where $\hk(\gamma)=\sum_{b_i\in D_\gamma}k_i$.
Since $\{g_1,\dots,g_{s'-1}\}$ is a basis of $\Vbb_{s'}$ (by Lemma~\ref{lm:coh:preimage:disc}), we have $\tilde{\tau}^*_{\Vbb_{s'}}=q^{\hk(\gamma)}\cdot\id_{\Vbb_{s'}}$. This completes the proof of the lemma.
\end{proof}

By Lemma~\ref{lm:Dehn:twists:in:annuli}, for $\tau \in \Gamma_{s,s'}$, $\tilde{\tau}^*_{|\Wbb_{s,s'}}\in \U(\Wbb_{s,s'})$.  Let $\Gb_{s,s'} \subset \U(\Wbb_{s,s'})$ denote the Zariski closure of the group $\{\tilde{\tau}^*_{|\Wbb_{s,s'}}, \; \tau \in \Gamma_{s,s'}\}$ and $\Gb^0_{s,s'}$ its identity component.

\subsection*{Proof of Theorem~\ref{th:Zar:density:on:factor}: case (a)}
\begin{proof}
Assume now that the sequence $(\left\{\frac{kk_1}{d}\right\},\dots,\left\{\frac{kk_n}{d}\right\})$ satisfies the condition of Definition~\ref{def:good:weights} (a), that is
$$
1 < \left\{\frac{kk_1}{d}\right\},\dots,\left\{\frac{kk_n}{d}\right\} < n-1.
$$
Let $r\in \{3,\dots,n\}$ be the index in Lemma~\ref{lm:exist:W:r:r-2:sign:1:1}.
\begin{Claim}\label{clm:Zar:dense:W:r:s}
We have $\Gb^0_r=\SU(\Vbb_r)$.
\end{Claim}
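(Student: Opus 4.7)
The plan is to build up $\SU(\Vbb_r)$ inside $\Gb_r^0$ by a chain of one-dimensional enlargements, starting from the $2$-dimensional subspace $\Wbb_{r,r-2}$ furnished by Proposition~\ref{prop:Zar:density:dim:2} and applying Proposition~\ref{prop:density:induction} at each induction step.

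For the base of the induction I would combine Proposition~\ref{prop:Zar:density:dim:2} with Lemma~\ref{lm:Dehn:twists:in:annuli}. The Dehn twists $\tau_{\alpha_r},\tau_{\beta_r},\tau_{\gamma_r}$ all lie in $\Gamma_{r,r-2}$, so by Lemma~\ref{lm:Dehn:twists:in:annuli} they act on $\Vbb_{r-2}$ by scalars. Since any group of scalar automorphisms of $\Vbb_{r-2}$ has trivial identity component in its Zariski closure, we obtain $\SU(\Wbb_{r,r-2})\times\{\mathrm{id}_{\Vbb_{r-2}}\}\subset\Gb_r^0$ relative to the orthogonal decomposition $\Vbb_r=\Wbb_{r,r-2}\oplus\Vbb_{r-2}$. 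If $r=3$ then $\Vbb_{r-2}=\{0\}$ and the Claim follows immediately.

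For $r\geq 4$, the case distinction in Lemma~\ref{lm:exist:gd:partition} (whether $s_2\leq 1$ or $s_2>1$) forces $\Vbb_{r-2}$ to be definite of dimension $r-3$. I would then construct a chain $W_2=\Wbb_{r,r-2}\subsetneq W_3\subsetneq\cdots\subsetneq W_{r-1}=\Vbb_r$ inductively by setting $W_j:=W_{j-1}+\C\cdot v_{j-2}$, where $v_{j-2}=g^*_{i,k}$ is an axis from Theorem~\ref{th:Menet:alphaij} whose projection $v^{\parallel}$ onto $W_{j-1}$ and whose projection $v^{\perp}$ onto $W_{j-1}^{\perp}\cap\Vbb_r$ are both non-zero. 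Since $W_{j-1}\supset\Wbb_{r,r-2}$ implies $W_{j-1}^{\perp}\cap\Vbb_r\subset\Vbb_{r-2}$, the definiteness of $\Vbb_{r-2}$ gives $\langle v^{\perp},v^{\perp}\rangle\neq 0$, so $L_{j-1}:=\C\cdot v^{\perp}$ is non-isotropic and $W_j=W_{j-1}\oplus L_{j-1}$ stays non-degenerate. Moreover, because the axis $v_{j-2}$ belongs to $W_j$, the complex reflection $\rho_q(\alpha_{i,k})$ preserves $W_j$ and is the identity on $W_j^{\perp}$; the copy of $\SU(W_{j-1})$ furnished by the inductive hypothesis also acts as the identity on $W_j^{\perp}$. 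Applying Proposition~\ref{prop:density:induction} with $V=W_j$, $V'=W_{j-1}$, $V''=L_{j-1}$ (so that $\SU(V'')=\{1\}$ is trivial) and $v=v_{j-2}$, the two non-orthogonality hypotheses $V'\not\subset v^{\perp}$ and $V''\not\subset v^{\perp}$ reduce exactly to $v^{\parallel}\neq 0$ and $v^{\perp}\neq 0$. The conclusion yields $\SU(W_j)\subset\Gb_r^0$, and after $r-3$ steps $W_{r-1}=\Vbb_r$ gives the Claim. The reverse inclusion $\Gb_r^0\subset\SU(\Vbb_r)$ is automatic from $\rho_q(\PB_n)\subset\U_d(\Vbb_r)$ (Corollary~\ref{cor:det:alpha:ij}) since $\SU(\Vbb_r)$ is the identity component of $\U_d(\Vbb_r)$.

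The main obstacle is ensuring at each step the existence of an axis $v_{j-2}=g^*_{i,k}$ with both $v^{\parallel}\neq 0$ and $v^{\perp}\neq 0$ relative to the current $W_{j-1}$, i.e.\ not contained in $W_{j-1}\cup(W_{j-1}^{\perp}\cap\Vbb_r)$. To handle this I would exploit the explicit expression $g^*_{i,k}=g_i+\sum_{l=i+1}^{k-1}\bar q^{k_{i+1}+\dots+k_l}g_l$ from Theorem~\ref{th:Menet:alphaij} together with the fact that $\{g_1,\dots,g_{r-1}\}$ is a basis of $\Vbb_r$ by Lemma~\ref{lm:coh:preimage:disc}: since $W_{j-1}$ and $W_{j-1}^{\perp}\cap\Vbb_r$ are proper subspaces whose union is not all of $\Vbb_r$, and the family $\{g^*_{i,k}\}$ spans $\Vbb_r$, a suitable axis can always be found, possibly after first replacing it by a linear combination obtained by acting on it with elements of the already-available $\SU(W_{j-1})\subset\Gb_r^0$.
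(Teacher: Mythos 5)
Your route is genuinely different from the paper's, but it has a real gap. The paper runs a downward induction on $s$: it works throughout with the groups $\Gb_{r,s}$, the Zariski closures in $\U(\Wbb_{r,s})$ of $\{\tilde{\tau}^*_{|\Wbb_{r,s}}:\tau\in\Gamma_{r,s}\}$, enlarging the annulus $A_{r,s}$ one boundary circle at a time and feeding a concrete Dehn twist with axis $g_{s-1}$ into Proposition~\ref{prop:density:induction}. You instead propose an abstract chain $W_2\subsetneq\cdots\subsetneq W_{r-1}=\Vbb_r$ of one-dimensional extensions, choosing at each step a reflection axis $g^*_{i,k}$ transverse to $W_{j-1}$ and to $W_{j-1}^\perp\cap\Vbb_r$. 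Your observation that $\Vbb_{r-2}$ is definite (which does follow from the two alternatives in the proof of Lemma~\ref{lm:exist:gd:partition}) is correct and a genuine simplification, since it ensures $W_{j-1}^\perp\cap\Vbb_r$ is definite and so every one-dimensional step stays non-degenerate, avoiding the two-dimensional jump the paper needs when $q^{\hk_{s-1}}=1$.

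The gap is in the existence of the transverse axis. You argue that because the family $\{g^*_{i,k}\}$ spans $\Vbb_r$ and $W_{j-1}\cup(W_{j-1}^\perp\cap\Vbb_r)$ is not all of $\Vbb_r$, some axis must escape this union; that inference is false, since a spanning family can perfectly well lie in a union of two proper subspaces (the standard basis of $\C^2$ lies in the two coordinate lines). Your proposed repair, conjugating by elements of $\SU(W_{j-1})\subset\Gb^0_r$, also fails: $\SU(W_{j-1})$ preserves both $W_{j-1}$ and $W_{j-1}^\perp$, so it can never move a vector out of their union. The existence you need is in fact true, but for a different reason: $g_{r-1}\in\Wbb_{r,r-2}\subset W_{j-1}$, and consecutive generators satisfy $\langle g_i,g_{i+1}\rangle\neq 0$ by Theorem~\ref{th:Menet:generator:set}, so if every $g_i$ lay in $W_{j-1}\cup W_{j-1}^\perp$ then, propagating from $g_{r-1}$, all $g_i$ would lie in $W_{j-1}$, forcing $\Vbb_r=W_{j-1}$, a contradiction; hence some $g_i=g^*_{i,i+1}$ is transverse as required. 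Until an argument of this sort is supplied, the inductive step is incomplete. A secondary point worth making explicit: Proposition~\ref{prop:density:induction} is stated for a subgroup of $\U_d(W_j)$, while $\Gb^0_r$ lives in $\U(\Vbb_r)$; to use it with $V=W_j$ you should pass to the Lie algebra $\gg$ of $\Gb^0_r$ and use that $\Ad(\rho_q(\alpha_{i,k}))$ preserves both $\gg$ and $\uu(W_j)\oplus\{0\}$, a bookkeeping issue the paper avoids by always working with $\Gb_{r,s}\subset\U(\Wbb_{r,s})$.
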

\begin{proof}
The claim will be proved by induction.
By Proposition~\ref{prop:Zar:density:dim:2}, we already have $\Gb_{r,r-2}^0=\SU(\Wbb_{r,r-2})$. Assume now that for some $s \in \{1,\dots,r-2\}$ we have  $q^{\hk_{s}}\neq 1$ and $\Gb^0_{r,s}=\SU(\Wbb_{r,s})$.
If $s=1$, then we are done  since $\Gamma_{r,1}=\Gamma_r$ and $\Wbb_{r,1}=\Vbb_r$.
So let us assume that $s>1$. We will show that there exists $s'< s$ such that $q^{\hk_{s'}}\neq 1$ and $\Gb^0_{r,s'}=\SU(\Wbb_{r,s'})$.
We have two cases
\begin{itemize}
\item[$\bullet$] $q^{\hk_{s-1}}\neq 1$. In this case, we take $s'=s-1$. It follows from Proposition~\ref{prop:coh:subsurf:embedding} that $\dim\Wbb_{s,s-1}=1$, and that the restriction of $\langle.,.\rangle$ to $\Wbb_{s,s-1}$ is non-degenerate, which means that $\langle v,v \rangle \neq 0$ if $v$ is a generator of $\Wbb_{s,s-1}$. It also follows from Proposition~\ref{prop:coh:subsurf:embedding} that $(\Wbb_{r,s-1},\langle.,.\rangle)$ is non-degenerate,  and we have the following orthogonal decomposition $\Wbb_{r,s-1}=\Wbb_{r,s}\oplus \Wbb_{s,s-1}$.

For all $\tau\in \Gamma_{r,s}$, the restriction of $\tilde{\tau}^*$ to $\Vbb_s$ is the multiplication of the identity by a constant in $\Ub_d$ (cf. Lemma~\ref{lm:Dehn:twists:in:annuli}).
Since $\Wbb_{s,s-1} \subset \Vbb_s$, there is a scalar $\lambda(\tilde{\tau}^*) \in \Ub_d$ such that $\tau^*(v)=\lambda(\tau^*)v$ for all $v \in \Wbb_{s,s-1}$.
This implies that  $\tilde{\tau}^*_{|\Wbb_{r,s-1}}$ is given by the matrix $\left(\begin{smallmatrix} \tilde{\tau}^*_{|\Wbb_{r,s}} & 0 \\ 0 & \lambda(\tilde{\tau}^*)\end{smallmatrix} \right) \in \U(\Wbb_{r,s})\times\Ub_d$.
Since the Zariski closure of $\{\tilde{\tau}^*_{|\Wbb_{r,s}}, \;  \tau \in \Gamma_{r,s}\}$ contains $\SU(\Wbb_{r,s})$ by assumption, it follows that the Zariski closure of $\{\tilde{\tau}^*_{|\Wbb_{r,s-1}}, \; \tau \in \Gamma_{r,s}\}$ contains $\SU(\Wbb_{r,s})\times\{1\}$.
Note that $\dim \Wbb_{s,s-1}=1$, $\SU(\Wbb_{s,s-1})=\{1\}$.
Therefore $\Gb^0_{r,s-1}$ contains the product $\SU(\Wbb_{r,s})\times\SU(\Wbb_{s,s-1})$.

The group $\Gamma_{r,s-1}$ contains the Dehn twist $\alpha_{s-1,s}$, that is the Dehn twist with support in a small disc containing $b_{s-1}$ and $b_s$.  One can readily check that  the hypotheses of Proposition~\ref{prop:density:induction} are satisfied, with $V=\Wbb_{r,s-1}, V'=\Wbb_{r,s}, V''=\Wbb_{s,s-1}$, $\gamma=\alpha_{s-1,s}$, and $v=g_{s-1}$.
Thus we can conclude that $\Gb^0_{r,s-1}=\SU(\Wbb_{r,s-1})$.\\

\item[$\bullet$] $q^{\hk_{s-1}}=1$. In this case, we must have $s-1>1$, since by assumption, $q^{k_i}\neq 1$ for all $i \in \{1,\dots,n\}$. We then have $q^{\hk_{s-2}}=q^{\hk_{s-1}-k_{s-1}}=\bar{q}^{k_{s-1}}\neq 1$. We will show that $\Gb^0_{r,s-2}=\SU(\Wbb_{r,s-2})$. Again, we have that the restrictions of $\langle.,.\rangle$ to $\Wbb_{r,s-2}, \Wbb_{r,s}, \Wbb_{s,s-2}$ are all non-degenerate, and $\Wbb_{r,s-2}=\Wbb_{r,s}\oplus  \Wbb_{s,s-2}$ is an orthogonal direct sum.
By Proposition~\ref{prop:coh:subsurf:embedding}, $\dim \Wbb_{s,s-2}=2$.
We now observe that $\left\{k\hk_{s-2}/d\right\}+\left\{kk_{s-1}/d\right\}=1$. This is because $0< \left\{k\hk_{s-2}/d\right\}+\left\{kk_{s-1}/d\right\} < 2$, and $\left\{k\hk_{s-2}/d\right\}+\left\{kk_{s-1}/d\right\}\in \N$ since $q^{\hk_{s-1}}=1$.
As a consequence we  have
    $$
    1 <  \left\{\frac{k\hk_{s-2}}{d}\right\} + \left\{\frac{kk_{s-1}}{d}\right\}+ \left\{\frac{kk_{s}}{d}\right\} < 2
    $$
    It follows from Proposition~\ref{prop:Zar:density:dim:2} that $\Gb^0_{s,s-2}=\SU(\Wbb_{s,s-2})$.
Since for all $\tau\in \Gamma_{s,s-2}$,    $\tilde{\tau}^*_{|\Wbb_{r,s}}=\id_{\Wbb_{r,s}}$, $\Gb^0_{r,s-2}$ contains the group
$\left(\begin{smallmatrix} \Id_{\Wbb_{r,s}} & 0 \\ 0 & \SU(\Wbb_{s,s-2}) \end{smallmatrix}\right)$.

By Lemma~\ref{lm:Dehn:twists:in:annuli}, for all $\tau\in \Gamma_{r,s}$, the action of $\tilde{\tau}^*_{|\Wbb_{r,s-2}}$ is given by a matrix of the form $\left(\begin{smallmatrix} \tilde{\tau}^*_{|\Wbb_{r,s}} &  0 \\ 0 & \lambda\cdot\id_2\end{smallmatrix}\right)$, with $\lambda\in \Ub_d$.
The assumption $\Gb^0_{r,s}=\SU(\Wbb_{r,s})$ implies that $\Gb^0_{r,s-2}$ contains the group $\left(\begin{smallmatrix} \SU(\Wbb_{r,s}) & 0 \\ 0 & \Id_2 \end{smallmatrix}\right)$. Therefore $\Gb^0_{r,s-2}$ contains the product $\SU(\Wbb_{r,s})\times\SU(\Wbb_{s,s-2})$.

We now remark that $V=\Wbb_{r,s-2}, V'=\Wbb_{r,s}, V''=\Wbb_{s,s-2},\gamma=\alpha_{s-1,s}, v=g_{s-1}$ satisfy the hypothesis of Proposition~\ref{prop:density:induction} which proves the claim.
\end{itemize}
\end{proof}

\begin{Claim}\label{clm:Zar:dense:V:s}
For all $s\in \{r,\dots,n-1\}$, assume that $\Gb^0_s=\SU(\Vbb_s)$ and $q^{\hk_s}\neq 1$. Then either $s=n-1$ and $\Gb^0_s=\SU(H^1(\hX)_q)$, or there exists $s'\in \{s+1,s+2\}, \; s' \leq n$, such that $q^{\hk_{s'}}\neq 1$ and $\Gb^0_{s'}=\SU(\Vbb_{s'})$.
\end{Claim}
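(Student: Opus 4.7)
The plan is to mimic the upward propagation of Claim~\ref{clm:Zar:dense:W:r:s} with $\Vbb_s$ replacing $\Wbb_{r,s}$, splitting on whether $q^{\hk_{s+1}}\neq 1$ (take $s'=s+1$) or $q^{\hk_{s+1}}=1$ (take $s'=s+2$, which forces $s\leq n-2$ and $q^{\hk_{s+2}}=q^{k_{s+2}}\neq 1$). First I would dispose of the terminal case $s=n-1$ with $\eps_0=1$: here $q^{\hk_n}=1$ while $q^{\hk_{n-1}}\neq 1$, so Theorem~\ref{th:Menet:dim:signature} gives $\dim H^1(\hX)_q=n-2$ and Lemma~\ref{lm:coh:preimage:disc} (together with the injectivity of $\iota_{Y_{n-1}}$ guaranteed by $q^{\hk_{n-1}}\neq 1$) gives $\dim \Vbb_{n-1}=n-2$; hence $\Vbb_{n-1}=H^1(\hX)_q$ and the hypothesis $\Gb^0_s=\SU(\Vbb_s)$ reads $\Gb^0_s=\SU(H^1(\hX)_q)$.

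In the remaining cases, Proposition~\ref{prop:coh:subsurf:embedding} supplies the orthogonal decomposition $\Vbb_{s'}=\Vbb_s\oplus\Wbb_{s',s}$ with $\dim\Wbb_{s',s}=s'-s\in\{1,2\}$ and $\langle\cdot,\cdot\rangle$ non-degenerate on each summand (using $q^{\hk_s}\neq 1$ and $q^{\hk_{s'}}\neq 1$). Since the lift of any $\tau\in\Gamma_s$ is identity on $Z_{s',s}$ and preserves $Y_s$, the group $\rho_q(\Gamma_s)$ acts block-diagonally on $\Vbb_{s'}$ (identity on $\Wbb_{s',s}$), so the inductive hypothesis delivers $\SU(\Vbb_s)\times\{\id_{\Wbb_{s',s}}\}\subset\Gb^0_{s'}$.

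The harder subcase is $s'=s+2$, in which one also needs the $\SU(\Wbb_{s+2,s})$ factor. The arithmetic input $q^{\hk_{s+1}}=1$ forces $\{k\hk_s/d\}+\{kk_{s+1}/d\}=1$, hence
$$1<\{k\hk_s/d\}+\{kk_{s+1}/d\}+\{kk_{s+2}/d\}<2,$$
so the signature computation of Lemma~\ref{lm:exist:W:r:r-2:sign:1:1} applied to the triple $(\hk_s,k_{s+1},k_{s+2})$ shows that $\langle\cdot,\cdot\rangle|_{\Wbb_{s+2,s}}$ has signature $(1,1)$. I would then re-run the three-Dehn-twist argument of Proposition~\ref{prop:Zar:density:dim:2} on the annulus $A_{s+2,s}$ (invoking Lemma~\ref{lm:Dehn:twists:in:annuli} to see that $\Gamma_{s+2,s}$ acts on $\Vbb_s$ by a scalar, so its identity component acts trivially there) to obtain $\Gb^0_{s+2,s}=\SU(\Wbb_{s+2,s})$, and therefore $\SU(\Vbb_s)\times\SU(\Wbb_{s+2,s})\subset\Gb^0_{s+2}$.

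Finally, in both subcases I close the step by applying Proposition~\ref{prop:density:induction} with $V=\Vbb_{s'}$, $V'=\Vbb_s$, $V''=\Wbb_{s',s}$, $\gamma=\rho_q(\alpha_{s,s+1})$, and $v=g_s$: Theorem~\ref{th:Menet:alphaij} makes $\gamma$ the complex reflection $x\mapsto x-c\langle x,g_s\rangle g_s$ since $g^*_{s,s+1}=g_s$; the condition $V'\not\subset g_s^\perp$ follows from $g_{s-1}\in\Vbb_s$ with $\langle g_{s-1},g_s\rangle\neq 0$ (Theorem~\ref{th:Menet:generator:set}); the condition $V''\not\subset g_s^\perp$ follows because $g_s$ has non-zero component in $\Wbb_{s',s}$ (as $\{g_1,\dots,g_s\}$ is a basis of $\Vbb_{s+1}$ by Lemma~\ref{lm:coh:preimage:disc} whereas $\Vbb_s=\Span(g_1,\dots,g_{s-1})$) and $\langle\cdot,\cdot\rangle|_{V''}$ is non-degenerate; and $\dim V'=s-1\geq 2$ since $s\geq r\geq 3$. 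Proposition~\ref{prop:density:induction} then yields $\Gb^0_{s'}=\SU(\Vbb_{s'})$, closing the induction. The main obstacle will be the signature-$(1,1)$ analysis for $\Wbb_{s+2,s}$ in subcase (ii); essentially one needs Proposition~\ref{prop:Zar:density:dim:2} to still apply when the first ``weight'' is the aggregated $\{k\hk_s/d\}$ rather than a single $\{kk_i/d\}$, and the decisive observation is that $q^{\hk_{s+1}}=1$ places the three relevant fractional parts in the open interval $(1,2)$.
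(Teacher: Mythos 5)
Your proof is correct and follows essentially the same approach as the paper: you split on whether $q^{\hk_{s+1}}\neq 1$ (take $s'=s+1$, dimension jump $1$) or $q^{\hk_{s+1}}=1$ (handle the terminal $s=n-1$ case, otherwise take $s'=s+2$, use the forced equality $\{k\hk_s/d\}+\{kk_{s+1}/d\}=1$ to get signature $(1,1)$ on $\Wbb_{s+2,s}$ and invoke Proposition~\ref{prop:Zar:density:dim:2}), and close each step with Proposition~\ref{prop:density:induction} applied to $V=\Vbb_{s'},V'=\Vbb_s,V''=\Wbb_{s',s},\gamma=\rho_q(\alpha_{s,s+1}),v=g_s$. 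Your concern that Proposition~\ref{prop:Zar:density:dim:2} might need reworking for an aggregated weight is unnecessary: its proof already treats the preimage of the annulus $A_{r,r-2}$ where $E_{r-2}$ contains $r-2$ branch points, so it applies verbatim with $r$ replaced by $s+2$.
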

\begin{proof}
The proof of this claim follows the same lines as the proof of Claim~\ref{clm:Zar:dense:W:r:s}.
We have two cases
\begin{itemize}
\item[$\bullet$] $q^{\hk_{s+1}}\neq 1$. In this case, we will take $s'=s+1$. We have $\dim \Wbb_{s+1,s}=1$, the restriction of $\langle.,.\rangle$ to $\Vbb_{s+1}$ is non-degenerate, and the direct sum $\Vbb_{s+1}=\Vbb_s\oplus\Wbb_{s+1,s}$ is an orthogonal decomposition. By assumption $\Gb^0_{s+1}$ contains the group $\{\left(\begin{smallmatrix} \SU(\Vbb_s) & 0 \\ 0 & 1 \end{smallmatrix}\right)\}$. Thus, by applying Proposition~\ref{prop:density:induction}  with $\gamma=\alpha_{s,s+1}$ and $v=g_s$, we get the desired conclusion.

\item[$\bullet$] $q^{\hk_{s+1}}=1$. If $s=n-1$, then we have $\Vbb_s=\Vbb_{n-1}=\Vbb_n=H^1(\hX)_q$, and we are done. Assume that $s+1 <n$. We then have $q^{\hk_{s+2}}=q^{\hk_{s+1}+k_{s+2}}=q^{k_{s+2}}\neq 1$. Thus $(\Vbb_{s+2},\langle.,.\rangle)$ and $(\Wbb_{s+2,s},\langle.,.\rangle)$ are non-degenerate. We also have $\dim \Wbb_{s+2,s}=2$ and the signature of the restriction of $\langle.,.\rangle$ to $\Wbb_{s+2,s}$ is $(1,1)$ because
    $$
    1 < \left\{\frac{k\hk_s}{d}\right\} + \left\{\frac{kk_{s+1}}{d}\right\} + \left\{\frac{kk_{s+1}}{d}\right\} < 2.
    $$
    By Proposition~\ref{prop:Zar:density:dim:2}, $\Gb^0_{s+2,s}=\SU(\Wbb_{s+2,s})$, and therefore $\Gb^0_{s+2}=\SU(\Vbb_{s+2})$ by Proposition~\ref{prop:density:induction} (applied to $V=\Vbb_{s+2}, V'=\Vbb_s, V''=\Wbb_{s+2,s}, \gamma=\alpha_{s,s+1}, v=g_s$).
\end{itemize}
\end{proof}

Claim~\ref{clm:Zar:dense:V:s} implies that either $\Gb^0_n=\SU(\Vbb_n)=\SU(H^1(\hX)_q)$, or $\Gb^0_{n-1}=\SU(H^1(\hX)_q)$. Since $\Gb^0_{n-1}\subset \Gb^0_n=\Gb^0$, and we already know that $\Gb^0 \subset \SU(H^1(\hX)_q)$, the theorem is proved in this case.
%
\end{proof}

\subsection*{Proof of Theorem~\ref{th:Zar:density:on:factor}: case (b)}\label{subsec:prf:th:Zar:dens:case:definite}
\begin{proof}
Assume now that $(\left\{\frac{kk_1}{d}\right\},\dots,\left\{\frac{kk_n}{d}\right\})$ satisfies Definition~\ref{def:good:weights} (b). We can assume that $$
0 < \left\{\frac{kk_1}{d}\right\},\dots,\left\{\frac{kk_n}{d}\right\} \leq 1
$$
and up to a renumbering of $(k_1,\dots,k_n)$ we have
\begin{itemize}
\item[$\bullet$] the order of $q^{k_1+k_2}$ is greater than 5, and

\item[$\bullet$] the order of either $q^{k_1+k_3}$ or $q^{k_2+k_3}$ is greater than 2.
\end{itemize}
Since the order of $\rho_q(\alpha_{i,j})$ is equal to the order of $q^{k_i+k_j}$, it  follows from Lemma~\ref{lm:density:sign:2:0} that $\Gb^0_3=\SU(\Vbb_3)$. Since $q^{\hk_s}\neq 1$ for all $ 3\leq s <n$, by Proposition~\ref{prop:coh:subsurf:embedding}, we have
\begin{itemize}
\item[$\bullet$]  $\dim \Vbb_s=s-1, \dim\Vbb_{s,s-1}=1$,

\item[$\bullet$]  the restrictions of the intersection form to $\Vbb_s$ and $\Wbb_{s,s-1}$ are non-degenerate,

\item[$\bullet$] the direct sum $\Vbb_s=\Vbb_{s-1}\oplus\Wbb_{s,s-1}$ is orthogonal.
\end{itemize}
Thus, by applying Proposition~\ref{prop:density:induction} successively, we get that $\Gb^0_s=\SU(\Vbb_s)$ for all $s=3,\dots,n-1$.
If $\hk_n=1$, then $\Vbb_{n-1}=\Vbb_n$ and we are done. Otherwise, we  apply again Proposition~\ref{prop:density:induction} to conclude.
\end{proof}

\section{Proof of  Zariski density}\label{sec:Zar:density}
In this section we will prove Theorem~\ref{th:main:Zar:dense:bis} and Theorem~\ref{th:main:Zariski:dense}.
Recall that  $H^1(\hX)_d \subset H^1(\hX)$ is the kernel of the action of $\phi_d(T^*)$ on  $H^1(\hX)$, where $\phi_d$ is the $d$-th cyclotomic polynomial. By choosing a basis of $H^1(\hX)_d$ with elements in  $H^1(\hX,\Z)$, we can identify $H^1(\hX)_d$ with $\C^N$ in such a way that  the action of $T^*$ on  $H^1(\hX)_d$ is given by a matrix with rational coefficients. In this basis, the intersection form is given by a skew-symmetric integral matrix. Thus the group  of endomorphisms of $H^1(\hX)_d$ that preserve the intersection form and commute with $T^*$ is  an algebraic linear group defined over $\Q$. We denote by $\Sp(\hX,\R)^T_d$  the set of real points of this group.

\medskip

For each $q\in \Ub_d$,  let $\Vbb^{(q)}:=H^1(\hX)_q$ and denote by $\Hb_q$ the restriction of the intersection form of $H^1(\hX)$ to $\Vbb^{(q)}$.
Recall that the signature $(r_q,s_q)$ of $\Hb_q$ is computed in Theorem~\ref{th:Menet:dim:signature}.
Let $\U(\Vbb^{(q)})$ (resp. $\SU(\Vbb^{(q)})$ be the group of automorphisms of $\Vbb^{(q)}$ that preserve $\Hb_q$ (resp. and have determinant $1$). The Lie algebras of $\U(\Vbb^{(q)})$ and $\SU(\Vbb^{(q)})$ will be denoted by $\uu(\Vbb^{(q)})$ and $\su(\Vbb^{(q)})$ respectively.
It is shown in \cite[\textsection 7]{McM13}, that
$$
\Sp(\hX,\R)^T_d \simeq \prod_{q \in \Ub_{d,{\rm prim}}^+}\U(\Vbb^{(q)}) \simeq  \prod_{q \in \Ub_{d,{\rm prim}}^+}\U(r_q,s_q)
$$
where $\Ub^+_{d,{\rm prim}}$ is the set of $d$-th primitive roots of unity $q$ such that $\mathrm{Im}(q) >0$. We also have the corresponding decomposition $H^1(\hX)_d=\bigoplus_{q\in \Ub_{d,{\rm prim}}}H^1(\hX,\C)_q$.
For all $q\in \Ub_{d,\prim}^+$, the $\U(\Vbb^{(q)})$ factor of $\Sp(\hX,\R)_d^T$ acts on $H^1(\hX,\C)_q\oplus H^1(\hX,\C)_{\bar{q}}$ via the morphism $A \mapsto (A\oplus \ol{A})$.

For all $q \in \Ub_{d,\prim}^+$, let $\pi_q$ be the projection from $\Sp(\hX,\R)^T_d$ to $\U(\Vbb^{(q)})$.
By construction $\rho_d$ is a group morphism from $\PB_n$ to $\Sp(\hX,\R)^T_d$, and we have $\rho_q=\pi_q\circ\rho_d$.

\medskip

Let $K_d$ denote the cyclotomic field $\Q(\zeta_d)$. Let $q$ be a fixed primitive $d$-th root of unity, and  $\vv$ a basis  of $\Vbb^{(q)}$ whose members are elements of $H^1(\hX, K_d)$.  For all primitive $d$-th root of unity $q'$ in $\Ub_{d,\prim}$, there is a unique $\sigma \in \mathrm{Gal}(K_d)$ such that $\sigma(q)=q'$, and we have $\vv':=\sigma(\vv)$ is a basis of $\Vbb^{(q')}$.
For all $\tau \in \PB_n$,  we can express $\rho_q(\tau)$ in the basis $\vv$ as a matrix with coefficients in $K_d$.
Then the matrix of $\rho_{q'}(\tau)$ in the basis $\vv'$ satisfies $\rho_{q'}(\tau)=\sigma(\rho_q(\tau))$.

\medskip

Recall that $\hat{\Gb}$ is the Zariski closure of $\rho_d(\PB_n)$ in $\Sp(\hX,\R)_d^T$, and  $\hat{\Gb}^0$ is its identity component.
Let $\U_d(\Vbb^{(q)})$ denote the  subgroup of $\U(\Vbb^{(q)})$ consisting of elements with determinant in $\Ub_d$.
Since $\rho_q(\PB_n)\subset \U_d(\Vbb^{(q)})$, that is  it follows that $\hat{\Gb} \subset \prod_{q\in \Ub_{d,\prim}^+}\U_d(\Vbb^{(q)})$, and therefore
$$
\hat{\Gb}^0\subset \prod_{q\in \Ub_{d,\prim}^+}\SU(\Vbb^{(q)}).
$$
We first show
\begin{Lemma}\label{lm:proj:Zar:dense}
Let $q=e^{-\frac{2\pi\imath k}{d}}$ be an element of $\Ub_{d,\prim}^+$. If the Zariski closure of $\rho_q(\PB_n)$ contains $\SU(\Vbb^{(q)})$, then $\pi_q(\hat{\Gb}^0)=\SU(\Vbb^ {(q)})$.
\end{Lemma}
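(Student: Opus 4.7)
My plan is to exploit that $\pi_q$ is a morphism of algebraic groups, so it sends closed subgroups to closed subgroups, and interacts well with identity components.

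First, I would observe that the projection $\pi_q: \Sp(\hX,\R)_d^T \to \U(\Vbb^{(q)})$ is a morphism of (real) algebraic groups, so by a standard result the image $\pi_q(\hat{\Gb})$ is a closed algebraic subgroup of $\U(\Vbb^{(q)})$. Since $\rho_q = \pi_q \circ \rho_d$, this closed subgroup contains $\rho_q(\PB_n)$ and therefore contains its Zariski closure. By the hypothesis of the lemma, this Zariski closure contains $\SU(\Vbb^{(q)})$, so
\[
\SU(\Vbb^{(q)}) \subset \pi_q(\hat{\Gb}).
\]

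Next I would relate $\pi_q(\hat{\Gb}^0)$ to $\pi_q(\hat{\Gb})^0$. Since $\hat{\Gb}/\hat{\Gb}^0$ is finite, the image $\pi_q(\hat{\Gb}^0)$ has finite index in $\pi_q(\hat{\Gb})$; being connected and containing the identity, it must therefore coincide with the identity component $\pi_q(\hat{\Gb})^0$. Because $\SU(\Vbb^{(q)})$ is connected, the inclusion above yields $\SU(\Vbb^{(q)}) \subset \pi_q(\hat{\Gb})^0 = \pi_q(\hat{\Gb}^0)$.

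For the reverse inclusion, I use the containment already recorded just before the statement of the lemma: $\hat{\Gb}^0 \subset \prod_{q'\in \Ub^+_{d,\prim}} \SU(\Vbb^{(q')})$. Projecting to the $q$-factor gives $\pi_q(\hat{\Gb}^0) \subset \SU(\Vbb^{(q)})$, and combining the two inclusions yields the equality $\pi_q(\hat{\Gb}^0) = \SU(\Vbb^{(q)})$.

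There is no real obstacle here: the argument is essentially formal, resting on the facts that morphisms of algebraic groups have closed image and preserve identity components up to finite index, together with the connectedness of $\SU(\Vbb^{(q)})$. The only point that requires mild vigilance is to work consistently in the category of real algebraic groups (or equivalently with Zariski closures in $\U(\Vbb^{(q)})$ viewed as a real algebraic group), since $\Sp(\hX,\R)_d^T$ and its factors are defined over $\R$ (even $\Q$); no substantive difficulty arises in that passage.
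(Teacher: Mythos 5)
Your proof is correct, and it rests on the same basic ingredients the paper uses: morphisms of algebraic groups have algebraic (closed) image, the Zariski density hypothesis on $\rho_q(\PB_n)$, and passage to identity components together with the containment $\hat{\Gb}^0 \subset \prod_{q'\in\Ub^+_{d,\prim}}\SU(\Vbb^{(q')})$. The one difference is organizational: you first establish $\SU(\Vbb^{(q)}) \subset \pi_q(\hat{\Gb})$ and then descend to identity components, while the paper works directly with $\pi_q(\hat{\Gb}^0)$ and asserts that it contains the Zariski dense subgroup $\rho_q(\PB_n) \cap \SU(\Vbb^{(q)})$. Your ordering is actually a bit safer: the paper's assertion requires knowing that an element $\tau \in \PB_n$ with $\det\rho_q(\tau)=1$ has $\rho_d(\tau)$ landing in $\hat{\Gb}^0$ (not merely in $\hat{\Gb}\cap\prod\SU(\Vbb^{(q')})$, which could a priori be a strictly larger subgroup), whereas your argument via $\pi_q(\hat{\Gb})$ and its identity component avoids that point entirely.
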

\begin{proof}
Since $\hat{\Gb}^0$ is an algebraic group, its projection $\hat{\Gb}^0_q:= \pi_q(\hat{\Gb}^0)$ is an algebraic subgroup of $\SU(\Vbb^{(q)})$. Clearly $\hat{\Gb}^0_{q}$ contains the group $\rho_q(\PB_n)\cap  \SU(\Vbb^{(q)})$. But by assumption  $\rho_q(\PB_n)\cap  \SU(\Vbb^{(q)})$ is Zariski dense in $\SU(\Vbb^{(q)})$. Thus we must have $\hat{\Gb}^0_{q}=\SU(\Vbb^{(q)})$.
\end{proof}

The following proposition is the key ingredient of the proofs of Theorem~\ref{th:main:Zar:dense:bis} and Theorem~\ref{th:main:Zariski:dense}.
\begin{Proposition}\label{prop:Lie:alg:morph:not:exist}
Let $q=e^{-\frac{2\pi\imath k}{d}}$ and $q'=e^{-\frac{-2\pi\imath k'}{d}}$ be two elements of $\Ub_{d,\prim}^+$.
Assume that the Zariski closures of $\rho_q(\PB_n)$ and $\rho_{q'}(\PB_n)$ contain $\SU(\Vbb^{(q)})$ and $\SU(\Vbb^{(q')})$ respectively. Suppose that there is a non-trivial morphism of Lie algebras  $\phi: \su(\Vbb^{(q)}) \to \su(\Vbb^{(q')})$ such that $\Ad(\rho_{q'}(\tau))\circ \phi =\phi\circ \Ad(\rho_q(\tau))$ for all $\tau\in \PB_n$.
If $\dim \Vbb^{(q)}=\dim \Vbb^{(q')} \geq 3$, or there exists a pair of indices $(i,j)$ such that $\gcd(k_i+k_j,d)=1$, then  $q=q'$.
\end{Proposition}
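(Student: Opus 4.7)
The plan is to consider the Zariski closure $\tilde{\Gb}$ of $(\rho_q,\rho_{q'})(\PB_n)$ in $\U(\Vbb^{(q)})\times \U(\Vbb^{(q')})$. By the hypothesis on the Zariski closures of $\rho_q(\PB_n)$ and $\rho_{q'}(\PB_n)$, the identity component $\tilde{\Gb}^0$ lies in $\SU(\Vbb^{(q)})\times \SU(\Vbb^{(q')})$ and projects surjectively onto each factor. Since $\su(\Vbb^{(q)})$ and $\su(\Vbb^{(q')})$ are simple, Goursat's lemma implies that $\mathrm{Lie}(\tilde{\Gb}^0)$ is either the full product $\su(\Vbb^{(q)})\oplus \su(\Vbb^{(q')})$ or the graph of a Lie algebra isomorphism. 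In the full-product case, the hypothetical non-trivial morphism $\phi$ would be $\SU\times \SU$-equivariant, and the irreducibility of the adjoint representation of a simple Lie algebra together with Schur's lemma would force $\phi=0$, contradicting the hypothesis. Hence $\tilde{\Gb}^0$ is the graph of a Lie group isomorphism $\Psi:\SU(\Vbb^{(q)})\to \SU(\Vbb^{(q')})$. Since $\tilde{\Gb}/\tilde{\Gb}^0$ is finite, for every $\tau\in \PB_n$ the element $\rho_{q'}(\tau)\Psi(\rho_q(\tau))^{-1}$ lies in a finite subgroup of $\U_d(\Vbb^{(q')})$ normalized by $\rho_{q'}(\PB_n)$, hence centralized by $\SU(\Vbb^{(q')})$ via Zariski density; Schur's lemma then places it in the scalars $\mu_d$. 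This produces a character $\lambda:\PB_n\to \mu_d$ with $\rho_{q'}(\tau)=\Psi(\rho_q(\tau))\,\lambda(\tau)\,\Id$.

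\textbf{Spectral matching on Dehn twists.} The Lie group isomorphism $\Psi$ is either \emph{holomorphic} (conjugation by an invertible $A:\Vbb^{(q)}\to \Vbb^{(q')}$, preserving eigenvalues) or \emph{anti-holomorphic} (conjugating the spectrum to its complex conjugate). For any subset $I\subset\{1,\dots,n\}$ with $|I|\geq 2$, a Dehn twist $\tau_I\in \PB_n$ enclosing $\{b_i\}_{i\in I}$ satisfies by Corollary~\ref{cor:Dehn:twist} that $\rho_q(\tau_I)$ has eigenvalue $q^{k_I}$ (with $k_I:=\sum_{i\in I}k_i$) of multiplicity $|I|-1$ and eigenvalue $1$ on the complementary subspace, with the unipotent case $d\mid k_I$ producing a Jordan-form constraint that forces $\lambda(\tau_I)=1$. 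Comparing the spectra of $\rho_{q'}(\tau_I)$ and of $\Psi(\rho_q(\tau_I))\,\lambda(\tau_I)\,\Id$ yields arithmetic relations between $q$, $q'$, and $\lambda$.

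\textbf{Case analysis.} Under hypothesis (b), I would specialize to the pair $\{i,j\}$ with $\gcd(k_i+k_j,d)=1$, so that $q^{k_i+k_j}$ has order $d$. A direct enumeration of the two multiset matchings (for each of the holomorphic and anti-holomorphic possibilities) gives $q'^{k_i+k_j}\in\{q^{k_i+k_j},\,\bar q^{k_i+k_j}\}$; since $\gcd(k_i+k_j,d)=1$, this forces $q'\in\{q,\bar q\}$, and $q'=\bar q$ is incompatible with $q,q'\in \Ub^+_{d,\prim}$. Under hypothesis (a), the multiplicity $\dim\Vbb^{(q)}-1\geq 2$ of the eigenvalue $1$ of $\rho_q(\alpha_{i,j})$ forces $\lambda(\alpha_{i,j})=1$ for every pair, and hence $(q/q')^{k_i+k_j}=1$ for all pairs (holomorphic case) or $(qq')^{k_i+k_j}=1$ for all pairs (anti-holomorphic case). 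An elementary argument on the order $m$ of $q/q'$ or $qq'$, combined with $\gcd(k_1,\dots,k_n,d)=1$, shows $m\in\{1,2\}$. The configurations $q/q'=-1$ and $qq'=1$ contradict positivity of imaginary parts; $q/q'=1$ gives the desired conclusion $q=q'$; and the last possibility $qq'=-1$ (which forces $d$ even and all $k_i$ odd) is excluded by applying the spectral matching to a triple Dehn twist $\tau_I$ with $|I|=3$, for which $k_I$ is odd and hence $(qq')^{k_I}=-1\neq 1$. The main obstacle is precisely this last exclusion: it is the only step where pair Dehn twists do not suffice, and where the interplay of pair and triple spectral constraints with the parity rigidity enforced by $\gcd(k_1,\dots,k_n,d)=1$ becomes essential.
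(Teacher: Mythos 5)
Your proof is essentially correct and reaches the same conclusion by a closely related but genuinely different route. Here is a brief comparison of the two approaches, along with a couple of places where your write-up is thinner than it should be.

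\textbf{Structure of the argument.} The paper works directly at the Lie algebra level: it shows $\phi$ is surjective, hence an irreducible representation of $\ssl(r_q+s_q,\C)$, and uses the classification (standard versus dual) to write $\rho_{q'}(\tau)=\lambda\,S^{-1}\rho_q(\tau)S$ or $\rho_{q'}(\tau)=\lambda\,S^{-1}\,{}^t\rho_q(\tau)^{-1}\,S$. You instead package the two representations into $(\rho_q,\rho_{q'})$ and apply Goursat's lemma to the identity component of the Zariski closure in $\SU(\Vbb^{(q)})\times\SU(\Vbb^{(q')})$: it is either the full product (excluded, via Schur, because $\phi\neq 0$) or the graph of a Lie group isomorphism $\Psi$. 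This is conceptually the same dichotomy — inner automorphism (your ``holomorphic'') versus the outer automorphism given by transpose-inverse (your ``anti-holomorphic'') — arrived at through a group-level rather than Lie algebra-level classification. Both routes buy the same ``up to a scalar $\lambda$'' relation.

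\textbf{End-game.} Here the two proofs genuinely diverge. Having forced $\lambda=1$ via the pair twists (both proofs do this, since $\dim\Vbb^{(q)}\geq 3$ puts eigenvalue $1$ with multiplicity $\geq 2$ in the spectrum of $\rho_q(\alpha_{i,j})$), the paper compares traces of $\rho_q(\tau_I)$ and $\rho_{q'}(\tau_I)$ for \emph{all} Dehn twists $\tau_I$ with $|I|\geq 2$, obtaining the individual relations $q^{k_i}=q'^{k_i}$ (or $q^{k_i}=\bar{q'}^{k_i}$) for every $i$, and then applies $\gcd(k_1,\dots,k_n,d)=1$ once. You instead stop at the pairwise relations $(q/q')^{k_i+k_j}=1$ (or $(qq')^{k_i+k_j}=1$), deduce via an elementary argument that the order $m$ of $q/q'$ (resp. $qq'$) lies in $\{1,2\}$, eliminate three of the four possibilities by positivity of imaginary parts, and handle the residual case $qq'=-1$ by one further triple-twist spectral matching. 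This route works — once $\lambda\equiv 1$, the multiset spectral matching $\{q'^{k_I}\}^{|I|-1}\cup\{1\}^{N-|I|+1}=\{\bar q^{k_I}\}^{|I|-1}\cup\{1\}^{N-|I|+1}$ does force $q'^{k_I}=\bar q^{k_I}$ for every $|I|\geq 2$ (including the borderline case $N=4$, $|I|=3$, where both multiplicities are $2$: the multiset equality still forces the repeated eigenvalues to coincide) — but it is a longer path to the same place. You flag this as ``the main obstacle''; had you gone straight to the general $\tau_I$ trace identity as the paper does, the exclusion of $qq'=-1$ would be automatic, since $q^{k_i}\bar{q'}^{k_i}=1$ for all $i$ forces $q'=\bar q$ directly, independent of parity considerations.

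\textbf{Minor gaps in rigor.} Two points in your reduction deserve more care. First, $\Psi$ is defined on $\SU(\Vbb^{(q)})$, but $\rho_q(\tau)$ lies only in $\U_d(\Vbb^{(q)})$, so the expression $\Psi(\rho_q(\tau))$ needs either an explicit extension or should be reformulated at the Lie algebra / adjoint level (as the paper does, where the ambient linear map $S$ is available from the start). Second, the passage from ``$\rho_{q'}(\tau)\Psi(\rho_q(\tau))^{-1}$ lies in a finite group normalized by $\rho_{q'}(\PB_n)$'' to ``it is scalar'' is correct (a connected group normalizing a finite group centralizes it; the centralizer of an irreducible $\SU(\Vbb^{(q')})$-action is scalar), but you state it as a single sentence, whereas the paper's derivation of $SA'S^{-1}A^{-1}=\lambda\,\mathrm{Id}$ from the $\Ad$-equivariance of $\phi$ is a short direct computation. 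Neither issue is fatal, but both are places where your argument currently hand-waves and the paper's does not.
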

\begin{proof}
By assumption $\Im(\phi)$ is a non-trivial Lie sub-algebra of $\su(\Vbb^{(q')})$ that is invariant under the adjoint action of $\rho_{q'}(\PB_n)$. Since the Zariski closure of  $\rho_{q'}(\PB_n)$ contains $\SU(\Vbb^{(q')})$ and $\su(\Vbb^{(q')})\simeq \su(r_{q'},s_{q'})$ is a (real) simple Lie algebra we must have $\Im(\phi)=\su(\Vbb^{(q')})$.
We can view $\phi$ as a Lie algebra representation of $\su(\Vbb^{(q)})$. Since $\Im(\phi)=\su(\Vbb^{(q')})$, this representation is irreducible.

We have $\su(\Vbb^{(q)})\otimes_\R\C \simeq \su(r_q,s_q)\otimes_\R\C \simeq \ssl(r_q+s_q,\C)$, and  $\dim \Vbb^{(q')}=\dim \Vbb^{(q)}=r_q+s_q$.
It is a well known fact that any irreducible (complex) representation of $\ssl(r_q+s_q,\C)$ is isomorphic to either the identity, or the dual representation (see for instance \cite[\textsection 15, p.224]{FH:rep:thry}). This means that there exists $S\in \Mb_{r_q+s_q}(\C)$ such that
either  (a) $\phi(X)=S^{-1}\cdot X\cdot S$, or (b) $\phi(X)=-S^{-1}\cdot{}^tX\cdot S$, for all $X\in \su(\Vbb^{(q)})$.

Given $\tau\in \PB_n$, let us write $A:=\rho_q(\tau)$ and $A'=\rho_{q'}(\tau)$.
We have two cases
\begin{itemize}
\item[$\bullet$] Case (a): by assumption,  for all  $X \in \su(\Vbb^{(q)})$,  we have $A'{}^{-1}\cdot (S^{-1}\cdot X \cdot S) \cdot A'=S^{-1}\cdot (A^{-1}\cdot X \cdot A)\cdot S$, which is equivalent to
$$
(A\cdot S \cdot A'{}^{-1} \cdot S^{-1}) \cdot X \cdot (S \cdot A' \cdot S^{-1}\cdot A^{-1}) =X.
$$
This means that $S \cdot A' \cdot S^{-1}\cdot A^{-1}=\lambda\cdot \id$ for some $\lambda\in \C^*$.

\medskip

Let $\tau=\alpha_{i,j}$, with $1 \leq i < j \leq n$. By Theorem~\ref{th:Menet:alphaij}, the spectrum of $\rho_q(\alpha_{i,j})$ is $(q^{k_i+k_j},1,\dots,1)$, while the spectrum of $\rho_{q'}(\alpha_{i,j})$ is $(q'{}^{k_i+k_j},1,\dots,1)$.
Since $S\cdot \rho_{q'}(\alpha_{i,j}) \cdot S^{-1}=\lambda\cdot\rho_q(\alpha_{i,j})$,  we must have
\begin{equation}\label{eq:compare:spectrums}
\{q'{}^{k_i+k_j},\underset{r_q+s_q-1}{\underbrace{1,\dots,1}}\}=\{\lambda\cdot q^{k_i+k_j},\underset{r_q+s_q-1}{\underbrace{\lambda,\dots,\lambda}}\}.
\end{equation}
Assume first that $\dim \Vbb^{(q)}=r_q+s_q \geq 3$.
Since $r_q+s_q-1 \geq 2$, \eqref{eq:compare:spectrums} only occurs if we have $\lambda=1$ and $q^{k_i+k_j}=q'{}^{k_i+k_j}$.
In particular, we have $S\cdot\rho_{q'}(\alpha_{i,j})\cdot S^{-1}= \rho_q(\alpha_{i,j})$ for all $1 \leq i < j \leq n$.
Since $\PB_n$ is generated by the $\alpha_{i,j}$'s, it follows that we have $S\cdot\rho_{q'}(\tau)\cdot S^{-1}=\rho_{q}(\tau)$ for all $\tau$ in $\PB_n$. As a consequence, we get
\begin{equation}\label{eq:trace:equal:conj:q}
\Tr(\rho_{q'}(\tau))=\Tr(\rho_{q}(\tau)), \quad \hbox{for all $\tau$ in $\PB_n$.}
\end{equation}
Let $I$ be a subset of $\{1,\dots,n\}$ such that $|I|\geq 2$.
If $\tau$ is a Dehn twist about the border of a disc $E$ such that $E\cap\Bc=\{b_i,\; i\in I\}$, then from Theorem~\ref{th:Menet:Dehn:twist} we have
$$
\Tr(\rho_q(\tau))=(|I|-1)\cdot q^{\sum_{i\in I}k_i}+(r_q+s_q+1-|I|)
$$
and
$$
\Tr(\rho_{q'}(\tau))=(|I|-1)\cdot q'{}^{\sum_{i\in I}k_i}+(r_q+s_q+1-|I|).
$$
It follows from \eqref{eq:trace:equal:conj:q} that $q^{\sum_{i\in I}k_i}=q'{}^{\sum_{i\in I}k_i}$ for all  $I \subset \{1,\dots,n\}$ such that $|I|\geq 2$, and hence  $q^{k_i}=q'{}^{k_i}$ for all $i\in \{1,\dots,n\}$. Since $\gcd(k_1,\dots,k_n,d)=1$, we conclude that $q=q'$.

Assume now that $r_q+s_q=2$, and there exists $1 \leq i < j\leq n$ such that $\gcd(k_i+k_j,d)=1$. By taking $\tau=\alpha_{i,j}$, from \eqref{eq:compare:spectrums} we get that either $\lambda=1$ and $q'{}^{k_i+k_i}=q^{k_i+k_j}$, or $\lambda=q'{}^{k_i+k_j}$ and $q'{}^{k_i+k_i}=\bar{q}^{k_i+k_j}$. Since $\gcd(k_i+k_j,d)=1$, it follows that either $q'=q$ or $q'=\bar{q}$. By assumption, we have $\Im(q)>0$ and $\Im(q')>0$. Therefore, we must have $q'=q$.\\

\item[$\bullet$] Case (b): by a similar argument, we get that $S\cdot A'\cdot S^{-1}=\lambda \cdot {}^tA^{-1}$ for some $\lambda\in \C^*$. If $\dim \Vbb^{(q)} \geq 3$, then we must have $\lambda=1$ for all $\tau \in \PB_n$. As a consequence, we get that $q'{}^{k_i}=\bar{q}^{k_i}$ for all $i\in \{1,\dots,n\}$, and hence  $q'=\bar{q}$. But since $\Im(q)>0$ and $\Im(q')>0$, this is impossible. In the case, $\dim \Vbb^{(q)}=2$, and there exist $1 \leq i < j < n$ such that $\gcd(k_i+k_j,d)=1$, we conclude by comparing the spectra of $\rho_{q}(\alpha^{-1}_{i,j})$ and $\rho_{q'}(\alpha_{i,j})$.
\end{itemize}
\end{proof}

\subsection{Proof of Theorem~\ref{th:main:Zar:dense:bis}}\label{subsec:prf:th:main:Zar:dense:bis}
\begin{proof}
Let $\gf$ denote the Lie algebra of $\hat{\Gb}^0$. We have $\gf \subset \oplus_{q\in \Ub_{d,\prim}^+}\su(\Vbb^{(q)})$. Our goal is to show that
\begin{equation}\label{eq:Lie:alg:equal}
\gf=\bigoplus_{q\in \Ub_{d,\prim}^+}\su(\Vbb^{(q)}).
\end{equation}
Let us choose a numbering of the elements of $\Ub^+_{d,\prim}$ so that one can write $\Ub^+_{d,\prim}=\{q_1,\dots,q_m\}$ with $m=\varphi(d)/2$, where $\varphi(d)$ is the degree of the $d$-th cyclotomic polynomial $\phi_d$.

For all $i=1,\dots,m$, let $\pi_i$ be of the  projection $\oplus_{i=1}^m\su(\Vbb^{(q_i)})$ onto $\su(\Vbb^{(q_i})$.
We define $\fh_0:=\gf$, and for $k=1,\dots,m-1$, $\fh_k:=\ker(\pi_k)\cap\fh_{k-1} \subset \oplus_{i=k+1}^m\su(\Vbb^{(q_i)})$, or equivalently
$$
\fh_k=\gf\cap\bigcap_{i=1}^k\ker(\pi_ i).
$$
Observe that all of the $\fh_k$'s are invariant under the adjoint action of $\hat{\Gb}$.

\begin{Claim}\label{clm:proj:kernel}
For all $0 \leq k < i \leq m$, we have $\pi_i(\fh_k)=\su(\Vbb^{(q_i)})$.
\end{Claim}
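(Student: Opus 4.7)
The plan is to prove the claim by induction on $k$. The base case $k=0$ is immediate from Lemma~\ref{lm:proj:Zar:dense}: the hypothesis of Theorem~\ref{th:main:Zar:dense:bis} ensures that the Zariski closure of $\rho_{q_i}(\PB_n)$ contains $\SU(\Vbb^{(q_i)})$, whence $\pi_i(\fh_0)=\pi_i(\gf)=\su(\Vbb^{(q_i)})$ for every $i\in\{1,\dots,m\}$.

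For the inductive step, assume $\pi_i(\fh_k)=\su(\Vbb^{(q_i)})$ for all $i>k$, and fix $i>k+1$. Since $\fh_{k+1}$ is stable under the adjoint action of $\hat{\Gb}$, its projection $\pi_i(\fh_{k+1})$ is a Lie subalgebra of $\su(\Vbb^{(q_i)})$ invariant under $\Ad(\rho_{q_i}(\PB_n))$, and therefore under $\Ad$ of its Zariski closure, which contains $\SU(\Vbb^{(q_i)})$. Thus $\pi_i(\fh_{k+1})$ is an ideal of the simple real Lie algebra $\su(\Vbb^{(q_i)})\simeq\su(r_{q_i},s_{q_i})$, so it equals either $\{0\}$ or all of $\su(\Vbb^{(q_i)})$.

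Suppose, for contradiction, that $\pi_i(\fh_{k+1})=\{0\}$. Since $\fh_{k+1}=\fh_k\cap\ker(\pi_{k+1})$, this means that whenever $x\in\fh_k$ satisfies $\pi_{k+1}(x)=0$ we also have $\pi_i(x)=0$. Consequently the recipe $\phi(\pi_{k+1}(x)):=\pi_i(x)$ defines a Lie algebra morphism $\phi:\su(\Vbb^{(q_{k+1})})\to\su(\Vbb^{(q_i)})$, which is surjective by the induction hypothesis, hence non-trivial. Because the $\Ad(\rho_d(\tau))$-action on $\gf\subset\bigoplus_q\su(\Vbb^{(q)})$ splits as the componentwise adjoint actions of $\rho_{q_j}(\tau)$, one verifies directly that
$$
\phi\circ\Ad(\rho_{q_{k+1}}(\tau))=\Ad(\rho_{q_i}(\tau))\circ\phi
$$
for every $\tau\in\PB_n$.

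Now Proposition~\ref{prop:Lie:alg:morph:not:exist} applies: by Theorem~\ref{th:Menet:dim:signature}, the dimensions $\dim\Vbb^{(q)}=n-1-\eps_0$ coincide for all $q\in\Ub^+_{d,\prim}$ (since $0<k_i<d$ implies $F'_q=\varnothing$ for every primitive $d$-th root $q$), so the hypothesis of Theorem~\ref{th:main:Zar:dense:bis}---either $\dim\Vbb^{(q)}\geq 3$ or the existence of a pair $(i,j)$ with $\gcd(k_i+k_j,d)=1$---is inherited by every pair $(q_{k+1},q_i)$. The proposition therefore forces $q_{k+1}=q_i$, contradicting $i>k+1$. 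Hence $\pi_i(\fh_{k+1})=\su(\Vbb^{(q_i)})$, completing the induction. The main difficulty lies precisely in ruling out the trivial alternative for $\pi_i(\fh_{k+1})$, and Proposition~\ref{prop:Lie:alg:morph:not:exist} supplies the rigidity result that makes this step work.
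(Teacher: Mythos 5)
Your proof is correct and follows essentially the same strategy as the paper's: the base case via Lemma~\ref{lm:proj:Zar:dense}, the dichotomy from simplicity of $\su(\Vbb^{(q_i)})$, and the use of Proposition~\ref{prop:Lie:alg:morph:not:exist} to rule out $\pi_i(\fh_{k+1})=\{0\}$ by producing a non-trivial equivariant Lie-algebra morphism $\phi:\su(\Vbb^{(q_{k+1})})\to\su(\Vbb^{(q_i)})$. The paper constructs $\phi$ as the map whose graph is $\Im(\theta_{(k+1,i)})$ and then invokes Proposition~\ref{prop:Lie:alg:morph:not:exist} to conclude $\phi=0$, contradicting $\pi_i(\fh_k)\neq\{0\}$, whereas you define $\phi$ directly on the quotient and note it is non-trivial because it is surjective, then derive $q_{k+1}=q_i$; these are the same argument presented in logically equivalent forms.
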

\begin{proof}
Indeed, in the case $k=0$, from Lemma~\ref{lm:proj:Zar:dense}, we get $\pi_i(\gf)=\su(\Vbb^{(q_i)})$ for all $i\in \{1,\dots,m\}$. Assume that the claim holds for some $k\in\{0,\dots,m-1\}$ and all $i\in \{k+1,\dots,m\}$.
We consider $\pi_i(\fh_{k+1}) \subset \su(\Vbb^{(q_i)})$, with $i\geq k+2$.
By definition $\pi_i(\fh_{k+1})$ is a Lie subalgebra of $\su(\Vbb^{(q_i)})$ which is invariant under the adjoint action of $\hat{\Gb}_{i}:=\pi_{q_i}(\hat{\Gb})$. Since the Lie algebra $\su(\Vbb^{(q_i)})\simeq \su(r_{q_i}, s_{q_i})$ is simple, and $\hat{\Gb}_i$ contains $\SU(\Vbb^{(q_i)})$ by Theorem~\ref{th:Zar:density:on:factor}, we have that either $\pi_i(\fh_{k+1})=\{0\}$, or $\pi_i(\fh_{k+1})=\su(\Vbb^{(q_i)})$.

Assume that $\pi_i(\fh_{k+1})=\{0\}$. Consider the projection $\theta_{(k+1,i)}: \fh_{k} \to \su(\Vbb^{(q_{k+1})})\oplus \su(\Vbb^{(q_i)}), \xi \mapsto (\pi_{k+1}(\xi),\pi_i(\xi))$.
Denote by $\theta_{k+1}$ and $\theta_i$ the projections of $\su(\Vbb^{(q_{k+1})})\oplus \su(\Vbb^{(q_i)})$ onto $\su(\Vbb^{(q_{k+1})})$ and $\su(\Vbb^{(q_i)})$) respectively.

Let $\Vb:=\Im(\theta_{k+1,i})\subset \su(\Vbb^{q_{k+1}})\oplus\su(\Vbb^{(q_i)})$. Since $\pi_{k+1}(\fh_k)=\su(\Vbb^{(q_{k+1})})$ by assumption, and by definition $\pi_{k+1|\fh_{k}}=\theta_{k+1}\circ\theta_{(k+1,i)}$, we get that $\theta_{k+1}(\Vb)=\su(\Vbb^{(q_{k+1}})$.  The hypothesis  $\pi_i(\fh_{k+1})=\{0\}$ implies that $\ker(\theta_{k+1})\cap\Vb\subset \ker(\theta_i)$, which means that the restriction of $\theta_{k+1}$ to $\Vb$ is an isomorphism. Thus, $\Vb$ is the graph of a Lie algebra morphism $\phi: \su(\Vbb^{(q_{k+1})}) \to \su(\Vbb^{(q_i)})$. That $\fh_k$ is invariant under the adjoint action of $\hat{\Gb}$ implies  that $\phi$ satisfies
$$
\Ad(\rho_{q_i}(\tau))\circ\phi=\phi\circ \Ad(\rho_{q_{k+1}}(\tau))
$$
for all $\tau\in \PB_n$. It then follows from Proposition~\ref{prop:Lie:alg:morph:not:exist} that  $\phi(\su(\Vbb^{(q_{k+1})}))=\{0\}$. But would mean that $\theta_{i}(\Vb)=\pi_i(\fh_k)=\{0\}$, which is a contradiction to the induction hypothesis. Thus, we  conclude that $\pi_i(\fh_{k})=\su(\Vbb^{(q_i)})$ for all $0\leq k < i \leq m$.
\end{proof}

It follows from Claim~\ref{clm:proj:kernel} that we have $\fh_{m-1}=\su(\Vbb^{q_{m}})$, and by induction $\fh_k=\oplus_{i=k+1}^m\su(\Vbb^{(q_i)})$.  In particular we have
$$
\gf=\fh_0=\bigoplus_{i=1}^m\su(\Vbb^{(q_i)}).
$$
Theorem~\ref{th:main:Zar:dense:bis} is then proved.
\end{proof}

\subsection{Proof of Theorem~\ref{th:main:Zariski:dense}}\label{subsec:prf:th:main:Zar:dense}
\begin{proof}
Since for all $k \in \{1,\dots,d-1\}$ such that $\gcd(k,d)=1$, from the hypothesis that the sequence $\mu^{(k)}$ is good in the sense of Definition~\ref{def:good:weights}, we get that the Zariski closure of $\rho_q(\PB_n)$ contains $\SU(\Vbb^{(q)})$ for all $q\in \Ub_{d, \, {\rm prim}}$ by Theorem~\ref{th:Zar:density:on:factor}.
Thus Theorem~\ref{th:main:Zariski:dense} is a consequence of Theorem~\ref{th:main:Zar:dense:bis}.
\end{proof}

\begin{Remark}\label{rk:Zar:dense:false:in:dim:2}
In the case $\dim H^1(\hX)_q=2$, equality \eqref{eq:id:comp:Z:closure} may not hold  without the assumption that there exist $1 \leq i < j \leq n$ such that $\gcd(k_i+k_j,d)=1$.
\end{Remark}

\section{Horospherical subgroups}\label{sec:horospherical}
\subsection{Margulis arithmeticity criterion}\label{subsec:Margulis:crit}
To prove Theorem~\ref{th:main:arithm}, we will use the following criterion

\begin{Theorem}[Arithmeticity criterion]\label{th:Margulis:crit}
Let $G$ be a semisimple Lie group of real rank at least $2$ and $U$ be a non-trivial horospherical subgroup of $G$. Let $\Gamma$ be a discrete Zariski dense subgroup of $G$ that contains an irreducible lattice of $U$. Then $\Gamma$ is a non-cocompact irreducible arithmetic lattice of $G$.
\end{Theorem}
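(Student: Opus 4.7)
The plan is to follow the overall strategy of Benoist--Miquel, proceeding in two stages: first establish that $\Gamma$ is in fact a lattice in $G$, and then invoke the classical Margulis arithmeticity theorem. The main novelty relative to Margulis' own arithmeticity statement lies in Stage~1, since Zariski density plus containment of a lattice in $U$ is substantially weaker than assuming $\Gamma$ to be a lattice from the outset.

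For Stage~1, write $\Lambda := \Gamma \cap U$ for the irreducible lattice in the horospherical subgroup $U$, and fix a one-parameter $\R$-diagonalizable subgroup $\{a_t\}_{t \in \R}$ of $G$ that expands $U$ under conjugation and contracts the opposite horospherical subgroup $U^{-}$ (such an $\{a_t\}$ exists by definition of ``horospherical''). Since $U/\Lambda$ is compact, its Haar probability measure pushes forward along the composition $U \hookrightarrow G \to G/\Gamma$ to a finite measure $\mu$ supported on a single compact $U$-orbit. The idea is to study weak-$*$ limits of the ergodic averages $\nu_T := \frac{1}{T}\int_0^T (a_t)_* \mu \, dt$ as $T \to \infty$. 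Any such limit $\nu$ is automatically $\{a_t\}$-invariant; the Mautner phenomenon, applied to the pair $(\{a_t\}, U^{-})$ and using the fact that $a_t$ contracts $U^{-}$, then promotes $\nu$ to a $U^{-}$-invariant measure. Since $G$ is semisimple, it is generated as a group by $U$ and $U^{-}$, so $\nu$ is $G$-invariant. Existence of a $G$-invariant probability measure on $G/\Gamma$ forces $\Gamma$ to be a lattice of finite covolume.

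Stage~2 is then essentially formal: $\Gamma$ is a Zariski dense lattice in a semisimple real Lie group of real rank at least $2$, so Margulis' arithmeticity theorem applies and $\Gamma$ is arithmetic. Irreducibility of $\Gamma$ as a lattice in $G$ descends from the irreducibility hypothesis on $\Lambda \subset U$: any almost-direct product decomposition $G = G_1 \times G_2$ to which $\Gamma$ was commensurable would induce a compatible decomposition of $U$, contradicting irreducibility of $\Lambda$. Non-cocompactness is automatic since $\Gamma$ contains the non-trivial unipotent elements of $\Lambda$.

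The main obstacle is the non-divergence step hidden inside Stage~1, namely showing that the family $\{\nu_T\}_{T > 0}$ has no escape of mass to infinity in $G/\Gamma$ and therefore admits a probability weak-$*$ limit rather than the zero measure. This is the technical core of \cite{BM:Duke}: one combines quantitative non-divergence for unipotent orbits in the spirit of Dani--Margulis and Kleinbock--Margulis with a Ratner-type classification of $U$-invariant measures on $G/\Gamma$, using Zariski density of $\Gamma$ to exclude concentration of mass on proper algebraic subvarieties associated to intermediate subgroups, and using $\mathrm{rk}_{\R} G \geq 2$ to invoke Howe--Moore decay of matrix coefficients. Once non-escape of mass is known, the remainder of the argument proceeds along the comparatively soft lines sketched above.
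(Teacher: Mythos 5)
The paper does not prove this statement. Theorem~\ref{th:Margulis:crit} is quoted as a black-box tool, and the sentences immediately following it explain that the criterion was conjectured by Margulis in the mid 90's, partially established in a series of works by Oh, Benoist--Oh and Venkataramana, and proved in full generality only recently by Benoist and Miquel in \cite{BM:Duke}. There is therefore no proof in this paper to compare your sketch against: the authors import Theorem~\ref{th:Margulis:crit} in exactly the same spirit as one imports Margulis' original arithmeticity theorem, and everything they actually prove about it concerns verifying its hypotheses for $G=\prod_q\SU(\Vbb^{(q)})$ and $\Gamma=\rho_d(\PB_n)\cap G$ (Zariski density via Theorem~\ref{th:main:Zariski:dense}, the irreducible lattice in $U$ via Theorem~\ref{th:inters:w:horosp:lattice}, and $\rk_\R G\geq 2$).

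On the internal merits of your sketch, two concerns. First, the ``Ratner-type classification of $U$-invariant measures on $G/\Gamma$'' you invoke to forbid escape of mass is a statement about finite-volume homogeneous spaces, so using it presupposes that $\Gamma$ is a lattice, which is precisely what Stage~1 is supposed to establish; the genuinely applicable non-divergence results for a discrete $\Gamma$ that is not yet known to be a lattice are much weaker and do not by themselves rule out total escape. Second, to the best of my knowledge the actual Benoist--Miquel proof does not go through ergodic averaging at all: it uses Zariski density of $\Gamma$ together with the given lattice $\Lambda\subset U$ to produce, by a more algebraic mechanism, an irreducible lattice in the opposite horospherical subgroup $U^-$ as well, and then reduces to earlier theorems (Oh and others) on discrete groups containing lattices in two opposite horospherical subgroups. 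So even read purely as a description of the cited proof, your sketch departs from it in the step that carries all the weight. None of this affects the paper, which treats the theorem as known.
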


This criterion was conjectured by G.~Margulis in the mid 90's. While it has been proved in many cases by work of several authors~\cite{Oh:JAlg,Oh:IJM, Benoist:Oh:IMRN, Benoist:Oh:TG, Venky:Pacific}, the general version of this conjecture has only been proved in a recent work~\cite{BM:Duke} by Benoist and Miquel.
For for more details on the history and development  of the resolution this conjecture we refer to \cite{Benoist:survey, BM:Duke}.
We will eventually apply this criterion to $G=\hat{\Gb}^0$ and $\Gamma=\hat{\Gb}^0\cap \rho_d(\PB_n)$, where $\hat{\Gb}^0$ is the identity component of the Zariski closure of $\rho_d(\PB_n)$ in $\Sp(\hX,\R)^T_d$ under the assumption of Theorem~\ref{th:main:arithm}.

\subsection{Horospherical subgroup on each factor}\label{subsec:horosph:on:factor}
To lighten the discussion, we will suppose from now on that $d\; | \; (k_1+\dots+k_n)$ and that $n\geq 5$.
We will see in Lemma~\ref{lm:rho:P:n-1:finite:ind:rho:P:n} that the proof of Theorem~\ref{th:main:arithm} is not affected by those  additional assumptions.

Pick a primitive $d$-th root of unity $q$.
We will single out a horospherical subgroup $U_q$ of $\SU(\Vbb^{(q)})$ (recall that $\Vbb^{(q)}:=H^1(\hX)_q$) and study the structure of this group. In particular, we are interested in the conjugate action of the associated parabolic subgroup on $U_q$. The horospherical subgroup that will be used in the proof of Theorem~\ref{th:main:arithm} is simply the product of the $U_q$'s, $q\in\Ub^+_{d,{\rm prim}}$.
Most of the material in this section was known to Venkataramana~\cite{Venky:Annals, Venky:Invent}.

\medskip

Assume that there exists $m \in \{2,\dots,n-2\}$  such that $d \, | \, (k_1+\dots+k_m)$.
Let $E_{m-1} \subset E_m \subset E_{m+1}$ be three topological discs in $D$ such that $E_j\cap\Bc=\{b_1,\dots,b_j\}$.
Let $F_{m+1}=\CP^1\setminus E_{m+1}$ and $A_{m+1,m-1}=E_{m+1}\setminus E_{m-1}$.
The preimages of $E_{m-1}, F_{m+1}, A_{m+1,m-1}$ in $\hX$ are denoted by $Y_{m-1}, Z_{m+1},  Z_{m+1,m-1}$ respectively.
Let  $\Vbb^{(q)}_{m-1}, \Wbb^{(q)}_{m+1}, \Wbb^{(q)}_{m+1,m-1}$ be the images of $H^1_c(Y_{m-1})_q, H^1_c(Z_{m+1})_q, H^1_c(Z_{m+1,m-1})_q$ in $\Vbb^{(q)}$.
Recall that by Theorem~\ref{th:Menet:dim:signature}, we have $\dim \Vbb^{(q)}=n-2$.
\begin{Lemma}\label{lm:0:mod:d:direct:sum}
We have $\dim \Vbb^{(q)}_{m-1}=m-2$, $\dim \Wbb^{(q)}_{m+1}=n-2-m$, $\dim \Wbb^{(q)}_{m+1,m-1}=2$.
The restriction of the intersection form to each of these subspaces of $\Vbb^{(q)}$ is non-degenerate and we have the following orthogonal decomposition
\begin{equation}\label{eq:coh:dec:W:m-1:m+1}
\Vbb^{(q)}=\Wbb^{(q)}_{m+1,m-1}\oplus\Vbb^{(q)}_{m-1}\oplus\Wbb_{m+1}^{(q)}.
\end{equation}
\end{Lemma}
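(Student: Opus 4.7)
The plan is to establish the dimension and non-degeneracy of each of the three subspaces separately, verify pairwise orthogonality, and conclude by a dimension count using $\dim \Vbb^{(q)} = n-2$ (which follows from Theorem~\ref{th:Menet:dim:signature} since under the standing assumption $0 < k_i < d$ we have $\#F'_q = 0$, and $d \mid \hat k_n$ forces $\eps_0 = 1$). The arithmetic observation that drives the computation is that $d \mid \hat k_n$ together with $d \mid \hat k_m$ yields $\hat k_{m-1} \equiv -k_m \pmod d$ and $\hat k_{m+1} \equiv k_{m+1} \pmod d$, both of which are non-zero modulo $d$ because $0 < k_i < d$; consequently $q^{\hat k_{m-1}} \neq 1$ and $q^{\hat k_{m+1}} \neq 1$.

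For $\Vbb^{(q)}_{m-1}$, I would apply Lemma~\ref{lm:coh:preimage:disc} with $r = m-1$: since $q^{\hat k_{m-1}} \neq 1$, the inclusion $\iota_{Y_{m-1}}$ is injective, the restriction of $\langle\cdot,\cdot\rangle$ is non-degenerate, and the dimension is $m-2$. For $\Wbb^{(q)}_{m+1,m-1}$, I would invoke Proposition~\ref{prop:coh:subsurf:embedding} with $E_0 = E_{m+1}$ and a single subdisc $E_1 = E_{m-1}$; with $I_1 = \{1,\dots,m-1\} \subsetneq I_0 = \{1,\dots,m+1\}$, and with $q^{\hat k_{m+1}} \neq 1$ and $q^{\hat k_{m-1}} \neq 1$, the dimension formula gives $(m+1) - (m-1) + 1 - 1 = 2$, the inclusion $\iota_{Z_{m+1,m-1}}$ is injective (as $I_0 \neq \{1,\dots,n\}$ since $m+1 < n$), and non-degeneracy holds.

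The subtle piece is $\Wbb^{(q)}_{m+1}$, since $F_{m+1}$ contains the point $\infty$, which lies outside the standard setup of Lemma~\ref{lm:coh:preimage:disc}. The decisive point is that when $d \mid \hat k_n$, the cover $\hX \to \CP^1$ is unramified over $\infty$, so $\infty$ plays no special topological role for the cover. My plan is to apply a Möbius transformation $\phi(x) = 1/(x-b')$ for some $b' \in E_{m-1} \setminus \Bc$ and to renormalize by $y' = x'^{\hat k_n/d} y$, which is well defined because $d \mid \hat k_n$. This puts the curve in the form $y'^d = \prod_{i=1}^n (1 - (b'-b_i) x')^{k_i}$, of the same type, with branch locus $\{\phi(b_1),\dots,\phi(b_n)\}$ and no ramification at $x'=0$ or $x'=\infty$. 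The resulting isomorphism of Riemann surfaces is $T$-equivariant, and $\phi(F_{m+1})$ is now a topological disc in the new affine chart containing exactly the $n-m-1$ branch points $\phi(b_{m+2}),\dots,\phi(b_n)$. Applying Lemma~\ref{lm:coh:preimage:disc} to the transformed curve yields $\dim \Wbb^{(q)}_{m+1} = (n-m-1) - 1 = n-m-2$, together with injectivity of $\iota_{Z_{m+1}}$ and non-degeneracy, because $q^{\sum_{i=m+2}^n k_i} = q^{-k_{m+1}} \neq 1$ and $\{m+2,\dots,n\} \neq \{1,\dots,n\}$.

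Finally, since $Y_{m-1}$, $Z_{m+1,m-1}$, and $Z_{m+1}$ are pairwise disjoint subsurfaces of $\hX$, classes in the three subspaces admit representatives with pairwise disjoint compact supports, so the formula $\langle \mu,\eta\rangle = \frac{\imath}{2}\int_{\hX}\mu\wedge\bar\eta$ forces the cross pairings to vanish, giving mutual orthogonality. The dimensions then sum to $(m-2) + 2 + (n-m-2) = n-2 = \dim \Vbb^{(q)}$, and mutual orthogonality together with non-degeneracy of each summand forces the sum to be direct; hence it coincides with $\Vbb^{(q)}$. I expect the main obstacle to be the bookkeeping for the Möbius change of coordinates, in particular verifying that $T$-equivariance is genuinely preserved and that the transformed curve fits the hypotheses of Lemma~\ref{lm:coh:preimage:disc} after a harmless rescaling to place all new branch points in a common bounded disc of $\C_{\rm new}$.
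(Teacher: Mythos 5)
Your proposal is correct and, at its core, follows the paper's route: reduce everything to Lemma~\ref{lm:coh:preimage:disc} and Proposition~\ref{prop:coh:subsurf:embedding} after observing that $q^{\hk_m}=1$ while $q^{\hk_{m-1}}=\bar q^{k_m}\neq 1$ and $q^{\hk_{m+1}}=q^{k_{m+1}}\neq 1$, then use disjointness of supports for orthogonality and a dimension count to finish (the paper's own proof is just this one-line observation plus the references). Where you diverge is in the treatment of $\Wbb^{(q)}_{m+1}$: you move $\infty$ to a finite, unramified point by an explicit Möbius change of coordinates together with the renormalisation $y'=x'^{\hk_n/d}y$, so that $F_{m+1}$ becomes a bounded disc to which Lemma~\ref{lm:coh:preimage:disc} applies verbatim. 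The paper's implicit device (made explicit only in the proof of Lemma~\ref{lm:bases:decomp}(iii)) is instead to embed $Z_{m+1}$ into the auxiliary cyclic cover $\hZ_{m+1}$ defined by $y^d=\prod_{i=m+1}^n(x-b_i)^{k_i}$ and identify $H^1_c(Z_{m+1})_q$ with $H^1(\hZ_{m+1})_q$; both devices exploit precisely the same fact — $d\mid\hk_n$ and $d\mid\hk_m$ force $d\mid\sum_{i=m+1}^n k_i$, so $\infty$ plays no topological role — and are interchangeable. Your Möbius argument has the merit of being self-contained and keeping everything on a single surface, at the cost of tracking the $T$-equivariance of the coordinate change (which you correctly flag and which is indeed unproblematic: $(x,y)\mapsto(x',x'^{\hk_n/d}y)$ visibly commutes with $y\mapsto\zeta_d y$); the paper's auxiliary-surface trick avoids any change of chart but requires invoking the $T$-equivariant homeomorphism of Lemma~\ref{lm:equiv:subsurfaces}. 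One tiny slip: the transformed equation should read $y'^d=\prod_{i=1}^n\bigl(1-(b_i-b')x'\bigr)^{k_i}$, not $1-(b'-b_i)x'$; this sign is immaterial.
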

\begin{proof}
Observe that $q^{k_1+\dots+k_m}=1$, while $q^{k_1+\dots+k_{m-1}}=q^{-k_m}\neq 1$ and $q^{k_1+\dots+k_{m+1}}=q^{k_{m+1}}\neq 1$.
Hence the lemma follows from Lemma~\ref{lm:coh:preimage:disc}  and Proposition~\ref{prop:coh:subsurf:embedding}.
\end{proof}

Let $\{g_1,\dots,g_{n-1}\}$ be the generating family of $\Vbb^{(q)}$ provided by Theorem~\ref{th:Menet:generator:set}.
Since $q^{k_1+\dots+k_m}=1$, $H^1_c(\partial Y_m)$ is generated by
$$
w:=\sum_{i=1}^{m-1}(\bar{q}^{k_1+\dots+k_i}-1)g_i.
$$
In fact $w$ is the Poincar\'e dual of a combination of the components of  $\partial Y_m$.
In particular, we have $w \in \Wbb_{m+1,m-1}$ since the support of $w$ is contained in $Z_{m+1,m-1}$.
Note that we have $\langle w,w \rangle =0$, and
\begin{equation}\label{eq:value:prod:w:gm}
\langle g_{m},w\rangle = (q^{k_1+\dots+k_{m-1}}-1)\langle g_{m},g_{m-1}\rangle = (\bar{q}^{k_m}-1)\cdot(-\imath)\cdot\frac{(1-q)(1-\bar{q})}{1-\bar{q}^{k_m}}=\imath(1-q)(1-\bar{q}) \in \imath\R.
\end{equation}

\begin{Lemma}\label{lm:bases:decomp}
We have
\begin{itemize}
\item[(i)] $(g_1,\dots,g_{m-2})$ is a basis of $\Vbb^{(q)}_{m-1}$, \\

\item[(ii)] $(w,g_{m})$ is a basis of $\Wbb^{(q)}_{m+1,m-1}$, and \\

\item[(iii)] $(g_{m+2},\dots,g_{n-1})$ is a basis of $\Wbb^{(q)}_{m+1}$.
\end{itemize}
As a consequence $\Gcal^{(q)}:=(w,g_1,\dots,g_{m-2},g_{m+2},\dots,g_{n-1},g_{m})$ is a basis of $\Vbb^{(q)}$.
\end{Lemma}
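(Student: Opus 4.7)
The plan is to prove each of (i), (ii), (iii) separately, using in each case a combination of the cohomological results from Section~\ref{sec:top:prim} together with support considerations for the generators $g_i$, and then to deduce the final statement from the orthogonal decomposition \eqref{eq:coh:dec:W:m-1:m+1} established in Lemma~\ref{lm:0:mod:d:direct:sum}.

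For (i) I would apply Lemma~\ref{lm:coh:preimage:disc}(a) directly to the disc $E_{m-1}$, which contains exactly the $m-1$ marked points $b_1,\dots,b_{m-1}$. This gives $\dim H^1_c(Y_{m-1})_q=m-2$ with basis $(g_1,\dots,g_{m-2})$. Since $q^{k_1+\dots+k_{m-1}}=\bar q^{k_m}\neq 1$, part (b) of the same lemma says that $\iota_{Y_{m-1}}$ is injective, and hence $(g_1,\dots,g_{m-2})$ also forms a basis of its image $\Vbb^{(q)}_{m-1}$. For (iii) I would use the symmetric argument on the other side: $F_{m+1}$ is a topological disc in $\CP^1$ whose only branch points are $b_{m+2},\dots,b_n$ (recall that $\infty$ is unramified because $d\,|\,k_1+\cdots+k_n$, so the preimage of a small neighborhood of $\infty$ consists of $d$ disjoint discs and does not alter the cohomology with compact support of $Z_{m+1}$ in the relevant degree). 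Applying Proposition~\ref{prop:coh:subsurf:embedding} (or, equivalently, a reindexed version of Lemma~\ref{lm:coh:preimage:disc}) and noting that $q^{k_{m+2}+\cdots+k_n}=\bar q^{k_{m+1}}\neq 1$, we conclude that $\iota_{Z_{m+1}}$ is injective with $(g_{m+2},\dots,g_{n-1})$ as a basis of its image $\Wbb^{(q)}_{m+1}$ — each $g_i$ for $m+2\le i\le n-1$ has support in the preimage of a small disc around $b_i,b_{i+1}$, which lies in $F_{m+1}$.

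For (ii), Lemma~\ref{lm:0:mod:d:direct:sum} already gives $\dim\Wbb^{(q)}_{m+1,m-1}=2$, so it suffices to show that $w$ and $g_m$ lie in this space and are linearly independent. Membership is immediate from support considerations: $g_m$ is represented by a form supported in the preimage of a small disc around $\{b_m,b_{m+1}\}\subset A_{m+1,m-1}$, while $w$ is the Poincar\'e dual of (a linear combination of) the components of $\partial Y_m\subset Z_{m+1,m-1}$. Linear independence then follows from \eqref{eq:value:prod:w:gm}, which gives $\langle g_m,w\rangle=\imath(1-q)(1-\bar q)\neq 0$: indeed, if $g_m$ were a scalar multiple of $w$, then $\langle g_m,w\rangle$ would be a multiple of $\langle w,w\rangle=0$, a contradiction.

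Finally, to deduce that $\Gcal^{(q)}$ is a basis of $\Vbb^{(q)}$, I would invoke the orthogonal direct-sum decomposition \eqref{eq:coh:dec:W:m-1:m+1}: concatenating the bases obtained in (i), (ii), and (iii) yields a family of $1+(m-2)+(n-m-2)+1=n-2=\dim\Vbb^{(q)}$ vectors that spans $\Vbb^{(q)}$, and it is free since the three subspaces are in direct sum. I do not expect a genuine obstacle here; the only delicate point is the justification of step (iii), where one should be careful that the ``missing'' puncture at $\infty$ in $F_{m+1}$ does not contribute to $H^1_c(Z_{m+1})_q$ — this is exactly where the hypothesis $d\,|\,k_1+\cdots+k_n$ enters, ensuring that $\infty$ is not a branch point.
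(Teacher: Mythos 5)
Your proof follows the paper's proof quite closely: (i) and (ii) are essentially verbatim, and the concluding step via the orthogonal decomposition from Lemma~\ref{lm:0:mod:d:direct:sum} is the same.

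The only place worth a comment is part (iii), where the paper and your write-up make slightly different choices of auxiliary surface. The paper embeds $Z_{m+1}$ into $\hZ_{m+1}$, the curve built from $\prod_{i=m+1}^n(x-b_i)^{k_i}$; the key point is that $\hZ_{m+1}$ has the \emph{same} (trivial) ramification at $\infty$ as $\hX$ (since $\sum_{i=m+1}^n k_i\equiv 0 \pmod d$), so the preimages of $F_{m+1}$ in $\hX$ and in $\hZ_{m+1}$ literally coincide as $T$-covers. You instead want to invoke a re-indexed Lemma~\ref{lm:coh:preimage:disc} with the $n-m-1$ branch points $b_{m+2},\dots,b_n$, which implicitly compares $Z_{m+1}$ to the preimage of a disc in the smaller surface from $\prod_{i=m+2}^n(x-b_i)^{k_i}$; that is also correct, provided one compares to the preimage of a disc in $\C$ avoiding $\infty$ (over which the two covers agree), rather than to the preimage of $F_{m+1}$ itself in that smaller surface (where $\infty$ \emph{is} ramified). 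Your parenthetical justification — that the $d$ discs over a neighborhood of $\infty$ ``do not alter $H^1_c(Z_{m+1})_q$ in the relevant degree'' — is not quite right as stated: a Mayer–Vietoris computation shows that removing those discs raises $\dim H^1_c$ by one (the quotient $H^2_c(\partial P)_q\to H^2_c(P)_q$ is an isomorphism while $H^1_c(\partial P)_q$ contributes a one-dimensional kernel). The conclusion you reach is nonetheless correct; it is only this particular heuristic that should be replaced by the equivariant-homeomorphism argument sketched above, or by the paper's route through $\hZ_{m+1}$.
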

\begin{proof}
Since $q^{k_1+\dots+k_{m-1}}=\bar{q}^{k_m}\neq 1$,  (i) follows from Lemma~\ref{lm:coh:preimage:disc}.
Since $\dim \Wbb^{(q)}_{m+1,m-1}=2$, and $w$ and $g_r$ are not collinear (if they were collinear then we must have $\langle g_m,w\rangle =0$), (ii) follows.
To prove (iii), we embed $Z_{m+1}$ into the Riemann surface $\hZ_{m+1}$ constructed from the curve defined by $y^d=\prod_{i=m+1}^n(x-b_i)^{k_i}$.
We then have $\Wbb^{(q)}_{m+1}\simeq H^1_c(Z_{m+1})_q \simeq H^1(\hZ_{m+1})_q$.
Since $q^{k_{m+1}+\dots+k_n}=1$, we have $\dim H^1(\hZ_{m+1})_q=n-m-2$, and $(g_{m+2},\dots,g_{n-1})$ is a basis of $H^1(\hZ_{m+1})_q$, from which (iii) follows.
\end{proof}

Let $\Lbb^{(q)}$ denote the line  generated by $w$ in $\Vbb^{(q)}$, and $\Wbb^{(q)}=\Vbb^{(q)}_{m-1}\oplus \Wbb^{(q)}_{m+1}$. Let $\hat{P}_q$ be the subgroup of $\U(\Vbb^{(q)})$ consisting of elements that preserve the partial flag
\begin{equation}\label{eq:flag:filtration}
\{0\}\subset \Lbb^{(q)} \subset \Lbb^{(q)}\oplus\Wbb^{(q)} \subset \Vbb^{(q)}.
\end{equation}
This is a parabolic subgroup of $\U(\Vbb^{(q)})$. The unipotent radical $U_q$ of $\hat{P}_q$ consists of elements that act trivially on the quotients of the filtration~\eqref{eq:flag:filtration}.
Note that $U_q$ is contained in $\SU(\Vbb^{(q)})$, and by definition, $U_q$ is a horospherical subgroup of $\SU(\Vbb^{(q)})$.

In the basis $\Gcal^{(q)}$ elements of $\hat{P}_q$ correspond to matrices of the form $\left(\begin{smallmatrix} * & * & * \\ 0 & X & * \\ 0 & 0 & * \end{smallmatrix}\right)$, with $X\in \Mb_{n-4}(\C)$, and elements of $U_q$ are given by matrices of the form $\left(\begin{smallmatrix} 1 & * & * \\ 0 & I_{n-4} & * \\ 0 & 0 & 1 \end{smallmatrix}\right)$.
The following lemma follows from  direct calculations
\begin{Lemma}\label{lm:unipotent:rad:prop}
Let $H$ be the matrix of the intersection form on $\Wbb^{(q)}$ in the basis $(g_1,\dots,g_{m-1},g_{m+2},\dots,g_n)$ and $\mu:=(1-q)(1-\bar{q})=-\imath\cdot\langle g_m,w\rangle$.
Let $\left(\begin{smallmatrix} 1 & {}^tx & a \\ 0 & I_{n-4} & x' \\ 0 & 0 & 1 \end{smallmatrix} \right)$ be an element of $U_q$ with $x,x'\in \C^{n-4}$, $a\in \C$. Then we have
\begin{itemize}
\item[$\bullet$] $x'=-\imath\cdot\mu\cdot H^{-1}\cdot\bar{x}$, and

\item[$\bullet$] $\Im(a)=-\frac{\mu}{2}\cdot{}^tx\cdot H ^{-1}\cdot \bar{x}$.
\end{itemize}
\end{Lemma}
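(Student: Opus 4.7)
The formulas arise from a direct bookkeeping application of the unitary condition $\langle u(v), u(w')\rangle = \langle v, w'\rangle$ to the basis $\Gcal^{(q)}$, combined with the intersection data supplied by Theorem~\ref{th:Menet:generator:set} and the orthogonal decomposition of Lemma~\ref{lm:0:mod:d:direct:sum}.

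First I would record the Gram matrix of $\langle\cdot,\cdot\rangle$ in $\Gcal^{(q)}=(w,v_1,\dots,v_{n-4},g_m)$, writing $(v_i)$ for the basis $(g_1,\dots,g_{m-2},g_{m+2},\dots,g_{n-1})$ of $\Wbb^{(q)}$. The orthogonality statement of Lemma~\ref{lm:0:mod:d:direct:sum} gives $\langle w,v_i\rangle=\langle g_m,v_i\rangle=0$ for each $i$, since $w$ and $g_m$ lie in $\Wbb^{(q)}_{m+1,m-1}$ while each $v_i$ lies in $\Wbb^{(q)}=\Vbb^{(q)}_{m-1}\oplus\Wbb^{(q)}_{m+1}$. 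The identity $\langle w,w\rangle=0$ follows from the fact that $w$ has a representative supported in a thin annular neighborhood of $\partial Y_m$: on such an annulus every class in compactly supported cohomology is a multiple of the Poincar\'e dual of the core circle, and the wedge of such a $1$-form with its complex conjugate vanishes. The value $\langle g_m, w\rangle = \imath \mu$ has already been computed in~\eqref{eq:value:prod:w:gm}. Hence, in $\Gcal^{(q)}$, the only non-vanishing pairings are $\langle g_m, w\rangle=\imath\mu$, its conjugate, $\langle g_m, g_m\rangle\in\R$, and the middle block coming from the non-degenerate restriction of $\langle\cdot,\cdot\rangle$ to $\Wbb^{(q)}$, whose matrix I will call $H$ (under the paper's convention).

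Next I would translate the action of $u$ into coordinates: $u(w)=w$, $u(v_j)=v_j+x_j w$, and $u(g_m)=g_m+\sum_i x'_i v_i+a w$. Thanks to $u(w)=w$ and the orthogonality relations above, all unitary relations $\langle u(e), u(f)\rangle = \langle e, f\rangle$ on pairs of basis vectors are automatic except two: $\langle u(g_m), u(v_j)\rangle=0$ for each $j$ and $\langle u(g_m), u(g_m)\rangle = \langle g_m, g_m\rangle$. Expanding the first with sesquilinearity and cancelling the terms annihilated by the orthogonality relations collapses it to the linear system
$$\sum_i x'_i \langle v_i, v_j\rangle = -\imath\mu \, \overline{x_j}, \qquad j=1,\dots,n-4.$$
Since $\langle\cdot,\cdot\rangle$ restricts non-degenerately to $\Wbb^{(q)}$, this system is uniquely solvable and yields the first formula $x'=-\imath\mu H^{-1}\bar x$.

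Finally, expanding the equation $\langle u(g_m), u(g_m)\rangle = \langle g_m, g_m\rangle$ and cancelling the vanishing cross-terms reduces it to $\imath\mu(\bar a - a) + \sum_{i,j} x'_i \overline{x'_j}\langle v_i,v_j\rangle = 0$. Substituting the identity $\sum_i x'_i \langle v_i,v_j\rangle=-\imath\mu\overline{x_j}$ just established collapses the double sum to $-\imath\mu\,\overline{x'}^T\bar{x}$, and replacing $\overline{x'}$ via the first formula (using $\overline{H^{-1}}=(H^{-1})^T$, which follows from the Hermitianness of $H$) yields $2\mu\Im(a)+\mu^2\, x^T H^{-1}\bar x=0$, i.e.\ the second formula. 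The only genuine subtlety in the argument is keeping track of the transposes and complex conjugates dictated by the sesquilinearity convention; once the Gram-matrix convention matching that of the statement is fixed, the entire verification is mechanical.
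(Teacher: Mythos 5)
Your proof is correct and is exactly the ``direct calculation'' the paper asserts without writing out: record the Gram matrix of $\Gcal^{(q)}$ (using $\langle w,w\rangle=0$, $\langle w,v_i\rangle=\langle g_m,v_i\rangle=0$ from Lemma~\ref{lm:0:mod:d:direct:sum}, and $\langle g_m,w\rangle=\imath\mu$ from~\eqref{eq:value:prod:w:gm}), observe that the only non-automatic unitary relations are $\langle u(g_m),u(v_j)\rangle=0$ and $\langle u(g_m),u(g_m)\rangle=\langle g_m,g_m\rangle$, and solve. One small point worth flagging: the system $\sum_i x'_i\langle v_i,v_j\rangle=-\imath\mu\bar{x}_j$ gives $x'=-\imath\mu(H^t)^{-1}\bar{x}$ under the usual Gram convention $H_{ij}=\langle v_i,v_j\rangle$, which matches the stated $-\imath\mu H^{-1}\bar{x}$ only after reading $H$ with the transposed convention $H_{ij}=\langle v_j,v_i\rangle$; this discrepancy disappears in the second formula (where both ${}^tx H^{-1}\bar{x}$ and ${}^tx(H^t)^{-1}\bar{x}$ are real and equal, by Hermitianness of $H$), and you correctly fix the typo in the lemma's basis to $(g_1,\dots,g_{m-2},g_{m+2},\dots,g_{n-1})$.
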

By Lemma~\ref{lm:unipotent:rad:prop} we can identify $U_q$ with $\C^{n-4}\times\R$ by the map $\left(\begin{smallmatrix} 1 & {}^tx & a \\ 0 & I_{n-4} & y \\ 0 & 0 & 1 \end{smallmatrix} \right) \mapsto (x,\mathrm{Re}(a))$, where the group law on $\C^{n-4}\times \R$ is given by $(x,s)\cdot (y,t)=(x+y, s+t+\mu\mathrm{Im}({}^tx\cdot H^{-1}\cdot \bar{y}))$.

\medskip

Let $N_q$ denote the subgroup of $U_q$ that corresponds to $\{0\}\times\R$,
that is $N_q$ is the image of the injection from $\R$ into $U_q$ by the map $s \mapsto \left(\begin{smallmatrix} 1 & 0 & s \\ 0 & I_{n-4} & 0 \\ 0 & 0 & 1 \end{smallmatrix}\right)$.
We have following exact sequence
\begin{equation}\label{eq:exact:seq:U}
\{0\} \to N_q\simeq \R \to U_q \overset{\chi_q}{\to} \C^{n-4} \to \{0\}
\end{equation}
where the map $\chi_q: U_q \to U_q/N_q\simeq\C^{n-4}$ sends $(x,s)$ to $x$.

\medskip

Observe that $U_q/N_q\simeq \C^{n-4}$ can be identified with the space $\mathrm{Hom}(\Wbb^{(q)},\Lbb^{(q)})$ of linear maps $\Wbb^{(q)} \to \Lbb^{(q)}$.
By definition, for all $A\in U_q$ and $v \in \Wbb^{(q)}$, we have
$$
A(v)=v+\chi_q(A)(v).
$$
where $\chi_q(A)$ is viewed as an element of $\mathrm{Hom}(\Wbb^{(q)},\Lbb^{(q)})$.
\begin{Lemma}\label{lm:conjugate:P:on:U}
Let $A$ be an element of $\hat{P}_q$ which is represented by a matrix of the form $\left(\begin{smallmatrix} \lambda & {}^tu & a \\ 0 & C & u' \\ 0 & 0 & \lambda'\end{smallmatrix} \right)$ in the basis $\Gcal^{(q)}$.
Let $M$ be an element of $U_q$. Then $A\cdot M \cdot A^{-1} \in U_q$, and we have
\begin{equation}\label{eq:conjugate:P:on:U}
\chi_q(A\cdot M \cdot A^{-1})=\lambda\cdot\chi_q(M)\cdot C^{-1}.
\end{equation}
\end{Lemma}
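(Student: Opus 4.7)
The plan is to verify the lemma by a direct block matrix computation, using only two structural facts. First, $U_q$ is by definition the unipotent radical of the parabolic subgroup $\hat{P}_q$, hence $U_q$ is normal in $\hat{P}_q$. This immediately gives $A\cdot M\cdot A^{-1}\in U_q$ for every $A\in \hat{P}_q$ and every $M\in U_q$, and reduces the lemma to the identification of the ``top row of the middle block'' of the conjugate.

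Second, I recall how $\chi_q$ is read off from a matrix: by the description of $U_q$ in the basis $\Gcal^{(q)}$, every $M\in U_q$ has matrix of the form
\[
M=\begin{pmatrix} 1 & {}^tx & c \\ 0 & I_{n-4} & x' \\ 0 & 0 & 1\end{pmatrix},
\]
and $\chi_q(M)\in U_q/N_q\simeq \mathrm{Hom}(\Wbb^{(q)},\Lbb^{(q)})$ is exactly the row vector ${}^tx$ (once $U_q/N_q$ is identified with $\C^{n-4}$ as in \eqref{eq:exact:seq:U}). Setting $N:=A\cdot M\cdot A^{-1}$ and writing $N$ in the same block form with top-row block ${}^ty$, the task is to show ${}^ty=\lambda\cdot {}^tx \cdot C^{-1}$.

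To do so, I would rewrite $AMA^{-1}=N$ as $AM=NA$ and compute both sides as products of $3\times 3$ block upper-triangular matrices. The $(1,2)$-block of $AM$ is $\lambda\,{}^tx+{}^tu$, while the $(1,2)$-block of $NA$ is ${}^tu+{}^ty\cdot C$. Equating these yields ${}^ty\cdot C=\lambda\,{}^tx$, hence ${}^ty=\lambda\,{}^tx\cdot C^{-1}$, which under the identification above is precisely $\chi_q(A\cdot M\cdot A^{-1})=\lambda\cdot\chi_q(M)\cdot C^{-1}$.

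There is essentially no obstacle: the computation is routine, since $\hat{P}_q$ stabilizes the flag \eqref{eq:flag:filtration} (so the conjugate is block upper-triangular of the required shape), and the other blocks of the identity $AM=NA$ produce only consistency conditions on the $(1,3)$ and $(2,3)$ blocks that are already built into the abelian quotient $U_q/N_q$. The one conceptual point worth emphasizing in the write-up is the convention identifying $U_q/N_q$ with $\mathrm{Hom}(\Wbb^{(q)},\Lbb^{(q)})$ through the row vector ${}^tx$; in this identification \eqref{eq:conjugate:P:on:U} is exactly the natural action of the Levi factor of $\hat{P}_q$ on the abelianization of its unipotent radical, namely pre- and post-composition by the induced automorphisms of $\Lbb^{(q)}$ and $\Wbb^{(q)}$.
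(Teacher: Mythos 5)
Your proof is correct and takes essentially the same route as the paper: a direct block-matrix computation in the basis $\Gcal^{(q)}$. The only (mild and pleasant) simplification is that by rewriting $A M A^{-1}=N$ as $AM=NA$ you avoid having to compute $A^{-1}$ explicitly, and in particular you never need the auxiliary observation $\lambda'=\bar\lambda^{-1}$ that the paper records before inverting $A$; likewise your appeal to normality of the unipotent radical to get $AMA^{-1}\in U_q$ replaces the paper's verification by inspection of the resulting matrix, and both are fine.
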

\begin{proof}
We first remark that $\lambda'=\bar{\lambda}^{-1}$ because
$$
\langle w,g_m\rangle = \langle A(w),A(g_m)\rangle= \langle \lambda w, \lambda'g_m\rangle =\lambda\bar{\lambda}'\langle w,g_m\rangle.
$$
We have $A^{-1}=\left(\begin{smallmatrix} \lambda^{-1} & -\lambda^{-1}\cdot{}^tu\cdot C^{-1} & b \\ 0 & C^{-1} & -\bar{\lambda}\cdot C^{-1}\cdot u' \\ 0 & 0 & \bar\lambda\end{smallmatrix} \right)$.
Let $x:=\chi_q(M) \in \C^{n-4}$.
We can write $M=\left(\begin{smallmatrix} 1 & {}^tx & s \\ 0 & I_{n-4} & x' \\ 0 & 0 & 1\end{smallmatrix} \right)$, hence
$$
A\cdot M\cdot A^{-1}=\left(\begin{smallmatrix} \lambda & {}^tu & a \\ 0 & C & u' \\ 0 & 0 & \bar{\lambda}^{-1}\end{smallmatrix} \right)\cdot \left(\begin{smallmatrix} 1 & {}^tx & s \\ 0 & I_{n-4} & x' \\ 0 & 0 & 1\end{smallmatrix} \right) \cdot \left(\begin{smallmatrix} \lambda^{-1} & -\lambda^{-1}\cdot {}^tu\cdot C^{-1} & b \\ 0 & C^{-1} & -\bar{\lambda}\cdot C^{-1}\cdot u' \\ 0 & 0 & \bar{\lambda} \end{smallmatrix} \right) = \left(\begin{smallmatrix} 1 & \lambda\cdot{}^tx\cdot C^{-1} & * \\ 0 & I_{n-4} & \bar{\lambda}\cdot C^{-1}\cdot x' \\ 0 & 0 & 1 \end{smallmatrix} \right)
$$
and the lemma follows.
\end{proof}

\begin{Corollary}\label{cor:dual:action}
Let $A$ be an element of $\U(\Vbb^{(q)})$ such that $A$ restricts to the identity on $\Wbb^{(q)}_{m+1,m+1}$. Then $A$ preserves $\Wbb^{(q)}$, and for all  $M\in U_q$, we have $A\cdot M \cdot A^{-1} \in U_q$ and
$$
\chi_q(A\cdot M \cdot A^{-1})=\chi_q(M)\circ A^{-1}_{|\Wbb^{(q)}}
$$
(here we identify $U_q/N_q$ with $\mathrm{Hom}(\Wbb^{(q)},\Lbb^{(q)})$).
\end{Corollary}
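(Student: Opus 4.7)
The plan is to reduce the statement to a direct application of Lemma~\ref{lm:conjugate:P:on:U}, and then translate the resulting matrix identity into the intrinsic formulation using the identification of $U_q/N_q$ with $\Hom(\Wbb^{(q)},\Lbb^{(q)})$.

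First I would observe that since $A$ is unitary with respect to the intersection form and acts as the identity on $\Wbb^{(q)}_{m+1,m-1}$, it preserves its orthogonal complement, which by Lemma~\ref{lm:0:mod:d:direct:sum} and the orthogonal decomposition \eqref{eq:coh:dec:W:m-1:m+1} is exactly $\Wbb^{(q)}$. In particular, $A$ fixes $w$ (a generator of $\Lbb^{(q)} \subset \Wbb^{(q)}_{m+1,m-1}$) and $g_m$, and preserves $\Wbb^{(q)}$. Hence $A$ preserves the full flag $\{0\} \subset \Lbb^{(q)} \subset \Lbb^{(q)}\oplus \Wbb^{(q)} \subset \Vbb^{(q)}$, so $A \in \hat{P}_q$. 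In the basis $\Gcal^{(q)}$, $A$ is therefore represented by the block-diagonal matrix
\[
A = \begin{pmatrix} 1 & 0 & 0 \\ 0 & C & 0 \\ 0 & 0 & 1 \end{pmatrix},
\]
where $C \in \Mb_{n-4}(\C)$ is the matrix of $A_{|\Wbb^{(q)}}$ in the basis $(g_1,\dots,g_{m-2},g_{m+2},\dots,g_{n-1})$.

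Next I would apply Lemma~\ref{lm:conjugate:P:on:U} with $\lambda = 1$, $u = 0$, $u' = 0$, $a = 0$. This gives $A \cdot M \cdot A^{-1} \in U_q$ for all $M \in U_q$, and
\begin{equation*}
\chi_q(A\cdot M \cdot A^{-1}) = \chi_q(M) \cdot C^{-1}.
\end{equation*}
It remains to identify the row vector $\chi_q(M)\cdot C^{-1}$ with the linear map $\chi_q(M) \circ A^{-1}_{|\Wbb^{(q)}} \in \Hom(\Wbb^{(q)},\Lbb^{(q)})$. Writing $M = \left(\begin{smallmatrix} 1 & {}^tx & s \\ 0 & I_{n-4} & x' \\ 0 & 0 & 1 \end{smallmatrix}\right)$, for any $v \in \Wbb^{(q)}$ expressed in the chosen basis of $\Wbb^{(q)}$ we have $M(v) = v + ({}^tx \cdot v)\, w$, so the element $\chi_q(M) \in \Hom(\Wbb^{(q)},\Lbb^{(q)})$ sends $v$ to $({}^tx \cdot v)\, w$. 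Therefore
\[
\bigl(\chi_q(M)\circ A^{-1}_{|\Wbb^{(q)}}\bigr)(v) = \chi_q(M)(C^{-1} v) = \bigl({}^tx\cdot C^{-1} v\bigr)\, w = \bigl(({}^tx \cdot C^{-1})\, v\bigr)\, w,
\]
which corresponds precisely to the row vector ${}^tx \cdot C^{-1}$, matching the right-hand side of Lemma~\ref{lm:conjugate:P:on:U}.

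There is no serious obstacle here: the corollary is essentially a coordinate-free reformulation of the special case of Lemma~\ref{lm:conjugate:P:on:U} in which the element of $\hat{P}_q$ acts trivially on both $\Lbb^{(q)}$ and $\Vbb^{(q)}/(\Lbb^{(q)}\oplus \Wbb^{(q)})$. The only point requiring attention is the bookkeeping of the identification $U_q/N_q \simeq \Hom(\Wbb^{(q)},\Lbb^{(q)})$ and the conventions regarding whether $\chi_q(M)$ is viewed as a row vector or as a homomorphism, which is what the final display above clarifies.
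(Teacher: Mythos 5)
Your argument is correct and follows the paper's own route: it identifies $A$ as block-diagonal in the basis $\Gcal^{(q)}$ using the orthogonality $\Wbb^{(q)} = \Wbb^{(q)\perp}_{m+1,m-1}$, and then specializes Lemma~\ref{lm:conjugate:P:on:U} with $\lambda = 1$. The paper states this more tersely as ``follows immediately''; you have simply filled in the row-vector/$\Hom(\Wbb^{(q)},\Lbb^{(q)})$ bookkeeping, which is a useful but not substantively different elaboration.
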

\begin{proof}
Since $\Wbb^{(q)}=\Wbb^{(q)\perp}_{m+1,m-1}$, $A$ preserves $\Wbb^{(q)}$ as well. The matrix of $A$ in the basis $\Gcal^{(q)}$ has the form $\left(\begin{smallmatrix} 1 & 0 & 0 \\ 0 & C & 0 \\ 0 & 0 & 1 \end{smallmatrix}\right)$. The corollary then follows  immediately from \eqref{eq:conjugate:P:on:U}.
\end{proof}

Define $\Wcal^{(q)}:=\Hom((\Wbb^{(q)}\oplus\Lbb^{(q)})/\Lbb^{(q)},\Lbb^{(q)})$.
Elements of $\Wcal^{(q)}$ are linear maps $f: \Wbb^{(q)}\oplus\Lbb^{(q)} \to \Lbb^{(q)}$ such that $\Lbb^{(q)}\subset \ker(f)$.
By definition, any $A \in \hat{P}_q$ preserves the spaces $\Lbb^{(q)}$ and $\Wbb^{(q)}\oplus\Lbb^{(q)}$. Thus $A$ induces an automorphism on $\Wcal^{(q)}$ given by $f \mapsto (x \mapsto A\circ f \circ A^{-1}(x))$.
We thus have a representation $\Rcal_q: \hat{P}_q \to \Aut(\Wcal^{(q)})$.

By identifying $U_q/N_q$ with $\mathrm{Hom}(\Wbb^{(q)},\Lbb^{(q)}) \simeq \mathrm{Hom}((\Wbb^{(q)}\oplus\Lbb^{(q)})/\Lbb^{(q)},\Lbb^{(q)})$ we can consider  $\chi_q$ as a group morphism from $U_q/N_q$ onto $(\Wcal^{(q)},+)$.
We  summarize the content of Lemma~\ref{lm:conjugate:P:on:U} and Corollary~\ref{cor:dual:action} by the following
\begin{Proposition}\label{prop:action:conj:P}
The group $\hat{P}_q$ acts by conjugation on $U_q$, and we have
\begin{itemize}

\item[(a)] For all $A\in \hat{P}_q, \; M \in U_q$, $\chi_q(A\cdot M \cdot A^{-1})=\Rcal_q(A)(\chi_q(M))$.

\item[(b)] If $A\in \U(\Vbb^{(q)})$ is identity on $\Wbb^{(q)}_{m+1,m-1}$, then $A\in \hat{P}_q$ and the action of $\Rcal_q(A)$ on $\Wcal^{(q)}$ is isomorphic to the dual action of $A$ on $\Wbb^{(q)*}$.
\end{itemize}
\end{Proposition}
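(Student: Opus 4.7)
\medskip

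\noindent\textbf{Proof plan for Proposition~\ref{prop:action:conj:P}.}
The plan is to deduce both assertions from Lemma~\ref{lm:conjugate:P:on:U} after carefully tracking the identifications $U_q/N_q \simeq \Hom(\Wbb^{(q)},\Lbb^{(q)}) \simeq \Wcal^{(q)}$. The first step is to verify that $\hat{P}_q$ does indeed act on $U_q$ by conjugation; since $U_q$ is the unipotent radical of the parabolic $\hat{P}_q$, it is normal in $\hat{P}_q$, and this can also be read off directly from the block-matrix description of $\hat{P}_q$ and $U_q$ in the basis $\Gcal^{(q)}$: conjugating an element $M=\bigl(\begin{smallmatrix} 1 & {}^tx & s \\ 0 & I_{n-4} & x' \\ 0 & 0 & 1\end{smallmatrix}\bigr)$ by an element of $\hat{P}_q$ preserves the three diagonal entries $(1,I_{n-4},1)$ and the block-triangular form.

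For (a), I would unwind the definitions. Given $A\in\hat{P}_q$ with matrix $\bigl(\begin{smallmatrix} \lambda & {}^tu & a \\ 0 & C & u' \\ 0 & 0 & \bar\lambda^{-1}\end{smallmatrix}\bigr)$ in $\Gcal^{(q)}$, the induced action of $A$ on the line $\Lbb^{(q)}$ is multiplication by $\lambda$, and the induced action on the quotient $(\Wbb^{(q)}\oplus\Lbb^{(q)})/\Lbb^{(q)} \simeq \Wbb^{(q)}$ is given by $C$. Hence, under the identification $\Wcal^{(q)}=\Hom((\Wbb^{(q)}\oplus\Lbb^{(q)})/\Lbb^{(q)},\Lbb^{(q)})$, the representation $\Rcal_q(A)$ sends a homomorphism $f$ (represented by a row vector ${}^tx$ in the dual basis) to $\lambda\cdot f \circ C^{-1}$, i.e.\ to the row vector $\lambda\cdot {}^tx \cdot C^{-1}$. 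Comparing with the formula $\chi_q(A\cdot M\cdot A^{-1})=\lambda\cdot \chi_q(M)\cdot C^{-1}$ of Lemma~\ref{lm:conjugate:P:on:U} yields (a) immediately.

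For (b), I would first observe that if $A\in\U(\Vbb^{(q)})$ restricts to the identity on $\Wbb^{(q)}_{m+1,m-1}$, then in particular $A(w)=w$ and $A(g_m)=g_m$; moreover, since $\Wbb^{(q)}=\Wbb^{(q)\perp}_{m+1,m-1}$ by Lemma~\ref{lm:0:mod:d:direct:sum}, $A$ preserves $\Wbb^{(q)}$. Consequently, in the basis $\Gcal^{(q)}$, $A$ has block form $\bigl(\begin{smallmatrix} 1 & 0 & 0 \\ 0 & C & 0 \\ 0 & 0 & 1\end{smallmatrix}\bigr)$ for some $C\in\GL(\Wbb^{(q)})$, which is evidently in $\hat{P}_q$ (with $\lambda=1$, $u=u'=0$, $a=0$). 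Applying part (a) (or directly the formula of Lemma~\ref{lm:conjugate:P:on:U}) then shows that $\Rcal_q(A)$ sends a homomorphism $f\in\Wcal^{(q)}$ to $f\circ C^{-1}$, which is precisely the dual action of $A$ on $\Wbb^{(q)*}$ under the identification $\Wcal^{(q)}\simeq \Wbb^{(q)*}$ coming from the choice of generator $w$ of $\Lbb^{(q)}$.

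The whole argument is essentially a matrix book-keeping computation, so I do not expect any substantive obstacle; the only care required is to fix once and for all the conventions identifying $U_q/N_q$ with $\Hom(\Wbb^{(q)},\Lbb^{(q)})$ and with $\Wcal^{(q)}$, so that the signs and orderings in the dual action match the formula of Lemma~\ref{lm:conjugate:P:on:U}.
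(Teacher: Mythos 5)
Your proposal is correct and takes essentially the same route as the paper: the paper itself explicitly presents Proposition~\ref{prop:action:conj:P} as merely a summary of Lemma~\ref{lm:conjugate:P:on:U} and Corollary~\ref{cor:dual:action}, and your argument does exactly that, unwinding the definition of $\Rcal_q$ to match the matrix formula of the lemma for part (a) and specializing to $\lambda=1$, $C$-block form for part (b). The extra care you take with the identifications $U_q/N_q\simeq\Hom(\Wbb^{(q)},\Lbb^{(q)})\simeq\Wcal^{(q)}$ is sound and matches the paper's conventions.
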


In what follows,  we identify $\PB_{m}$ with the subgroup of $\PB_n$  generated by the set of Dehn twists $\{\alpha_{i,j}, \; 1 \leq i < j \leq m\}$. We  denote by $\PB_{m+1,n}$ the subgroup of $\PB_n$ generated by the Dehn twists $\{\alpha_{i,j}, \; m+1 \leq i < j \leq n\}$.
We will investigate the action of $\rho_q(\PB_m)$ and of $\rho_q(\PB_{m+1,n})$ on $\Wcal^{(q)}$. The results in this section will be used to show that the intersection of $\rho_d(\PB_n)$ with $U:=\prod_{q\in \Ub^+_{d,{\rm prim}}}U_q$ contains a lattice of $U$.

Denote by $\Vcal^{(q)}_{m-1}$ the subspace of $\Wcal^{(q)}$ consisting of elements $\eta$  such that $\eta_{|\Wbb^{(q)}_{m+1}}\equiv 0$. Similarly, denote by  $\Wcal^{(q)}_{m+1}$ the subspace of $\Wcal^{(q)}$ consisting of elements $\eta$  such that $\eta_{|\Vbb^{(q)}_{m-1}}\equiv 0$. We clearly have  $\Wcal^{(q)}=\Vcal^{(q)}_{m-1}\oplus \Wcal^{(q)}_{m+1}$.
\begin{Lemma}\label{lm:action:PB:m}
For all $q\in \Ub_{d,\prim}$ and all  $\tau\in \PB_m$, we have $\rho_q(\tau) \subset \hat{P}_q$. Moreover, $\Rcal_q\circ\rho_q(\tau)$ preserves $\Vcal^{(q)}_{m-1}$ and acts trivially  on $\Wcal^{(q)}_{m+1}$.
\end{Lemma}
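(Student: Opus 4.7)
The plan is to prove the lemma by exploiting the topological support of elements of $\PB_m$ and their lifts to $\hX$, together with the geometric descriptions of the spaces $\Lbb^{(q)}$, $\Vbb^{(q)}_{m-1}$, and $\Wbb^{(q)}_{m+1}$ as images of cohomology with compact support.

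The first observation is that every $\tau \in \PB_m$ admits a representative supported in the interior of the disc $E_m$, since $\PB_m$ is generated by Dehn twists about curves in $E_m$ bounding discs that contain only pairs $\{b_i, b_j\}$ with $i,j \leq m$. Moreover, this representative can be chosen to be the identity in a neighborhood of $\partial E_m$. Consequently, the lift $\tilde{\tau}$ is the identity outside $Y_m = \pi^{-1}(E_m)$, is the identity in a tubular neighborhood of $\partial Y_m$, and fixes $Z_{m+1} = \pi^{-1}(\CP^1 \setminus E_{m+1})$ pointwise (since $Z_{m+1} \subset \hX \setminus Y_m$).

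Next I will show that $\rho_q(\tau) \in \hat{P}_q$. Recall that $w$ is the Poincaré dual of a linear combination of components of $\partial Y_m$, and hence has a representative supported in an arbitrarily small tubular neighborhood of $\partial Y_m$. Since $\tilde{\tau}$ is the identity on such a neighborhood, $\tilde{\tau}^{*} w = w$, so $\rho_q(\tau)$ fixes $\Lbb^{(q)}$ pointwise. Because $\rho_q(\tau)$ preserves the intersection form, it also preserves $\Lbb^{(q)\perp}$, which equals $\Lbb^{(q)} \oplus \Wbb^{(q)}$ by the orthogonality statement in Lemma~\ref{lm:0:mod:d:direct:sum} together with the fact that $w \in \Wbb^{(q)}_{m+1,m-1}$ is isotropic. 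Therefore $\rho_q(\tau)$ preserves the flag defining $\hat{P}_q$.

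For the second assertion I will analyze how $\rho_q(\tau)$ acts on the quotient $Q := (\Lbb^{(q)}\oplus\Wbb^{(q)})/\Lbb^{(q)}$, identified with $\Wbb^{(q)} = \Vbb^{(q)}_{m-1}\oplus\Wbb^{(q)}_{m+1}$. Every class in $\Wbb^{(q)}_{m+1}$ has a representative with compact support in $Z_{m+1}$, so $\tilde{\tau}^{*}$ acts as the identity on $\Wbb^{(q)}_{m+1}$. For classes in $\Vbb^{(q)}_{m-1}$ with representatives supported in $Y_{m-1} \subset Y_m$, the pullback $\tilde{\tau}^{*}$ gives classes with representatives supported in $Y_m$, hence lying in the image $\Vbb^{(q)}_{m}$ of $H^{1}_{c}(Y_m)_{q}$ in $\Vbb^{(q)}$. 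The key observation is that $\Vbb^{(q)}_{m} = \Vbb^{(q)}_{m-1}\oplus \Lbb^{(q)}$: by Lemma~\ref{lm:coh:preimage:disc} applied with $r=m<n$ (using $q^{k_1+\cdots+k_m}=1$), $\iota_{Y_m}$ is injective of rank $m-1$, it contains the $(m-2)$-dimensional space $\Vbb^{(q)}_{m-1}$, and it contains $w \in H^1_c(\partial Y_m)_q \subset H^1_c(Y_m)_q$. Therefore $\tilde{\tau}^{*}$ sends $\Vbb^{(q)}_{m-1}$ into $\Vbb^{(q)}_{m-1} \oplus \Lbb^{(q)}$, which shows that the induced action on $Q$ preserves $\Vbb^{(q)}_{m-1}$.

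Finally, since $\Vcal^{(q)}_{m-1} \simeq \Hom(\Vbb^{(q)}_{m-1},\Lbb^{(q)})$ and $\Wcal^{(q)}_{m+1} \simeq \Hom(\Wbb^{(q)}_{m+1},\Lbb^{(q)})$, Proposition~\ref{prop:action:conj:P} gives that $\Rcal_q(\rho_q(\tau))$ acts on $\Wcal^{(q)}$ by precomposition with $\rho_q(\tau)^{-1}$ (up to the trivial action on $\Lbb^{(q)}$). Because $\rho_q(\tau)$ preserves both summands on $Q$ and acts trivially on the $\Wbb^{(q)}_{m+1}$ summand, the dual action preserves $\Vcal^{(q)}_{m-1}$ and is trivial on $\Wcal^{(q)}_{m+1}$. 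The main technical point, which I anticipate being the subtlest step, is the correct identification of $\Vbb^{(q)}_{m}$ as $\Vbb^{(q)}_{m-1}\oplus\Lbb^{(q)}$ and the careful bookkeeping between the subspaces of $\Vbb^{(q)}$ and their images in the quotient $Q$; once this is in place the rest is formal.
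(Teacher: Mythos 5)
Your proof is correct and takes a genuinely different, more conceptual route than the paper's. The paper checks the claim on the generators $\alpha_{i,j}$, $1\le i<j\le m$, case by case using the explicit formulas of Theorem~\ref{th:Menet:alphaij} and Theorem~\ref{th:Menet:Dehn:twist}, with a separate computation for $j=m$ (expressing $\rho_q(\alpha_{m-1,m})(g_{m-2})$ in terms of $w$); you instead argue for all $\tau\in\PB_m$ at once by tracking supports: $\tilde{\tau}$ is identity outside $Y_m$, hence fixes $w$ and (by unitarity plus the orthogonality of the decomposition in Lemma~\ref{lm:0:mod:d:direct:sum} and the isotropy of $w$) preserves $\Lbb^{(q)\perp}=\Lbb^{(q)}\oplus\Wbb^{(q)}$; for the second assertion you exploit that $\tilde\tau^*$ sends $\Vbb^{(q)}_{m-1}$ into the $\tau$-invariant subspace $\Vbb^{(q)}_m$, and the identification $\Vbb^{(q)}_m=\Vbb^{(q)}_{m-1}\oplus\Lbb^{(q)}$ (correctly justified by Lemma~\ref{lm:coh:preimage:disc} and a dimension count). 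Your approach buys uniformity, avoids the case distinction on generators, and makes the triviality on $\Wcal^{(q)}_{m+1}$ explicit, whereas the paper's computational route is self-contained from the explicit matrix formulas. The one step you assert without proof and that merits a word of justification is that the lift $\tilde\tau$, normalized to be the identity near $\tilde\infty$, is in fact the identity on all of $\hX\setminus Y_m$: on the unramified locus $\tilde\tau$ lifts the identity and so is locally a deck transformation, hence the set where it equals the identity is open and closed; one must then observe that each connected component of $\hX\setminus Y_m$ is a (branched) cover of the connected set $\CP^1\setminus E_m\ni\infty$ and therefore contains a preimage of $\infty$, so $\tilde\tau$ is the identity on every component. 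This is routine, and the paper's computations implicitly rely on the same fact, so it is not a gap; but in a final write-up it should be spelled out.
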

\begin{proof}
It is enough to show that $\rho_q(\alpha_{i,j}) \in \hat{P}_q$ for all $1 \leq i < j \leq m$. Since the boundary of $E_m$ is disjoint from the support of $\alpha_{i,j}$, we immediately get $\rho_q(\alpha_{i,j})(w)=w$.
If $j<m$, then the support of $\alpha_{i,j}$ does not meet the support of $g_m$, therefore we also have $\rho_q(\alpha_{i,j})(g_m)=g_m$, which means that $\rho_q(\alpha_{i,j})$ is identity on $\Wbb^{(q)}_{m+1,m-1}$, and we conclude by Proposition~\ref{prop:action:conj:P}.

Assume now that $j=m$. We will only consider the case $i=m-1$, the other cases follow by a renumbering of $\{b_1,\dots,b_{m-1}\}$.
Since $\rho_q(\alpha_{m-1,m})(g_i)=g_i$ for all $i\in \{1,\dots,m-3\}\cup\{m+2,\dots,n-1\}$, all we need to show is that $\rho_q(\alpha_{m-1,m})(g_{m-2}) \in \Lbb^{(q)}\oplus\Wbb^{(q)}$. By Theorem~\ref{th:Menet:alphaij}, we have
\begin{align*}
\rho_q(\alpha_{m-1,m})(g_{m-2}) & = g_{m-2}+(1-q^{k_m})g_{m-1} = g_{m-2}+(1-\bar{q}^{k_1+\dots+k_{m-1}})g_{m-1} \\
 & = g_{m-2}+\sum_{i=1}^{m-2}(\bar{q}^{k_1+\dots+k_i}-1)\cdot g_i -w.
\end{align*}
Therefore $\rho_q(\alpha_{m-1,m})(g_{m-2}) \in \Lbb^{(q)}\oplus\Vbb^{(q)}_{m-1}$, and hence $\rho_{q}(\alpha_{m-1,m})\in \hat{P}_q$.

For all $l \in \{m+2,\dots,n-1\}$, since the support of $\alpha_{i,j}$, with $1 \leq i < j \leq m$, is disjoint from the support of $g_l$, we have $\rho_q(\alpha_{i,j})(g_l)=g_l$. As a consequence, $\Rcal_q\circ \rho_q(\alpha_{i,j})$ preserves the space $\Vcal^{(q)}_{m-1}$.
\end{proof}

Similarly, we have
\begin{Lemma}\label{lm:action:PB:m+1:n}
For all $q \in \Ub_{d,\prim}$ and all $ \tau\in \PB_{m+1,n}$, we have $\rho_q(\tau) \in \hat{P}_q$. Moreover, the action of $\Rcal_q\circ\rho_q(\tau)$ on $\Wcal^{(q)}$ preserves the  decomposition $\Wcal^{(q)}=\Vcal^{(q)}_{m-1}\oplus\Wcal^{(q)}_{m+1}$ and is trivial on $\Vcal^{(q)}_{m-1}$.
\end{Lemma}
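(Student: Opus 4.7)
The plan is to follow the proof of Lemma~\ref{lm:action:PB:m}: by linearity it suffices to verify both claims for the generators $\alpha_{i,j}$ with $m+1\le i<j\le n$, for which Theorem~\ref{th:Menet:alphaij} provides the explicit reflection formula $\rho_q(\alpha_{i,j})(x)=x-c\langle x,g^*_{i,j}\rangle g^*_{i,j}$, with $g^*_{i,j}\in\Span(g_{m+1},\dots,g_{n-1})$. The main obstacle --- the only genuinely nontrivial point --- is that $g^*_{i,j}$ can involve $g_{m+1}$, which is not one of the basis vectors of $\Lbb^{(q)}\oplus\Wbb^{(q)}$ listed in Lemma~\ref{lm:bases:decomp}, so one must first show that $g_{m+1}$ nevertheless sits inside $\Lbb^{(q)}\oplus\Wbb^{(q)}_{m+1}$.

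To produce this inclusion I would rewrite the boundary class $w$ from the ``outside''. Combining \eqref{eq:rel:generators} and \eqref{eq:rel:gen:degen:case} --- both valid because $q^{k_1+\dots+k_n}=1$ --- yields
\[
w=\sum_{i=m+1}^{n-1}(1-\bar{q}^{k_1+\dots+k_i})\,g_i,
\]
with the $i=m$ and $i=n$ coefficients vanishing thanks to $q^{k_1+\dots+k_m}=q^{k_1+\dots+k_n}=1$. Since $1-\bar{q}^{k_{m+1}}\neq 0$, solving for $g_{m+1}$ places it in $\Span(w,g_{m+2},\dots,g_{n-1})\subset\Lbb^{(q)}\oplus\Wbb^{(q)}_{m+1}$; consequently $g^*_{i,j}\in\Lbb^{(q)}\oplus\Wbb^{(q)}_{m+1}$ for every admissible $(i,j)$.

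Once this is in hand the rest is a short check using the tridiagonal structure of $\langle.,.\rangle$ in $(g_1,\dots,g_n)$ (Theorem~\ref{th:Menet:generator:set}): for any $l\le m-1$ one has $\langle g_l,g^*_{i,j}\rangle=0$, so the reflection fixes $w$ and each $g_l$ with $l\le m-2$, while on $\Wbb^{(q)}_{m+1}$ it adds a multiple of $g^*_{i,j}\in\Lbb^{(q)}\oplus\Wbb^{(q)}_{m+1}$. This gives $\rho_q(\alpha_{i,j})\in\hat P_q$ and an induced action on $(\Lbb^{(q)}\oplus\Wbb^{(q)})/\Lbb^{(q)}\simeq\Vbb^{(q)}_{m-1}\oplus\Wbb^{(q)}_{m+1}$ which is the identity on $\Vbb^{(q)}_{m-1}$ and stabilizes $\Wbb^{(q)}_{m+1}$. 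Dualizing through $\Rcal_q(A)(f)=A\circ f\circ A^{-1}$ and noting that $\rho_q(\alpha_{i,j})$ acts on $\Lbb^{(q)}$ by $1$ (as $w$ is fixed), $\Rcal_q(\rho_q(\alpha_{i,j}))$ then preserves the splitting $\Wcal^{(q)}=\Vcal^{(q)}_{m-1}\oplus\Wcal^{(q)}_{m+1}$ and acts as the identity on $\Vcal^{(q)}_{m-1}$, as required.
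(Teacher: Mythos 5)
Your proof is correct and matches the paper's approach: both hinge on the same relation expressing $w$ in terms of $g_{m+1},\dots,g_{n-1}$ to place $g_{m+1}$ (and hence every $g^*_{i,j}$ with $i\geq m+1$) inside $\Lbb^{(q)}\oplus\Wbb^{(q)}_{m+1}$. The paper splits into the cases $i\geq m+2$ (where $\rho_q(\alpha_{i,j})$ is already the identity on $\Wbb^{(q)}_{m+1,m-1}$ and Proposition~\ref{prop:action:conj:P}(b) applies directly) and $i=m+1$ with $j=m+2$; your uniform treatment of all generators via the reflection formula and the tridiagonal structure of $\langle.,.\rangle$ is a clean reorganization of the same argument.
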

\begin{proof}
It is enough to prove the lemma for all $\alpha_{i,j}$ with $m+1 \leq i < j \leq n$. It is clear that for such an $\alpha_{i,j}$, the restriction of $\rho_q(\alpha_{i,j})$ on $\Lbb^{(q)}\oplus\Vbb^{(q)}_{m-1}$ is identity.
If $i\geq m+2$ then $\rho_q(\alpha_{i,j})(g_m)=g_m$, which means that $\rho_q(\alpha_{i,j})$ restricts to the identity on $\Wbb^{(q)}_{m+1,m-1}$. Thus we can conlude by Proposition~\ref{prop:action:conj:P} in this case.

Assume now that $i=m+1$.
We only need to consider the case $j=m+2$, the other cases follows from a renumbering of $\{b_{m+2},\dots,b_n\}$.
We need to show that $\rho_q(\alpha_{m+1,m+2})(\Wbb^{(q)}_{m+1}) \subset \Wbb^{(q)}_{m+1}\oplus\Lbb^{(q)}$.
Recall that $\{g_{m+2},\dots,g_{n-1}\}$ is a basis of $\Wbb^{(q)}_{m+1}$.
For all $s\geq m+3$, we have $\rho_q(\alpha_{m+1,m+2})(g_s)=g_s$.
For $s=m+2$, we have (see Theorem~\ref{th:Menet:alphaij})
$$
\rho_q(\alpha_{m+1,m+2})(g_{m+2}) = q^{k_{m+2}}(1-q^{k_{m+1}})\cdot g_{m+1}+g_{m+2}=q^{k_{m+1}+k_{m+2}}(\bar{q}^{k_{m+1}}-1)\cdot g_{m+1}+g_{m+2}.
$$
Recall that by assumption $q^{k_1+\dots+k_n}=1$. Therefore we have (see Theorem~\ref{th:Menet:generator:set})
$$
\sum_{i=1}^{n-1}(\bar{q}^{k_1+\dots+k_i}-1)\cdot g_i=0.
$$
Hence
$$
w=\sum_{i=1}^{m-1} (\bar{q}^{k_1+\dots+k_i}-1)\cdot g_i=-\sum_{i=m+1}^{n-1}(\bar{q}^{k_{m+1}+\dots+k_i}-1)\cdot g_i.
$$
In particular, we get
$$
(\bar{q}^{k_{m+1}}-1)\cdot g_{m+1}=-w-\sum_{i=m+2}^{n-1}(\bar{q}^{k_{m+1}+\dots+k_i}-1)\cdot g_i \in \Lbb^{(q)}\oplus \Wbb^{(q)}_{m+1}.
$$
It follows that $\rho_q(\alpha_{m+1,m+2})$ preserves the flag \eqref{eq:flag:filtration}, and $\rho_q(\alpha_{m+1,m+2}) \in \hat{P}_q$. We can then conclude that $\rho_q(\PB_{m+1,n})\subset \hat{P}_q$.
The second assertion of the lemma is also clear from the arguments  above.
\end{proof}

\begin{Lemma}\label{lm:action:PB:m:q:irred}
For all $q\in \Ub_{d,\prim}$, the actions of $\Rcal_q\circ\rho_q(\PB_{m})$ on $\Vcal^{(q)}_{m-1}$ and the action of $\Rcal_q\circ\rho_q(\PB_{m+1,n})$ on $\Wcal^{(q)}_{m+1}$ are irreducible.
\end{Lemma}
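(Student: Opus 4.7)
The plan is to use duality to translate the irreducibility statement on $\Vcal^{(q)}_{m-1}$ into the irreducibility of the induced $\rho_q(\PB_m)$-representation on $\Vbb^{(q)}_{m-1}$, and then establish this latter irreducibility by a standard rank-one reflection argument. When $\dim \Vbb^{(q)}_{m-1}\le 1$ the claim is trivial, so I assume $m\ge 4$; by a symmetric argument the case of $\Wcal^{(q)}_{m+1}$ and $\PB_{m+1,n}$ will follow.

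Since every $\rho_q(\alpha_{i,j})$ with $1\le i<j\le m$ fixes $w$ (proof of Lemma~\ref{lm:action:PB:m}), the action of $\rho_q(\PB_m)$ on $\Lbb^{(q)}$ is trivial. Proposition~\ref{prop:action:conj:P}(a) then identifies $\Rcal_q\circ\rho_q(\PB_m)$ acting on $\Vcal^{(q)}_{m-1}=\Hom(\Vbb^{(q)}_{m-1},\Lbb^{(q)})$ with the contragredient of the induced action on the quotient $(\Lbb^{(q)}\oplus \Vbb^{(q)}_{m-1})/\Lbb^{(q)}\simeq \Vbb^{(q)}_{m-1}$; since irreducibility is preserved by duality, it suffices to prove irreducibility of the induced representation on $\Vbb^{(q)}_{m-1}$. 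Moreover Lemma~\ref{lm:0:mod:d:direct:sum} gives $\Vbb^{(q)}_{m-1}\perp\Lbb^{(q)}$, so the intersection form descends to a non-degenerate Hermitian form on the quotient.

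I would then compute the induced action of the elementary reflections $\rho_q(\alpha_{i,i+1})$ for $i=1,\dots,m-1$. By Theorem~\ref{th:Menet:alphaij}, each $\rho_q(\alpha_{i,i+1})$ acts on $\Vbb^{(q)}$ as $x\mapsto x-c_i\langle x,g_i\rangle g_i$ with $c_i\ne 0$ (since $q^{k_l}\neq 1$ for all $l$). For $i\le m-2$, $g_i$ already lies in $\Vbb^{(q)}_{m-1}$, and I set $v_i:=g_i$. For $i=m-1$, using the global relation $\sum_{l=1}^{n-1}(\bar q^{k_1+\dots+k_l}-1)g_l=0$ (coming from Theorem~\ref{th:Menet:generator:set} and $q^{k_1+\dots+k_n}=1$), I would write $g_{m-1}=\lambda\cdot w+v_{m-1}$ with
\begin{equation*}
v_{m-1}:=\frac{-1}{q^{k_m}-1}\sum_{l=1}^{m-2}(\bar q^{k_1+\dots+k_l}-1)g_l \in \Vbb^{(q)}_{m-1},
\end{equation*}
which is non-zero since $0<k_1<d$. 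As $\Vbb^{(q)}_{m-1}\perp\Lbb^{(q)}$, the pairing $\langle x,g_{m-1}\rangle$ coincides with $\langle x,v_{m-1}\rangle$ for every $x\in \Vbb^{(q)}_{m-1}$, so the induced operator on $\Vbb^{(q)}_{m-1}$ is again a rank-one perturbation of the identity, now with axis $\C\cdot v_{m-1}$.

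Finally, let $V\subset \Vbb^{(q)}_{m-1}$ be a non-zero invariant subspace. For each $i\in\{1,\dots,m-1\}$, the rank-one structure forces either $v_i\in V$ or $V\subset v_i^\perp$. Let $S:=\{i\,:\,v_i\in V\}$. If there existed adjacent $i,i+1$ with $i\in S$ and $i+1\notin S$ (or vice versa), then $\langle v_i,v_{i+1}\rangle=0$; but Theorem~\ref{th:Menet:generator:set} gives $\langle g_i,g_{i+1}\rangle\neq 0$ for $i\le m-3$, and the identity $\langle g_{m-2},v_{m-1}\rangle=\langle g_{m-2},g_{m-1}\rangle\neq 0$ (which uses $g_{m-2}\perp w$) rules out the boundary case. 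Hence $S=\emptyset$ or $S=\{1,\dots,m-1\}$. Since $\{v_1,\dots,v_{m-2}\}$ is already a basis of $\Vbb^{(q)}_{m-1}$, the first alternative combined with non-degeneracy would give $V\subset \Span(v_1,\dots,v_{m-1})^\perp=\{0\}$, contradicting $V\ne\{0\}$; the second gives $V=\Vbb^{(q)}_{m-1}$, proving irreducibility. The symmetric argument for $\Wcal^{(q)}_{m+1}$ uses $d\mid k_{m+1}+\dots+k_n$ (which follows from $d\mid k_1+\dots+k_n$ together with $d\mid k_1+\dots+k_m$) and the reflections $\rho_q(\alpha_{i,i+1})$ for $i=m+1,\dots,n-1$; the boundary case is handled analogously by projecting $g_{m+1}$ to $\Wbb^{(q)}_{m+1}$ via the same global relation. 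The main technical point in both cases is the boundary reflection, whose axis mixes contributions from $\Lbb^{(q)}$ and the target space of the irreducibility claim; once this projection is correctly identified and shown to be non-zero with non-trivial pairing against the adjacent basis vector, the rest is a clean application of the rank-one reflection technique.
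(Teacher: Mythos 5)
Your proof is correct, but it takes a somewhat more laborious route than the paper's. The paper's key observation is that one can restrict to the \emph{strict} subgroup $\PB_{m-1}\subset\PB_m$ (generated by $\alpha_{i,j}$ with $j\le m-1$): the Dehn twists $\alpha_{i,i+1}$, $i\le m-2$, already live in $\PB_{m-1}$ and suffice for the rank-one reflection argument, so irreducibility under $\PB_{m-1}$ gives irreducibility under the larger $\PB_m$ for free. The payoff of this restriction is that $\rho_q(\PB_{m-1})$ fixes not only $w$ but also $g_m$, i.e. it acts trivially on all of $\Wbb^{(q)}_{m+1,m-1}$, so Proposition~\ref{prop:action:conj:P}(b) applies directly and identifies the $\Rcal_q$-action on $\Wcal^{(q)}$ with the dual of the $\rho_q$-action on $\Wbb^{(q)*}$. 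This bypasses everything you do with the quotient $(\Lbb^{(q)}\oplus\Vbb^{(q)}_{m-1})/\Lbb^{(q)}$ and with the ``boundary reflection'' $\alpha_{m-1,m}$: there is no need to decompose $g_{m-1}=\lambda w+v_{m-1}$, verify that the projection $v_{m-1}$ is non-zero, or check the pairing $\langle g_{m-2},v_{m-1}\rangle\neq 0$ using $g_{m-2}\perp w$. Your computations at those steps are right (I checked the decomposition of $g_{m-1}$ and the orthogonality claims, and your ``connectivity of $S$'' argument is an equivalent formulation of the paper's ``walk outward from $g_i$'' argument), so your proof does go through; it simply does extra work that the paper avoids by the well-chosen restriction to $\PB_{m-1}$. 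One tiny quibble: the contragredient identification is best cited as Proposition~\ref{prop:action:conj:P}(b) rather than (a), since (a) is just the defining compatibility of $\chi_q$ with conjugation.
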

\begin{proof}
We will show that the action of $\Rcal_q\circ\rho_q(\PB_{m-1})$ on $\Vcal^{(q)}_{m-1}$ is irreducible, where $\PB_{m-1}$ is the subgroup of $\PB_m$ generated by $\{\alpha_{i,j}, \; 1 \leq i< j \leq m-1\}$.
Since $\rho_q(\PB_{m-1})$ acts trivially on $\Wbb_{m+1,m-1}$, by Proposition~\ref{prop:action:conj:P}, the action of $\Rcal_q\circ\rho_q(\PB_{m-1})$ on $\Vcal^{(q)}_{m-1}$ is isomorphic to the dual action of $\rho_q(\PB_{m-1})$ on $\Vbb^{(q)*}_{m-1}$. Thus it is enough to show that the action of $\rho_q(\PB_{m-1})$ on $\Vbb^{(q)}$ is irreducible.

Let $V^{(q)}\neq \{0\}$ be a $\rho_q(\PB_{m-1})$-invariant subspace of $\Vbb^{(q)}_{m-1}$. Consider a vector $v\in V^{(q)}\setminus\{0\}$. Since the family  $(g_1,\dots,g_{m-2})$ is a basis of $\Vbb^{(q)}_{m-1}$, and $\langle.,.\rangle$ is non-degenerate on $\Vbb^{(q)}_{m-1}$, there is some $i\in \{1,\dots,m-2\}$ such that $\langle v,g_i\rangle \neq 0$. Applying the twist $\alpha_{i,i+1}$ (which is an element of $\PB_{m-1}$) to $v$, we get (by Theorem~\ref{th:Menet:Dehn:twist})
$$
\rho_q(\alpha_{i,i+1})(v)=v-\imath\frac{(1-q^{k_i})(1-q^{k_{i+1}})}{(1-q)(1-\bar{q})}\langle v, g_i\rangle g_i \in V^{(q)}.
$$
It follows that $g_i\in V^{(q)}$. Applying successively $\alpha_{i-1,i},\dots,\alpha_{1,2}$ and $\alpha_{i+1,i+2}, \dots,\alpha_{m-2,m-1}$, we obtain that $\{g_1,\dots,g_{m-2}\}$ is contained in $V^{(q)}$. Therefore $V^{(q)}=\Vbb^{(q)}_{m-1}$, and the claim follows.

The irreducibility of the action of $\Rcal_q\circ\rho_q(\PB_{m+1,n})$ on $\Wcal^{(q)}_{m+1}$ follows from similar arguments.
\end{proof}

\subsection{Braids that are mapped in to $N_q$}  \label{subsec:inters:G:N:q:no:empty}
We will now show that the intersection $\rho_q(\PB_n)\cap N_q$ is non-empty.
\begin{Lemma}\label{lm:U:q:inters:Gam:non:triv}
If $m\geq 3$, then there exists $\tau \in \PB_m$ such that for all  $q\in \Ub_{d,\prim}$, $\rho_q(\tau) \in U_q$ and $\nu_q:=\chi_q(\rho_q(\tau)) \in \Vcal^{(q)}_{m-1}\setminus\{0\}$.
Similarly, if  $m \leq n-3$, then there exists $\tau'\in \PB_{m+1,n}$ such that or all $q\in \Ub_{d,\prim}$, $\rho_q(\tau') \in U_q$ and $\nu'_q:=\chi_q(\rho_q(\tau')) \in \Wcal^{(q)}_{m+1}\setminus\{0\}$.
\end{Lemma}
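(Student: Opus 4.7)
Plan of proof. I propose to exhibit explicit commutators realizing the required elements. For the first assertion, set
$$\tau := [\tau_{\partial E_{m-1}},\alpha_{m-1,m}]\in\PB_m,$$
where $\tau_{\partial E_{m-1}}$ is the Dehn twist about the boundary of the disc $E_{m-1}$ (which encloses $b_1,\dots,b_{m-1}$) and $\alpha_{m-1,m}$ is the standard twist about a curve enclosing $\{b_{m-1},b_m\}$. Both lie in $\PB_m$, hence so does $\tau$. The reason a commutator is the right candidate is that $\rho_q(\tau_{\partial E_{m-1}})$ will turn out to act as a \emph{scalar} on the quotient $\Vbb^{(q)}_m/\Lbb^{(q)}$, and therefore commute there with $\rho_q(\alpha_{m-1,m})$, which forces the commutator into the unipotent radical $U_q$.

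Step 1 ($\rho_q(\tau)\in U_q$). By Lemma~\ref{lm:action:PB:m} we already know $\rho_q(\tau)\in\hat P_q$ and that $\rho_q(\tau)$ is the identity on $\Wbb^{(q)}_{m+1}$. Since $\tau\in\PB_m$ fixes a neighbourhood of $\partial E_m$, its lift fixes a representative of $w$, so $\rho_q(\tau)(w)=w$. It remains to show that $\rho_q(\tau)$ acts trivially on the quotient $\Vbb^{(q)}_m/\Lbb^{(q)}$. Using Theorem~\ref{th:Menet:Dehn:twist} with $r=m-1$, one finds $\rho_q(\tau_{\partial E_{m-1}})(g_i)=\bar q^{k_m}g_i$ for $i\le m-2$ (since $q^{k_1+\cdots+k_{m-1}}=\bar q^{k_m}$ thanks to $d\mid k_1+\cdots+k_m$), and a direct unfolding of $\bar g_{m-1}\in\Vbb^{(q)}_m/\Lbb^{(q)}$ in the basis $(\bar g_1,\dots,\bar g_{m-2})$ shows $\rho_q(\tau_{\partial E_{m-1}})(\bar g_{m-1})=\bar q^{k_m}\bar g_{m-1}$. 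Thus $\rho_q(\tau_{\partial E_{m-1}})$ acts on $\Vbb^{(q)}_m/\Lbb^{(q)}$ by the scalar $\bar q^{k_m}$ and commutes with every matrix there; the commutator $\rho_q(\tau)$ is consequently the identity on this quotient, which together with $\lambda=1$ and triviality on $\Wbb^{(q)}_{m+1}$ places $\rho_q(\tau)$ in $U_q$ for every $q\in\Ub_{d,\prim}$.

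Step 2 ($\chi_q(\rho_q(\tau))\neq 0$). Restrict to $\Vbb^{(q)}_m$ with basis $(w,g_1,\dots,g_{m-2})$. Setting $A=\rho_q(\tau_{\partial E_{m-1}})|_{\Vbb^{(q)}_m}=\diag(1,\bar q^{k_m},\dots,\bar q^{k_m})$ and $B=\rho_q(\alpha_{m-1,m})|_{\Vbb^{(q)}_m}$, Theorem~\ref{th:Menet:alphaij} and the expansion $g_{m-1}=(q^{k_m}-1)^{-1}(w-\sum_{l=1}^{m-2}(\bar q^{k_1+\cdots+k_l}-1)g_l)$ give $B(w)=w$, $B(g_i)=g_i$ for $i<m-2$, and
$$B(g_{m-2})=-w+\sum_{l=1}^{m-3}(\bar q^{k_1+\cdots+k_l}-1)g_l+q^{k_{m-1}+k_m}g_{m-2}.$$
A short computation then yields $ABA^{-1}=B+(1-q^{k_m})\,w\otimes e^*_{g_{m-2}}$ and therefore
$$\rho_q(\tau)|_{\Vbb^{(q)}_m}=ABA^{-1}B^{-1}=\Id+(1-q^{k_m})\bar q^{k_{m-1}+k_m}\,w\otimes e^*_{g_{m-2}}.$$
The shift coefficient is nonzero for every primitive $d$-th root $q$, since $0<k_m<d$. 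The only nonzero off-diagonal entry of $\rho_q(\tau)$ in the basis $\Gcal^{(q)}$ in the row of $w$ lies in the column of $g_{m-2}\in\Vbb^{(q)}_{m-1}$; this precisely says that $\nu_q:=\chi_q(\rho_q(\tau))$ is a nonzero element of $\Vcal^{(q)}_{m-1}$.

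For the second assertion (with $m\leq n-3$), the same argument applies to $\tau':=[\tau_{\partial D},\alpha_{m+1,m+2}]\in\PB_{m+1,n}$, where $D\subset D\setminus\Bc$ is a disc enclosing $\{b_{m+2},\dots,b_n\}$, using Lemma~\ref{lm:action:PB:m+1:n} and the relation $k_{m+1}+\cdots+k_n\equiv 0\pmod d$ in place of $k_1+\cdots+k_m\equiv 0$; the resulting shift lives in the direction of $g_{m+2}\in\Wbb^{(q)}_{m+1}$, giving $\nu'_q\in\Wcal^{(q)}_{m+1}\setminus\{0\}$. The only nontrivial part of the argument is the explicit matrix computation in Step~2; the conceptual content is entirely in the observation of Step~1 that $\tau_{\partial E_{m-1}}$ acts by a scalar on $\Vbb^{(q)}_m/\Lbb^{(q)}$, which is what forces the commutator into $U_q$ uniformly in $q$.
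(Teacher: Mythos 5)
Your proof is correct and takes essentially the same route as the paper: it exhibits the same commutator $\tau=[\tau_{\partial E_{m-1}},\alpha_{m-1,m}]$ (up to inversion, the paper takes $[\alpha_{m-1,m},\tau_{\partial E_{m-1}}]$) and the analogous $\tau'$ for the second assertion. The one place your argument adds value is the conceptual explanation in Step~1: you observe that $\rho_q(\tau_{\partial E_{m-1}})$ acts as the scalar $\bar q^{k_m}$ on the quotient $\Vbb^{(q)}_m/\Lbb^{(q)}$, so the commutator must be the identity there; the paper instead just lists the values of $\rho_q(\tau)$ on the generators $w,g_1,\dots,g_{m-2},g_{m+2},\dots,g_{n-1}$ by direct computation and invokes $\det\rho_q(\tau)=1$ to handle the $g_m$-direction. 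Both arguments are sound, and your Step~2 matrix computation (giving the nonzero coefficient $(1-q^{k_m})\bar q^{k_{m-1}+k_m}$ on $w\otimes e^*_{g_{m-2}}$) is consistent with mine when re-derived carefully; the precise coefficient differs superficially from the one printed in the paper, but in both cases it is visibly nonzero, which is all that is used. No gaps.
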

\begin{proof}
We consider two Dehn twists $\alpha$ and $\beta$ with support in $E_m$ defined as follows: $\alpha:=\alpha_{m-1,m}$ is the Dehn twist about the boundary of a disc $D_{m-1} \subset E_m$ such that  $D_{m-1}\cap\Bcal=\{b_{m-1},b_{m}\}$, and  $\beta$ is the Dehn twist about the boundary of the disc $E_{m-1}$ (see Figure~\ref{fig:Dehn:twists:E:m}).

\begin{figure}[htb]
\begin{tikzpicture}[scale=0.5]
\foreach \x in {(-6,0), (0,0), (4,0), (8,0), (12,0), (18,0)} \filldraw[gray] \x circle (3pt);
\draw (-3,0) node {$\dots$};
\draw (15,0) node {$\dots$};
\draw (-1,0) ellipse (7 and 4);
\draw[thin, red] (2,0) ellipse (3 and 2);
\draw[thin, red] (-2.5,0) ellipse (4.5 and 3);
\draw[thin, red] (10,0) ellipse (3 and 2);
\draw[thin, red] (15,0) ellipse (5 and 3);
\draw (-6,0) node[below] {$b_1$}
      (0.3,0)  node[below] {$b_{m-1}$}
      (4,0)  node[below] {$b_m$};
\draw (2,-2) node[below] {$\alpha$}
      (0,3) node {$\beta$};
\draw (8,0) node[below] {$b_{m+1}$}
      (12,0) node[below] {$b_{m+2}$}
      (18,0) node[below] {$b_n$};
\draw (10,-2) node[below] {$\alpha'$}
      (15,-3.5) node {$\beta'$};
\draw (-1,-4.5) node {$E_m$};
\end{tikzpicture}
\caption{Dehn twists in $\hat{P}_q$.}
\label{fig:Dehn:twists:E:m}
\end{figure}
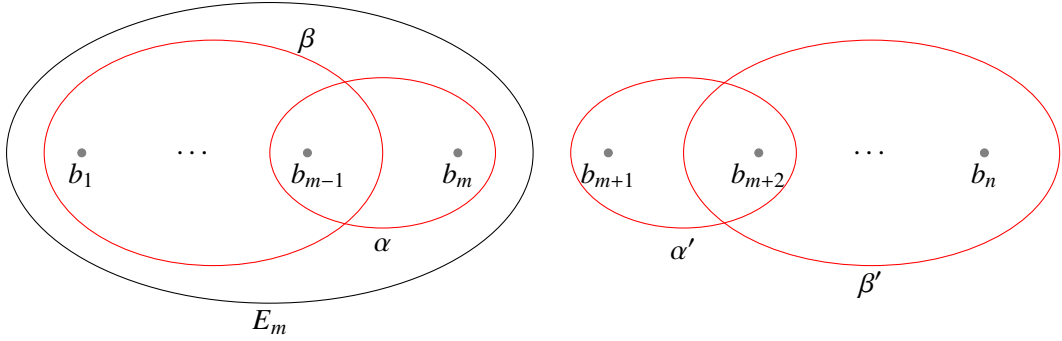
%
%
%
%
%
%
%
We now consider $\tau:=[\alpha,\beta]$.
By direction calculations, we get that
\begin{itemize}
\item[.] $\rho_q(\tau)(w)=w$,

\item[.] $\rho_q(\tau)(g_i)=g_i$, for all $i=1,\dots,m-3$,

\item[.] $\rho_q(\tau)(g_{m-2})=g_{m-2}+(\bar{q}^{k_m}-1)\cdot w$,

\item[.] $\rho_q(\tau)(g_j)=g_j$, for all $j=m+2,\dots,n-1$.
\end{itemize}
Since $\det(\rho_q(\tau))=1$, we  must have $\rho_q(\tau)(g_m)\in g_m+ \Lbb^{(q)}\oplus \Wbb^{(q)}$.
As a consequence, $\rho_q(\tau)\in U_q$.
Let  $\nu_q(\tau):=\chi_q(\rho_q(\tau)) \in \mathrm{Hom}((\Wbb^{(q)}\oplus\Lbb^{(q)})/\Lbb^{(q)},\Lbb^{(q)}) \simeq \mathrm{Hom}(\Wbb^{(q)},\Lbb^{(q)})$.
By definition
$$
\nu_q(\tau)(g_i)= \left\{
\begin{array}{cl}
0 & \text{ if } i\in \{1,\dots,m-3\}\cup\{m+2,\dots,n-1\}\\
(\bar{q}^{k_m}-1)\cdot w & \text{ if } i=m-2.
\end{array}
\right.
$$
Since $\bar{q}^{k_m}\neq 1$, we have that $\nu_q(\tau) \neq 0$ which proves the first assertion of the lemma.

\medskip

For the second assertion,  remark that since $q^{k_1+\dots+k_n}=q^{k_1+\dots+k_m}=1$, we have
$\sum_{i=1}^{n-1}(\bar{q}^{k_1+\dots+k_i}-1)g_i=0$, and
$$
w=\sum_{i=1}^{m-1}(\bar{q}^{k_1+\dots+k_i}-1)g_i=-\sum_{i=m+1}^{n-1}(\bar{q}^{k_{m+1}+\dots+k_i}-1)g_i.
$$
Let $\alpha':=\alpha_{m+1,m+2}$ be the Dehn twist about the border of a disc $D_{m+1}$ outside of $E_m$ such that $D_{m+1}\cap\Bcal=\{b_{m+1}, b_{m+2}\}$, and $\beta'$ the Dehn twist about the border of a disc which is disjoint from $E_{m+1}$ and contains all the points in $\{b_{m+2},\dots,b_n\}$ (see Figure~\ref{fig:Dehn:twists:E:m}). By using the formulas of Theorem~\ref{th:Menet:alphaij} and Theorem~\ref{th:Menet:Dehn:twist}, one readily checks that $\tau':=[\alpha',\beta'] \in \PB_{m+1,n}$  satisfies the second assertion.
\end{proof}

\begin{Remark}\label{rk:Gamma:inters:U:q:Venky}
The existence of $\tau$ and $\tau'$ in Lemma~\ref{lm:U:q:inters:Gam:non:triv} was known to Venkataramana~\cite{Venky:Annals,Venky:Invent}.
\end{Remark}

\section{Proof of arithmeticity}\label{sec:prf:arithmetic}
\subsection{Reduction to the case $d$ divides $k_1+\dots+k_n$}\label{subsec:prf:arith:reduction}
We first show that to prove Theorem~\ref{th:main:arithm}, it is enough to consider the case $d \, | \, (k_1+\dots+k_n)$.
In what follows, we identify $\PB_{n-1}$ with the subgroup of $\PB_n$ generated by  $\{\alpha_{i,j}, \;  1 \leq i < j \leq n-1\}$.
\begin{Lemma}\label{lm:rho:P:n-1:finite:ind:rho:P:n}
Assume that $d\; | \; (k_1+\dots+k_n)$.  Then for all $q \in \pdroots$, $\rho_q(\PB_{n-1})$ is a normal subgroup of $\rho_q(\PB_n)$, and $\rho_q(\PB_n)/\rho_q(\PB_{n-1})$ is a finite cyclic group.
\end{Lemma}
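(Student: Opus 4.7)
Consider the Birman short exact sequence
$$1\longrightarrow F_{n-1}\longrightarrow \PB_n\overset{\pi}{\longrightarrow}\PB_{n-1}\longrightarrow 1$$
coming from the map $\pi$ that forgets the puncture $b_n$. Here $F_{n-1}\simeq\pi_1(D\smin\{b_1,\dots,b_{n-1}\},b_n)$ is free and is normally generated in $\PB_n$ by $\alpha_{1,n},\dots,\alpha_{n-1,n}$, while the natural splitting of $\pi$ identifies $\PB_{n-1}$ with the subgroup of $\PB_n$ generated by $\{\alpha_{i,j}:1\leq i<j\leq n-1\}$. In particular $\rho_q(\PB_n)$ is generated by $\rho_q(\PB_{n-1})$ together with the $n-1$ complex reflections $\rho_q(\alpha_{i,n})$, $i=1,\dots,n-1$.

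The crux of the argument is the vanishing $\rho_q(Z_n)=\Id_{\Vbb^{(q)}}$ under the present hypothesis, where $Z_n\in \PB_n$ is the Dehn twist about a simple closed curve in $D$ enclosing all of $b_1,\dots,b_n$. Indeed, Theorem~\ref{th:Menet:Dehn:twist} applied with $r=n$ together with $q^{k_1+\dots+k_n}=1$ gives $\rho_q(Z_n)(g_i)=g_i$ for every $i=1,\dots,n-1$; and the supplementary relation \eqref{eq:rel:gen:degen:case} (valid under the present hypothesis) expresses $g_{n-1}$ as a non-trivial combination of $g_1,\dots,g_{n-2}$, so that $\{g_1,\dots,g_{n-2}\}$ is a basis of $\Vbb^{(q)}$; this forces $\rho_q(Z_n)=\Id$. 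Combining with the classical factorisation $Z_n=\alpha_{1,n}\alpha_{2,n}\cdots \alpha_{n-1,n}\cdot Z_{n-1}$ in $\PB_n$ (which is the unique expression of $Z_nZ_{n-1}^{-1}\in F_{n-1}=\ker\pi$ obtained from $\pi(Z_n)=Z_{n-1}$), one obtains the key identity
\begin{equation}\label{eq:rho:An:in:PBnm1}
\rho_q(\alpha_{1,n})\rho_q(\alpha_{2,n})\cdots\rho_q(\alpha_{n-1,n})=\rho_q(Z_{n-1})^{-1}\in\rho_q(\PB_{n-1}).
\end{equation}

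From here I would establish both normality and cyclicity simultaneously, by analysing how the conjugation action of $\PB_{n-1}$ on $F_{n-1}$ interacts with $\rho_q$. The main ingredient is to show that for every pair $i,j$ one has $\rho_q(\alpha_{i,n})\rho_q(\alpha_{j,n})^{-1}\in \rho_q(\PB_{n-1})$, using Theorems~\ref{th:Menet:alphaij}--\ref{th:Menet:Dehn:twist} together with the cohomological relation \eqref{eq:rel:gen:degen:case} to compare the reflection vectors $g^*_{i,n}$ and $g^*_{j,n}$. Once this is established, one expands for $\beta\in\PB_{n-1}$ the conjugate $\beta\alpha_{i,n}\beta^{-1}$ as a word in the $\alpha_{l,n}$'s and, iteratively replacing each $\alpha_{l,n}$ by $\alpha_{i,n}$ modulo $\rho_q(\PB_{n-1})$, concludes that $\rho_q(\beta)\rho_q(\alpha_{i,n})\rho_q(\beta)^{-1}\in\rho_q(\PB_{n-1})\cdot\rho_q(\alpha_{i,n})$; this yields normality of $\rho_q(\PB_{n-1})$. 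The same comparison also shows that in the quotient $\rho_q(\PB_n)/\rho_q(\PB_{n-1})$ all the cosets $[\rho_q(\alpha_{i,n})]$ coincide with a single class $c$; and \eqref{eq:rho:An:in:PBnm1} then forces $c^{n-1}=1$ in the quotient, which is therefore cyclic of order dividing $n-1$, hence finite.

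The main obstacle is the comparison step $\rho_q(\alpha_{i,n})\rho_q(\alpha_{j,n})^{-1}\in\rho_q(\PB_{n-1})$: by Theorem~\ref{th:Menet:alphaij} this amounts to producing, for each $i<j$, an element of $\rho_q(\PB_{n-1})$ which intertwines the two complex reflections along $g^*_{i,n}$ and $g^*_{j,n}$. I expect this to be accessible either by an explicit direct computation (tracking how the coefficients of $g^*_{i,n}$ in the basis $\{g_1,\dots,g_{n-2}\}$ vary with $i$, which is controlled by \eqref{eq:rel:gen:degen:case}) or by a more geometric lantern-type argument applied to a four-holed sphere containing $b_i,b_j,b_n$ and the rest of the punctures grouped together.
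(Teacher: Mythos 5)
Your idea of exploiting the Birman exact sequence together with the vanishing $\rho_q(Z_n)=\Id$ (which does indeed follow from Theorem~\ref{th:Menet:Dehn:twist} applied with $r=n$, since all $g_i$ are fixed) and the relation $Z_n=Z_{n-1}\cdot\alpha_{1,n}\cdots\alpha_{n-1,n}$ is genuinely different from what the paper does, and those two preliminary observations are correct. However, the two steps you leave as ``expected to be accessible'' contain genuine gaps, and the first is in fact most likely false as stated.

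The crux of your plan is the comparison $\rho_q(\alpha_{i,n})\rho_q(\alpha_{j,n})^{-1}\in\rho_q(\PB_{n-1})$. What the paper actually proves (by a direct computation on the basis $\{g_1,\dots,g_{n-2}\}$, using the extra relation forced by $q^{k_1+\dots+k_n}=1$) is the stronger and differently-shaped identity
$\rho_q(\alpha_{i,n})=q^{k_i+k_n}\cdot\rho_q(\alpha'_i)$, where $\alpha'_i\in\PB_{n-1}$ is the Dehn twist about a disc containing $\{b_1,\dots,b_{n-1}\}\setminus\{b_i\}$. Consequently
$\rho_q(\alpha_{i,n})\rho_q(\alpha_{j,n})^{-1}=q^{k_i-k_j}\cdot\rho_q(\alpha'_i)\rho_q(\alpha'_j)^{-1}$,
and the scalar $q^{k_i-k_j}\Id$ has no reason to lie in $\rho_q(\PB_{n-1})$: the only obvious scalar in $\rho_q(\PB_{n-1})$ is $\rho_q(Z_{n-1})=\bar q^{\,k_n}\Id$, so one only controls scalars in $\langle q^{k_n}\rangle$, which is a proper subgroup of $\Ub_d$ whenever $\gcd(k_n,d)>1$. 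Since the whole point of this paper is to drop the hypothesis $\gcd(k_i,d)=1$, this is precisely the regime where your comparison step breaks down.

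There is also a circularity in how you want to deduce normality from the comparison: the manipulations ``iteratively replace $\alpha_{l,n}$ by $\alpha_{i,n}$ modulo $\rho_q(\PB_{n-1})$'' and ``all cosets $[\rho_q(\alpha_{i,n})]$ coincide, hence $c^{n-1}=1$ in the quotient'' both presuppose that $\rho_q(\PB_{n-1})$ is already normal so that coset arithmetic and the quotient group make sense. Likewise, producing a $g\in\rho_q(\PB_{n-1})$ with $g\,\rho_q(\alpha_{i,n})\,g^{-1}=\rho_q(\alpha_{j,n})$ would only give $\rho_q(\alpha_{j,n})\rho_q(\alpha_{i,n})^{-1}=g\,\rho_q(\alpha_{i,n})\,g^{-1}\rho_q(\alpha_{i,n})^{-1}$, and concluding this lies in $\rho_q(\PB_{n-1})$ requires $\rho_q(\alpha_{i,n})$ to normalise $\rho_q(\PB_{n-1})$, which is what is to be proved.

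The paper avoids both issues by obtaining the explicit scalar: because $q^{k_i+k_n}\Id$ is central, the identity $\rho_q(\alpha_{i,n})=q^{k_i+k_n}\rho_q(\alpha'_i)$ makes normality immediate and shows the quotient is generated by scalar classes in $\Ub_d$, hence finite cyclic. If you want to keep your route, you would need to upgrade your comparison to the form ``$\rho_q(\alpha_{i,n})\in\Ub_d\cdot\rho_q(\PB_{n-1})$ for each $i$'', which is exactly the paper's statement, and then normality and cyclicity follow exactly as in the paper; the relation $Z_n=Z_{n-1}\prod\alpha_{i,n}$ alone is not enough.
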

\begin{proof}
We will show that for all $\alpha\in \PB_n$, there exist $\alpha'\in \PB_{n-1}$ and a  $d$-th root of unity $\lambda$ such that $\rho_q(\alpha)=\lambda\cdot\rho_q(\alpha')$. Clearly, it is enough to show this property for $\alpha\in \{\alpha_{i,n}, \; 1 \leq i <  n\}$.
We will only consider the case $\alpha=\alpha_{n-1,n}$ since the other cases follow from a renumbering of the points in $\{b_1,\dots,b_{n-1}\}$.

Since $d \; |\; (k_1+\dots+k_n)$, $(g_1,\dots,g_{n-2})$ is a basis of $\Vbb^{(q)}$, and we have the following relation
$$
\sum_{i=1}^{n-1}(\bar{q}^{k_1+\dots+k_i}-1)g_i=0
$$
which implies that $\sum_{i=1}^{n-2}(\bar{q}^{k_1+\dots+k_i}-1)g_i=(1-\bar{q}^{k_1+\dots+k_{n-1}})g_{n-1}=(1-q^{k_n})g_{n-1}$.
By Theorem~\ref{th:Menet:alphaij} we have $\rho_q(\alpha_{n,n-1})(g_i)= g_i$ for all $i < n-2$, while
\begin{align*}
\rho_q(\alpha_{n,n-1})(g_{n-2})&=  g_{n-2}+ (1-q^{k_n})g_{n-1}=g_{n-2}+\sum_{i=1}^{n-2}(\bar{q}^{k_1+\dots+k_i}-1)g_i \\
                               &= q^{k_{n-1}+k_n}g_{n-2}+\sum_{i=1}^{n-3}(\bar{q}^{k_1+\dots+k_i}-1)g_i.
\end{align*}
In this case, we take $\alpha'$ to be the Dehn twist about the border of a topological disc $E$ such that $E\cap \Bcal=\{b_1,\dots,b_{n-2}\}$.
By Theorem~\ref{th:Menet:Dehn:twist}, we have
$$
\rho_q(\alpha')(g_i)=\left\{
\begin{array}{ll}
\bar{q}^{k_{n-1}+k_n}g_i & \text{ if } i < n-2, \\
g_{n-2}+\bar{q}^{k_{n-1}+k_n}\sum_{i=1}^{n-3}(\bar{q}^{k_1+\dots+k_i}-1)g_i & \text{ if } i=n-2.
\end{array}
\right.
$$
One readily  checks that $\rho_q(\alpha)(g_i)=q^{k_{n-1}+k_n}\rho_q(\alpha')(g_i)$ for all $i=1,\dots,n-2$. The claim is then proved.
It follows immediately that $\rho_q(\PB_{n-1})$ is a normal subgroup of $\rho_q(\PB_n)$.

Let $Z(\U(\Vbb^{(q)}))\simeq \S^1$ denote the centre of $\U(\Vbb^{(q)})$.
Let $\Zb_n:=\rho_q(\PB_n)\cap Z(\U(\Vbb^{(q)}))$, and $\Zb_{n-1}:=\rho_q(\PB_{n-1})\cap Z(\U(\Vbb^{(q)}))$.
It also follows from the claim that $\rho_q(\PB_n)/\rho_q(\PB_{n-1})=\Zb_{n}/\Zb_{n-1}$.
Since $\rho_q(\PB_n)$ is contained in $\U_d(\Vbb^{(q)})$ (that is the set of $A\in \U(\Vbb^{(q)})$ such that $(\det A)^d=1$), $\Zb_n$ (and hence $\Zb_{n-1}$) is contained in the group of $d(n-2)$-th roots of unity.
In particular $\Zb_n$ and $\Zb_{n-1}$ are both finite, which implies that $\rho_q(\PB_n)/\rho_q(\PB_{n-1})$ is finite.
\end{proof}

\begin{Corollary}\label{cor:rho:P:n-1:finite:ind:P:n}
Assume that $d\, | \, (k_1+\dots+k_n)$, then $\rho_d(\PB_{n-1})$ is a normal subgroup of finite index in $\rho_d(\PB_n)$.
\end{Corollary}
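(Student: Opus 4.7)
The plan is to bootstrap directly from Lemma~\ref{lm:rho:P:n-1:finite:ind:rho:P:n} by observing that the construction given in its proof is uniform in $q \in \pdroots$. Recall that under the identification $\Sp(\hX,\R)_d^T \simeq \prod_{q \in \pdroots} \U(\Vbb^{(q)})$ the representation $\rho_d$ is nothing but $(\rho_q)_{q \in \pdroots}$. Inspecting the proof of Lemma~\ref{lm:rho:P:n-1:finite:ind:rho:P:n}, for each generator $\alpha_{i,n}$ of $\PB_n$ (relative to $\PB_{n-1}$) one produces a companion braid $\alpha' \in \PB_{n-1}$ purely topologically (e.g., for $\alpha=\alpha_{n-1,n}$ one takes $\alpha'$ to be the Dehn twist about the border of a disc $E$ with $E \cap \Bcal = \{b_1,\dots,b_{n-2}\}$), so $\alpha'$ depends only on $\alpha$ and not on $q$. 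Only the scalar factor $\lambda_q$ relating $\rho_q(\alpha)$ to $\rho_q(\alpha')$ varies with $q$, and it is always a $d$-th root of unity.

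Consequently, for every $\alpha \in \PB_n$ there exist $\alpha' \in \PB_{n-1}$ and a central element $z_\alpha := (\lambda_q \cdot \Id_{\Vbb^{(q)}})_{q \in \pdroots}$ of $\prod_{q} \U(\Vbb^{(q)})$ such that $\rho_d(\alpha) = z_\alpha \cdot \rho_d(\alpha')$. Normality of $\rho_d(\PB_{n-1})$ in $\rho_d(\PB_n)$ is then immediate: for any $\beta \in \PB_{n-1}$, since $z_\alpha$ is central,
$$ \rho_d(\alpha)\, \rho_d(\beta)\, \rho_d(\alpha)^{-1} = \rho_d(\alpha')\, \rho_d(\beta)\, \rho_d(\alpha')^{-1} = \rho_d(\alpha' \beta \alpha'{}^{-1}) \in \rho_d(\PB_{n-1}). $$

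For finite index, set $Z_n := \rho_d(\PB_n) \cap Z$, where $Z$ is the center of $\prod_q \U(\Vbb^{(q)})$. The identity $\rho_d(\alpha) = z_\alpha \rho_d(\alpha')$ yields $\rho_d(\PB_n) = Z_n \cdot \rho_d(\PB_{n-1})$, so the index is bounded by $|Z_n|$. Now any $(\mu_q \Id)_{q} \in Z_n$ must lie in $\prod_{q}\U_d(\Vbb^{(q)})$ (by Corollary~\ref{cor:det:alpha:ij} applied to each generator $\alpha_{i,j}$ whose product expresses the element), so each $\mu_q$ satisfies $\mu_q^{d \dim \Vbb^{(q)}} = \mu_q^{d(n-2)} = 1$, using $\dim \Vbb^{(q)} = n-2$ from \eqref{eq:dim:H:1:X:q} under the hypothesis $d \mid (k_1+\dots+k_n)$. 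Hence $Z_n$ sits inside a product of finite cyclic groups and is itself finite, whence $[\rho_d(\PB_n) : \rho_d(\PB_{n-1})] \leq |Z_n| < \infty$.

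There is no real obstacle here: the substantive content was already established in Lemma~\ref{lm:rho:P:n-1:finite:ind:rho:P:n}, and the corollary is obtained by packaging its conclusion across all $q \in \pdroots$ and invoking the centrality of the discrepancy factor to pass from the individual factor statements to the product.
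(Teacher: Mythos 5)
Your proof is correct, and since the paper leaves the corollary without an explicit argument (it is stated as an immediate consequence of Lemma~\ref{lm:rho:P:n-1:finite:ind:rho:P:n}), your bootstrapping — noting that the companion braid $\alpha'$ in the Lemma's proof is constructed topologically and hence uniformly in $q$, so that the discrepancy $\rho_d(\alpha)\rho_d(\alpha')^{-1}$ is a single central element of the product group — is precisely the intended derivation. Two very minor points worth tidying: the identification of $\Sp(\hX,\R)^T_d$ is with $\prod_{q\in \Ub^+_{d,\prim}}\U(\Vbb^{(q)})$, not with the product over all of $\pdroots$; and the passage from ``for each generator $\alpha_{i,n}$'' to ``for every $\alpha\in\PB_n$'' should be flagged as using that the set of $\alpha$ admitting such a companion is a subgroup of $\PB_n$ containing the full generating set $\{\alpha_{i,j}\}$.
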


As a consequence of Corollary~\ref{cor:rho:P:n-1:finite:ind:P:n}, we get

\begin{Lemma}\label{lm:reduction:d:divides:sum:ki}
To prove Theorem~\ref{th:main:arithm}, one only needs to consider the case $d \,| \, (k_1+\dots+k_n)$.
\end{Lemma}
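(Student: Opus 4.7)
The plan is to reduce the case $d\nmid (k_1+\dots+k_n)$ (where $\eps_0=0$) to the case $d\, | \, (k_1+\dots+k_n)$ by adjoining a single extra branch point. Suppose $\kappa=(k_1,\dots,k_n)$ satisfies the hypotheses of Theorem~\ref{th:main:arithm} with $\eps_0=0$; writing $\sum_{i=1}^n k_i = qd+r$ with $1\leq r\leq d-1$, I would set $k_{n+1}:=d-r\in\{1,\dots,d-1\}$, choose a point $b_{n+1}$ outside the disc $D$ containing $\Bc$, and consider the tuple $\kappa':=(k_1,\dots,k_{n+1})$ with associated Riemann surface $\hX'$. By construction $d\,|\,(k_1+\dots+k_{n+1})$, so the new $\eps'_0$ equals $1$. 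One then checks that $\kappa'$ still satisfies the hypotheses of Theorem~\ref{th:main:arithm}: the bound $(n+1)+1-\eps'_0 = n+1\geq 5$ follows from $n+1-\eps_0=n+1\geq 5$; taking $I':=I$, conditions (i) and (ii) depend only on $\{k_i,\ i\in I\}$ and so are unchanged, while $\gcd(d,\{k_i,\ i\in I'^c\}) = \gcd(\gcd(d,\{k_i,\ i\in I^c\}),k_{n+1})$ shows that (iii) transfers whenever the range $|I'|\leq n-2=n-2-\eps_0$ is active. Finally, if $d\in\{3,4,6\}$, the original inequality $2<\frac{1}{d}\sum_{i=1}^n k_i < n-2$ forces $q\in\{2,\dots,n-3\}$, and since $\frac{1}{d}\sum_{i=1}^{n+1} k_i=q+1$, we obtain $2<\frac{1}{d}\sum_{i=1}^{n+1} k_i<(n+1)-2$.

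The heart of the argument is a geometric identification of $\hX$ with $\hX'$. Topologically, both are $d$-fold cyclic branched covers of $\CP^1$ carrying the same ramification data $(k_1,\dots,k_n,d-r)$; the last branch point sits at $\infty$ for $\hX$ and at $b_{n+1}$ for $\hX'$. Following the strategy of Lemma~\ref{lm:equiv:subsurfaces}, I would construct an equivariant homeomorphism $\varphi:\hX\to\hX'$ covering a homeomorphism of $\CP^1$ that is the identity on $D$ and sends $\infty$ to $b_{n+1}$. Embedding $\PB_n$ into $\PB_{n+1}$ as the subgroup of braids whose representatives are the identity outside $D$, the preferred lifts of $\tau\in\PB_n$ on $\hX$ and on $\hX'$ (both characterized by being the identity near the preimage of $\infty$ in the respective surface) correspond under $\varphi$. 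Consequently $\varphi^*:H^1(\hX',\Z)\to H^1(\hX,\Z)$ is a $T^*$-equivariant isometry of the intersection form that conjugates $\rho_d^{(n+1)}|_{\PB_n}$ onto $\rho_d^{(n)}$ and carries $\Sp(\hX',\Z)^T_d$ onto $\Sp(\hX,\Z)^T_d$.

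Granted the validity of Theorem~\ref{th:main:arithm} for $\kappa'$, the group $\rho_d^{(n+1)}(\PB_{n+1})$ is commensurable with $\Sp(\hX',\Z)^T_d$. Applying Corollary~\ref{cor:rho:P:n-1:finite:ind:P:n} to $\kappa'$ (for which $d$ divides the total sum of weights), the subgroup $\rho_d^{(n+1)}(\PB_n)$ has finite index in $\rho_d^{(n+1)}(\PB_{n+1})$, and is therefore also commensurable with $\Sp(\hX',\Z)^T_d$. Transporting via $\varphi^*$ yields the commensurability of $\rho_d^{(n)}(\PB_n)$ with $\Sp(\hX,\Z)^T_d$, which is the conclusion of Theorem~\ref{th:main:arithm} for $\kappa$. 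The most delicate point will be producing $\varphi$ and verifying the intertwining of preferred lifts; this requires some care because the preimages of $\infty$ in $\hX$ and of $b_{n+1}$ in $\hX'$ are in general multi-point $T$-orbits, but once an equivariant identification of a neighborhood of these orbits is set up, the rest of the compatibility is formal.
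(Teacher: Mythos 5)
Your proposal is correct and follows exactly the same strategy as the paper's proof: adjoin a point $b_{n+1}$ with weight $k_{n+1}\equiv -(k_1+\dots+k_n)\pmod d$, identify $\hX$ with the $(n{+}1)$-point cover $\hX'$ via a M\"obius transformation carrying $\infty$ into $\C$, so that $\rho_d$ is the restriction of $\rho'_d$ to $\PB_n\subset\PB_{n+1}$, and then invoke Corollary~\ref{cor:rho:P:n-1:finite:ind:P:n}. You additionally spell out the (routine but worthwhile) verification that $\kappa'$ inherits the hypotheses of Theorem~\ref{th:main:arithm}, which the paper leaves implicit.
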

\begin{proof}
Assume that $d \, \nmid \, (k_1+\dots+k_n)$. Let $k_{n+1}$ be the integer such that $0 < k_{n+1} <d$, and $k_{n+1}\equiv -(k_1+\dots+k_n) \mod d$.
Let $f$ be  an automorphism  of $\CP^1$ such that $f(\Bc\cup\{\infty\}) \subset \C$. Set $b'_i:=f(b_i)$ for all $i=1,\dots,n$, and $b'_{n+1}=f(\infty)\in \C$.
Then $\hX$ is isomorphic to the Riemann surface $\hX'$ constructed from the curve $y^d=\prod_{i=1}^{n+1}(x-b'_i)^{k_i}$. Moreover, the $\Z/d\Z$ actions on $\hX$ and $\hX'$ are equivariant with respect to isomorphism $\hX \to \hX'$. Therefore, we can identify  the representation $\rho_q: \PB_n\to \U(H^1(\hX)_q)$ with the restriction of the representation $\rho'_q: \PB_{n+1} \to \U(H^1(\hX')_q)$ to $\PB_n$. Since $\rho_d(\PB_n)$ has finite index in $\rho'_d(\PB_{n+1})$, if the conclusion of Theorem~\ref{th:main:arithm} holds for $\rho'_d(\PB_{n+1})$, it also holds for $\rho_d(\PB_n)$.
\end{proof}

\subsection{Zariski density}\label{subsec:prf:arith:Zar:density}
Having proved Lemma~\ref{lm:reduction:d:divides:sum:ki}, in the sequel we make the assumption that $d$ divides $k_1+\dots+k_n$. Note that the assumption
$$
n+1-\eps_0 \geq 5
$$
implies that $n\geq 5$ in this case.
From now on, we will suppose that $\gcd(k_1,\dots,k_n,d)=1$, and there exists $m \in \{2,\dots,n-1\}$ such that $d \, | \, (k_1+\dots+k_m)$.
Since $n\geq 5$, at least one of the following holds: (i) $m\geq 3$, (ii) $n-m \geq 3$. 
We first show that the Zariski closure of $\rho_q(\PB_n)$ contains $\prod_{q\in \Ub^+_{d, \prim}}\SU(\Vbb^{(q)})$.
\begin{Lemma}\label{lm:0:mod:d:good}
For all $k\in \Z$ such that $\gcd(k,q)=1$, the sequence $(\left\{\frac{kk_1}{d}\right\},\dots,\left\{\frac{kk_1}{d}\right\})$ satisfies Definition~\ref{def:good:weights}(a).
\end{Lemma}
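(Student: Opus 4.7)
The plan is to verify directly that condition (a) of Definition~\ref{def:good:weights} holds: namely, that
\[
1 \;<\; \sum_{i=1}^{n}\Bigl\{\tfrac{kk_i}{d}\Bigr\} \;<\; n-1.
\]
Set $s_i := \{kk_i/d\}$. First I would note that for every $i$ we have $s_i \in (0,1)$: indeed $0 < k_i < d$ together with $\gcd(k,d)=1$ forces $kk_i \not\equiv 0 \pmod d$, so $s_i \in \{1/d,2/d,\dots,(d-1)/d\}$, all of which lie strictly between $0$ and $1$.

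The central observation is that the hypothesis $d \mid (k_1+\dots+k_m)$ for some $m \in \{2,\dots,n-1\}$ forces the partial sum $\sum_{i=1}^{m} s_i$ to be a positive integer. More precisely,
\[
\sum_{i=1}^{m} s_i \;=\; \frac{k\sum_{i=1}^m k_i}{d} \;-\; \sum_{i=1}^m \Bigl\lfloor \tfrac{kk_i}{d}\Bigr\rfloor,
\]
and both terms on the right are integers under the divisibility assumption. Combined with $s_i \in (0,1)$, which gives $0 < \sum_{i=1}^m s_i < m$, we conclude
\[
\sum_{i=1}^{m} s_i \;\in\; \{1, 2, \dots, m-1\}.
\]

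From here the two inequalities follow immediately. For the lower bound, I would split
\[
\sum_{i=1}^n s_i \;=\; \sum_{i=1}^{m} s_i \;+\; \sum_{i=m+1}^{n} s_i \;\geq\; 1 + \sum_{i=m+1}^{n} s_i \;>\; 1,
\]
where the last strict inequality uses that $m \leq n-1$ guarantees at least one index $i>m$, and each $s_i>0$. For the upper bound, using $\sum_{i=1}^m s_i \leq m-1$ together with $s_i < 1$ for $i>m$ yields
\[
\sum_{i=1}^n s_i \;\leq\; (m-1) + \sum_{i=m+1}^n s_i \;<\; (m-1) + (n-m) \;=\; n-1.
\]

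There is no real obstacle; the whole argument is a careful bookkeeping of fractional parts. The only subtlety is remembering that $m \in \{2,\dots,n-1\}$ (strictly less than $n$), so that the complementary sum $\sum_{i=m+1}^n s_i$ is nonempty and contributes the strict positivity needed for the lower bound, while the integrality of $\sum_{i=1}^m s_i$ saves one unit in the upper bound. This gives exactly Definition~\ref{def:good:weights}(a).
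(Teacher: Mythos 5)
Your proof is correct and follows essentially the same route as the paper: establish $\{kk_i/d\}\in(0,1)$ from $\gcd(k,d)=1$ and $0<k_i<d$, use $d\mid(k_1+\dots+k_m)$ to pin $\sum_{i=1}^m\{kk_i/d\}$ to an integer in $\{1,\dots,m-1\}$, and then add the remaining (nonempty, since $m\le n-1$) block of fractional parts to get both strict bounds. The only differences are presentational.
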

\begin{proof}
Since $\gcd(k,d)=1$ and $0<k_i<d$, we have $d\; \nmid kk_i$, which means that $0< \left\{ \frac{kk_i}{d} \right\} < 1$  for all $i=1,\dots,n$. Therefore, we have
$$
0< \left\{\frac{kk_1}{d}\right\}+\dots+\left\{\frac{kk_m}{d}\right\} < m.
$$
By assumption $\frac{k(k_1+\dots+k_m)}{d} \in \Z$, therefore $\left\{kk_1/d\right\}+\dots+\left\{kk_m/d\right\} \in \N$, which means that
$$
1 \leq \left\{\frac{kk_1}{d}\right\}+\dots+\left\{\frac{kk_m}{d}\right\} \leq m-1.
$$
As a consequence
$$
1 \leq \left\{\frac{kk_1}{d}\right\}+\dots+\left\{\frac{kk_m}{d}\right\}< \left\{\frac{kk_1}{d}\right\}+\dots+\left\{\frac{kk_n}{d}\right\} < m-1+(n-m)=n-1.
$$
\end{proof}
By definition, the morphism $\rho_d$  takes values in $\Sp(\hX,\R)^T_d$. Since $\Sp(\hX,\R)^T_d\simeq\prod_{q\in \Ub^+_{d,\prim}}\U(\Vbb^{(q)})$, we will identify $\rho_d$ with the morphism $\tau \mapsto (\rho_q(\tau))_{q\in \Ub^+_{d,\prim}} \in \prod_{q\in \Ub^+_{d,\prim}}\U(\Vbb^{(q)})$.

\begin{Corollary}\label{cor:Zar:dense:d:divides:partial:sum}
If $(k_1,\dots,k_n)$ satisfies the hypothesis of Theorem~\ref{th:main:arithm}. Then the identity component of the Zariski closure  of $\rho_d(\PB_n)$ in $\Sp(\hX,\R)^T_d$ is equal to $\prod_{q\in \Ub^+_{d,\prim}}\SU(\Vbb^{(q)})$.
\end{Corollary}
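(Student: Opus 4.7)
The plan is a direct synthesis of the earlier results collected in this subsection, with no new ingredients. Under the standing assumptions in force at the start of \textsection\ref{subsec:prf:arith:Zar:density} ($d\mid k_1+\dots+k_n$, $n\geq 5$, $\gcd(k_1,\dots,k_n,d)=1$, and the existence of some $m\in\{2,\dots,n-1\}$ with $d\mid k_1+\dots+k_m$), essentially all the needed hypotheses of Theorem~\ref{th:main:Zar:dense:bis} are already built in or are immediate consequences of Lemma~\ref{lm:0:mod:d:good} and Theorem~\ref{th:Zar:density:on:factor}.

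The first step is to fix any $k\in\Z$ with $\gcd(k,d)=1$ and apply Lemma~\ref{lm:0:mod:d:good}: the divisibility $d\mid k_1+\dots+k_m$ with $m\in\{2,\dots,n-1\}$ forces the weight sequence $\mu^{(k)}=(\{kk_1/d\},\dots,\{kk_n/d\})$ to be \emph{good} in the sense of Definition~\ref{def:good:weights}(a), namely $1<\sum_i\{kk_i/d\}<n-1$. Feeding this into Theorem~\ref{th:Zar:density:on:factor}, I obtain that for every primitive $d$-th root of unity $q$ the identity component of the Zariski closure of $\rho_q(\PB_n)$ in $\U(\Vbb^{(q)})$ equals $\SU(\Vbb^{(q)})$. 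In particular, the Zariski closure of $\rho_q(\PB_n)$ contains $\SU(\Vbb^{(q)})$ for all $q\in\Ub_{d,\prim}$, which is exactly the input hypothesis of Theorem~\ref{th:main:Zar:dense:bis}.

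It remains to verify the dimension side-condition in Theorem~\ref{th:main:Zar:dense:bis}. Because $1\leq k_i\leq d-1$ for every $i$ and $q$ is a primitive $d$-th root, one has $q^{k_i}\neq 1$ for all $i$, so the set $F'_q$ of Theorem~\ref{th:Menet:dim:signature} is empty. Combined with $\eps_0=1$ (since $d\mid k_1+\dots+k_n$) and $n\geq 5$, Theorem~\ref{th:Menet:dim:signature} gives
\[
\dim\Vbb^{(q)}=n-1-\eps_0=n-2\geq 3,
\]
so the dimension hypothesis of Theorem~\ref{th:main:Zar:dense:bis} is met. Applying that theorem yields
\[
\hat{\Gb}^0=\prod_{q\in\Ub^+_{d,\prim}}\SU(\Vbb^{(q)}),
\]
which is the desired conclusion. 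There is no serious obstacle in this proof, as the corollary is designed precisely to package the outputs of Lemma~\ref{lm:0:mod:d:good}, Theorem~\ref{th:Zar:density:on:factor}, and Theorem~\ref{th:main:Zar:dense:bis} into the statement that feeds the arithmeticity argument.
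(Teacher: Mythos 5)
Your proof is correct and takes essentially the same approach as the paper: the paper's one-paragraph proof invokes Lemma~\ref{lm:0:mod:d:good} to get goodness, checks $n-1-\eps_0\geq 3$, and then cites Theorem~\ref{th:main:Zariski:dense}, which is itself a packaging of Theorem~\ref{th:Zar:density:on:factor} and Theorem~\ref{th:main:Zar:dense:bis}; you simply unpack that last citation one level, applying Theorems~\ref{th:Zar:density:on:factor} and~\ref{th:main:Zar:dense:bis} directly and verifying the dimension condition $\dim\Vbb^{(q)}=n-2\geq 3$ explicitly via Theorem~\ref{th:Menet:dim:signature}.
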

\begin{proof}
By Lemma~\ref{lm:0:mod:d:good}, for all $k \in \N$ such that $\gcd(d,k)=1$, the sequence $(\left\{\frac{kk_1}{d}\right\}, \dots, \left\{\frac{kk_n}{d}\right\})$ is good. Moreover, since $n+1-\eps_0\geq 5$, we have $n-1-\eps_0 \geq 3$. Thus we can conclude by Theorem~\ref{th:main:Zariski:dense}.
\end{proof}

\subsection{Horospherical subgroup in product}\label{subsec:horo:product}
Recall that for each $q\in \Ub_{d,\prim}$,  we have
$$
\Vcal_{m-1}^{(q)}=\Hom(\Vbb^{(q)},\Lbb^{(q)}), \Wcal^{(q)}=\Hom(\Wbb^{(q)}_{m+1}, \Lbb^{(q)}), \, \text{ and } \, \Wcal^{(q)}= \Hom(\Wbb^{(q)},\Lbb^{(q)})\simeq \Vcal_{m-1}^{(q)}\oplus\Wcal_{m+1}^{(q)}.
$$
Define
$$
\Wcal:=\prod_{q\in \Ub_{d,\prim}} \Wcal^{(q)}, \; \quad \Vcal_{m-1}:=\prod_{q\in \Ub_{d,\prim}} \Vcal^{(q)}_{m-1}, \; \quad  \Wcal_{m+1}=\prod_{q\in \Ub_{d,\prim}} \Wcal^{(q)}_{m+1}.
$$
Note that we also have $\Wcal=\Vcal_{m-1}\oplus\Wcal_{m+1}$.
Let $\pf_q$ denote the projection from $\Wcal$ onto $\Wcal^{(q)}$ for all $q\in \Ub_{d,\prim}$.

Define $\hat{P}:=\prod_{q\in \Ub_{d,\prim}}\hat{P}_q$, where $\hat{P}_q$ is a subgroup of $\U(\Vbb^{(q)})$ that preserves the flag \eqref{eq:flag:filtration}.
We have a group morphism $\Rcal: \hat{P} \to \Aut(\Wcal)$ such that  the action of $\Rcal(A)$ on the $\Wcal^{(q)}$-factor of $\Wcal$ is given by the $\hat{P}_q$-factor of $A$, for all $A \in \hat{P}$.

For all $\alpha\in \PB_n$,  let $\hat{\rho}_d(\alpha):=(\rho_q(\alpha))_{q\in \Ub_{d, {\rm prim}}} \in \prod_{q\in \Ub_{d, {\rm prim}}}\U(\Vbb^{(q)})$.
It follows from Lemma~\ref{lm:action:PB:m} and Lemma~\ref{lm:action:PB:m+1:n}   that $\hat{\rho}_d(\PB_m)$ and $\hat{\rho}_d(\PB_{m+1,n})$ are both contained in $\hat{P}$. Moreover, $\Rcal\circ\hat{\rho}_d(\PB_m)$ preserves $\Vcal_{m-1}$ and acts trivially on $\Wcal_{m+1}$, while  $\Rcal\circ\hat{\rho}_d(\PB_{m+1,n})$ preserves $\Wcal_{m+1}$ and acts trivially on $\Vcal_{m-1}$.
We are now in a position to prove the following

\begin{Proposition}\label{prop:irred:rep:PB:m}
Assume that
\begin{itemize}
\item[$\bullet$] $\gcd(d,k_1,\dots,k_m)=1$ if $m\geq 3$,

\item[$\bullet$] $\gcd(d,k_{m+1},\dots,k_n)=1$ if $n-m\geq 3$.
\end{itemize}
Let $V$  (resp. $W$) be a vector subspace of $\Vcal_{m-1}$ (resp. of $\Wcal_{m+1}$) invariant under the action of $\Rcal\circ\hat{\rho}_d(\PB_{m})$ (resp. of $\Rcal\circ\hat{\rho}_d(\PB_{m+1,n})$).
If $V$ contains a vector $v$ such that $\pf_q(v)\neq 0$, for all $q\in \Ub_{d,\prim}$, then $V=\Vcal_{m-1}$. Similarly, if $W$ contains a vector $w$ such that $\pf_q(w)\neq 0$ for all $q\in \Ub_{d, \prim}$, then $W=\Wcal_{m+1}$.
\end{Proposition}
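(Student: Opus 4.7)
The plan is to combine a Goursat-style reduction to a pair of factors with a character analysis, the gcd hypothesis providing the necessary Galois-theoretic rigidity. I will describe the argument for $V\subset\Vcal_{m-1}$ in detail; the claim for $W\subset\Wcal_{m+1}$ follows by a symmetric argument with $\PB_{m+1,n}$, $\Wcal^{(q)}_{m+1}$, and $\gcd(d,k_{m+1},\dots,k_n)=1$ in place of $\PB_m$, $\Vcal^{(q)}_{m-1}$, and $\gcd(d,k_1,\dots,k_m)=1$. First I observe that, by Lemma~\ref{lm:action:PB:m}, each $\pf_q(V)\subset\Vcal^{(q)}_{m-1}$ is $\PB_m$-invariant, and by Lemma~\ref{lm:action:PB:m:q:irred} the resulting action is irreducible; since $\pf_q(v)\neq 0$, irreducibility gives $\pf_q(V)=\Vcal^{(q)}_{m-1}$ for every $q\in\Ub_{d,\prim}$. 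Assuming for contradiction $V\subsetneq\Vcal_{m-1}$, a standard induction on $|\Ub_{d,\prim}|$ using Goursat's lemma for submodules of direct sums of irreducible $\PB_m$-modules produces two distinct roots $q,q'\in\Ub_{d,\prim}$ together with a $\PB_m$-equivariant isomorphism $\phi\colon\Vcal^{(q)}_{m-1}\to\Vcal^{(q')}_{m-1}$. Passing to duals, and using Lemma~\ref{lm:action:PB:m} (which shows that for $\tau\in\PB_m$ the matrix of $\rho_q(\tau)$ on the middle quotient of \eqref{eq:flag:filtration} is block-diagonal with blocks on $\Vbb^{(q)}_{m-1}$ and the identity on $\Wbb^{(q)}_{m+1}$), $\phi$ amounts to an isomorphism between the induced $\PB_m$-representations on $\Vbb^{(q)}_{m-1}$ and $\Vbb^{(q')}_{m-1}$.

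The next step is to extract a Galois obstruction from this isomorphism. Set $\zeta:=q\bar q'\in\Ub_d$. For every subset $J\subset\{1,\dots,m\}$ with $2\leq|J|\leq m-1$, let $\tau_J\in\PB_m$ be the Dehn twist about the boundary of a disc inside $E_m$ whose interior meets $\Bc$ exactly in $\{b_j:j\in J\}$. Applying Theorems~\ref{th:Menet:alphaij}, \ref{th:Menet:generator:set} and \ref{th:Menet:Dehn:twist}, and using the relation $\sum_{l=1}^{n-1}(\bar q^{k_1+\dots+k_l}-1)g_l=0$ to express $g_{m-1}$ in the basis $\Gcal^{(q)}$ whenever $m-1\in J$, one computes that the induced action of $\rho_q(\tau_J)$ on $\Vbb^{(q)}_{m-1}$ has the eigenvalue $q^{\sum_{j\in J}k_j}$ with multiplicity at least one and remaining eigenvalues all equal to $1$. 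Equality of characteristic polynomials for $q$ and $q'$ therefore forces
$$
\zeta^{\sum_{j\in J}k_j}=1 \quad\text{for every such }J.
$$

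The conclusion is then a short combinatorial deduction. Taking $J=\{i,j\}\subset\{1,\dots,m-1\}$ gives $\zeta^{k_i+k_j}=1$ for all such pairs. For $m\geq 4$, the triple $J=\{1,2,3\}$ combined with $\zeta^{k_1+k_2}=1$ forces $\zeta^{k_3}=1$, and by symmetry $\zeta^{k_l}=1$ for $l=1,\dots,m-1$; the standing assumption $q^{k_1+\dots+k_m}=1=q'^{k_1+\dots+k_m}$ finally yields $\zeta^{k_m}=1$. The edge case $m=3$ needs in addition the explicit action of $\rho_q(\alpha_{1,3})$ and $\rho_q(\alpha_{2,3})$ on $\Vbb^{(q)}_2=\C g_1$ (computed after eliminating $g_2$ via the same cohomological relation), which yields $\zeta^{k_1}=\zeta^{k_2}=1$; then $\zeta^{k_3}=1$ follows from $\zeta^{k_1+k_2+k_3}=1$. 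In every case $\zeta^{k_i}=1$ for all $i\in\{1,\dots,m\}$, so the order of $\zeta$ divides $\gcd(d,k_1,\dots,k_m)=1$ and hence $\zeta=1$, i.e.\ $q=q'$, contradicting the choice. Thus $V=\Vcal_{m-1}$.

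The hard part will be the character computation for subsets $J$ containing $m$ (equivalently, the Dehn twists $\alpha_{i,m}$ with $i<m$): such twists do not preserve $\Vbb^{(q)}_{m-1}$ as a subspace of $\Vbb^{(q)}$, and one has to track their action modulo $\Lbb^{(q)}$ using the explicit cohomological relation of Theorem~\ref{th:Menet:generator:set}. The calculation is routine linear algebra once this relation is exploited, but the book-keeping is the one slightly delicate step in an otherwise clean Goursat-plus-Galois argument.
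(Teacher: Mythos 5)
Your proposal is correct and follows essentially the same route as the paper: irreducibility of each factor (Lemma~\ref{lm:action:PB:m:q:irred}), a Goursat-type induction over the factors of $\Vcal_{m-1}$ (the paper's Claim~\ref{clm:V:m-1:proj:inv:subsp}), and a trace/spectrum computation on Dehn twists to rule out a $\PB_m$-equivariant intertwiner between $\Vcal^{(q)}_{m-1}$ and $\Vcal^{(q')}_{m-1}$ for $q\neq q'$ (the paper's Claim~\ref{clm:V:m-1:equiv:iso:not:exist}), with the gcd hypothesis supplying the final rigidity. Your $m=3$ patch via $\alpha_{2,3}$ also matches the paper's.

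One remark on the step you flagged as the ``hard part'' (Dehn twists $\alpha_{i,m}$ not preserving $\Vbb^{(q)}_{m-1}$): for $m\geq 4$ your own deduction only invokes $J\subset\{1,\dots,m-1\}$ — pairs and one triple from $\{1,\dots,m-1\}$ already give $\zeta^{k_i}=1$ for $i\leq m-1$, and then $\gcd(d,k_1,\dots,k_{m-1})=\gcd(d,k_1,\dots,k_m)=1$ (since $d\mid k_1+\dots+k_m$) finishes. The paper exploits exactly this and never looks at subsets containing $m$: it restricts to discs $E\subset E_{m-1}$, so the corresponding Dehn twist acts trivially on $\Wbb^{(q)}_{m+1,m-1}$ and Proposition~\ref{prop:action:conj:P}(b) gives $\Rcal_q\circ\rho_q(\tau)$ as a plain dual action, with trace $(|I|-1)\bar q^{\sum_{i\in I}k_i}+(m-1-|I|)$ read straight off Theorem~\ref{th:Menet:Dehn:twist}. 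The bookkeeping modulo $\Lbb^{(q)}$ that you anticipate is therefore unnecessary for $m\geq 4$; it only genuinely shows up in the $m=3$ edge case (via $\alpha_{2,3}$), which both you and the paper compute explicitly using the relation $\sum_l(\bar q^{k_1+\dots+k_l}-1)g_l=0$.

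Two very minor points: (i) the case $m=2$, where $\Vcal^{(q)}_{m-1}=\{0\}$ and the $V$-statement is vacuous, should be mentioned for completeness (the paper does); (ii) ``eigenvalue $q^{\sum_{j\in J}k_j}$ with multiplicity at least one'' is a bit loose — the multiplicity is exactly $|J|-1$ when $J\subset\{1,\dots,m-1\}$, and it is this precise multiplicity count (via the trace) that lets one deduce $q^{\sum k_j}=q'^{\sum k_j}$ from equality of the two representations without worrying about the degenerate case where that eigenvalue equals $1$.
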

\begin{proof}
We will only give the prove for $V$, the proof for $W$ follows the same lines. Note that if $m=2$ then $\dim \Vcal^{(q)}_{m-1}=\{0\}$ for all $q \in \Ub_{d, \prim}$, and there is nothing to prove. Therefore, we will always assume that $m\geq 3$ and $\gcd(d,k_1,\dots,k_m)=1$.
\begin{Claim}\label{clm:V:m-1:equiv:iso:not:exist}
Let $q$ and $q'$ be two elements of $\Ub_{d, \prim}$. If there exists an isomorphism $\psi: \Vcal^{(q)}_{m-1} \to \Vcal^{(q')}_{m-1}$ such that $(\Rcal_{q'}\circ\rho_{q'}(\tau))\circ\psi=\psi\circ(\Rcal_q\circ\rho_q(\tau))$,  for all $\tau\in \PB_{m}$, then we must have $q'=q$.
\end{Claim}
\begin{proof}
In what follows, by a slight abuse of notation if $\tau$ is an element of $\PB_m$, by $\rho_q(\tau)$ (resp. by $\Rcal_q\circ\rho_q(\tau)$) we will mean the restriction of $\rho_q(\tau)$ to $\Vbb^{(q)}_{m-1}$ (resp. the restriction of $\Rcal_q\circ\rho_q(\tau)$ to $\Vcal^{(q)}_{m-1}$).

Let $(v_1,\dots,v_{m-2})$ a basis of $\Vcal^{(q)}_{m-1}$. Then $(\psi(v_1),\dots,\psi(v_{m-2}))$ is a basis of $\Vcal^{(q')}_{m-1}$. The matrix of $\Rcal_q\circ\rho_q(\tau)$ in $(v_1,\dots,v_{m-2})$ and the matrix of $\Rcal_{q'}\circ\rho_{q'}(\tau)$ in $(\psi(v_1),\dots,\psi(v_{m-2}))$ are the same. In particular, we have $\Tr(\Rcal_q\circ\rho_q(\tau))=\Tr(\Rcal_{q'}\circ\rho_{q'}(\tau))$.

Let $I$ be a subset of $\{1,\dots,m-1\}$ such that $|I|\geq 2$, and $E$ be a disc in $E_{m-1}$ such that $E\cap \Bcal=\{b_i, \; i\in I\}$. Consider  the  Dehn twist $\tau$ about the border of $E$. Note that $\tau$ acts trivially on $\Wbb^{(q)}_{m+1,m-1}$.
It follows from Proposition~\ref{prop:action:conj:P} (b) that we have
$$
\Tr(\Rcal_{q}\circ\rho_q(\tau))=\Tr(\rho_q(\tau)^*)=\Tr(\rho_q(\tau^{-1}))=(|I|-1)\bar{q}^{\sum_{i\in I} k_i}+ m-1-|I|
$$
while
$$
\Tr(\Rcal_{q'}\circ\rho_{q'}(\tau))=\Tr(\rho_{q'}(\tau)^*)=\Tr(\rho_{q'}(\tau^{-1}))=(|I|-1)\bar{q}'{}^{\sum_{i\in I} k_i}+ m-1-|I|.
$$
Therefore we have $q^{\sum_{i\in I} k_i}=q'{}^{\sum_{i\in I}k_i}$ for all $I\subset \{1,\dots,m-1\}$ such that $|I|\geq 2$. If $m-1\geq 3$, then we must have $q^{k_i}=q'{}^{k_i}$ for all $i=1,\dots,m-1$. In the case $m-1=2$ (that is $m=3$), we consider the action of $\alpha_{2,3}$.
In this case $q^{k_1+k_2+k_3}=1$, $\Vbb_{m-1}^{(q)}=\langle g_1\rangle$, and $w=(\bar{q}^{k_1}-1)g_1+(\bar{q}^{k_1+k_2}-1)g_2$.
The action of $\rho_q(\alpha_{2,3})$ satisfies $\rho_q(\alpha_{2,3})(w)=w$, and
\begin{align*}
\rho_q(\alpha_{2,3})(g_1) & =g_1+(1-q^{k_3})g_2=g_1+(1-\bar{q}^{k_1+k_2})g_2\\
& =g_1+(\bar{q}^{k_1}-1)g_1-w \\
& =\bar{q}^{k_1}g_1 -w.
\end{align*}
It follows that $\Rcal_q\circ\rho_q(\alpha_{2,3})=q^{k_1}\Id_{\Vcal^{(q)}_2}$. By the same argument, we also have $\Rcal_{q'}\circ\rho_{q'}(\alpha_{2,3})=q'{}^{k_1}\Id_{\Vcal^{(q')}_2}$. Therefore we get $q^{k_1}=q'{}^{k_1}$. Since we already have $q^{k_1+k_2}=q'{}^{k_1+k_2}$, we  conclude that $q^{k_2}=q'{}^{k_2}$ as well. Now, from the assumption $d \; | \; (k_1+\dots+k_m)$ we have $\gcd(d,k_1,\dots,k_{m-1})=\gcd(d,k_1,\dots,k_m)=1$ which allows us to conclude that $q=q'$.
\end{proof}

Let $\ell:=|\Ub^+_{d,\prim}|=\varphi(d)/2$, where $\varphi$ is the Euler's phi function.  Note that $|\Ub_{d,\prim}|=2\ell$.
Let us choose a numbering of the elements of $\Ub_{d,\prim}$ so that one can write $\Ub_{d,\prim}=\{q_i, \; i=1,\dots,2\ell\}$.  To simplify the notation, we will write $\Vcal^i_\bullet, \Wcal^i_\bullet, \pf_i$ instead of $\Vcal^{(q_i)}_\bullet, \Wcal^{(q_i)}_\bullet, \pf_{q_i}$ respectively.
Set $W_0:=V$, and for $k=1,\dots,2\ell$, $W_k:=W_{k-1}\cap\ker\mathfrak{p}_k$, or equivalently
$$
W_k=V\cap\left(\bigcap_{i=1}^k\ker(\pf_i)\right).
$$

\begin{Claim}\label{clm:V:m-1:proj:inv:subsp}
For all $k\in\{0,\dots,2\ell-1\},  \; i \in \{k+1,\dots,2\ell\}$, we have $\mathfrak{p}_i(W_k)=\Vcal^{i}_{m-1}$.
\end{Claim}
\begin{proof}
We first prove the claim in the case $k=0$, that is $W_0=V$. By assumption $\mathfrak{p}_i(V) \subset \Vcal^{i}_{m-1}$ is a $\Rcal_{q_i}\circ\rho_{q_i}(\PB_{m})$-invariant subspace. Since the action of $\Rcal_{q_i}\circ\rho_{q_i}(\PB_{m})$ on $\Vcal^i_{m-1}$ is irreducible (by Lemma~\ref{lm:action:PB:m:q:irred}), and $\mathfrak{p}_i(V)$ contains $\pf_i(v)\neq 0$, we must have $\mathfrak{p}_i(V)=\Vcal^{i}_{m-1}$.

Assume now that the claim holds for $k=r$ and all $i\in \{r+1,\dots,2\ell\}$. Consider $\pf_i(W_{r+1})$ for some $i>r+1$. By construction, $\pf_i(W_{r+1})$ is $\Rcal_{q_i}\circ\rho_{q_i}(\PB_{m})$-invariant. Since the action of $\Rcal_{q_i}\circ\rho_{q_i}(\PB_{m})$ on $\Vcal^i_{m-1}$ is irreducible, either $\pf_i(W_{r+1})=\{0\}$ or $\pf_i(W_{r+1})=\Vcal^{i}_{m-1}$.

Consider the projection $\theta_{r+1,i}: W_{r} \to \Vcal^{r+1}_{m-1}\oplus\Vcal^{i}_{m-1}$. If $\pf_i(W_{r+1})=\{0\}$ then $\theta_{r+1,i}(W_k)$ is the graph of a morphism $\psi: \Vcal^{r+1}_{m-1} \to \Vcal^{i}_{m-1}$.  Observe that $\psi$ is an isomorphism because $\psi(\Vcal^{r+1}_{m-1})=\pf_i(W_r)=\Vcal^{i}_{m-1}$ by the induction hypothesis, and we have $\dim \Vcal^{r+1}_{m-1}=\dim \Vcal^{i}_{m-1}$.

By construction, the morphism $\psi$ satisfies $\psi\circ(\Rcal_{q_{r+1}}\circ\rho_{q_{r+1}}(\tau))=(\Rcal_{q_i}\circ\rho_{q_i}(\tau))\circ\psi$, for all $\tau \in \PB_{m}$.
But Claim~\ref{clm:V:m-1:equiv:iso:not:exist} then implies that $q_{r+1}=q_i$, which is impossible. Therefore we must have $\pf_i(W_{r+1})=\Vcal^i_{m-1}$. By induction, the claim follows.
\end{proof}
It follows from Claim~\ref{clm:V:m-1:proj:inv:subsp} that $W_{2\ell-1}=\Vcal^{2\ell}_{m-1}$. By induction, one readily gets that
$$
V=W_0=\prod_{i=1}^{2\ell}\Vcal^i_{m-1}=\Vcal_{m-1},
$$
which proves the proposition.
\end{proof}

\subsection{Intersection of the image of $\rho_d(\PB_n)$ with the product of horospherical subgroups} \label{subsec:inters:horosp}
Let
$$
G:=\prod_{q\in \Ub^+_{d,\prim}}\SU(\Vbb^{(q)}), \quad \text{ and } \quad  \Gamma:=\rho_d(\PB_n)\cap G.
$$
Observe that $G$ is a semisimple real algebraic Lie group defined over $\Q$.
By Corollary~\ref{cor:Zar:dense:d:divides:partial:sum}, $\Gamma$ is Zariski dense in $G$.
We will show that $\Gamma$ is a lattice in $G$ by using Theorem~\ref{th:Margulis:crit}.
To this purpose, we consider the  subgroup $U:=\prod_{q\in \Ub^+_{d,\prim}}U_q$ of $G$,  where $U_q$ is defined in \textsection~\ref{subsec:horosph:on:factor}.
This is the unipotent radical of the parabolic subgroup $P=\prod_{q\in \Ub^+(d, \prim)}P_q$ of $G$ ($P_q$ is the subgroup of $\SU(\Vbb^{(q)})$ that preserves the flag \eqref{eq:flag:filtration}).
Therefore, $U$ is a horospherical subgroup of $G$.

\begin{Theorem}\label{th:inters:w:horosp:lattice}
Let $\kappa=(k_1,\dots,k_n)$, with $n\geq 5$, be a sequence of positive integers such that $0 < k_i < d$, for all $i=1,\dots,n$, and $d \, | \, (k_1+\dots+k_n)$.
Assume that there exists $m\in \{2,\dots,n-2\}$ such that $d \, | \, (k_1+\dots+k_m)$, and 
\begin{itemize}
\item[$\bullet$] $\gcd(k_1,\dots,k_m,d)=1$ if $m\geq 3$,

\item[$\bullet$] $\gcd(k_{m+1},\dots,k_n,d)=1$ if $n-m \geq 3$.
\end{itemize}
Then the intersection $\rho_d(\PB_n)\cap U$ contains a lattice of $U$.
\end{Theorem}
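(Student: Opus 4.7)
The strategy is to exploit the short exact sequence
\[
0\to N\to U\xrightarrow{\chi} U/N\to 0,
\]
where $N=\prod_{q\in\Ub^+_{d,\prim}}N_q\simeq\R^\ell$ (with $\ell=|\Ub^+_{d,\prim}|$) is the centre of $U$ and $U/N$ is identified with $\Wcal^+:=\prod_{q\in\Ub^+_{d,\prim}}\Wcal^{(q)}\simeq\C^{\ell(n-4)}$. It is enough to show that $\chi(\Gamma\cap U)$ is a lattice in $U/N$ and that $\Gamma\cap N$ is a lattice in $N$.

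For the first task, I would first apply Lemma~\ref{lm:U:q:inters:Gam:non:triv} simultaneously over every $q\in\Ub^+_{d,\prim}$: when $m\geq 3$ this yields $\tau\in\PB_m$ with $\hat\rho_d(\tau)\in U$ and $\nu:=\chi(\hat\rho_d(\tau))\in \Vcal^+_{m-1}:=\prod_{q\in\Ub^+_{d,\prim}}\Vcal^{(q)}_{m-1}$ having nonzero projection on every factor, and symmetrically, when $n-m\geq 3$, an element $\tau'\in\PB_{m+1,n}$ with $\nu':=\chi(\hat\rho_d(\tau'))\in\Wcal^+_{m+1}:=\prod_{q\in\Ub^+_{d,\prim}}\Wcal^{(q)}_{m+1}$ of the same type; the hypothesis $n\geq 5$ ensures at least one of these applies. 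By Lemmas~\ref{lm:action:PB:m}--\ref{lm:action:PB:m+1:n} and Proposition~\ref{prop:action:conj:P}, for every $\sigma\in\PB_m$ the conjugate $\hat\rho_d(\sigma)\hat\rho_d(\tau)\hat\rho_d(\sigma)^{-1}$ lies in $\Gamma\cap U$ and has $\chi$-image $\Rcal(\hat\rho_d(\sigma))(\nu)$; similarly for $\tau'$ under $\PB_{m+1,n}$. The $\Ub^+_{d,\prim}$-indexed version of Proposition~\ref{prop:irred:rep:PB:m} (whose proof carries over verbatim, since Claim~\ref{clm:V:m-1:equiv:iso:not:exist} forces $q'=q$ and thereby preserves any subset of $\Ub_{d,\prim}$) then shows that the $\C$-spans of these two orbits are exactly $\Vcal^+_{m-1}$ and $\Wcal^+_{m+1}$, so together they $\R$-span $U/N$. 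Since in an integral basis of $H^1(\hX,\Z)_d$ one has $\rho_d(\PB_n)\subset\Sp(\hX,\Z)^T_d$, the image $\chi(\Gamma\cap U)$ is contained in a lattice of $\Wcal^+$ and is hence discrete; combined with its full $\R$-span this forces $\chi(\Gamma\cap U)$ to be a lattice in $U/N$.

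For $\Gamma\cap N$ I would exploit the commutator law in $U$. On each factor, Lemma~\ref{lm:unipotent:rad:prop} yields
\[
[(x,s),(y,t)]=\bigl(0,\,2\mu_q\,\Im({}^tx\,H_q^{-1}\,\bar y)\bigr)\in N_q,\qquad\mu_q:=(1-q)(1-\bar q),
\]
so $[\Gamma\cap U,\,\Gamma\cap U]\subset\Gamma\cap N$. The skew-symmetric form $\omega_q(x,y):=\Im({}^tx\,H_q^{-1}\,\bar y)$ is non-degenerate on each $\Wcal^{(q)}$ and takes rational values on $\chi(\Gamma\cap U)$, because $H_q$ is defined over $K_d=\Q(\zeta_d)$ and the vectors in $\chi(\Gamma\cap U)$ have entries in $K_d$. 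Using the lattice obtained in the previous step and forming sufficiently many commutators, one assembles a finitely generated subgroup of $\Gamma\cap N$ whose $\Q$-span is all of $N\otimes\Q$; combined with the discreteness inherited from $\Sp(\hX,\Z)^T_d$ this produces a lattice in $N\simeq\R^\ell$.

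The hard part will be this last step: ruling out that the $\ell$-tuple of commutator values $(\omega_q)_{q\in\Ub^+_{d,\prim}}$ collapses onto a proper $\Q$-subspace of $N$ cut out by some Galois-equivariant linear relation among the $\omega_q$. The natural way around this is to pass to the \emph{global} $\Q$-structure on $N$ inherited from the integral intersection form on $H^1(\hX,\Z)$: up to Galois reindexing, the combined form $\sum_{q\in\Ub^+_{d,\prim}}\omega_q$ is (the imaginary part of) the restriction of a non-degenerate $\Q$-rational skew form on $(U/N)\otimes\Q$, so non-degeneracy at the $\Q$-level yields exactly the required full rank. This is precisely where the hypothesis $\rho_d(\PB_n)\subset\Sp(\hX,\Z)^T_d$ is decisive, as it is what makes the various $N_q$ factors assemble into a single $\Q$-rational object rather than a collection of Galois conjugates with uncontrolled linear relations.
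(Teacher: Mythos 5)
Your argument for showing that $\chi^+(\Gamma\cap U)$ contains a lattice of $U/N$ is essentially the paper's argument: produce $\tau,\tau'$ via Lemma~\ref{lm:U:q:inters:Gam:non:triv}, use Lemmas~\ref{lm:action:PB:m}--\ref{lm:action:PB:m+1:n} and Proposition~\ref{prop:action:conj:P} to conjugate inside $\hat P^+$, and apply the irreducibility argument of Proposition~\ref{prop:irred:rep:PB:m} to each factor. One small simplification the paper makes and you don't need the discreteness-from-integrality detour for: the paper extracts a finite subset $\Fcal^+$ of the two orbits which is an $\R$-basis of $\Wcal^+$ (Proposition~\ref{prop:orbits:basis:W}) and then takes its $\Z$-span $\Lambda^+$, which is automatically a lattice. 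Your extra remark that $\chi^+(\Gamma\cap U)$ is discrete is true but superfluous. Note also that the passage from $\C$-span over $\Ub_{d,\prim}$ to $\R$-span over $\Ub^+_{d,\prim}$ needs the conjugate-component argument the paper carries out in the proof of Proposition~\ref{prop:orbits:basis:W}; your remark that Proposition~\ref{prop:irred:rep:PB:m} ``carries over verbatim'' to the half-index set glosses over this, but the fix is exactly the one the paper spells out.

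Where your proposal falls short is the second half: producing a lattice in $N$. You correctly identify the commutator law, the $L_d$-valued forms $\omega_q$, and the danger of Galois-equivariant collapse onto a proper $\Q$-subspace, and you explicitly flag this as ``the hard part''. But the route you then gesture at --- ``pass to the global $\Q$-structure on $N$'' and invoke non-degeneracy of a $\Q$-rational skew form --- is underdeveloped and does not, as stated, close the gap. Two concrete issues: first, ``the combined form $\sum_{q}\omega_q$'' is only the trace component of the $N$-valued bracket, and its non-degeneracy over $\Q$ says nothing about whether the full tuple-valued map $(\omega_q)_q$ sweeps out all of $N$ on a lattice; what is needed is that the $N$-valued bracket $U/N\times U/N\to N$ is a $\Q$-rational surjection and that $\Lambda^+\subset (U/N)(\Q)$, and neither of these is verified. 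In particular, to make $U$ and $N$ into groups defined over $\Q$ one must check that the flag \eqref{eq:flag:filtration}, assembled over all $q\in\Ub_{d,\prim}$, is Galois-equivariant --- i.e.\ that the vectors $w_q$, $g_{i,q}$ used to define $\Lbb^{(q)}$ and $\Wbb^{(q)}$ satisfy $w_{\sigma(q)}=\sigma(w_q)$ etc., which is implicit in the paper (via Remark~\ref{rk:generators:up:to:const} and the explicit construction in Lemma~\ref{lm:dual:border:neigh:infty}) but needs to be said if you rely on the algebraic-group $\Q$-structure. The paper instead resolves the hard step by a direct number-theoretic computation (Lemmas~\ref{lm:coeff:H:q:in:K:d}--\ref{lm:values:H:conjugate} and Claims~\ref{clm:Q:basis:K:d:n-4}--\ref{clm:lattice:in:N}): it first shows the projection $\Fcal^+_q$ of the lattice basis to a single factor $\Wcal^{(q)}$ is already a $\Q$-basis of $(K_d)^{n-4}$, deduces that the set of $\omega_q$-values on $\Lambda^+_q\times\Lambda^+_q$ contains a $\Q$-basis of $L_d$, and then averages over $\mathrm{Gal}(L_d/\Q)$ to produce $\ell$ vectors in $\Pi^+$ that are $\R$-independent. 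Your alternative route is plausible and more conceptual, but as written it is a pointer rather than a proof; you should either carry it through (verify $\Q$-rationality of $U$, $N$ and of the bracket, and that $\Lambda^+$ sits in $(U/N)(\Q)$) or follow the paper's more concrete argument.
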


Recall that $\ell=|\Ub^+_{d,\prim}|=\varphi(d)/2$, where $\varphi$  is the Euler's phi function. We have the following exact sequence
\begin{equation}\label{eq:product:upotent:ex:seq}
\{0\}\to N:=\prod_{q\in \Ub^+_{d,\prim}}N_q\simeq \R^{\ell} \to U:=\prod_{q\in \Ub^+_{d,\prim}}U_q \overset{\chi^+}{\to} \prod_{q\in \Ub^+_{d,\prim}}(U_q/N_q) \to \{0\}
\end{equation}
where $\chi^+((u_q)_{q\in \Ub^+_{d,\prim}})=(\chi_q(u_q))_{q\in \Ub^+_{d,\prim}}$ for all $(u_q)_{q\in \Ub^+_{d,\prim}} \in U$.

Recall that $U_q/N_q$ can be identified with $(\Wcal^{(q)},+)$.
Let us define $\Vcal^+_{m-1}=\prod_{q\in \Ub^+_{d,\prim}}\Vcal^{(q)}_{m-1}$,   $\Wcal^+_{m+1}=\prod_{q\in \Ub^+_{d,\prim}}\Wcal^{(q)}_{m+1}$,
and
$$
\Wcal^+=\prod_{q\in \Ub^+_{d,\prim}}\Wcal^{(q)}=\prod_{q\in \Ub^+_{d,\prim}}(\Vcal^{(q)}_{m-1}\oplus\Wcal^{(q)}_{m+1}).
$$
Note that $\Wcal^+ \simeq U/N= \prod_{q \in \Ub^+_{d, \prim}}(U_q/N_q)$, and we can  view $\chi^+$ as a group morphism from $U$ onto $(\Wcal^+,+)$.
Set $\hat{P}^+:=\prod_{q\in \Ub^+_{d,\prim}} \hat{P}_q$, where  $\hat{P}_q$ is the subgroup of $\U(\Vbb^{(q)})$ that preserves the flag \eqref{eq:flag:filtration}, and define
$$
\begin{array}{cccc}
\Rcal^+: &  \hat{P}^+  & \to &  \Aut(\Wcal^+)\\
         & (u_q)_{q\in \Ub^+_{d,\prim}} &  \mapsto &  (\Rcal_q(u_q))_{q\in \Ub^+_{d,\prim}}.
\end{array}
$$

Note that both $\rho_d(\PB_m)$ and $\rho_d(\PB_{m+1,n})$ are contained in $\hat{P}^+$, and the actions of  $\Rcal^+\circ\rho_d(\PB_m)$ and $\Rcal^+\circ\rho_d(\PB_{m+1,n})$  preserve  $\Vcal^+_{m-1}$ and $\Wcal^+_{m+1}$ respectively (see Lemma~\ref{lm:action:PB:m} and Lemma~\ref{lm:action:PB:m+1:n}).

\medskip

Recall that the  action of $\hat{P}_q$  on $U_q$ by conjugation satisfies $\chi_q(A\cdot * \cdot A^{-1})=\Rcal_q(A)(\chi_q(*))$, for all $A\in \hat{P}_q$ (see Proposition~\ref{prop:action:conj:P})

\begin{Proposition}\label{prop:orbits:basis:W}
Let $\tau$ and $\tau'$ be the elements of $\PB_n$  in Lemma~\ref{lm:U:q:inters:Gam:non:triv} (if exist). Let $\nu^+:=\chi^+(\rho_d(\tau)) \in \Vcal^+_{m-1}$ and $\nu'{}^+:=\chi^+(\rho_d(\tau')) \in \Wcal^+_{m+1}$.
Define
$$
\Sigma^+:=\{\Rcal^+\circ\rho_d(\alpha)(\nu^+), \; \alpha \in \PB_m\} \quad \text{ and } \quad  \Sigma'{}^+:= \{\Rcal^+\circ\rho_d(\alpha)(\nu'{}^+), \; \alpha \in \PB_{m+1,n}\}.
$$
Assume that $\gcd(d,k_1,\dots,k_m)=1$  if $m\geq 3$, and  $\gcd(d,k_{m+1},\dots,k_n)=1$  if $n-m\geq 3$.
Then  $\Span_\R(\Sigma^+)=\Vcal^+_{m-1}$,  $\Span_\R(\Sigma'{}^+)=\Wcal^+_{m+1}$, and therefore $\Span_\R(\Sigma^+\cup \Sigma'{}^+)=\Wcal^+$.
\end{Proposition}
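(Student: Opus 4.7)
The plan is to prove the two spanning statements $\Span_\R(\Sigma^+) = \Vcal^+_{m-1}$ and $\Span_\R(\Sigma'^+) = \Wcal^+_{m+1}$ separately; the final equality $\Span_\R(\Sigma^+ \cup \Sigma'^+) = \Wcal^+$ is then immediate from the direct sum decomposition $\Wcal^+ = \Vcal^+_{m-1} \oplus \Wcal^+_{m+1}$. I will focus on the first statement, the second being entirely analogous, with $\PB_{m+1,n}$ and $\Wcal^{(q)}_{m+1}$ replacing $\PB_m$ and $\Vcal^{(q)}_{m-1}$.

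Set $V^+ := \Span_\R(\Sigma^+)$. Since each $\Rcal^+ \circ \rho_d(\alpha)$, $\alpha \in \PB_m$, acts $\C$-linearly (hence $\R$-linearly) on $\Vcal^+_{m-1}$, the subspace $V^+$ is $\Rcal^+ \circ \rho_d(\PB_m)$-invariant. Moreover, Lemma~\ref{lm:U:q:inters:Gam:non:triv} ensures that $\pf_q(\nu^+) \neq 0$ for every $q \in \Ub^+_{d,\prim}$. It therefore suffices to establish an $\R$-linear version of Proposition~\ref{prop:irred:rep:PB:m} restricted to $\Ub^+_{d,\prim}$: any non-zero $\Rcal^+ \circ \rho_d(\PB_m)$-invariant $\R$-subspace of $\Vcal^+_{m-1}$ whose projection onto every factor $\Vcal^{(q)}_{m-1}$ is non-zero must coincide with $\Vcal^+_{m-1}$.

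The inductive scheme from Claim~\ref{clm:V:m-1:proj:inv:subsp} then carries over verbatim, provided one upgrades two ingredients to the real setting: (i) each $\Vcal^{(q)}_{m-1}$, $q \in \Ub^+_{d,\prim}$, is $\R$-irreducible as a $\Rcal_q \circ \rho_q(\PB_m)$-representation, strengthening the $\C$-irreducibility of Lemma~\ref{lm:action:PB:m:q:irred}; and (ii) for distinct $q, q' \in \Ub^+_{d,\prim}$, no non-zero $\R$-linear, $\PB_m$-equivariant map $\Vcal^{(q)}_{m-1} \to \Vcal^{(q')}_{m-1}$ exists, strengthening Claim~\ref{clm:V:m-1:equiv:iso:not:exist}. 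The hypotheses $\gcd(d, k_1, \ldots, k_m) = 1$ (if $m \geq 3$) and $\gcd(d, k_{m+1}, \ldots, k_n) = 1$ (if $n-m \geq 3$) are inherited from the statement and are precisely what permits the invocation of Claim~\ref{clm:V:m-1:equiv:iso:not:exist}.

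Both (i) and (ii) reduce to Claim~\ref{clm:V:m-1:equiv:iso:not:exist} via the canonical decomposition $\phi = \phi_1 + \phi_2$ of any $\R$-linear map $\phi: \Vcal^{(q)}_{m-1} \to \Vcal^{(q')}_{m-1}$ between complex vector spaces into its $\C$-linear summand $\phi_1$ and its $\C$-antilinear summand $\phi_2$. Each summand is separately $\PB_m$-equivariant, since the action is $\C$-linear. Claim~\ref{clm:V:m-1:equiv:iso:not:exist} combined with Schur's lemma implies $\phi_1 \neq 0 \Rightarrow q = q'$, whereas $\phi_2$, viewed as a $\C$-linear $\PB_m$-equivariant map into the complex-conjugate representation $\overline{\Vcal^{(q')}_{m-1}}$ (isomorphic as a $\PB_m$-module to $\Vcal^{(\bar q')}_{m-1}$ via complex conjugation on $H^1(\hX, \C)$, which is $\PB_n$-equivariant because $\PB_n$ preserves $H^1(\hX, \R)$), satisfies $\phi_2 \neq 0 \Rightarrow q = \bar q'$. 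The condition $q = \bar q'$ is impossible for $q, q' \in \Ub^+_{d,\prim}$ since both have positive imaginary part; this yields (ii) at once when $q \neq q'$, and gives $\End_{\PB_m}^\R(\Vcal^{(q)}_{m-1}) = \End_{\PB_m}^\C(\Vcal^{(q)}_{m-1}) = \C$ when $q = q'$, from which $\R$-irreducibility (i) follows by the standard Schur-type argument: a non-zero $\R$-invariant $W \subsetneq \Vcal^{(q)}_{m-1}$ would, via $\C$-irreducibility of $W + iW$, force $W \cap iW = 0$ and $W + iW = \Vcal^{(q)}_{m-1}$, giving a $\PB_m$-equivariant $\C$-antilinear involution lying in $\End^\R \setminus \C$, a contradiction. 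The main obstacle the argument must clear --- and the only substantive departure from Proposition~\ref{prop:irred:rep:PB:m} --- is ruling out these ``twisted'' $\C$-antilinear intertwiners, which are absent in the complex setting but could a priori obstruct the real spanning.
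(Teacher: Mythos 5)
Your argument is correct but takes a genuinely different route from the paper's. The paper applies the already-proved $\C$-spanning result (Proposition~\ref{prop:irred:rep:PB:m}) to the full $\PB_m$-orbit $\Sigma$ of $\nu=(\nu_q)_{q\in\Ub_{d,\prim}}$ inside $\Vcal_{m-1}=\prod_{q\in\Ub_{d,\prim}}\Vcal^{(q)}_{m-1}$, obtaining $\Span_\C(\Sigma)=\Vcal_{m-1}$, and then descends to $\R$-spanning on the $\Ub^+_{d,\prim}$-factor by exploiting the conjugate-pair structure at the level of coefficients: in the distinguished bases $\Hcal^{(q)}$, the coordinates of $\nu_q$ and the matrices of $\Rcal_q\circ\rho_q(\alpha)$ are entrywise complex conjugates of those at $\bar q$, so every $\eta=(\eta^+,\eta^-)\in\Sigma$ satisfies $\eta^-=\bar\eta^+$; writing $(\xi,\bar\xi)=\sum a_i\eta_i$ for $\xi\in\Vcal^+_{m-1}$ then yields $\xi=\sum\Re(a_i)\eta_i^+\in\Span_\R(\Sigma^+)$. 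You instead stay on $\Ub^+_{d,\prim}$ throughout and upgrade the irreducibility machinery of Proposition~\ref{prop:irred:rep:PB:m} to the real category, the new content being control of $\C$-antilinear intertwiners: a non-zero $\C$-antilinear equivariant map $\Vcal^{(q)}_{m-1}\to\Vcal^{(q')}_{m-1}$ produces, after composing with conjugation, a $\C$-linear equivariant isomorphism onto $\Vcal^{(\bar q')}_{m-1}$, whence $q=\bar q'$ by Claim~\ref{clm:V:m-1:equiv:iso:not:exist}, impossible on $\Ub^+_{d,\prim}$. This is the same conjugation symmetry the paper uses, repackaged as a statement about intertwiners rather than about coefficients of linear combinations. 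The paper's route is shorter because it re-uses the complex result as a black box; yours is self-contained in the real category but needs the extra step from $\End^\R_{\PB_m}(\Vcal^{(q)}_{m-1})=\C$ to $\R$-irreducibility, which you correctly supply. One small point you gloss over: the $\Rcal$-equivariance of the conjugation isomorphism $\Vcal^{(q')}_{m-1}\to\Vcal^{(\bar q')}_{m-1}$ (not just $\rho$-equivariance on $H^1$, but compatibility with the flag, the quotients, and the chosen bases $\Hcal^{(q)}$) requires a short verification; the paper records precisely this when it notes the entrywise-conjugate relation between the matrices of $\Rcal_q\circ\rho_q(\alpha)$ and $\Rcal_{\bar q}\circ\rho_{\bar q}(\alpha)$.
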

\begin{proof}
Define $\Ub^-_{d, \prim}$ to be the set of primitive $d$-th root of unity $q$ such that $\Im(q) <0$, and
$$
\Vcal^-_{m-1}:=\prod_{d\in \Ub^-_{d,\prim}}\Vcal^{(q)}_{m-1}.
$$
Since $\Vcal_{m-1}=\Vcal^+_{m-1}\times \Vcal^-_{m-1}$, for all $\eta\in \Vcal_{m-1}$ we can write $\eta=(\eta^+,\eta^-)$, with $\eta^+\in \Vcal^+_{m-1}$ and $\eta^-\in \Vcal^-_{m-1}$.

Consider the element $\nu =(\nu_q)_{q\in \Ub_{d,\prim}}\in \Vcal_{m-1}$  where  $\nu_q:=\chi_q(\rho_q(\tau))$ for all $q\in \Ub_{d,\prim}$.
We have $\nu=(\nu^+,\nu^-)$ where $\nu^+=\chi^+(\rho_d(\tau))$.
Recall that $\hat{\rho}_d: \PB_n \to \prod_{q\in \Ub_{d,\prim}}\U(\Vbb^{(q)})$ is the morphism given by $\hat{\rho}_d(\alpha)=(\rho_q(\alpha))_{q\in \Ub_{d,\prim}}$ for all $\alpha\in \PB_n$, and $\Rcal: \hat{P}=\prod_{q\in \Ub_{d,\prim}} \hat{P}_q \to \Aut(\Wcal)$ is the morphism such that for all $A=(A_q)_{q\in \Ub_{d,\prim}} \in \hat{P}$ the action of $\Rcal(A)$ on the $\Wcal^{(q)}$-factor of $\Wcal$ is given by $\Rcal_q(A_q)$ (see \textsection~\ref{subsec:horo:product}).
Let
$$
\Sigma:=\{\Rcal\circ\hat{\rho}_d(\alpha)(\nu), \; \alpha \in \PB_m\} \subset \Vcal_{m-1}, \text{ and } V:=\Span_\C(\Sigma) \subset \Vcal_{m-1}.
$$
By definition $V$  is $\Rcal\circ\hat{\rho}_d(\PB_m)$-invariant and contains the vector $\nu=(\nu_q)_{q\in \Ub_{d,\prim}}$ where $\nu_q\neq 0$ for all $q\in \Ub_{d,\prim}$. It follows from Proposition~\ref{prop:irred:rep:PB:m} that  $V=\Vcal_{m+1}$, that is $\Span_\C(\Sigma)=\Vcal_{m-1}$.

Recall that for each $q\in \Ub_{d,\prim}$, we have specified a basis $\Gcal^{(q)}$ of $\Vbb^{(q)}$.
We then have a distinguished basis $\Hcal^{(q)}=(g^*_1\otimes w, \dots, g^*_{m-2}\otimes w, g^*_{m+2}\otimes w,\dots,g^*_{n-1}\otimes w)$ of $ \Hom(\Wbb^{(q)},\Lbb^{(q)})\simeq \Wcal^{(q)}$, where $(g^*_1,\dots,g^*_{m-2},g^*_{m+2},\dots,g^*_{n-1})$ is the basis of $\Wbb^{(q)*}$ dual to  $(g_1,\dots,g_{m-2},g_{m+2},\dots,g_{n-1})$.
Observe that the coordinates of $\nu_q$ in the basis $\Hcal^{(q)}$ are the complex conjugates of the coordinates of $\nu_{\bar{q}}$ in the basis $\Hcal^{(\bar{q})}$.
Moreover, for all  $\alpha \in \PB_m$ the   matrix of $\Rcal_q\circ\rho_q(\alpha)$  in the basis $\Hcal^{(q)}$ is the complex conjugate  of the matrix of and $\Rcal_{\bar{q}}\circ\rho_{\bar{q}}(\alpha)$ in the basis $\Hcal^{(\bar{q})}$.
As a consequence, we can identify $\Vcal^+_{m-1}$ and $\Vcal^-_{m-1}$ with $\C^{\ell\cdot(m-2)}$ in such a way that for all $\eta=(\eta^+,\eta^-) \in \Sigma$, we have $\eta^-=\bar{\eta}^+$.

Consider now a vector $\xi \in \Vcal^+_{m-1}$. Let $\hat{\xi}:=(\xi,\bar{\xi}) \in \Vcal_{m-1}$. Since $\Span_\C(\Sigma)=\Vcal_{m-1}$, there exist $\eta_1, \dots,\eta_s \in \Sigma$, and $a_1,\dots,a_s \in \C$ such that $\hat{\xi}=\sum_i a_i\cdot\eta_i$. We then have $\xi=\sum_ia_i\cdot \eta^+_i$, and $\bar{\xi}=\sum_i a_i\cdot\eta^-_i=\sum_i a_i\cdot \bar{\eta}^+_i$. Therefore,
$$
\xi=\sum_{i=1}^s a_i\cdot\eta^+_i=\sum_{i=1}^s \bar{a}_i\cdot\eta^+_i
$$
which implies that
$$
\xi=\sum_{i=1}^s \real(a_i)\cdot\eta^+_i.
$$
Since $\eta^+_i \in \Sigma^+$, we conclude that $\Vcal^+_{m-1}=\Span_\R(\Sigma^+)$. The fact that $\Wcal^+_{m+1}=\Span_\R(\Sigma'{}^+)$ follows from the same argument.
\end{proof}

Proposition~\ref{prop:orbits:basis:W} means that $\Sigma^{+}\cup\Sigma'{}^+$ contains an $\R$-basis $\Fcal^+$ of $\Wcal^+$. Let $\Lambda^+$ denote the Abelian subgroup generated by $\Fcal^+$ in $\Wcal^+$. Then $\Lambda^+$ is  a lattice in $\Wcal^+$, and we have
\begin{Corollary}\label{cor:Gam:inters:U:proj:lattice}
Under the assumption of Theorem~\ref{th:inters:w:horosp:lattice},  $\chi^+(\Gamma\cap U)$  contains the lattice $\Lambda^+\subset \Wcal^+\simeq U/N$.
\end{Corollary}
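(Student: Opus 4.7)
The plan is to recognize every element of the generating set $\Sigma^+\cup\Sigma'{}^+$ as the image under $\chi^+$ of an actual element of $\Gamma\cap U$, and then use the fact that $\chi^+(\Gamma\cap U)$ is a subgroup of the abelian group $(\Wcal^+,+)$ containing $\Fcal^+$, hence contains the lattice $\Lambda^+$ generated by $\Fcal^+$.

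First I would rewrite the elements of $\Sigma^+$ as conjugates. For $\alpha\in\PB_m$, Lemma~\ref{lm:action:PB:m} gives $\rho_d(\alpha)\in\hat{P}^+$, and the conjugation formula of Proposition~\ref{prop:action:conj:P}(a) applied on each $\U(\Vbb^{(q)})$-factor ($q\in\Ub^+_{d,\prim}$) yields
$$
\Rcal^+\circ\rho_d(\alpha)(\nu^+)=\Rcal^+\circ\rho_d(\alpha)\bigl(\chi^+(\rho_d(\tau))\bigr)=\chi^+\bigl(\rho_d(\alpha)\cdot\rho_d(\tau)\cdot\rho_d(\alpha)^{-1}\bigr)=\chi^+\bigl(\rho_d(\alpha\tau\alpha^{-1})\bigr).
$$
The analogous identity holds for $\alpha'\in\PB_{m+1,n}$ via Lemma~\ref{lm:action:PB:m+1:n}. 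Thus it suffices to show that $\rho_d(\alpha\tau\alpha^{-1})\in\Gamma\cap U$ for every $\alpha\in\PB_m$, and similarly for $\alpha'\tau'\alpha'{}^{-1}$.

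The second step is the verification of membership. By Lemma~\ref{lm:U:q:inters:Gam:non:triv}, $\rho_q(\tau)\in U_q$ for every $q\in\Ub_{d,\prim}$, so in particular $\rho_d(\tau)\in U=\prod_{q\in\Ub^+_{d,\prim}}U_q$. Since $U$ is the unipotent radical of the parabolic $P=\prod_{q\in\Ub^+_{d,\prim}}P_q$ of $G$, and $\rho_d(\alpha)\in\hat{P}^+\cap G$ (the restriction to $\Ub^+_{d,\prim}$ of $\hat P$), the group $\rho_d(\alpha)$ normalizes $U$; hence $\rho_d(\alpha\tau\alpha^{-1})\in U$. As $U\subset G$, and $\alpha\tau\alpha^{-1}\in\PB_n$, we conclude $\rho_d(\alpha\tau\alpha^{-1})\in\rho_d(\PB_n)\cap G\cap U=\Gamma\cap U$. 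The same argument applied to $\tau'$ and any $\alpha'\in\PB_{m+1,n}$ shows $\rho_d(\alpha'\tau'\alpha'{}^{-1})\in\Gamma\cap U$. Combining, we obtain $\Sigma^+\cup\Sigma'{}^+\subset\chi^+(\Gamma\cap U)$.

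Finally, $\Gamma\cap U$ is a subgroup of $U$ and $\chi^+:U\to\Wcal^+$ is a group morphism onto the abelian group $(\Wcal^+,+)$, so $\chi^+(\Gamma\cap U)$ is an additive subgroup of $\Wcal^+$. It contains $\Sigma^+\cup\Sigma'{}^+$, hence in particular the $\R$-basis $\Fcal^+$ supplied by Proposition~\ref{prop:orbits:basis:W}, and therefore it contains the $\Z$-span $\Lambda^+$ of $\Fcal^+$. Since $\Fcal^+$ is an $\R$-basis of $\Wcal^+$, the subgroup $\Lambda^+$ is a lattice in $\Wcal^+$, completing the argument. No genuine obstacle is expected here — the substantive content is packaged in Lemma~\ref{lm:U:q:inters:Gam:non:triv} and Proposition~\ref{prop:orbits:basis:W}; the only point requiring care is checking that the conjugation $\alpha\tau\alpha^{-1}$ remains in $U$, which is immediate from $\hat P^+\supset\rho_d(\PB_m)$ normalizing its unipotent radical $U$.
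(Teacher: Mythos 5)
Your proof is correct and spells out exactly the argument the paper leaves implicit (the paper states the corollary as an immediate consequence of Proposition~\ref{prop:orbits:basis:W} without a separate proof): you correctly identify each element of $\Sigma^+\cup\Sigma'{}^+$ as $\chi^+$ of a conjugate $\rho_d(\alpha\tau\alpha^{-1})$ or $\rho_d(\alpha'\tau'\alpha'^{-1})$ lying in $\Gamma\cap U$ (using Lemma~\ref{lm:U:q:inters:Gam:non:triv}, Lemma~\ref{lm:action:PB:m}/\ref{lm:action:PB:m+1:n}, Lemma~\ref{lm:conjugate:P:on:U}, and Proposition~\ref{prop:action:conj:P}(a)), and then observe that the additive subgroup $\chi^+(\Gamma\cap U)$ of $\Wcal^+$ contains $\Fcal^+$ and hence $\Lambda^+$. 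This matches the paper's intended reasoning.
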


To prove that $\Gamma\cap U$ is a lattice of $U$, it remains to show that
$\Gamma\cap N$ is a lattice of $N$.
Recall that for each $q\in \Ub_{d,\prim}$, $U_q$ is isomorphic to $\C^{n-4}\times\R$ and $N_q$ is identified with $\{0\}\times \R \subset U_q$. The group law in $U_q$ is given by
$$
(x,s)\cdot (y,t)=(x+y, s+t+\mu\mathrm{Im}({}^tx\cdot H_q^{-1}\cdot \bar{y}))
$$
where $H_q$ is the matrix of the intersection form on $\Wbb^{(q)}$ in the basis $(g_1,\dots,g_{m-2},g_{m+2},\dots,g_{n-1})$.
In particular, we have $(x,t)^{-1}=(-x,-t)$, and
\begin{equation}\label{eq:commutator:in:U}
[(x,s), (y,t)]=(x,s)\cdot(y,t)\cdot(-x,-s)\cdot(-y,-t)=(0,2\mu\Im({}^tx\cdot H_q^{-1}\cdot\bar{y})) \in N_q.
\end{equation}
For all $q\in \Ub_{d,\, {\rm prim}}$, let us define
$$
\omega_q(x,y):=2\mu\cdot\Im({}^tx\cdot H_q^{-1}\cdot\bar{y}).
$$
Define
$$
\Pi^+:=\{(\omega_q(\eta^+_{q},{\eta}'{}^+_{q}))_{q\in \Ub^+_{d,\prim}} \in \R^\ell, \; \eta^+, \eta'{}^+ \in \Lambda^+ \} \subset \R^\ell.
$$
Corollary~\ref{cor:Gam:inters:U:proj:lattice} then implies the following
\begin{Lemma}\label{lm:Gam:inters:N}
The set $\Pi^+$ is contained in the intersection $\Gamma\cap N$, where $N=\prod_{q\in \Ub^+_{d, \prim}}N_q$ is identified with $\R^\ell$.
\end{Lemma}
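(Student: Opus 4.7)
The plan is to realize each element of $\Pi^+$ as a commutator of two elements of $\Gamma\cap U$ lifted from $\Lambda^+$ via Corollary~\ref{cor:Gam:inters:U:proj:lattice}, then observe that such commutators land in $N$ by the formula \eqref{eq:commutator:in:U}.

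More precisely, fix $\eta^+,\eta'{}^+\in \Lambda^+\subset \Wcal^+\simeq U/N$. By Corollary~\ref{cor:Gam:inters:U:proj:lattice}, we have $\Lambda^+\subset \chi^+(\Gamma\cap U)$, so there exist elements $u,u'\in \Gamma\cap U$ with $\chi^+(u)=\eta^+$ and $\chi^+(u')=\eta'{}^+$. Writing $u=(u_q)_{q\in \Ub^+_{d,\prim}}$ and $u'=(u'_q)_{q\in \Ub^+_{d,\prim}}$, the identification $U_q\simeq \C^{n-4}\times\R$ of \textsection\ref{subsec:horosph:on:factor} yields $u_q=(\eta^+_q,s_q)$ and $u'_q=(\eta'{}^+_q,s'_q)$ for some $s_q,s'_q\in \R$.

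Now consider the commutator $[u,u']\in U$. Since $\Gamma$ is a group and $u,u'\in \Gamma$, we also have $[u,u']\in \Gamma$. The commutator is computed componentwise in $U=\prod_{q\in \Ub^+_{d,\prim}}U_q$, and on each factor the formula \eqref{eq:commutator:in:U} gives
$$
[u_q,u'_q]=\bigl(0,\,2\mu\,\Im({}^t\eta^+_q\cdot H_q^{-1}\cdot\overline{\eta'{}^+_q})\bigr)=(0,\omega_q(\eta^+_q,\eta'{}^+_q))\in N_q.
$$
Therefore $[u,u']\in N$, so $[u,u']\in \Gamma\cap N$, and under the identification $N\simeq \R^\ell$ it corresponds exactly to the tuple $(\omega_q(\eta^+_q,\eta'{}^+_q))_{q\in \Ub^+_{d,\prim}}\in \Pi^+$. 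As this holds for every pair $\eta^+,\eta'{}^+\in \Lambda^+$, we conclude $\Pi^+\subset \Gamma\cap N$, which is the claim.

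There is no substantive obstacle here; the lemma is essentially the observation that the commutator bracket on $U$ descends, via $\chi^+$, to the $\R$-valued skew form $\omega_q$ on each factor, so commutators of lifts of lattice vectors automatically lie in $\Gamma\cap N$. The real work is reserved for the subsequent step, where one will need to show that the subgroup generated by $\Pi^+$ is of full rank in $N\simeq\R^\ell$ in order to conclude that $\Gamma\cap N$ is a lattice in $N$.
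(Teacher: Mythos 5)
Your proof is correct and is precisely the argument the paper intends; the paper gives no explicit proof because it presents the lemma as an immediate consequence of Corollary~\ref{cor:Gam:inters:U:proj:lattice} together with the commutator formula~\eqref{eq:commutator:in:U}, which is exactly what you have written out. The only point worth flagging is that you correctly observe the commutator in $U=\prod_q U_q$ is computed componentwise and that $[u,u']\in\Gamma$ simply because $\Gamma$ is a group — both are needed and both are present.
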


Our goal now is to show
\begin{Proposition}\label{prop:Gam:inters:N:lattice}
Under the assumption of Theorem~\ref{th:inters:w:horosp:lattice}, the set $\Pi^+$ contains a lattice of $\R^\ell$.
\end{Proposition}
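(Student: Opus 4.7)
The plan is to reduce the statement to a single $\Z$-linear map. Fix an element $\eta^+_0 \in \Lambda^+$ (to be chosen) and consider
\[
\Phi \,:\, \Lambda^+ \longrightarrow \R^\ell, \qquad \eta'{}^+ \longmapsto \bigl(\omega_q(\eta^+_{0,q},\,\eta'{}^+_{q})\bigr)_{q\in \Ub^+_{d,\prim}}.
\]
Since $\omega_q$ is $\R$-bilinear, $\Phi$ is a group homomorphism, and by construction $\Phi(\Lambda^+)\subset \Pi^+$. It therefore suffices to produce a choice of $\eta^+_0$ for which $\Phi(\Lambda^+)$ is a lattice in $\R^\ell$, i.e.\ a discrete subgroup of rank $\ell$.

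First I would verify that the $\R$-linear extension $\Phi_\R\colon \Wcal^+ \to \R^\ell$ is surjective. Because $\Lambda^+$ spans $\Wcal^+$ as a real vector space (Corollary~\ref{cor:Gam:inters:U:proj:lattice} combined with Proposition~\ref{prop:orbits:basis:W}), we may choose $\eta^+_0\in\Lambda^+$ so that $\eta^+_{0,q}\neq 0$ for every $q\in \Ub^+_{d,\prim}$. For each such $q$, the Hermitian form $\langle\cdot,\cdot\rangle$ on $\Wbb^{(q)}$ is non-degenerate; its imaginary part $\omega_q$ is therefore a non-degenerate real symplectic form on $\Wbb^{(q)}$. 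Hence the real linear functional $\omega_q(\eta^+_{0,q},\cdot) : \Wbb^{(q)} \to \R$ is surjective. Since $\Wcal^+=\bigoplus_{q\in \Ub^+_{d,\prim}} \Wbb^{(q)}$ as a real vector space and each component of $\Phi_\R$ depends only on its own $q$-summand, $\Phi_\R$ is surjective.

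The remaining, and main, task is to show that $\Phi(\Lambda^+)$ is discrete; this is where the arithmetic nature of the intersection form enters. The decomposition $H^1(\hX,\C)_d=\bigoplus_{q\in \Ub_{d,\prim}} H^1(\hX)_q$ is defined over $K_d=\Q(\zeta_d)$: the spectral projectors are polynomials in $T^*$ with coefficients in $K_d$, so applying them to an integral basis of $H^1(\hX,\Z)$ produces a $K_d$-basis of $\bigoplus_q \Wbb^{(q)}$ coming from a single $K_d$-vector space $W$ via the embeddings $\sigma_q\colon \zeta_d\mapsto q$. In this basis, the Hermitian forms on the $\Wbb^{(q)}$'s are the Galois conjugates $\sigma_q(h)$ of a single $K_d$-valued Hermitian form $h$ on $W$, so one has, for some fixed nonzero real constant $c$,
\[
\omega_q(\eta^+_{0,q},\,\eta'{}^+_{q})\;=\;c\,\Im\bigl(\sigma_q(h(v_0,v))\bigr),
\]
where $v_0,v\in W$ are the elements underlying $\eta^+_0$ and $\eta'{}^+$. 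Up to commensurability, $\Lambda^+$ corresponds to an $O_{K_d}$-lattice $L\subset W$: this must be checked from the construction of $\Lambda^+$ as the $\Z$-span of orbits under $\rho_d(\PB_m)$ and $\rho_d(\PB_{m+1,n})$, whose matrices in the chosen basis have entries in $K_d$, so that clearing denominators produces an $O_{K_d}$-module commensurable with $\Lambda^+$. As $v$ runs over $L$, $h(v_0,v)$ sweeps out an $O_{K_d}$-submodule $\mathfrak{a}\subset K_d$; nondegeneracy of $h$ forces $\mathfrak{a}$ to be a nonzero fractional ideal, hence a full-rank $\Z$-lattice of rank $2\ell$ in $K_d$. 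Finally, the $\Q$-linear map
\[
\Theta \,:\, K_d \longrightarrow \R^\ell, \qquad \alpha \longmapsto \bigl(\Im\sigma_q(\alpha)\bigr)_{q\in \Ub^+_{d,\prim}},
\]
has kernel equal to the totally real subfield $K_d^+=\Q(\zeta_d+\zeta_d^{-1})$ of $\Q$-dimension $\ell$; a standard Minkowski-type argument, using that $K_d$ is a CM field and that $\Theta$ is essentially the \emph{imaginary} half of the Minkowski embedding, shows that $\Theta$ sends any full-rank $\Z$-lattice of $K_d$ to a rank-$\ell$ lattice in $\R^\ell$. Combining this with the surjectivity from paragraph two, $\Phi(\Lambda^+)=c\cdot\Theta(\mathfrak{a})$ is a lattice in $\R^\ell$, finishing the proof. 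The main obstacle is precisely the book-keeping in paragraph three: making rigorous the compatibility between the lattice $\Lambda^+$, obtained from braid-group orbits in \textsection\ref{subsec:inters:horosp}, and the natural $O_{K_d}$-integral structure coming from $H^1(\hX,\Z)$, so that one may freely use the language of fractional ideals and the Minkowski embedding on the arithmetic side.
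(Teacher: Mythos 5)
Your route is correct in outline but genuinely different from the paper's. The paper works ``by hand'': it fixes one $q$, shows the projected family $\Fcal^+_q$ is a $\Q$-basis of $K_d^{n-4}$, uses the $L_d$-bilinearity of $\omega_q$ to extract from $\Pi^+_q$ a $\Q$-basis $(a_q\lambda_1,\dots,a_q\lambda_\ell)$ of $L_d$, transports these across $\Ub^+_{d,\prim}$ by Galois action (Lemma~\ref{lm:values:H:conjugate}) to produce explicit vectors $\kappa_1,\dots,\kappa_\ell\in\Pi^+$, and finally proves their $\R$-independence by a trace/averaging argument over $\mathrm{Gal}(L_d/\Q)$. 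You instead package the whole thing as a single $\Z$-linear map $\Phi=\omega_\bullet(\eta_0^+,\cdot)$, prove surjectivity of $\Phi_\R$ from nondegeneracy of the symplectic forms $\omega_q$, and then use the $K_d$-rational structure plus a Minkowski-type embedding to get discreteness. The two arguments buy different things: the paper's is elementary (no ideals, no Minkowski theory) and self-contained, while yours is more conceptual and makes the role of $K_d$ as a CM field over $L_d$ transparent.

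Three remarks on the details you would still need to fix. First, the constant in $\omega_q(\eta^+_{0,q},\eta'{}^+_q)=c\,\Im\bigl(\sigma_q(h(v_0,v))\bigr)$ cannot be a single real scalar: by Lemma~\ref{lm:unipotent:rad:prop} there is a factor $\mu_q=(1-q)(1-\bar q)$ which depends on $q$. That is harmless because $\mu_q=\sigma_q\bigl((1-\zeta_d)(1-\zeta_d^{-1})\bigr)\in L_d$, so it can be absorbed into the $K_d$-Hermitian form $h$, but as written the formula is incorrect. Second, the ``main obstacle'' you flag can actually be sidestepped: you do not need to upgrade $\Lambda^+$ to an $O_{K_d}$-module. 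Once you know that (for a fixed $q$) the $q$-components of your generators form a $\Q$-basis of $K_d^{n-4}$---which is precisely the content of the paper's Claim~\ref{clm:Q:basis:K:d:n-4}---then $h(v_0,\Lambda^+_q)$ already spans $K_d$ over $\Q$, hence is a full-rank $\Z$-lattice in $K_d$; and since any two full-rank $\Z$-lattices in $K_d$ are commensurable, the fractional-ideal language is superfluous. Third, the statement that ``$\Theta$ sends any full-rank $\Z$-lattice of $K_d$ to a rank-$\ell$ lattice in $\R^\ell$'' is true but deserves a sentence: projecting a lattice onto a subspace is in general not discrete; here it works because $\ker\Theta=L_d$ meets any full-rank lattice $\mathfrak{a}$ in a rank-$\ell$ sublattice, so $\Theta(\mathfrak{a})\simeq\mathfrak{a}/(\mathfrak{a}\cap L_d)$ is a free abelian group of rank exactly $\ell$ that spans $\R^\ell$, and such a group is automatically a lattice. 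With these repairs your proof closes; what you identified as the hard part is, in effect, the paper's Claim~\ref{clm:Q:basis:K:d:n-4}.
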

For the proof of Proposition~\ref{prop:Gam:inters:N:lattice}, we first need the following lemmas.
\begin{Lemma}\label{lm:coeff:H:q:in:K:d}
For all $q\in \Ub_{d,\, {\rm prim}}$, we have  $\imath\cdot H_q \in \Mb_{n-4}(K_d)$, where $K_d=\Q(\zeta_d)$.
\end{Lemma}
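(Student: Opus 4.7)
The plan is to read off the entries of $H_q$ directly from the explicit formulas in Theorem~\ref{th:Menet:generator:set} and to observe that each entry is the product of $\imath$ with an element of $K_d = \Q(\zeta_d)$.

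First, by Lemma~\ref{lm:bases:decomp}, the distinguished basis of $\Wbb^{(q)} = \Vbb^{(q)}_{m-1} \oplus \Wbb^{(q)}_{m+1}$ is $(g_1, \dots, g_{m-2}, g_{m+2}, \dots, g_{n-1})$. So the entries of $H_q$ are the inner products $\langle g_i, g_j \rangle$ for $i, j \in I := \{1, \dots, m-2\} \cup \{m+2, \dots, n-1\}$. Second, since $q$ is a primitive $d$-th root of unity, both $q$ and $\bar{q} = q^{-1}$ lie in $K_d$; moreover, since $0 < k_i < d$ for each $i$, we have $q^{k_i} \neq 1$, and similarly $q^{k_i + k_{i+1}}$ is a well-defined element of $K_d$ whose only role in denominators is through the nonzero factors $1 - q^{k_i}$.

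Now we simply apply Theorem~\ref{th:Menet:generator:set}: for $i,j \in I$ with $|j-i| > 1$ the entry is $0 \in K_d$; for $i \in I$ we have
\[
\langle g_i, g_i \rangle \;=\; -\imath\,(1-q)(1-\bar q)\,\frac{1 - q^{k_i + k_{i+1}}}{(1 - q^{k_i})(1 - q^{k_{i+1}})},
\]
which lies in $\imath \cdot K_d$ because $(1-q)(1-\bar q), 1 - q^{k_i + k_{i+1}} \in K_d$ and $(1-q^{k_i})^{-1}, (1-q^{k_{i+1}})^{-1} \in K_d$; and for consecutive $i, i+1 \in I$,
\[
\langle g_i, g_{i+1} \rangle \;=\; \imath\,(1-q)(1-\bar q)\,\frac{1}{1 - q^{k_{i+1}}} \;\in\; \imath \cdot K_d
\]
by the same reasoning. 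Multiplying any such entry by $\imath$ and using $\imath^2 = -1 \in K_d$ shows that $\imath \cdot H_q$ has all entries in $K_d$, as claimed.

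There is no real obstacle here; this is pure bookkeeping with the formulas of Theorem~\ref{th:Menet:generator:set}. The only tiny point requiring attention is the pair of indices $(m-2, m+2)$ which are consecutive in the enumeration of the basis but not in the original indexing — however $|(m+2) - (m-2)| = 4 > 1$, so the corresponding entry vanishes and poses no issue.
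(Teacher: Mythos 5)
Your proof is correct and follows exactly the same route as the paper's one-line proof (which simply cites Theorem~\ref{th:Menet:generator:set}); you have merely spelled out the bookkeeping that the authors leave implicit, including the harmless observation that $\langle g_{m-2},g_{m+2}\rangle=0$ since $|(m+2)-(m-2)|=4>1$.
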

\begin{proof}
Since the coefficients of $H_q$ is given by $\langle g_i, g_j\rangle$, with $i, j \in \{1,\dots,m-2,m+2,\dots,n-1\}$, the lemma follows immediately from Theorem~\ref{th:Menet:generator:set}.
\end{proof}

Let $L_d:=K_d\cap\R$. Then $L_d=\Q(\cos(\frac{2\pi}{d}))$ is totally real number field of degree $\ell$ over $\Q$.
\begin{Lemma}\label{lm:values:inters:form:in:L}
For all $q\in \Ub_{d,\prim}$, $x, y \in (K_d)^{n-4}$, we have
$\omega_q(x,y)\in L_d$.
\end{Lemma}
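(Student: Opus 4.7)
The plan is to deduce everything from Lemma~\ref{lm:coeff:H:q:in:K:d}, the formula $\mu=(1-q)(1-\bar q)$, and the observation that $K_d=\Q(\zeta_d)$ is a CM field, hence stable under complex conjugation (because $\bar\zeta_d=\zeta_d^{-1}\in K_d$), with $L_d$ as its maximal totally real subfield.

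First I would rewrite $H_q^{-1}$ in a convenient form. By Lemma~\ref{lm:coeff:H:q:in:K:d}, $M:=\imath H_q \in \Mb_{n-4}(K_d)$, and $M$ is invertible since $H_q$ is (the restriction of $\langle .,.\rangle$ to $\Wbb^{(q)}$ is non-degenerate by Lemma~\ref{lm:0:mod:d:direct:sum}). Consequently
\[
H_q^{-1}=(\imath M/\imath)^{-1}=\imath\cdot M^{-1}, \qquad M^{-1}\in \Mb_{n-4}(K_d).
\]

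Next I would plug this into the definition. Given $x,y\in (K_d)^{n-4}$, the vector $\bar y$ again lies in $(K_d)^{n-4}$ by stability of $K_d$ under conjugation, so
\[
\alpha\;:=\;{}^t x\cdot M^{-1}\cdot\bar y\;\in\;K_d.
\]
Therefore ${}^t x\cdot H_q^{-1}\cdot\bar y=\imath\alpha$, and
\[
\Im\bigl({}^t x\cdot H_q^{-1}\cdot\bar y\bigr)=\Im(\imath\alpha)=\Re(\alpha)=\tfrac{\alpha+\bar\alpha}{2}.
\]
Since $\alpha\in K_d$ and $K_d$ is stable under complex conjugation, $\Re(\alpha)$ is fixed by conjugation and hence belongs to $L_d=K_d\cap\R$.

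Finally, $\mu=(1-q)(1-\bar q)=2-(q+\bar q)\in L_d$ since $q+\bar q=2\Re(q)\in L_d$. Combining everything,
\[
\omega_q(x,y)=2\mu\cdot\Re(\alpha)\in L_d,
\]
which is the claim. There is no real obstacle here: the argument is a direct unwinding of the definitions once Lemma~\ref{lm:coeff:H:q:in:K:d} has identified the arithmetic nature of $\imath H_q$. The only point to be careful about is that one must multiply $H_q^{-1}$ (and not $H_q$) by $\imath$ to land in $K_d$-coefficients, which is what allows the extra factor of $\imath$ to cancel the imaginary part operation and land the final answer in the real subfield $L_d$.
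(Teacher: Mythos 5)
Your proof is correct and takes essentially the same route as the paper: both rest on $\imath H_q^{-1}\in \Mb_{n-4}(K_d)$ (from Lemma~\ref{lm:coeff:H:q:in:K:d}) together with stability of $K_d$ under complex conjugation, and then land in $L_d=K_d\cap\R$ because the expression is manifestly real. (The intermediate rewriting $H_q^{-1}=(\imath M/\imath)^{-1}$ is a small typo — you mean $H_q=M/\imath$, hence $H_q^{-1}=\imath M^{-1}$ — but the conclusion you draw is the right one.)
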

\begin{proof}
Since $\omega_q(x,y)$ is a real number, we only need to show that $\omega_q(x,y)\in K_d$. We can write
\begin{align*}
\omega_q(x,y) &=(1-q)(1-\bar{q})\cdot\imath\cdot({}^ty\cdot H_q^{-1}\cdot\bar{x} - {}^tx\cdot H_q^{-1}\cdot\bar{y})\\
&= (1-q)(1-\bar{q})\cdot ({}^ty\cdot(\imath\cdot H_q^{-1})\cdot\bar{x}- {}^tx\cdot(\imath\cdot H_q^{-1})\cdot\bar{y}).
\end{align*}
Since $\imath\cdot H_q$ is a matrix with coefficients in $K_d$ and the complex conjugation preverses $K_d$ the lemma follows.
\end{proof}

\begin{Lemma}\label{lm:values:H:conjugate}
For all $q \in \Ub_{d,\prim}$, $x, y \in (K_d)^{n-4}$, and $\sigma\in\mathrm{Gal}(K_d/\Q)$, we have
$$
\omega_{\sigma(q)}(\sigma(x),\sigma(y))=\sigma(\omega_q(x,y)).
$$
\end{Lemma}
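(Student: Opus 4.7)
The natural object to work with is not $H_q^{-1}$ itself but $A(q) := \imath\,H_q^{-1}$, which by Lemma~\ref{lm:coeff:H:q:in:K:d} (together with the explicit formulas of Theorem~\ref{th:Menet:generator:set}) has entries in $K_d$. Using $\mathrm{Im}(z) = (z-\bar z)/(2\imath)$ and substituting $H_q^{-1} = -\imath A(q)$ in the definition of $\omega_q$, a short direct computation (as in the proof of Lemma~\ref{lm:values:inters:form:in:L}) gives the ``$K_d$-rational'' formula
$$
\omega_q(x,y) \;=\; -\mu(q)\bigl({}^tx\cdot A(q)\cdot \bar{y} \;+\; {}^t\bar{x}\cdot\overline{A(q)}\cdot y\bigr), \qquad \mu(q) := (1-q)(1-\bar q),
$$
valid in $\R$, in which every factor belongs to $K_d$.

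The plan is then to observe two Galois-equivariance statements and apply $\sigma$ termwise to the boxed identity. First, I would check that for any $\sigma \in \mathrm{Gal}(K_d/\Q)$, acting entrywise on matrices, one has $\sigma(\imath H_q) = \imath H_{\sigma(q)}$, and consequently $\sigma(A(q)) = A(\sigma(q))$. This is immediate from the formulas of Theorem~\ref{th:Menet:generator:set}: each nonzero entry of $\imath H_q$ is of the shape $\pm(1-q)(1-\bar q)\cdot R(q,\bar q)$ with $R \in \Q(t,t^{-1})$, so the entries are rational functions of $q$ with rational coefficients; applying $\sigma$ simply evaluates the same rational functions at $\sigma(q)$ in place of $q$, and inversion commutes with $\sigma$ because $\sigma$ is a ring automorphism. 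Second, since $\mathrm{Gal}(K_d/\Q)$ is abelian and complex conjugation $c$ is one of its elements, $\sigma$ commutes with $c$ on $K_d$; so $\sigma(\bar x) = \overline{\sigma(x)}$ componentwise, $\sigma(\mu(q)) = \mu(\sigma(q))$, and $\sigma(\overline{A(q)}) = \overline{A(\sigma(q))}$.

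Putting these together and applying $\sigma$ to the boxed identity yields
$$
\sigma(\omega_q(x,y)) \;=\; -\mu(\sigma(q))\bigl({}^t\sigma(x)\cdot A(\sigma(q))\cdot \overline{\sigma(y)} + {}^t\overline{\sigma(x)}\cdot \overline{A(\sigma(q))}\cdot \sigma(y)\bigr) \;=\; \omega_{\sigma(q)}(\sigma(x),\sigma(y)),
$$
which is the claimed identity. The only point requiring care (and hardly an obstacle) is the Galois-equivariance $\sigma(\imath H_q) = \imath H_{\sigma(q)}$: since $H_q$ is defined for \emph{every} primitive $d$-th root of unity $q$ by the same universal rational expressions in $q$ from Theorem~\ref{th:Menet:generator:set}, there is no basis-change issue, and the verification reduces to noting that those expressions have integer coefficients.
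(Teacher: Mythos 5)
Your argument is correct and follows essentially the same route as the paper's proof: both hinge on (i) passing to the $K_d$-rational matrix $\imath H_q^{-1}$ via Lemma~\ref{lm:coeff:H:q:in:K:d}, (ii) the Galois-equivariance $\sigma(\imath H_q)=\imath H_{\sigma(q)}$ coming from the universal rational expressions of Theorem~\ref{th:Menet:generator:set}, and (iii) the fact that $\sigma$ commutes with complex conjugation. The only difference is cosmetic: you use a symmetrized form of the $K_d$-rational expression for $\omega_q(x,y)$, whereas the paper uses the antisymmetric expression $(1-q)(1-\bar q)\bigl({}^ty\,(\imath H_q^{-1})\,\bar x - {}^tx\,(\imath H_q^{-1})\,\bar y\bigr)$; the two agree and the ensuing Galois argument is identical.
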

\begin{proof}
From Theorem~\ref{th:Menet:generator:set} we get that $\imath\cdot H_q\in \Mb_{n-4}(K_d)$, and $\imath\cdot H_{\sigma(q)}=\sigma(\imath \cdot H_q)$. The lemma follows from the fact that
$$
\omega_q(x,y)=(1-q)(1-\bar{q})\cdot ({}^ty\cdot(\imath\cdot H_q^{-1})\cdot\bar{x}- {}^tx\cdot(\imath\cdot H_q^{-1})\cdot\bar{y})
$$
and that complex conjugation comutes with $\sigma$.
\end{proof}

\subsubsection{Proof of Proposition~\ref{prop:Gam:inters:N:lattice}}\label{subsec:Gam:inters:N:lattice}
\begin{proof}
Recall that by assumption we have $n\geq 5$. 
Let $\{\nu_1,\dots,\nu_{2\ell(n-4)}\}$ be the vectors in the $\R$-basis $\Fcal^+$ of $\Wcal^+$. 
Let us  pick a primitive $d$-th root of unity $q\in \Ub^+_{d,\prim}$. Let $\Fcal_{q}^+:=\{\nu^+_{1,q},\dots,\nu^+_{2\ell(n-4),q}\}$, where $\nu_{i,q}$ is the projection of $\nu_i$ in the $\Wcal^{(q)}$ factor of $\Wcal^+$.
We use the basis $\Hcal^{(q)}=(g^*_1\otimes w, \dots, g^*_{m-2}\otimes w,g^*_{m+2}\otimes w, \dots,g^*_{n-1}\otimes w)$ of $\Wcal^{(q)}$ to identify $\nu^+_{i,q}$ with a vector in $(K_d)^{n-4}$.

\begin{Claim}\label{clm:Q:basis:K:d:n-4}
The family $\Fcal^+_q$ is a basis of $(K_d)^{n-4}$ over $\Q$.
\end{Claim}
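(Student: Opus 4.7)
The plan is to combine a dimension count with Galois equivariance. Since $[K_d:\Q]=\varphi(d)=2\ell$, one has $\dim_\Q (K_d)^{n-4}=2\ell(n-4)=|\Fcal^+|=|\Fcal^+_q|$, so it will suffice to check that $\Fcal^+_q$ is contained in $(K_d)^{n-4}$ and that its elements are linearly independent over $\Q$.

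For the $K_d$-rationality I would argue that each $\nu_i$ equals $\chi^+(\rho_d(\alpha_i))$ for some $\alpha_i\in\PB_m\cup\PB_{m+1,n}\subset\PB_n$: the matrices of the generators $\rho_q(\alpha_{s,t})$ in $\Gcal^{(q)}$ have entries in $K_d$ by Theorem~\ref{th:Menet:alphaij}, hence so does the matrix of $\rho_q(\alpha_i)$. Since the basis $\Hcal^{(q)}$ of $\Wcal^{(q)}$ is dual to the $K_d$-rational basis $(g_1,\dots,g_{m-2},g_{m+2},\dots,g_{n-1})$ of $\Wbb^{(q)}$, the coordinates of $\nu^+_{i,q}$ in $\Hcal^{(q)}$ lie in $K_d$.

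For the linear independence over $\Q$, I would exploit the Galois equivariance already noted just before Lemma~\ref{lm:proj:Zar:dense}: for every $\sigma\in\mathrm{Gal}(K_d/\Q)$ and every $\alpha\in\PB_n$, the matrix of $\rho_{\sigma(q)}(\alpha)$ in the basis $\Gcal^{(\sigma(q))}$ is obtained by applying $\sigma$ entry-wise to the matrix of $\rho_q(\alpha)$ in $\Gcal^{(q)}$. Writing $\nu_{i,q'}$ for the $q'$-component of $\chi_{q'}(\rho_{q'}(\alpha_i))$ expressed in $\Hcal^{(q')}$, extended to all $q'\in\Ub_{d,\prim}$, this gives $\sigma(\nu_{i,q})=\nu_{i,\sigma(q)}$ as vectors in $(K_d)^{n-4}$. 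Now starting from a hypothetical $\Q$-linear relation $\sum_i c_i\nu^+_{i,q}=0$, applying each $\sigma$ and using the transitivity of $\mathrm{Gal}(K_d/\Q)$ on $\Ub_{d,\prim}$ would yield $\sum_i c_i\nu_{i,q'}=0$ for every $q'\in\Ub_{d,\prim}$. Restricting to $q'\in\Ub^+_{d,\prim}$ gives $\sum_i c_i\nu^+_i=0$ in $\Wcal^+$, and since $\Fcal^+$ was chosen to be an $\R$-basis of $\Wcal^+$ (and the $c_i$ are in particular real) this forces all $c_i$ to vanish.

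The main point one has to verify carefully is the equivariance $\sigma(\nu_{i,q})=\nu_{i,\sigma(q)}$, and in particular that the morphism $\chi_q$ and the basis $\Hcal^{(q)}$ behave correctly under the Galois twist $q\mapsto\sigma(q)$. This should be essentially automatic once one recalls that the filtration defining $U_q$, the distinguished vector $w$, and the matrix $H_q$ are all built from the $K_d$-rational family $(g_1,\dots,g_n)$ via the explicit formulas of Theorem~\ref{th:Menet:generator:set}, whose coefficients depend polynomially on $q$ and $\bar q$; the remaining verifications are bookkeeping.
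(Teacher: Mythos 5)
Your proof is correct and follows essentially the same route as the paper: a dimension count $\dim_\Q(K_d)^{n-4}=2\ell(n-4)=|\Fcal^+_q|$, reduction to $\Q$-linear independence, and the Galois-equivariance $\sigma(\nu^+_{i,q})=\nu^+_{i,\sigma(q)}$ to transport a hypothetical relation at $q$ to every $q'\in\Ub^+_{d,\prim}$, contradicting that $\Fcal^+$ is an $\R$-basis of $\Wcal^+$. The extra care you take to justify $K_d$-rationality of the $\nu^+_{i,q}$ and the equivariance (via the explicit formulas of Theorem~\ref{th:Menet:alphaij}/\ref{th:Menet:generator:set} and the conjugation formula $\Rcal^+\circ\rho_d(\alpha)(\chi^+(\rho_d(\tau)))=\chi^+(\rho_d(\alpha\tau\alpha^{-1}))$) is precisely what the paper compresses into ``by construction.''
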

\begin{proof}
We have $[K_q:\Q]=\varphi(d)=2\ell$. Therefore $(K_d)^{n-4}$ is a $\Q$-vector space of dimension $2\ell(n-4)$. All we need to show is that the family $\{\nu^+_{1,q},\dots,\nu^+_{2\ell(n-4),q}\}$ is independent over $\Q$. Assume that there is a non-trivial relation $\sum_{i=1}^{2\ell(n-4)} a_i\nu^+_{i,q}=0$, with $a_i\in \Q, \; i=1,\dots,2\ell(n-4)$. For all $q'\in \Ub^+_{d,\prim}$ there is a unique Galois automorphism $\sigma\in \mathrm{Gal}(K_d/\Q)$ such that $q'=\sigma(q)$. By construction, we also have $\nu^+_{i,q'}=\sigma(\nu^+_{i,q})$. Thus for all $q'\in \Ub^+_{d,\prim}$, we have
$$
\sum_{i=1}^{2\ell(n-4)}a_i\nu^+_{i,q'}=0,
$$
which means that $\sum_{i=1}^{2\ell(n-4)}a_i\nu^+_i=0$. Since $\{\nu^+_1,\dots,\nu^+_{2\ell(n-4)}\}$ is an $\R$-basis of $\Wcal^+$, this is impossible. The claim is then proved.
\end{proof}

\begin{Claim}\label{clm:values:Im:H:on:Lambda:q}
Let $\Lambda^+_q \subset (K_d)^{n-4}$ denote the subgroup  generated by $\Fcal^+_q$.
Then the set
$$
\Pi^+_q:=\{\omega_q(\nu^+_{q},\nu'{}^+_{q}), \nu^+_q, \nu'_q{}^+ \in \Lambda^+_q\} \subset L_d
$$
contains a  basis of $L_d$ over $\Q$.
\end{Claim}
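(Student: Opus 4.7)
The plan is to show that the $\Q$-subspace $V_q := \Span_\Q \Pi^+_q$ of $L_d$ is all of $L_d$; the claim then follows because any spanning set of a finite-dimensional vector space contains a basis. First, by the preceding Claim the family $\Fcal^+_q$ is a $\Q$-basis of $(K_d)^{n-4}$, so $\Lambda^+_q$ spans $(K_d)^{n-4}$ as a $\Q$-vector space, and the $\Q$-bilinearity of $\omega_q$ gives
\[
V_q \;=\; \Span_\Q \omega_q\bigl((K_d)^{n-4}\times(K_d)^{n-4}\bigr).
\]

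The key observation I would exploit is that $\omega_q$ is actually $L_d$-bilinear, not merely $\Q$-bilinear. Indeed, for any $\lambda \in L_d \subset \R$ one has $\bar\lambda = \lambda$, so
\[
\omega_q(\lambda x,y) \;=\; 2\mu\,\Im\bigl(\lambda\,{}^t x\, H_q^{-1}\bar y\bigr) \;=\; \lambda\,\omega_q(x,y),
\]
and analogously in the second argument. Consequently $V_q$ is stable under multiplication by $L_d$, i.e.\ is an $L_d$-submodule of the field $L_d$. Hence $V_q$ is either $\{0\}$ or all of $L_d$, and everything reduces to exhibiting a single nonzero value of $\omega_q$ on $(K_d)^{n-4}\times(K_d)^{n-4}$.

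This last step is a direct computation using Theorem~\ref{th:Menet:generator:set}. When $n-4 \geq 2$, the formulas there show that the off-diagonal entries $\langle g_i, g_{i+1}\rangle$ of $H_q$ have strictly positive imaginary part, so $H_q$ is not a real matrix; hence neither is $H_q^{-1}$, as the inverse of a real matrix is real. Being Hermitian, $H_q^{-1}$ has real diagonal, so some off-diagonal entry $H_q^{-1}[i,j]$ has nonzero imaginary part, which gives $\omega_q(g_i,g_j) = 2\mu\,\Im(H_q^{-1}[i,j]) \neq 0$ for the corresponding standard basis vectors of $(K_d)^{n-4}$. In the edge case $n = 5$, where $\dim \Wbb^{(q)} = 1$ and $H_q$ reduces to a nonzero real scalar $h$, one uses instead $\omega_q(1,\zeta_d) = -2\mu\,h^{-1}\sin(2\pi/d) \neq 0$ with $1, \zeta_d \in K_d$. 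Either way $V_q \neq \{0\}$, so $V_q = L_d$ and the claim follows.

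I do not anticipate any serious obstacle. The entire argument is a short assembly of the $L_d$-bilinearity of $\omega_q$, which is automatic from $L_d \subset \R$, with the non-degeneracy of the Heisenberg commutator provided explicitly by the formulas of Theorem~\ref{th:Menet:generator:set}. The only place where mild care is required is the case $n=5$, where a pair of standard basis vectors does not suffice and one has to multiply by $\zeta_d$ instead.
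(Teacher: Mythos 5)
Your overall strategy is essentially the paper's: show that $\omega_q$, being $L_d$-bilinear, takes a nonzero value on $\Lambda^+_q \times \Lambda^+_q$, and then promote this single value to a $\Q$-basis of $L_d$. The paper does the promotion concretely (clearing denominators so that $\lambda_s\cdot\nu^+_{i,q}\in\Lambda^+_q$, giving $\{a_q\lambda_s\}_s\subset\Pi^+_q$), whereas you package it abstractly as ``$V_q := \Span_\Q\Pi^+_q$ is an $L_d$-submodule of the field $L_d$, hence $\{0\}$ or $L_d$.'' Both versions of that step are fine and are really the same idea.

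However, your step exhibiting a nonzero value has a concrete error. The basis of $\Wbb^{(q)}$ in which $H_q$ is written is $(g_1,\dots,g_{m-2},g_{m+2},\dots,g_{n-1})$, which \emph{skips} the indices $m-1,m,m+1$. Consequently $\langle g_i,g_{i+1}\rangle$ is an entry of $H_q$ only when $i$ and $i+1$ both lie in $\{1,\dots,m-2\}$ or both in $\{m+2,\dots,n-1\}$. In the case $m=3$, $n=6$ --- which is permitted under the hypotheses of Theorem~\ref{th:inters:w:horosp:lattice}, for instance $\kappa=(1,1,1,1,1,1)$, $d=3$ --- the basis is $(g_1,g_5)$; since $|5-1|>1$ we have $\langle g_1,g_5\rangle=0$, so $H_q$ is a real diagonal $2\times2$ matrix, and your claim that $H_q$ is non-real fails. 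Here $n-4=2$, so this falls under your ``generic'' branch, not your $n=5$ edge case, and the argument as written breaks. The repair is the same trick you reserve for $n=5$: $H_q^{-1}$ has some nonzero entry $H_q^{-1}[i,j]$ (it is invertible); if $\Im(H_q^{-1}[i,j])\neq 0$ take $\omega_q(e_i,e_j)$, and if $H_q^{-1}[i,j]$ is real take $\omega_q(e_i,\zeta_d e_j)=2\mu\,H_q^{-1}[i,j]\,\Im(\bar\zeta_d)\neq 0$. This is precisely the computation that shows $\omega_q$ is a non-degenerate $L_d$-bilinear form, which is what the paper's proof silently relies on when it says that the non-degeneracy of $H_q$ yields a pair with $\omega_q(\nu^+_{i,q},\nu^+_{j,q})\neq 0$. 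So the proposal is repairable, but as stated it has a genuine hole in the case $m=3$, $n=6$.
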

\begin{proof}
We consider $K_d^{n-4}$ and an $L_d$-vector space and $\omega_q(.,.)$ as an $L_d$-bilinear form on $(K_d)^{n-4}$ taking values in $L_d$.
Since $H_q$ is non-degenerate, and $\Fcal_q^+$  contains an $L_d$-basis of $(K_d)^{n-4}$ by Claim~\ref{clm:Q:basis:K:d:n-4}, there exist $\nu^+_{i,q}, \nu^+_{j,q}$ in $\Fcal^+_q$ such that $a_q:=\omega_q(\nu^+_{i,q},\nu^+_{j,q})\in L_d^*$.

Let $(\lambda_{1},\dots,\lambda_{\ell})$ be a basis of $L_d$ over $\Q$.
Since $\{\nu^+_{1,q},\dots,\nu^+_{2\ell(n-4),q}\}$ is a basis of $K_d^{n-4}$ over $\Q$,
for all $s\in \{1,\dots,\ell\}$, we can write
$$
\lambda_s\cdot\nu^+_{i,q}=\sum_{k=1}^{2\ell(n-4)}r_{k,s}\cdot\nu^+_{k,q},
$$
with $(r_{1,s},\dots,r_{2\ell(n-4),s}) \in \Q^{2\ell(n-4)}$.
Thus by multiplying $\lambda_{s}$ by an integer, we can assume that $\lambda_{s}\cdot\nu^+_{i,q}$ is an element of $\Lambda^+_q$.
Note that if one multiplies each member of the family $(\lambda_1,\dots,\lambda_\ell)$ by an integer, this family remains a basis of $L_d$ over $\Q$.
We now have
$$
\omega_q(\lambda_s\cdot\nu^+_{i,q},\nu^+_{j,q})=\lambda_s\cdot\omega_q(\nu^+_{i,q},\nu^+_{j,q})=\lambda_{s}\cdot a_q.
$$
Since $(\lambda_{1},\dots,\lambda_{\ell})$ is a basis of $L_d$ over $\Q$, so is $(a_q\cdot\lambda_1,\dots,a_q\cdot\lambda_\ell)$.
\end{proof}

Note that every element of $\mathrm{Gal}(K_d/\Q)$ leaves invariant $L_d$ (since the complex conjugation preserves $K_d$ and commutes with all elements of $\mathrm{Gal}(K_d/\Q)$). Moreover, every element of $\mathrm{Gal}(L_d/\Q)$ is the restriction of some elements of $\mathrm{Gal}(K_d/\Q)$ to $L_d$.

Recall that
$$
\Pi^+:=\{(\omega_{q'}(\eta_{q'},\eta'_{q'}))_{q'\in \Ub^+(d, \prim)}, \; \text{ with } \; \eta=(\eta_{q'}) \in \Lambda^+, \, \eta'=(\eta'_{q'}) \in \Lambda^+\} \subset L_d^{\ell}.
$$
Let $a_q$ and $(\lambda_1,\dots,\lambda_\ell)$ be as in Claim~\ref{clm:values:Im:H:on:Lambda:q}.
For $s=1,\dots,\ell$, we define $\kappa_s:=(\kappa_{s,q'})_{q'\in \Ub^+_{d,\prim}} \in L_d^\ell$ as follows: for each $q'\in \Ub^+_{d,\prim}$, $\kappa_{s,q'}=\sigma(\lambda_{s}\cdot a_q)$ where $\sigma$ is the element of $\mathrm{Gal}(K_d/\Q)$ such that $q'=\sigma(q)$.
Since each member of the family $\{a_q\cdot\lambda_{1}, \dots,a_q\cdot\lambda_\ell\}$ is equal to $\omega_q(\eta^+_q,\eta'_{q}{}^+)$ for some $\eta^+_q$ and $\eta'_q{}^+$ in the lattice $\Lambda^+_q$, it follows from Lemma~\ref{lm:values:H:conjugate} that  the vectors $\kappa_1,\dots,\kappa_\ell$ are contained in $\Pi^+$.

\begin{Claim}\label{clm:lattice:in:N}
The family $(\kappa_1,\dots,\kappa_\ell)$ is a basis of $\R^\ell$.
\end{Claim}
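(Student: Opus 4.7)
The plan is to recognize $(\kappa_1,\dots,\kappa_\ell)$ as the image of a $\Q$-basis of $L_d$ under the Minkowski (real) embedding of the totally real field $L_d$ into $\R^\ell$, and then invoke the standard non-vanishing of the discriminant.

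First I would set up the dictionary between $\Ub^+_{d,\prim}$ and $\mathrm{Gal}(L_d/\Q)$. Since $d\geq 3$, complex conjugation is the unique non-trivial element of $\mathrm{Gal}(K_d/L_d)$, so the restriction map $\mathrm{Gal}(K_d/\Q)\to \mathrm{Gal}(L_d/\Q)$ is a surjection whose fibers are the pairs $\{\sigma,c\circ\sigma\}$, where $c$ denotes complex conjugation. For any $\sigma\in\mathrm{Gal}(K_d/\Q)$ we have $c(\sigma(q))=\overline{\sigma(q)}$, so exactly one member of each such pair sends $q$ into $\Ub^+_{d,\prim}$. Consequently the correspondence $q'\mapsto \sigma_{q'}|_{L_d}$, where $\sigma_{q'}$ is the unique element of $\mathrm{Gal}(K_d/\Q)$ with $\sigma_{q'}(q)=q'$, is a bijection between $\Ub^+_{d,\prim}$ and $\mathrm{Gal}(L_d/\Q)$. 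Writing $\tau_1,\dots,\tau_\ell$ for the list of embeddings $L_d\hookrightarrow \R$ obtained in this way, the definition of $\kappa_s$ becomes
\[
\kappa_s=\bigl(\tau_1(a_q\lambda_s),\dots,\tau_\ell(a_q\lambda_s)\bigr)\in\R^\ell,\qquad s=1,\dots,\ell.
\]

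Next I would observe that $(a_q\lambda_1,\dots,a_q\lambda_\ell)$ is a $\Q$-basis of $L_d$: we picked $a_q\in L_d^*$ in Claim~\ref{clm:values:Im:H:on:Lambda:q}, and $(\lambda_1,\dots,\lambda_\ell)$ is a $\Q$-basis by construction, so multiplication by $a_q$ is a $\Q$-linear automorphism of $L_d$. Thus proving the claim reduces to the assertion that the matrix $M:=(\tau_i(a_q\lambda_j))_{1\leq i,j\leq \ell}$ is invertible.

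The final step is the standard number-theoretic input: for any $\Q$-basis $(\beta_1,\dots,\beta_\ell)$ of a number field $L$ of degree $\ell$ with embeddings $\tau_1,\dots,\tau_\ell$ into $\C$, the matrix $(\tau_i(\beta_j))$ is invertible, since its determinant squared equals the discriminant $\mathrm{disc}(\beta_1,\dots,\beta_\ell)\neq 0$. Applied to $L=L_d$ (which is totally real, so all embeddings land in $\R$) and $\beta_j=a_q\lambda_j$, this gives $\det M\neq 0$, hence $(\kappa_1,\dots,\kappa_\ell)$ is an $\R$-basis of $\R^\ell$.

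There is no substantive obstacle: the only thing to verify carefully is the bijection between $\Ub^+_{d,\prim}$ and $\mathrm{Gal}(L_d/\Q)$, which is a routine check using that $K_d/L_d$ is the quadratic extension obtained by adjoining $\imath\sin(2\pi/d)$, and the rest is the well-known invertibility of the Minkowski matrix of a $\Q$-basis of a number field.
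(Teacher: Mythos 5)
Your proof is correct, and it takes a cleaner and more conceptual route than the paper's. You identify $\kappa_s$ with the Minkowski embedding of the $\Q$-basis $(a_q\lambda_1,\dots,a_q\lambda_\ell)$ of the totally real field $L_d$ into $\R^\ell$ (after noting that $q'\mapsto \sigma_{q'}\vert_{L_d}$ biject $\Ub^+_{d,\prim}$ with $\mathrm{Gal}(L_d/\Q)$, which you justify correctly via the action of complex conjugation), and then conclude by the non-vanishing of the discriminant, i.e.\ $\det(\tau_i(\beta_j))^2=\mathrm{disc}(\beta_1,\dots,\beta_\ell)=\det(\mathrm{Tr}_{L_d/\Q}(\beta_i\beta_j))\neq 0$ because the trace form of a separable extension is non-degenerate. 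The paper instead argues by contradiction: given a putative nontrivial real relation $\sum_i c_i\kappa_i=0$, it first reduces to $c_i\in L_d$ with $c_1=1$, applies the Galois automorphism $\sigma$ with $\sigma(q')=q$ to each coordinate equation, and sums over all $\ell$ such $\sigma$ to replace the coefficients $c_i$ by their traces $\hat c_i\in\Q$, producing a nontrivial $\Q$-linear relation among the $\Q$-independent elements $\kappa_{s,q}=a_q\lambda_s$ of $L_d$. Both proofs hinge on the same two facts — the bijection $\Ub^+_{d,\prim}\leftrightarrow\mathrm{Gal}(L_d/\Q)$ and the $\Q$-independence of $(a_q\lambda_s)_s$ — but you package the linear-independence step as a one-line appeal to the discriminant, while the paper re-derives it by hand through a Galois-averaging trick; your version is shorter and makes the structural reason (nondegeneracy of the trace form) explicit, whereas the paper's is self-contained without needing to cite the discriminant formula.
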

\begin{proof}
It is enough to show that $(\kappa_1,\dots,\kappa_\ell)$ is independent over $\R$.
Assume that there is a non-trivial relation $\sum_i c_i\cdot\kappa_i=0$. We can assume that $c_1=1$, and  $c_i\in L_d$ for all $i\geq 2$ (since $\kappa_i \in L_d^{\ell}$). For each $q'\in\Ub^+_{d,\prim}$, we have
$$
\kappa_{1,q'}=-\sum_{i=2}^\ell c_i\cdot\kappa_{i,q'}
$$
Let $\sigma\in\mathrm{Gal}(K_d/\Q)$ be the automorphism of $K_d$ that sends $q'$ to $q$. Applying $\sigma$ to the equality above, we get
$$
\kappa_{1,q}=-\sum_{i=2}\sigma(c_i)\cdot \kappa_{i,q}.
$$
We now remark that the set of $\sigma \in \mathrm{Gal}(K_d/\Q)$ such that $\sigma^{-1}(q)\in\Ub^+_{d,\prim}$ is in bijection with $\mathrm{Gal}(L_d/\Q)$. Taking the sum over all such $\sigma$'s, we get
$$
\ell\cdot \kappa_{1,q}=-\sum_{i=2}^\ell \hat{c}_i\cdot \kappa_{i,q},
$$
where
$$
\hat{c}_i=\sum_{\theta\in \mathrm{Gal}(L_d/\Q)}\theta(c_i) \in \Q.
$$
Since $(\kappa_{1,q},\dots,\kappa_{\ell,q})$ is independent over $\Q$, this is impossible, and the claim follows.
\end{proof}
Claim~\ref{clm:lattice:in:N} implies that the group generated by $\{\kappa_1,\dots,\kappa_\ell\}$ is a lattice in $\R^\ell$. Since $\{\kappa_1,\dots,\kappa_\ell\}$ is contained in $\Pi^+$, the proposition follows.
\end{proof}

\subsubsection{Proof of Theorem~\ref{th:inters:w:horosp:lattice}}\label{subsec:prf:th:inters:w:horosp:lattice}
\begin{proof}
Remark that $N\simeq \R^\ell$ is the center of $U$. By Proposition~\ref{prop:Gam:inters:N:lattice}, $\Gamma\cap N$ contains a lattice in $N$, while by Corollary~\ref{cor:Gam:inters:U:proj:lattice}, $\chi^+(\Gamma\cap U)$ contains a lattice of $U/N$. Therefore, $\Gamma\cap U$ contains a lattice of $U$.
\end{proof}

\subsection{Proof of Theorem~\ref{th:main:arithm}}\label{subsec:prf:th:main:arith}
\begin{proof}
By Lemma~\ref{lm:reduction:d:divides:sum:ki}, we can assume that $d \, | \, (k_1+\dots+k_n)$, and $n\geq 5$.
Recall that $G=\prod_{q\in \Ub^+_{d, \prim}}\SU(\Vbb^{(q)})$ and $\Gamma=\rho_d(\PB_n)\cap G$.
Since for all $q \in \Ub_{d,\prim}$, we have $\det\circ\rho_q(\PB_n) \subset \Ub_d$,  the subgroup $\Gamma$ has finite index in $\rho_d(\PB_n)$.

We first observe that it is enough to show that $\Gamma$ is commensurable to  $G(\Z)$. Indeed, by the Borel and Harish-Chandra's theorem, $G(\Z)$ is a lattice in $G$. Since $\Sp(\hX,\R)^T_d \simeq \prod_{q\in \Ub^+_{d, \prim}}\U(\Vbb^{(q)})$, we have $\Sp(\hX,\R)^T_d/G \simeq (\S^1)^\ell$. Thus $G(\Z)$ is also a lattice in $\Sp(\hX,\R)^T_d$. By definition, $G(\Z)$ is contained in $\Sp(\hX,\Z)^T_d$. Since $G(\Z)$ is a lattice in $\Sp(\hX,\R)^T_d$, so is $\Sp(\hX,\Z)^T_d$ (because $\Sp(\hX,\Z)^T_d$ is a discrete subgroup of $\Sp(\hX,\R)^T_d$). In particular, we have $[\Sp(\hX,\Z)^T_d:G(\Z)]$ is finite. Therefore $\rho_d(\PB_n)$ is commensurable to $\Sp(\hX,\Z)^T_d$ if $\Gamma$ is commensurable to $G(\Z)$.

By Corollary~\ref{cor:Zar:dense:d:divides:partial:sum}, we know that $\Gamma$ is Zariski dense in $G$.
By Theorem~\ref{th:inters:w:horosp:lattice}, $\Gamma\cap U$ contains a lattice $\Delta$ in $U$. To apply Theorem~\ref{th:Margulis:crit}, we need to show that $\Delta$ is irreducible in $G$. By definition, $\Delta$ is irreducible if for any  algebraic normal subgroup $G'$ of $G$ of infinite index, $\Delta\cap G'$ is finite (see \cite[Def. 1.13]{Benoist:survey}).
Since $G'$ is normal in $G$, for each $q\in \Ub^+_d$ the projection $G'_q$ of $G'$ in $\SU(\Vbb^{(q)})$ is an algebraic normal subgroup of $\SU(\Vbb^{(q)})$. Since $\SU(\Vbb^{(q)})$ is quasisimple (that is, its Lie algebra is simple), either $G'_q=\SU(\Vbb^{(q)})$, or $G'_q$ is contained in the center  of $\SU(\Vbb^{(q)})$. If $G'_q=\SU(\Vbb^{(q)})$ for all $q\in \Ub^+_{d, \prim}$ then $G'=G$. Hence for some $q\in \Ub^+_{d, \prim}$, $G'_q$ is contained in the center $\Zcal_q$ of $\SU(\Vbb^{(q)})$.

Let $u=(u_{q'})_{q'\in \Ub^+_{d, \prim}} \in \Delta\cap G'$. We then have $u_q \in \Zcal_q$. Note that $\Zcal_q$ is a finite group, while $u_q \in  U_q$ is  unipotent. Therefore, we must have $u_q=\id_{\Vbb^{(q)}}$.

By construction, for all $q'\in \Ub^+_{d,\prim}$, we have $u_{q'}=\sigma(u_q)$, where $\sigma$ is the element of $\mathrm{Gal}(K_d/\Q)$ such that $\sigma(q)=q'$ (here we abusively denote by $u_{q'}$ the matrix of $u_{q'}$ in some appropriate basis of $\Vbb^{(q')}$).
This implies that $u=e_G$, which proves that $\Delta$ is irreducible.

\medskip

By definition we have $\rk_\R(G)=\sum_{q\in \Ub^+_{d,\prim}}\rk_\R(\SU(\Vbb^{(q)}))$. It follows from Lemma~\ref{lm:0:mod:d:good} and Theorem~\ref{th:Menet:dim:signature} that the signature $(r_q,s_q)$ of the intersection form on $\Vbb^{(q)}$ satisfies $r_q\geq 1$ and $s_q\geq 1$ for all $q \in \Ub_{d,\prim}$. Thus we have
$$
\rk_\R(G) \geq |\Ub^+_{d,\prim}|=\ell.
$$
If $\ell \geq 2$ then we immediately get $\rk_\R(G)\geq 2$.
Otherwise, that is when $\ell=1$, we must have $d\in\{3,4,6\}$. In those cases, the condition $2 < \left\{k_1/d\right\}+\dots+\left\{k_n/d\right\} < n-2$ implies that $r_q\geq 2$ and $s_q\geq 2$. Therefore $\rk_\R(G)=\min\{r_q,s_q\} \geq 2$ as well.
We can now conclude by Theorem~\ref{th:Margulis:crit}.
\end{proof}


\end{document}